\newtheorem{thm}{Theorem}[section]
\newtheorem{lem}[thm]{Lemma}
\newtheorem{prop}[thm]{Proposition}
\newtheorem{cor}[thm]{Corollary}
\theoremstyle{definition}
\newtheorem*{claim}{Claim}
\newtheorem{definition}[thm]{Definition}
\newtheorem{remark}[thm]{Remark}
\newtheorem{question}[thm]{Question}
\newtheorem{notation}[thm]{Notation}
\setlist[description]{leftmargin=0cm}
\newcommand{\eps}{\varepsilon}
\newcommand{\dee}{\ensuremath{\partial}}
\newcommand{\mc}[1]{\ensuremath{\mathcal{#1}}}
\newcommand{\ten}{\otimes}
\newcommand{\Z}{\ensuremath{\mathbb{Z}}}
\newcommand{\R}{\ensuremath{\mathbb{R}}}
\renewcommand{\H}{\ensuremath{\mathbb{H}}}
\renewcommand{\P}{\ensuremath{\mathbb{P}}}
\DeclareMathOperator{\Aut}{Aut}
\DeclareMathOperator{\Hom}{Hom}
\DeclareMathOperator{\SL}{SL}
\DeclareMathOperator{\GL}{GL}
\DeclareMathOperator{\PGL}{PGL}
\DeclareMathOperator{\Cay}{Cay}
\DeclareMathOperator{\spn}{span}
\DeclareMathOperator{\diam}{diam}
\newcommand{\identity}{\mathrm{id}}
\newcommand{\minus}{-}
\newcommand{\vindomain}{\Omega_{\mathrm{Vin}}}
\newcommand{\mindomain}{\Omega_{\min}}
\newcommand{\uvindomain}{\tilde{\Omega}_{\mathrm{Vin}}}
\newcommand{\toproot}{\mu_{1,2}}
\newcommand{\iroot}{\mu_{1,2}}
\newcommand{\udist}{d_{\mathcal{U}}}
\newcommand{\utruncl}[1]{\trunc{L}{\udist(#1)}}
\newcommand{\ulongl}[1]{\{\udist(#1)\}^L}
\newcommand{\trunc}[2]{\{#2\}_{#1}}
\newcommand{\untrunc}[2]{\{#2\}^{#1}}
\newcommand{\elt}{\gamma}
\newcommand{\udvindomain}{\tilde{\mathscr{O}}_{\mathrm{Vin}}}
\newcommand{\dvindomain}{\mathscr{O}_{\mathrm{Vin}}}
\newcommand{\daviscx}{\mathrm{D}(C,S)}
\newcommand{\walls}{\mathbf{W}}
\newcommand{\minwalls}{\mathbf{W}_{\min}}
\newcommand{\type}[1]{s(#1)}
\newcommand{\BP}{\mathrm{BP}}
\newcommand{\SLpm}{\SL^{\pm}}
\newcommand{\halfspace}{\mathbf{Hs}}
\newcommand{\halfcone}{\mathbf{Hc}}
\newcommand{\uhalfspace}{\widetilde{\halfspace}}
\newcommand{\uhalfcone}{\widetilde{\halfcone}}
\newcommand{\stable}[1]{{E^-_{#1}}}
\newcommand{\unstable}[1]{{E^+_{#1}}}
\newcommand{\stableunstable}[1]{{E^\pm_{#1}}}
\newcommand{\dproj}{d_{\mathbb{P}}}
\newcommand{\ddproj}{d^*_{\mathbb{P}}}
\DeclarePairedDelimiter\norm{\lVert}{\rVert}
\title{Cubulated hyperbolic groups admit Anosov representations}
\author{Sami Douba}
\address{Institut des Hautes Études Scientifiques, 35 route de Chartres, 91440 Bures-sur-Yvette, France}
\email{douba@ihes.fr}
\author{Balthazar Fl\'echelles}
\address{Institut des Hautes Études Scientifiques, 35 route de Chartres, 91440 Bures-sur-Yvette, France}
\email{flechelles@ihes.fr}
\author{Theodore Weisman}
\address{Department of Mathematics, University of Michigan, Ann Arbor,
  MI 48109, USA
}
\email{tjwei@umich.edu}
\author{Feng Zhu}
\address{Department of Mathematics, University of Wisconsin -- Madison, WI 53706, USA}
\email{fzhu52@wisc.edu}
\begin{document}

\maketitle

\begin{abstract}
  We prove that any hyperbolic group acting properly discontinuously and cocompactly on a
  $\mathrm{CAT}(0)$ cube complex admits a projective Anosov
  representation into $\mathrm{SL}(d, \R)$ for some $d$. More
  specifically, we show that if $\Gamma$ is a hyperbolic
  quasiconvex subgroup of a right-angled Coxeter group $C$, then a generic representation of $C$ by reflections
  restricts to a projective Anosov representation of $\Gamma$.
\end{abstract}

\tableofcontents

\section{Introduction}

The prototypical examples of Gromov-hyperbolic groups are the convex
cocompact subgroups of
$\mathrm{PO}(n,1) = \mathrm{Isom}(\mathbb{H}^n)$, where $\mathbb{H}^n$
denotes real hyperbolic $n$-space, and more generally, of
$\mathrm{Isom}(X)$, where $X$ is a rank-one symmetric space of
noncompact type. It is known that not all hyperbolic groups can arise
in this fashion: there are hyperbolic groups which do not admit
faithful representations into \emph{any} matrix group
\cite{kapovichpolygons, canary2019new, tholozan2021linearity,
  tholozan2022residually}, and even among linear hyperbolic groups
there are examples which fail to admit discrete faithful
representations in any rank-one Lie group \cite{tholozan2021linearity,
  dt2022anosov}.

Apart from the convex cocompact subgroups of linear rank-one Lie
groups, a significant source of linear hyperbolic groups is the class
of hyperbolic groups that are {\em virtually special} in the sense of
Haglund and Wise \cite{HW08}. A theorem of Agol \cite{agolVHC}
established that the virtually compact
special hyperbolic groups are precisely those hyperbolic groups that
are {\em cubulated}, that is, those that act properly discontinuously and cocompactly
on $\mathrm{CAT}(0)$ cube complexes. Many diverse examples of
cubulated hyperbolic groups can be found ``in nature'' and in the
literature: see
e.g.\ \cite{nibloreevescoxeter,wise2004cubulating,ollivier2011cubulating, bergeron2012boundary, wise2021structure, giralt}.
In fact, there are currently no known examples of noncubulated convex cocompact
subgroups of $\mathrm{PO}(n,1)$, and indeed, Wise conjectured that no
such subgroups exist \cite[Conj.~13.52]{wise2014cubical}. The
following question is also due to Wise
\cite[Prob.~13.53]{wise2014cubical}.

\begin{question}\label{pon1}
  Does every cubulated hyperbolic group embed as a convex cocompact
  subgroup of $\mathrm{PO}(n,1)$ for some $n \geq 2$?
\end{question}

There are positive results towards \Cref{pon1} for some
low-dimensional cubulated hyperbolic groups; see
e.g.\ \cite{kapovichpolygons,
  markovic2013criterion,haissinsky2015hyperbolic}.
However, the question remains open even for some of the most standard
examples of cubulated hyperbolic groups. For example, it is not known
if every hyperbolic right-angled Coxeter group embeds convex
cocompactly in $\mathrm{PO}(n,1)$ for some $n$, or even if the latter
holds for such Coxeter groups of arbitrarily large virtual
cohomological dimension. 
On the other hand, Danciger--Gu\'eritaud--Kassel \cite{DGKgeomded} and Lee--Marquis \cite{leemarquis2019} (see also \cite{DGKLM}) have shown that every hyperbolic Coxeter group admits an {\em Anosov} representation.

\subsection{Anosov representations}
Anosov representations were introduced by Labourie
\cite{labourie2006anosov}, and their theory subsequently developed by
Guichard--Wienhard \cite{gw2012anosov}, Kapovich--Leeb--Porti
\cite{klp2017characterizations, klp2018domains, KLP2018},
Gu\'eritaud--Guichard--Kassel--Wienhard \cite{ggkw2017anosov},
Bochi--Potrie--Sambarino \cite{BPS}, and many others. They have
emerged as a successful generalization of convex cocompact
representations in arbitrary rank. An Anosov representation $\rho\colon \Gamma \to
  \GL(d,\R)$ gives a quasi-isometric embedding of $\Gamma$ into the
  target group, and the class of Anosov representations is stable
  under small perturbations:  the set of Anosov representations from
  $\Gamma$ into $\GL(d,\R)$ is an open subset of
  $\Hom(\Gamma,\GL(d,\R))$ \cite{labourie2006anosov}. Anosov
  representations may also be characterized in terms of the existence
  of limit maps relating the dynamics of  $\Gamma$ acting on its
  boundary to the dynamics of the image subgroup acting on a flag
  variety \cite{klp2017characterizations}, and are closely related to
  convex cocompact actions in real projective space
  \cite{dgk2017convex}.

The class of groups admitting an Anosov representation is known to be
strictly larger than that of groups which admit a convex
cocompact embedding in rank one \cite{dt2022anosov}. However, little
is currently understood about the possible isomorphism types of groups
admitting Anosov representations. Any such group is necessarily finite-by-linear,
and is
Gromov-hyperbolic \cite{klp2017characterizations,BPS}, but as observed
by Canary \cite[Q. 50.2]{canary2021anosov}, there are no other known
restrictions:
\begin{question}
  \label{quest:linear_anosov}
  Does every linear hyperbolic group admit an Anosov representation?
\end{question}

Even though \Cref{quest:linear_anosov} is open, there are still
surprisingly few constructions available for producing concrete
examples of groups admitting Anosov representations, beyond the groups
that are already known to admit convex cocompact representations in
rank one. Apart from hyperbolic Coxeter groups, Kapovich \cite{kapovich2007convex} has shown that
fundamental groups of certain Gromov--Thurston manifolds admit Anosov
representations (note that in high dimensions, groups of the latter form are not commensurable to Coxeter groups \cite{js2003}). 
In another direction, some combination theorems for Anosov subgroups
have been established in recent years \cite{DGKexamples, dkl2019,
  dk2022, dk2023}, which can be used to prove that the class of groups
admitting Anosov embeddings into $\SL(d, \R)$ for some $d$ is closed under free
products \cite{dgk2017convex, dt2022anosov}.

In this paper, we prove:
\begin{thm}\label{thm:cubulated_anosov}
  Every cubulated hyperbolic group admits an Anosov representation.
\end{thm}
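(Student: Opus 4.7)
The plan, following the strategy announced in the abstract, is to realize any cubulated hyperbolic group $\Gamma$ as a hyperbolic quasiconvex subgroup of a right-angled Coxeter group $C$, and then construct the desired Anosov representation by restricting a generic Vinberg-type reflection representation of $C$ to $\Gamma$. For the reduction step, I would invoke Agol's theorem that cubulated hyperbolic groups are virtually compact special, together with the Haglund--Wise theorem that compact special hyperbolic groups embed as quasiconvex subgroups of right-angled Coxeter groups. Since finite-index sub- and overgroups of groups admitting Anosov representations do so themselves, this reduces \Cref{thm:cubulated_anosov} to finding a projective Anosov representation of an arbitrary hyperbolic quasiconvex subgroup $\Gamma \leq C$.

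For the construction step, Vinberg's theory provides a family of reflection representations $\rho_M \colon C \to \SLpm(d,\R)$, with $d = |S|$, parametrized by symmetric Cartan-type matrices $M = (m_{s,t})_{s,t \in S}$ satisfying $m_{s,s} = 2$, $m_{s,t} \leq 0$, and $m_{s,t} = 0$ if and only if $s$ and $t$ commute in $C$. Each generator acts as a linear reflection, and the representation preserves a properly convex cone in $\R^d$ whose geometry is governed by $M$. When $M$ is Lorentzian, $\rho_M$ factors through $\PO(n,1)$, recovering the classical rank-one picture; the goal is to show that for generic $M$, the restriction $\rho_M|_{\Gamma}$ is projective Anosov. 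Using the Bochi--Potrie--Sambarino characterization, this amounts to establishing a uniform singular value gap inequality of the form
\begin{equation*}
  \log \frac{\sigma_1(\rho_M(\gamma))}{\sigma_2(\rho_M(\gamma))} \geq A |\gamma|_{\Gamma} - B
\end{equation*}
for some $A > 0$, $B \geq 0$, and every $\gamma \in \Gamma$; by quasiconvexity of $\Gamma$ in $C$, $|\gamma|_\Gamma$ is comparable to the word length of $\gamma$ in $C$, so the task reduces to controlling singular value gaps of $\rho_M(w)$ for words $w$ in $\Gamma$ viewed inside $C$.

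To extract the required contraction, I would exploit the combinatorics of the Davis complex $\daviscx$: a reduced word $w$ over $S$ corresponds to a combinatorial geodesic in $\daviscx$ crossing a sequence of walls, each of which $\rho_M$ maps to a linear hyperplane with an associated reflection. A generic choice of $M$ should ensure that consecutive non-commuting reflections contribute uniform transverse contractions, and the invariant convex cone should guarantee that these contractions accumulate along such a geodesic rather than cancel. The main obstacle is that $C$ itself is not hyperbolic: any pair of commuting generators generates a $\Z^2$ inside which $\rho_M$ cannot satisfy a singular value gap, so the desired inequality necessarily fails on all of $C$. The heart of the argument must therefore be to show that this failure is localized to the abelian ``flat'' substructure of $C$, from which a hyperbolic quasiconvex $\Gamma$ automatically stays away in a uniform manner. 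Making this precise is the main technical challenge --- I would expect it to require a combinatorial mechanism recognizing admissible wall-crossing patterns for quasi-geodesics in $\Gamma$ (via, say, an automaton or cone-type decomposition), combined with a genericity argument ensuring that reflections corresponding to such admissible patterns generate a semigroup with a definite contracting projective cone.
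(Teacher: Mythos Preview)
Your reduction step is exactly the paper's: Agol plus Haglund--Wise realize a finite-index subgroup of $\Gamma$ as a quasiconvex subgroup of a right-angled Coxeter group $C$, and commensurability invariance of the Anosov property finishes once one has \Cref{thm:mainthm}. The overall shape of the construction step also matches---simplicial (Vinberg) representations of $C$, the Bochi--Potrie--Sambarino singular-value gap criterion, wall-crossing combinatorics in the Davis complex, and the recognition that the non-hyperbolicity of $C$ is the obstacle. One small slip: commuting generators in a right-angled Coxeter group are involutions, so a pair of them generates $(\Z/2)^2$, not $\Z^2$; the genuine flats arise from ``empty squares'' in the nerve, producing $D_\infty \times D_\infty$.

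The gap is that your proposal stops at the hard step, and the mechanisms you speculate about (cone-type automata, a single contracting projective cone for an admissible semigroup) are not what the paper uses and would be difficult to make work here. The paper's actual argument has three ingredients you do not anticipate. First, the combinatorial condition replacing hyperbolicity of $\Gamma$ is \emph{bounded product projections}: along any $\Gamma$-geodesic, one can extract a long chain of pairwise \emph{disjoint} walls, with uniformly few ``stray'' walls between consecutive ones. Second, the contraction is detected not by a global invariant cone but by \emph{half-cones} over individual walls---enlargements of half-spaces whose strong nesting, when it occurs, yields the gap estimate via the Hilbert metric. Third, and crucially, strong nesting can fail; when it does, the paper shows the geodesic is (coarsely) trapped in a proper standard subgroup $C(T)$, applies induction on $|S|$ (this is where the genericity condition, made precise as \emph{full nondegeneracy} of the Cartan matrix, is used), and glues the resulting pieces using uniform transversality of stable/unstable subspaces together with the Kapovich--Leeb--Porti higher-rank Morse lemma and local-to-global principle. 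None of this is visible from your outline, and it is the substance of the theorem.
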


\Cref{thm:cubulated_anosov} significantly enlarges the family of
groups known to admit Anosov representations; see \Cref{sec:examples}
below for some example applications. However, our proof also provides
new examples of Anosov representations even for groups already known
to admit them, since our main theorem (\Cref{thm:mainthm} below) can
provide different Anosov representations for different cubulations of
the same group.

Before stating the result, we briefly recall the definition of an
Anosov representation. For a matrix $g \in \GL(d, \R)$ and
$1 \le k \le d$, we let $\sigma_k(g)$ denote the $k$\textsuperscript{th} largest
\emph{singular value} of $g$, i.e.\ the $k$\textsuperscript{th} largest eigenvalue of the
matrix $\sqrt{gg^T}$, counted with multiplicity.
\begin{definition}
  \label{defn:anosov}
  Let $\Gamma$ be a finitely generated group, equipped with the word
  metric $|\cdot|$ induced by a finite generating set. A
  representation $\rho\colon \Gamma \to \GL(d, \R)$ is \emph{$k$-Anosov} for
  some $1 \le k < d$ if there are constants $A, B > 0$ so that for all
  $\gamma \in \Gamma$,
  \[
    \log\left(\frac{\sigma_k(\rho(\gamma))}{\sigma_{k+1}(\rho(\gamma))}\right) \ge
    A|\gamma| - B.
  \]
\end{definition}
A $1$-Anosov representation is also called a \emph{projective} Anosov representation. We will discuss singular values and Anosov representations in further
detail in \Cref{sec:singular_values}. For now, we remark that
\Cref{defn:anosov} bears little resemblance to Labourie's original
definiton of an Anosov representation, which was stated in terms of a certain flow on a bundle associated to the representation; the equivalence of the definitions is a theorem due to Kapovich--Leeb--Porti
\cite{klp2017characterizations} and Bochi--Potrie--Sambarino \cite{BPS}. 

Our main theorem is as follows:
\begin{thm}
  \label{thm:mainthm}
  Let $(C, S)$ be a right-angled Coxeter system, let
  $\Gamma \hookrightarrow C$ be a quasiconvex embedding of a
  hyperbolic group $\Gamma$ into $C$, and let
  $\rho\colon C \to \SLpm(|S|, \R)$ be a simplicial representation of $C$
  whose Cartan matrix is \emph{fully nondegenerate}. Then the
  restriction $\rho|_\Gamma$ is $1$-Anosov.
\end{thm}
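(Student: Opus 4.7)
The plan is to construct equivariant continuous transverse limit maps $\xi\colon\partial_\infty\Gamma\to\proj(\R^{|S|})$ and $\xi^*\colon\partial_\infty\Gamma\to\proj((\R^{|S|})^*)$ with suitable contracting dynamics, and then invoke the Guichard--Wienhard and Kapovich--Leeb--Porti characterization of $1$-Anosov representations to conclude. Equivalently, the whole strategy reduces to proving a uniform exponential singular-value-gap estimate along geodesic words of $\Gamma$ viewed inside $C$.

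The full nondegeneracy of the Cartan matrix ensures that $\rho$ preserves a properly convex Vinberg domain $\vindomain\subseteq\proj(\R^{|S|})$, with each generator $s\in S$ acting as a reflection preserving a pair of half-spaces $\halfspace_s^\pm$ (or half-cones $\halfcone_s^\pm$ after passing to a double cover). For any $\elt\in C$ with reduced expression $s_1\cdots s_\ell$, the matrix product $\rho(s_1)\cdots\rho(s_\ell)$ carries a distinguished half-cone into a nested chain of half-cones. The limit maps are then defined as follows: at a boundary point $\eta\in\partial_\infty\Gamma$ represented by a geodesic ray $(\gamma_n)\subseteq\Gamma$, take $\xi(\eta)$ to be the intersection (in a suitable sense) of the chain of half-cones associated to $C$-reduced expressions of the $\gamma_n$, and similarly for $\xi^*$ in the dual. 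Quasiconvexity of $\Gamma\hookrightarrow C$ ensures that such $C$-reduced expressions fellow-travel genuine $\Gamma$-geodesics through the Davis complex $\daviscx$, making $\xi,\xi^*$ well-defined, $\Gamma$-equivariant, and transverse.

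The main obstacle --- and the technical heart of the proof --- is the underlying uniform contraction lemma: along the reduced word of an element $\elt\in\Gamma$ of $\Gamma$-length $\ell$, the diameter of the nested half-cones in a Hilbert-type metric should decay exponentially in $\ell$. A single reflection strictly contracts its preserved half-cone thanks to full nondegeneracy, but successive walls could a priori be nearly coincident --- precisely what happens for long words lying in a ``flat'' parabolic subgroup of $C$, such as a direct product of infinite dihedral groups. The crucial input here is that $\Gamma$ is hyperbolic and quasiconvexly embedded in $C$: this forbids $\Z^2$ subgroups of $\Gamma$ and prevents $\Gamma$-geodesics from shadowing any flat of $\daviscx$ for too long. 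Translating this combinatorial restriction into a uniform transversality bound on the sequence of walls produced by a $\Gamma$-geodesic, and composing the resulting one-step contractions, should produce the desired exponential contraction and hence a singular value gap
\[
\log\!\left(\frac{\sigma_1(\rho(\elt))}{\sigma_2(\rho(\elt))}\right) \gtrsim \ell \asymp |\elt|_\Gamma,
\]
where the final equivalence uses the quasi-isometric embedding provided by the quasiconvex hypothesis. Continuity of $\xi,\xi^*$ and the verification of the Anosov condition of Definition \ref{defn:anosov} then follow by a standard Cauchy-type comparison of nested half-cones along converging sequences of rays.
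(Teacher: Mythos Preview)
Your high-level outline captures the right flavor---nested half-cones in the Vinberg domain, Hilbert-metric contraction, and the need for hyperbolicity to rule out flat behavior---but it glosses over precisely the place where the real work lies, and your proposed mechanism for getting uniform contraction does not survive contact with the examples.

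The central gap is your claim that full nondegeneracy plus hyperbolicity of $\Gamma$ yields a ``uniform transversality bound on the sequence of walls,'' from which one can simply compose one-step contractions. This is false: even when $\overline{W}\cap\overline{W'}=\emptyset$ (equivalently, $\gamma(W,W')$ does not lie in any proper standard subgroup), the half-cone $\overline{\halfcone_+(W')}$ need not be contained in $\halfcone_+(W)$. The paper's appendix constructs explicit examples of this failure, including ones where $\gamma(W,W')$ is arbitrarily far from every standard subgroup and the failure occurs in \emph{every} reflection domain for $\rho$. So there is no uniform one-step contraction to compose, and the naive nesting argument breaks down.

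What the paper actually does is quite different and considerably more intricate. It proves the singular-value gap estimate $\mu_{1,2}(\rho(\gamma))\ge A|\gamma|-B$ by induction on $|T|$ over standard subgroups $C(T)\le C$. For the inductive step, an itinerary traversing $\gamma$ is cut into sub-itineraries at walls where strong nesting \emph{does} happen. If most of the length sits in short sub-itineraries, the Hilbert-metric contraction argument you sketch works directly. If most of the length sits in long sub-itineraries, a structural lemma (Lemma~5.10) shows that the element traversed is, up to bounded error, a product of at most two elements in proper standard subgroups---so the inductive hypothesis applies to each piece. Gluing the pieces back together requires two tools you do not mention: a uniform-transversality additivity estimate for $\mu_{1,2}$ (Lemmas~8.7 and~8.8, which need delicate control over where unstable subspaces can land when strong nesting fails), and the Kapovich--Leeb--Porti local-to-global principle for Morse quasi-geodesics. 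Neither of these is a routine Cauchy-type comparison; the local-to-global principle in particular is a deep theorem about higher-rank symmetric spaces, and it is what allows one to concatenate overlapping sub-geodesics with uniform gaps into a global one. Your proposal would need to incorporate both the inductive structure and these gluing mechanisms to become a proof.
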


In this paper, a \emph{simplicial representation} of a right-angled
Coxeter group $C$ is a deformation of the well-known \emph{geometric
  representation} $C \to \SLpm(|S|, \Z)$ studied by Tits
\cite{bourbaki68}; see \Cref{sec:cartan_simplicial} for more
detail. The geometric representation itself essentially never
satisfies the technical condition demanded by \Cref{thm:mainthm}, but
representations with this property do exist for any right-angled
Coxeter group. In fact, these representations form an open dense
subset of the space of simplicial representations, and can even be
arranged to have image lying in $\mathrm{O}(p, q)$ or $\SLpm(|S|, \Z)$
(see \Cref{rem:fully_nondegenerate_exists}).

Thus, our proof of \Cref{thm:cubulated_anosov} does not reprove the
result of Haglund--Wise and Agol that cubulated hyperbolic groups are
linear. Rather, we use their characterization of cubulated hyperbolic
groups as hyperbolic virtual quasiconvex subgroups of right-angled
Coxeter groups, and apply
\Cref{thm:mainthm}. \Cref{thm:cubulated_anosov} then follows once we
know that admitting an Anosov representation is a commensurability
invariant (see \cite[Lemma 2.1]{dt2022anosov}).

\subsection{Actions of reflection groups on projective space}\label{sec:refproj}

In the special case where the ambient right-angled Coxeter group $C$
in \Cref{thm:mainthm} is Gromov-hyperbolic, then work of
Danciger--Gu\'eritaud--Kassel--Lee--Marquis \cite{DGKLM} (see also
\cite{DGKgeomded}) implies that any representation
$\rho\colon C \to \SLpm(|S|, \R)$ as in the theorem is already
$1$-Anosov; in that case it follows easily that the restriction of
$\rho$ to any quasiconvex subgroup of $C$ is 1-Anosov also.

However, the Haglund--Wise construction typically yields a quasiconvex
embedding of a compact special hyperbolic group $\Gamma$ into a
\emph{non-hyperbolic} right-angled Coxeter group $C$, and when this
occurs we cannot invoke the results in \cite{DGKgeomded} or
\cite{DGKLM}. Further, we know of no procedure that replaces $C$ with
a hyperbolic Coxeter group (although we do not know of any reason such
a procedure cannot exist).

Our proof of \Cref{thm:mainthm} ultimately differs significantly from
the approach in \cite{DGKgeomded} and \cite{DGKLM}, and provides a new
proof of the fact that hyperbolic right-angled Coxeter groups admit
Anosov representations. Indeed, our methods rely on a completely
different characterization of Anosov representations. However, the
starting point for both proofs is the same: we consider
representations $C \to \SLpm(|S|, \R)$ which are \emph{generated by
  reflections}. The general theory of such representations was
developed by Vinberg \cite{Vinberg1971}, generalizing Tits' study of
the geometric representation, and centers around the action of $C$ on
a \emph{convex domain} in real projective space $\P(\R^{|S|})$.

As an illustrative example, the geometric representation of the free
product $\Gamma = \Z/2 * \Z/2 * \Z/2$ realizes $\Gamma$ as a
reflection lattice in $\mathrm{O}(2,1)$, acting on the
\emph{projective} or \emph{Klein} model of the hyperbolic plane
embedded in $\P(\R^3)$ as a convex ball. Although $\Gamma$ is a
hyperbolic group in this case, the geometric representation fails to
be Anosov, as the product of any pair of distinct generators is a
nontrivial unipotent element in $\mathrm{O}(2,1)$. However, $\Gamma$
also admits \emph{convex cocompact} representations into
$\mathrm{O}(2,1)$, where the product of any pair of distinct
generators is instead loxodromic. Such representations also preserve
another convex domain $\Omega \subset \P(\R^3)$, called the \emph{Vinberg
  domain}, which is \emph{not} projectively equivalent to the Klein
model for $\H^2$.

\subsection{Proof idea}

Our proof of \Cref{thm:mainthm} heavily exploits the relationship
between the projective geometry of the Vinberg domain $\Omega$ for a
right-angled Coxeter system $(C, S)$ acting by reflections, and the
combinatorial geometry of the \emph{Davis complex} $\daviscx$, a
natural $\mathrm{CAT}(0)$ cube complex with a properly discontinuous and cocompact $C$-action. Specifically, we relate \emph{half-spaces} in
$\daviscx$ to certain convex subsets of projective space, which we
call \emph{half-cones}. By examining the nesting properties of
half-cones, we are able to prove that if a geodesic $\gamma_n$ in an
irreducible right-angled Coxeter group $C$ does not get ``stuck''
inside of a proper \emph{standard} subgroup in $C$ (i.e.\ a subgroup
generated by a subset of the generating set $S$) then the singular
value gaps of the sequence $\rho(\gamma_n)$ grow at a uniform
rate. This directly verifies the condition in \Cref{defn:anosov}.

Our proof also needs to handle the case where the geodesic $\gamma_n$
spends arbitrarily long amounts of time inside of standard subgroups
of $C$, and this is where the majority of the work takes place. The
strategy is to induct on the size of the generating set $S$, and
assume that the geodesic $\gamma_n$ experiences uniform singular value
gap growth on each sub-geodesic lying in a proper standard subgroup of
$C$. The challenge is then to ``glue together'' the singular value gap
growth on each of these sub-geodesics.

This ``gluing'' process is somewhat involved, but there are
essentially only two different techniques at play. One of them is a
\emph{uniform transversality} argument for stable and unstable
subspaces of elements in $\SLpm(|S|, \R)$, and relies on an
understanding of the convex projective geometry of the Vinberg domain
for $\rho$. The other technique is to apply the \emph{higher-rank
  Morse lemma} and \emph{local-to-global principle} of
Kapovich--Leeb--Porti \cite{KLP2018,KLP2023} (see also
\cite{Riestenberg}), a pair of deep theorems about the geometry of
certain quasi-geodesic sequences in higher-rank symmetric spaces.

\begin{remark}
  An interesting feature of our proof is that it uses the
  hyperbolicity of the quasiconvex subgroup $\Gamma$ only indirectly,
  in the form of a condition on the walls in $\daviscx$ crossed by an
  arbitrary geodesic in $\Gamma$ (see \Cref{defn:bpp}). As a consequence, our proof actually shows that this condition implies hyperbolicity of $\Gamma$, since any group
  admitting an Anosov representation is necessarily hyperbolic; see
  \Cref{rem:bpp_hyperbolicity}. In the special case where
  $\Gamma = C$, this gives an alternative (albeit inefficient) proof of Gromov's ``no empty square''
  characterization of hyperbolic right-angled Coxeter groups relying on the
  theory of Anosov representations (this was also accomplished in
  \cite{DGKgeomded} and \cite{leemarquis2019}).
\end{remark}

\subsection{Examples and applications}
\label{sec:examples}

\Cref{thm:cubulated_anosov} provides evidence that groups admitting
Anosov representations are in some sense ``abundant.'' One way to make
this precise is Gromov's density model for random finitely presented
groups: at density
$<\frac{1}{12}$, a random group satisfies the $C'(\frac{1}{6})$ small
cancellation condition, and such groups are hyperbolic and
cubulated \cite{wise2004cubulating}; more generally, at density $<\frac{1}{6}$, random groups are hyperbolic and
cubulated (see \cite[9.B]{gromov1993asymptotic},
\cite{ollivier2011cubulating}). Hence, by \Cref{thm:cubulated_anosov}, random finitely presented groups at density $< \frac16$ admit Anosov representations. 

These results tell us that we cannot control the dimension
of the Anosov representations provided by \Cref{thm:cubulated_anosov} even for torsion-free cubulated hyperbolic groups of bounded cohomological dimension: for fixed $n$, a random group at any positive density has no
$n$-dimensional linear representations with finite
kernel \cite{kozmalubotzky}, while random groups at density $<\frac{1}{2}$ have cohomological dimension~$2$.

  We mention, however, that not all groups that admit $1$-Anosov representations into $\mathrm{SL}(d,\R)$---indeed, not all convex cocompact subgroups of rank-one Lie groups---are cubulated. For example, any action of a discrete group with Kazhdan's property (T) on a finite-dimensional $\mathrm{CAT}(0)$ cube complex has a global fixed point \cite{nibloreeves97}, ruling out cubulability for uniform lattices in $\mathrm{Sp}(n,1)$,
$n \geq 2$, and $F_{4{(-20)}}$. For subtler reasons, it is also true that
uniform lattices in $\mathrm{PU}(n,1)$, $n \geq 2$, fail to be
cubulated \cite{delzantgromov, py2013coxeter}.

\subsubsection{Strict hyperbolization}
Recent work of Lafont and Ruffoni \cite{lafont2022special} has
established that the Charney--Davis \emph{strict hyperbolization}
process \cite{charney1995strict} also yields cubulated hyperbolic
groups, which allows us to produce examples of Anosov subgroups with
various ``exotic'' properties. For instance, Ontaneda \cite{ontaneda}
has used strict hyperbolization to construct new examples of closed
negatively curved Riemannian manifolds in any dimension $n \ge 4$
which are not homeomorphic to any locally symmetric space of rank
one. Work of Januszkiewicz--\'Swi\c{a}tkowski \cite{js2003} implies
that the fundamental groups of these manifolds are not commensurable
to any Coxeter group when $n > 61$, meaning that
\Cref{thm:cubulated_anosov} gives the first proof that these groups
admit Anosov representations.

For another sample application of \Cref{thm:cubulated_anosov} and
strict hyperbolization, recall that if $M$ is a closed negatively
curved Riemannian manifold, then the Gromov boundary of $\pi_1(M)$ is
a topological sphere. However, Davis--Januszkiewicz \cite{dj91} showed
that the latter may fail if $M$ is merely a closed aspherical manifold
with hyperbolic fundamental group. The Davis--Januszkiewicz examples
are constructed via strict hyperbolization, so their fundamental groups are cubulated by the work of Lafont--Ruffoni. Combining these
results with a theorem of Bestvina \cite[Thm.~2.8]{bestvina96} and
Theorem \ref{thm:cubulated_anosov} yields the following:
\begin{thm}\label{exotic}
  For every $n \geq 4$, there is some $d \in \mathbb{N}$ and an Anosov subgroup of
  $\mathrm{SL}(d, \mathbb{R})$ whose Gromov boundary is not homeomorphic to an $n$-sphere, but is
  nevertheless a homology $n$-manifold with the homology of an $n$-sphere.
\end{thm}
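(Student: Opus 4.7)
The plan is to combine \Cref{thm:cubulated_anosov} with the work of Davis--Januszkiewicz, Lafont--Ruffoni, and Bestvina outlined in the preceding discussion. Fix $n \geq 4$. First, I would invoke the construction of Davis--Januszkiewicz \cite{dj91}: for each such $n$, they produce a closed aspherical $(n+1)$-manifold $M$ with hyperbolic fundamental group $\Gamma := \pi_1(M)$ whose Gromov boundary $\partial_\infty \Gamma$ is \emph{not} homeomorphic to $S^n$. Crucially, $M$ arises from the Charney--Davis strict hyperbolization procedure \cite{charney1995strict}, so the theorem of Lafont--Ruffoni \cite{lafont2022special} ensures that $\Gamma$ is cubulated. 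Applying \Cref{thm:cubulated_anosov} then yields an integer $d$ and a projective Anosov representation $\rho \colon \Gamma \to \mathrm{SL}(d,\R)$.

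Next, I would verify that $\rho(\Gamma) \leq \mathrm{SL}(d,\R)$ has Gromov boundary homeomorphic to $\partial_\infty \Gamma$. Since $\rho$ is Anosov, it is a quasi-isometric embedding with finite kernel, so $\rho(\Gamma)$ is Gromov-hyperbolic with boundary canonically identified with $\partial_\infty(\Gamma/\ker\rho) = \partial_\infty \Gamma$. This step is standard and poses no difficulty.

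Finally, to control the homological type of $\partial_\infty \rho(\Gamma)$, I would invoke Bestvina's theorem \cite[Thm.~2.8]{bestvina96}: since $M$ is a closed aspherical $(n+1)$-manifold with hyperbolic fundamental group, $\partial_\infty \Gamma$ is a homology $n$-manifold with the homology of $S^n$. Combining this with $\partial_\infty \Gamma \not\cong S^n$ exhibits $\rho(\Gamma)$ as the desired Anosov subgroup.

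There is no genuine obstacle: the theorem is an assembly of quoted results, with \Cref{thm:cubulated_anosov} supplying the new ingredient that converts the cubulated hyperbolic group $\Gamma$ into an Anosov subgroup. The only point requiring attention is the transfer of boundary properties from $\Gamma$ to $\rho(\Gamma)$, which is immediate from the quasi-isometric embedding property of Anosov representations.
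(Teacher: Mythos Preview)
Your proposal is correct and follows essentially the same approach as the paper: the proof is an assembly of the Davis--Januszkiewicz examples, Lafont--Ruffoni's cubulation result for strict hyperbolization, Bestvina's theorem on boundaries of aspherical manifold groups, and \Cref{thm:cubulated_anosov}. Your additional remark about transferring the boundary from $\Gamma$ to $\rho(\Gamma)$ via the quasi-isometric embedding property is a sensible clarification that the paper leaves implicit.
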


This theorem may be viewed as a positive answer for each $n \geq 4$ to
a variant of a question of Kapovich
\cite[Q.~9.4]{kapovich2008kleinian}, within the broader realm of
Anosov groups. It in fact follows from known results that Theorem
\ref{exotic} also holds for $n=3$. Indeed, for an example with $n=3$,
it suffices to take one of the Anosov representations guaranteed by
Danciger--Gu\'eritaud--Kassel \cite{DGKgeomded} of a right-angled
Coxeter group given by a flag no-square triangulation of a nontrivial
homology $3$-sphere (see \cite{davis1983}); for the existence of such
triangulations, see \cite{ps2009flag}. That the latter approach fails
as soon as $n \geq 4$ follows from aforementioned work of
Januszkiewicz--\'Swi\c{a}tkowski \cite[Sect.~2.2]{js2003}.

\begin{remark}
  The situation for groups admitting $1$-Anosov representations into
  $\SL(d, \R)$ appears to be strikingly different from the situation
  for groups admitting \emph{Borel} Anosov representations into
  $\SL(d, \R)$ (a representation $\rho\colon \Gamma \to \SL(d, \R)$ is Borel
  Anosov if it is $k$-Anosov for every $1 \le k < d$). Indeed,
  Sambarino conjectured that every group admitting a Borel Anosov
  representation into $\SL(d, \R)$ is virtually either a free group or
  the fundamental group of a closed surface, and this conjecture has
  been verified for infinitely many $d$ (see \cite{ct2020},
  \cite{tsouvalas2020}, \cite{dey2022}).
\end{remark}

\subsection{Further questions}

We have already observed that \Cref{thm:mainthm} would no longer hold
if we removed the hypothesis regarding fully nondegenerate Cartan
matrices (see the end of Section~\ref{sec:refproj}). However, it seems plausible that a version of
\Cref{thm:mainthm} could hold with a considerably relaxed version of
this hypothesis, as long as we impose some assumptions on the
quasiconvex embedding $\Gamma \hookrightarrow C$. For instance, it
might be sufficient to ask for $\Gamma$ to have finite intersection
with every standard virtually unipotent subgroup of $C$. For
appropriate quasiconvex embeddings, this could allow the conclusion of
\Cref{thm:mainthm} to hold for an \emph{arbitrary} representation of
$C$ by reflections (in particular, for the geometric representation).

Even in its current form, \Cref{thm:mainthm} still gives us a great
deal of freedom to pick the simplicial representation
$\rho\colon C \to \SLpm(d, \R)$. In particular, for a fixed infinite
right-angled Coxeter group $C$, we can pick $\rho$ from a
positive-dimensional submanifold $M$ of the representation variety
$\Hom(C, \SLpm(d, \R))$. We might want to consider properties of the
restriction map $R\colon M \to \Hom(\Gamma, \SLpm(d, \R))$---for instance,
it would be interesting to know the dimension of $R(M)$, or whether
$R(M)$ contains any representations with Zariski-dense image.

\subsection{Acknowledgments}

The authors would like to thank Gye-Seon Lee, Jason Manning, Lorenzo
Ruffoni, and Abdul Zalloum for productive discussion. We are also
grateful to the Mathematisches Forschungsinstitut Oberwolfach and the
organizers of the Arbeitsgemeinschaft: Higher Rank Teichm\"uller
Theory (2241), where this work was initiated. S.D. was supported by
the Huawei Young Talents Program.  B.F. was partially supported by the
grant NRF-2022R1I1A1A01072169 and received funding from the European
Research Council (ERC) under the European Union's Horizon 2020
research and innovation programme (ERC starting grant DiGGeS, grant
agreement No 715982, and ERC consolidator grant GeometricStructures,
grant agreement No 614733).  T.W. was partially supported by NSF grant
DMS-2202770.  F.Z. was partially supported by ISF grant 737/20 and an
AMS-Simons Travel Grant.

\section{Cube complexes and right-angled Coxeter groups}
\label{sec:cube_complexes_RACGs}

In this section we briefly review some essential background on the
topic of nonpositively curved cube complexes, right-angled Coxeter
groups, and the Davis complex. We refer to \cite{HW08}, \cite{Davis}
for further detail. Afterwards, we introduce a useful combinatorial
framework for working with geodesics in the Davis complex, in the form
of \emph{itineraries}.

\subsection{CAT(0) cube complexes}

For our purposes, a {\em cube complex} $X$ is a finite-dimensional
cell complex in which each cell is a cube and attaching maps are
combinatorial isomorphisms onto their images. A $d$-cell of $X$ is a
{\em $d$-cube}. A $0$-cube is a {\em vertex}, a $1$-cube is an {\em edge}, and a $2$-cube is a {\em square}. Under the identification
of a $d$-cube $c$ of $X$ with $[-1,1]^d$, a {\em midcube} of $c$ is an
intersection of $c$ with a coordinate hyperplane of $\mathbb{R}^d$. By
gluing midcubes of adjacent cubes of $X$ whenever they meet, one
obtains immersed subspaces of $X$, called {\em hyperplanes}, each of
which also carries a natural cube complex structure. Note that a
compact cube complex possesses only finitely many hyperplanes.

Two edges of a cube complex $X$ are {\em elementary parallel} if they
appear as opposite edges of a square in $X$. A {\em wall} of $X$ is a
class of the equivalence relation on the edge set of $X$ generated by
elementary parallelisms. Two edges of $X$ are {\em parallel} if they
belong to the same wall of $X$ (in other words, if they are dual to
the same hyperplane of $X$). Frequently, we will abuse terminology and
refer to properties of ``walls'' when we really mean properties of the
corresponding hyperplanes. In particular, when we say that an
intersection of walls $W_1 \cap W_2$ is empty or nonempty, we mean to
refer to an intersection of hyperplanes.

One says a cube complex $X$ is {\em nonpositively curved} if the link
of each vertex of $X$ is a flag simplicial complex; recall that a
simplicial complex $L$ is {\em flag} if each clique $\mathcal{V}$ in
$L$ spans a $(|\mathcal{V}|-1)$-simplex. If $X$ is moreover simply
connected, then $X$ is said to be $\mathrm{CAT}(0)$. Implicit in this
terminology is a theorem of Gromov \cite{gromov} that the path metric
on a cube complex $X$ induced by the Euclidean metric on each of its
cubes is $\mathrm{CAT}(0)$ if and only if $X$ is $\mathrm{CAT}(0)$ in
the previous combinatorial sense. A key feature of a $\mathrm{CAT}(0)$
cube complex is that each of its hyperplanes is separating.

\subsection{Right-angled Coxeter groups}

Let $\Sigma$ be a finite simplicial graph with vertex set $S$. The {\em right-angled Coxeter group} $C_\Sigma$ with {\em nerve} $\Sigma$ is the group given by
the presentation with generating set $S$ subject to the relations that
two generators $s,t \in S$ commute if and only if $s$ and $t$ are
adjacent as vertices in $\Sigma$. The pair $(C_\Sigma, S)$ is a {\em
  right-angled Coxeter system}. A conjugate within $C_\Sigma$ of an
element of $S$ is a {\em reflection}. If the complement of $\Sigma$ is
connected, we say $(C_\Sigma, S)$ is {\em irreducible}; this is
equivalent to saying that $C_\Sigma$ does not decompose as a
nontrivial direct product.

If $(C,S)$ is a right-angled Coxeter system and $T \subset S$, we
denote by $C(T)$ the subgroup of $C$ generated by $T$. We refer to
such subgroups of $C$ as {\em standard subgroups}. For any
$T \subset S$, the pair $(C(T), T)$ is again a right-angled Coxeter
system.

\subsubsection{The Davis complex}

Given a finite simplicial graph $\Sigma$, there is a $\mathrm{CAT}(0)$
cube complex $\mathrm{D}(C_\Sigma, S)$, called the {\em Davis complex}
of $(C_\Sigma, S)$, on which the group $C_\Sigma$ acts by
combinatorial automorphisms, that may be constructed as follows. Let
$\mathrm{D}'(C_\Sigma, S)$ be the square complex (i.e.\
$2$-dimensional cube complex) obtained by attaching a square to each
labeled $4$-cycle of the form $stst$ in the Cayley graph $\mathrm{Cay}(C_\Sigma,S)$ of $C_\Sigma$
with respect to the generating set $S$, where $s,t \in S$. Then
$\mathrm{D}(C_\Sigma, S)$ is the unique nonpositively curved cube
complex with $2$-skeleton $\mathrm{D}'(C_\Sigma, S)$.

Each wall $W$ in $\mathrm{D}(C_\Sigma, S)$ is fixed by a unique
reflection $r$ in $C_\Sigma$, and conversely each reflection $r$ fixes
a unique wall. As any reflection $r$ is a conjugate of a unique
$s \in S$, the edges comprising any given wall $W$ of
$\mathrm{D}(C_\Sigma, S)$ are all labeled with a single generator
$\type{W} \in S$. We call $\type{W}$ the {\em type} of $W$. For each
$s \in S$, we also let $W(s)$ denote the unique wall fixed by the
reflection $s$.

\subsubsection{Quasiconvex subgroups}

Given a right-angled Coxeter system $(C,S)$, a subgroup $\Gamma < C$ is {\em quasiconvex} (with respect to the standard generating set $S$) if, viewing $\Gamma$ as a subset of the vertices of $\mathrm{D}(C,S)$, there is some $K > 0$ such that every combinatorial geodesic in $\mathrm{D}(C,S)$ with endpoints in $\Gamma$ lies in the combinatorial $K$-neighborhood of $\Gamma$. (For example, if $s_1, t_1, s_2, t_2$ are distinct elements of $S$ such that each element of $\{s_1, t_1\}$ commutes with each element of $\{s_2,t_2\}$, but $s_i$ and $t_i$ do not commute for $i=1,2$, then the cyclic subgroup $\langle s_1t_1s_2t_2 \rangle < C$ is {\em not} quasiconvex.) A subgroup $\Gamma < C$ is quasiconvex if and only if there is a $\Gamma$-invariant convex subcomplex $\widetilde{Y}$ of $\mathrm{D}(C,S)$ on which $\Gamma$ acts cocompactly; see \cite[Cor.~7.8]{HW08} or \cite[Thm.~H]{haglund2008finite}. 
In particular, standard subgroups of $C$ are quasiconvex. This second characterization of quasiconvexity is the one that will be relevant to us.

It turns out that the hyperbolic groups that are cubulated are precisely those that virtually embed as quasiconvex subgroups of right-angled Coxeter groups. Indeed, it follows from seminal work of Haglund--Wise \cite{HW08} and Agol \cite{agolVHC} that if a hyperbolic group $\Gamma$ acts properly discontinuously and cocompactly on a $\mathrm{CAT}(0)$ cube complex $\widetilde{X}$, then there is a finite-index subgroup $\Gamma' < \Gamma$, a right-angled Coxeter system $(C,S)$, an embedding $\iota\colon \Gamma' \rightarrow C$, and an $\iota$-equivariant embedding of $\widetilde{X}$ as a convex subcomplex of $\mathrm{D}(C,S)$. 

In more detail, Agol \cite{agolVHC} showed that there is a finite-index torsion-free subgroup $\Lambda < \Gamma$ such that the cube complex $\widetilde{X} / \Lambda$ is {\em special} in the sense of Haglund and Wise \cite{HW08}, answering a question of the latter two authors \cite[Prob.~11.7]{HW08}. One can then pass to a deeper finite-index subgroup $\Gamma' < \Lambda$ such that $X:= \widetilde{X} / \Gamma'$ is moreover {\em $C$-special}; see \cite[Prop.~3.10]{HW08}. The latter condition is designed to ensure the existence of a {\em local isometry} from the cube complex $X$ to the orbicomplex $\mathrm{D}(C_X, S_X)/C_X$ for some right-angled Coxeter system $(C_X, S_X)$ associated to $X$, inducing an embedding $\iota\colon \Gamma' \rightarrow C_X$ between (orbicomplex) fundamental groups, and lifting to an $\iota$-equivariant embedding of $\widetilde{X}$ as a convex subcomplex of $\mathrm{D}(C_X,S_X)$. (Alternatively, one can find a finite-index subgroup $\Gamma'' < \Lambda$ so that $\widetilde{X} / \Gamma''$ admits a local isometry into the Salvetti complex for a right-angled Artin group, and then apply 
\cite{dj2000}.)

We remark that, even in the case that the action of $\Gamma$ on $\widetilde{X}$ is the action of a hyperbolic right-angled Coxeter group on its Davis complex, the right-angled Coxeter group $C_X$ that one obtains via the process above may {\em not} be hyperbolic.

\subsection{Itineraries}

Throughout this paper, we will need a good understanding of the
combinatorial behavior of geodesics in right-angled Coxeter groups,
and especially geodesics lying in quasiconvex hyperbolic subgroups of
right-angled Coxeter groups. It is often convenient to work not
with the geodesics themselves, but rather some related combinatorial
data in the form of an \emph{itinerary}.

For the following, we fix a right-angled Coxeter system $(C, S)$.

\begin{definition}
  Recall that a wall $W$ in the Davis complex $\daviscx$ can be viewed
  as an equivalence class of edges in the Cayley graph $\Cay(C,
  S)$. An \emph{itinerary} is a sequence of walls $W_1, \ldots, W_n$,
  such that for some sequence of edges $e_i \in W_i$, the sequence
  $e_1 \cdots e_n$ is a geodesic edge path in $\Cay(C, S)$. We say that this
  edge path \emph{follows} the itinerary $\mc{W}$.
\end{definition}

A geodesic edge path $e_1 \cdots e_n$ in the Cayley graph $\Cay(C, S)$ always
determines a geodesic word in $S$, by reading off the generators
$s_i \in S$ labeling each edge $e_i$; conversely, a geodesic word in
$S$ gives rise to infinitely many different geodesic edge paths, one for each
possible base point of the path in $\Cay(C, S)$. So a geodesic word
records strictly less information than a geodesic edge path in $\Cay(C, S)$.

On the other hand an itinerary records strictly \emph{more}
information than a geodesic word but \emph{less} information than a geodesic
edge path: there may be many different edge paths which follow the
same itinerary $W_1, \ldots, W_n$, but each of these edge paths
determines the same geodesic word, namely the word
\[
  \type{W_1} \cdots \type{W_n}.
\]

\begin{definition}
  If $\mc{W}$ is an itinerary $W_1, \ldots, W_n$, we say the geodesic
  word $\type{W_1} \cdots \type{W_n}$ is \emph{traversed} by $\mc{W}$.
  We also say that $\mc{W}$ \emph{traverses} the unique element
  $\gamma \in C$ represented by this geodesic word.

  If $\alpha \in C$ is the initial vertex of some edge path following
  $\mc{W}$, then we say that $\mc{W}$ \emph{departs from}
  $\alpha$. Similarly, if $\beta \in C$ is the last vertex of an edge
  path following $\mc{W}$, then $\mc{W}$ \emph{arrives at} $\beta$. If
  $\alpha, \beta \in C$, and $\mc{W}$ departs from $\alpha$ and
  traverses $\alpha^{-1}\beta$, then we say that $\mc{W}$ \emph{joins}
  $\alpha$ to $\beta$. Note that this is stronger than saying that
  $\mc{W}$ departs from $\alpha$ and arrives at $\beta$.
\end{definition}

The proposition below follows directly from the fact that walls are
defined to be equivalence classes of edges in the Cayley graph of the
Coxeter system $(C,S)$:
\begin{prop}
  \label{prop:wall_expression}
  Let $W_1, \ldots, W_n$ be an itinerary departing from the identity,
  and let $s_i = \type{W_i}$ for all $1 \le i \le n$. Then
  $W_n = s_1 \cdots s_{n-1} W(s_n)$.
\end{prop}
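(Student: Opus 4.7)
The plan is to unwind definitions and exploit the fact that $C$ acts on the Cayley graph $\Cay(C,S)$ (and hence on $\daviscx$) by combinatorial automorphisms, which preserve squares, preserve elementary parallelism of edges, and therefore send walls to walls. First I would identify the edge $e_n$ explicitly: since the itinerary $\mc{W}$ departs from the identity and consecutive edges along the geodesic edge path $e_1 \cdots e_n$ carry the labels $s_1, \ldots, s_n$, the $k$\textsuperscript{th} vertex of the path is $s_1 \cdots s_k$; in particular $e_n$ is the edge between $s_1 \cdots s_{n-1}$ and $s_1 \cdots s_n$ labeled by $s_n$.

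Next, I would observe that the edge $e(s_n)$ from $1$ to $s_n$ lies in the wall $W(s_n)$. This is immediate from the definition: the reflection $s_n$ swaps the endpoints of $e(s_n)$, hence fixes this edge setwise and so fixes the wall containing it; but $W(s_n)$ is the unique wall fixed by $s_n$, so $e(s_n) \in W(s_n)$.

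Finally, left multiplication by $\alpha := s_1 \cdots s_{n-1}$ is a combinatorial automorphism of $\Cay(C,S)$, and it sends $e(s_n)$ to the edge between $\alpha$ and $\alpha s_n$, which is exactly $e_n$. Since this automorphism preserves the equivalence relation generated by elementary parallelism, it sends the wall $W(s_n)$ to a wall $\alpha W(s_n)$ containing $e_n$. Because each edge lies in a unique wall (walls being equivalence classes), this wall must coincide with $W_n$, yielding $W_n = s_1 \cdots s_{n-1} W(s_n)$.

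There is no substantive obstacle here: as the authors indicate, the proposition is essentially a direct restatement of the definition of walls as edge equivalence classes, together with the $C$-equivariance of this construction. The only mildly nontrivial verification is that left multiplication in $C$ respects elementary parallelism, which in turn reduces to the elementary observation that combinatorial automorphisms of a cube complex send squares to squares.
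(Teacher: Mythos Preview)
Your proposal is correct and matches the paper's approach: the paper states that the proposition ``follows directly from the fact that walls are defined to be equivalence classes of edges in the Cayley graph'' and gives no further proof, so your write-up simply fills in the details the authors left implicit. The key points---identifying $e_n$ as the edge from $s_1\cdots s_{n-1}$ to $s_1\cdots s_n$, noting $e(s_n)\in W(s_n)$, and using that left multiplication by $s_1\cdots s_{n-1}$ is a wall-preserving automorphism---are exactly what is needed.
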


In general a single itinerary can depart from different elements in
$C$ (and likewise can arrive at different elements in $C$). However,
an itinerary always traverses a unique element.

\begin{definition}
  \label{defn:element_traversed_itinerary}
  If $\mc{U} = W_1, \ldots, W_n$ is an itinerary, we let
  $\gamma(\mc{U}) = \gamma(W_1, \ldots, W_n)$ denote the group element
  traversed by $\mc{U}$.
  
  If $W_i, W_j$ are walls in $\mc{U}$, with $W_i$ appearing before
  $W_j$, we let $\gamma_{\mc{U}}(W_i, W_j)$ denote the group element
  traversed by the sub-itinerary of $\mc{U}$ beginning with $W_i$ and
  ending with $W_j$.
\end{definition}

\subsubsection{Partial order on walls}

As any wall in $\daviscx$ separates $\daviscx$ into two convex
components, it follows that for any $\alpha, \beta \in C$, the walls
appearing in an itinerary $W_1 \ldots , W_n$ joining $\alpha$ to
$\beta$ are precisely the walls in $\daviscx$ separating $\alpha$ from
$\beta$. Motivated by this, we introduce the following notation.

\begin{definition}
  For $\alpha, \beta \in C$, we let $\walls(\alpha, \beta)$ denote the
  set of walls in $\daviscx$ separating $\alpha$ from $\beta$. We
  write $\walls(\alpha)$ for $\walls(\identity, \alpha)$.

  The set $\walls(\alpha, \beta)$ is endowed with a partial order $<$,
  defined as follows: if $W_i, W_j \in \walls(\alpha, \beta)$, then
  $W_i < W_j$ if $W_i$ separates $\alpha$ from $W_j$ in $\daviscx$
  (equivalently, if $W_j$ separates $W_i$ from $\beta$).
\end{definition}

Recall that two elements $a, b$ in a poset are \emph{incomparable} if
neither $a < b$ nor $b < a$ holds. We say that two disjoint subsets
$A, B$ of a poset are \emph{completely incomparable} if every element
of $A$ is incomparable to every element in $B$.

It is immediate that if $\alpha, \beta \in C$ and $W_i, W_j$ are two
walls in $\walls(\alpha, \beta)$, then $W_i$ and $W_j$ are
incomparable with respect to $<$ if and only if $W_i \cap W_j$ is
nonempty. When this occurs, the generators $\type{W_i}$ and
$\type{W_j}$ must commute.

Every itinerary joining $\alpha$ to $\beta$ determines a total
ordering of the set $\walls(\alpha, \beta)$ which is compatible with
the partial ordering $<$. The proposition below says that all
compatible total orderings of $\walls(\alpha, \beta)$ arise in
precisely this way.
\begin{prop}
  \label{prop:itinerary_correspondence}
  Let $\alpha, \beta \in C$. There is a one-to-one correspondence
  between the following three sets:
  \begin{enumerate}
  \item Itineraries joining $\alpha$ to $\beta$,
  \item Geodesic words in $S$ representing $\alpha^{-1}\beta$,
  \item Total orderings of $\walls(\alpha, \beta)$ which are
    compatible with $<$.
  \end{enumerate}
\end{prop}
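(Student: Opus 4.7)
The plan is to show (1) $\leftrightarrow$ (2) essentially tautologically, and to deduce (1) $\leftrightarrow$ (3) by establishing each direction separately, culminating in a short induction on $d(\alpha,\beta)$. For (1) $\leftrightarrow$ (2): an itinerary $W_1, \ldots, W_n$ joining $\alpha$ to $\beta$ traverses, by definition, the geodesic word $\type{W_1} \cdots \type{W_n}$ representing $\alpha^{-1}\beta$. Conversely, any geodesic word $s_1 \cdots s_n$ for $\alpha^{-1}\beta$ determines a unique edge path $e_1, \ldots, e_n$ in $\Cay(C,S)$ based at $\alpha$ whose $i$th edge is labeled $s_i$; letting $W_i$ be the wall containing $e_i$ produces an itinerary joining $\alpha$ to $\beta$ whose sequence of types is $s_1, \ldots, s_n$. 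These two assignments are mutually inverse.

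For (1) $\to$ (3), the walls appearing in an itinerary $\mc{W}$ joining $\alpha$ to $\beta$ are precisely $\walls(\alpha,\beta)$, and a geodesic edge path following $\mc{W}$ crosses them in the order they appear. I claim the resulting total order is compatible with $<$: if $W_i < W_j$, then $W_i$ separates $\alpha$ from $W_j$, so any edge path following $\mc{W}$ must cross $W_i$ strictly before $W_j$, since its initial segment up to $W_j$ lies on the $\alpha$-side of $W_j$. Distinct itineraries evidently produce distinct total orders.

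For (3) $\to$ (1), I would induct on $n = d(\alpha,\beta) = |\walls(\alpha,\beta)|$. Given a compatible total order $<_T$, let $W_1$ be its minimum, which must also be minimal for $<$. The crucial technical point is that some edge at $\alpha$ crosses $W_1$. To establish this, fix any geodesic from $\alpha$ to $\beta$ and consider the first wall $W'$ it crosses. If $W' \ne W_1$, then $W_1$ lies on the $\beta$-side of $W'$; if moreover $W' \cap W_1 = \emptyset$, then all of $W_1$ lies on the $\beta$-side of $W'$, forcing $W' < W_1$ and contradicting the minimality of $W_1$. Hence $W' \cap W_1 \ne \emptyset$, so $\type{W'}$ and $\type{W_1}$ commute, and one may swap the two crossings in the geodesic. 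Iterating this swap, commuting $W_1$ past the walls preceding it one at a time, produces a geodesic from $\alpha$ to $\beta$ whose first wall is $W_1$; its initial edge is the desired edge $e_1$. Setting $\alpha' := \alpha \cdot \type{W_1}$, one has $\walls(\alpha',\beta) = \walls(\alpha,\beta) \setminus \{W_1\}$, and since $\alpha$ and $\alpha'$ lie on the same side of every wall other than $W_1$, the partial orders on this subset defined from $\alpha$ and from $\alpha'$ coincide. The restriction of $<_T$ is therefore a compatible total order on $\walls(\alpha',\beta)$, and by induction there is an itinerary $W_2, \ldots, W_n$ joining $\alpha'$ to $\beta$ realizing it; prepending $W_1$ yields the desired itinerary. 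The main obstacle is the commutation step, as it is the only place where the $\mathrm{CAT}(0)$ geometry of $\daviscx$ enters nontrivially.
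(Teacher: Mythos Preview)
Your argument is correct, and for the direction (3) $\to$ (1) it takes a genuinely different route from the paper. The paper proves directly, via a CAT(0) metric argument (a minimal-length edge path to the far half-space of $W_1$, together with the fact that local isometries of CAT(0) cube complexes are global isometric embeddings), that any wall not separated from a vertex $\gamma$ by another wall must contain an edge incident to $\gamma$; iterating this yields the itinerary. You instead start from an arbitrary geodesic and ``bubble'' $W_1$ to the front by repeated adjacent transpositions, arguing that $<$-minimality forces $W_1$ to intersect each preceding wall. Your approach is purely combinatorial, essentially an instance of Tits' solution to the word problem for right-angled Coxeter groups, and sidesteps the appeal to the CAT(0) metric; the paper's approach is more geometric and isolates a clean standalone lemma about edges incident to a given vertex. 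One expositional wrinkle worth fixing: you prove $W' \cap W_1 \ne \emptyset$ for the \emph{first} wall $W'$ of the geodesic, but the transposition you actually need is between $W_1$ and its \emph{immediate predecessor}. Your minimality argument applies verbatim to that predecessor (indeed to every wall preceding $W_1$ in the geodesic), so nothing is wrong, but the write-up should target the correct wall.
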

\begin{proof}
  The correspondence between the first two sets is immediate, once we
  recognize that itineraries joining $\alpha$ to $\beta$ are in
  one-to-one correspondence with geodesic edge paths in the Cayley
  graph $\Cay(C, S)$ joining $\alpha$ to $\beta$. We have already
  observed that any itinerary joining $\alpha$ to $\beta$ gives rise
  to an ordering on $\walls(\alpha, \beta)$ compatible with $<$, so we
  just need to check that any such ordering determines an
  itinerary.

  First observe that for any $\gamma \in C$ and any wall $W$ in
  $\daviscx$, if no walls separate $\gamma$ from $W$, then $W$
  contains a unique edge incident to $\gamma$. To see this, let
  $H_\pm$ denote the half-spaces in $\daviscx$ bounded by $W$, chosen
  so that $\gamma \in H_-$. We consider a minimal-length edge path $p$
  in $\Cay(C, S)$ joining $\gamma$ to $H_+$. The last edge $e_2$ in
  $p$ must belong to $W$, since it crosses from $H_-$ to $H_+$. If
  there is more than one edge in $p$ and $e_1$ is the next-to-last
  edge, then, by minimality of $p$, the edges $e_1$ and $e_2$ do not
  lie in a common square of $\mathrm{D}(C,S)$. It follows that the
  edge path $e_1e_2$ is a geodesic segment for the $\mathrm{CAT}(0)$
  metric on $\mathrm{D}(C,S)$ (indeed, it is more generally true that
  a local isometry between $\mathrm{CAT}(0)$ cube complexes is an
  isometric embedding with respect to their $\mathrm{CAT}(0)$ metrics;
  see \cite[Lem.~2.11]{HW08},
  \cite[Prop.~II.4.14]{bridsonhaefliger}). The nearest point
  projection to $e_i$ of the hyperplane $\Pi_i$ dual to $e_i$ is the
  midpoint of $e_i$, so that the $\Pi_i$ are disjoint. The wall
  corresponding to $\Pi_1$ thus separates $\gamma$ from $W$, a
  contradiction.

  Now, fix an ordering $W_1, \ldots, W_n$ on $\walls(\alpha, \beta)$
  which is compatible with $<$. The previous claim tells us that $W_1$
  contains a unique edge in $\Cay(C, S)$ incident to $\alpha$. If
  $\alpha'$ is the other endpoint of this edge, then
  $W_2, \ldots, W_n$ is an ordering on $\walls(\alpha', \beta)$,
  compatible with the partial ordering $<$ on this set. Proceeding
  iteratively, we then construct an edge path in $\Cay(C, S)$ from
  $\alpha$ to $\beta$ which crosses exactly the sequence of walls
  $W_1, \ldots, W_n$, meaning this sequence is an itinerary.
\end{proof}

\begin{definition}
  We say two itineraries $\mc{U}, \mc{U}'$ are \emph{equivalent} if there are elements
  $\alpha, \beta \in C$ so that both $\mc{U}$ and $\mc{U}'$ join
  $\alpha$ to $\beta$.
\end{definition}

\Cref{prop:itinerary_correspondence} means that equivalent itineraries
$\mc{U}, \mc{U}'$ always consist of the same set of walls. And, if
$\mc{U}$ joins $\alpha$ to $\beta$ for some $\alpha, \beta \in C$,
then so does any equivalent itinerary $\mc{U}'$. Thus the third
condition of \Cref{prop:itinerary_correspondence} ensures that the
definition of ``equivalence'' actually describes an equivalence
relation.

\subsubsection{Efficient itineraries}

Whenever $W$ and $W'$ are walls in $\daviscx$, then there is always
some itinerary $\mc{U}$ whose first wall is $W$ and whose last wall is
$W'$. Every wall $W_i$ in $\mc{U}$ must either separate $W$ from $W'$,
or intersect at least one of $W,
W'$. \Cref{prop:itinerary_correspondence} tells us that we can always
find another itinerary equivalent to $\mc{U}$ by putting all of the
walls in $\mc{U}$ intersecting $W$ or $W'$ either first or last. That
is, if there are any walls in $\mc{U}$ which intersect either $W$ or
$W'$, we can reorder the walls and restrict to a strictly shorter
sub-itinerary to get a new (non-equivalent) itinerary whose first wall
is $W$ and whose last wall is $W'$. On the other hand, if $\mc{U}$ is
\emph{any} itinerary with initial wall $W$ and final wall $W'$, then
$\mc{U}$ must contain every wall separating $W$ from $W'$.

\begin{definition}
  We say that an itinerary $\mc{U} = W_1, \ldots, W_n$ is
  \emph{efficient} if every wall $W_i$ with $1 < i < n$ is disjoint
  from both $W_1$ and $W_n$.
\end{definition}

Given any two distinct walls $W, W'$, the argument above shows that
there is always an efficient itinerary with initial wall $W$ and final
wall $W'$, and that each efficient itinerary between $W$ and $W'$ must
consist of the same set of walls. The itinerary orders these walls in
a way which is compatible with the partial ordering: $W_i < W_j$ if
$W_i$ separates $W$ from $W_j$. Thus
\Cref{prop:itinerary_correspondence} means that any pair of efficient
itineraries between $W, W'$ are equivalent, and we can define the
following.
\begin{definition}
  Let $W_1$, $W_2$ be distinct walls in $\daviscx$. We let
  $\gamma(W_1, W_2)$ denote the unique element in $C$ traversed by any
  efficient itinerary whose first wall is $W_1$ and whose last wall is
  $W_2$.
\end{definition}

In general, not every itinerary is equivalent to an efficient
itinerary, although this is ``almost'' true in the special case where
$\daviscx$ is hyperbolic; see \Cref{lem:minimal_wall_intersections}
below.

\section{Bounded product projections}
\label{sec:bounded_product_projections}

In this section we fix a right-angled Coxeter system $(C, S)$, and use
the setup from the previous section to prove some combinatorial
results about \emph{hyperbolic subcomplexes} of the Davis complex
$\daviscx$. Our main aim is to prove \Cref{prop:disjoint_walls_I},
which implies that every geodesic in a hyperbolic subcomplex of
$\daviscx$ is traversed by an itinerary consisting almost entirely of
``regularly-spaced'' pairwise disjoint walls. In later sections, we
will be able to work with geodesics in $C$ by only considering this
set of disjoint walls.

\begin{definition}
  \label{defn:bpp}
  Suppose that $(C, S)$ is a right-angled Coxeter system. Let
  $\gamma \in C$, and let $D > 0$. We say a group element
  $\gamma \in C$ has {\em $D$-bounded product projections} if every
  pair of disjoint completely incomparable subsets
  $A, B \subset \walls(\gamma)$ satisfies $\min(|A|, |B|) \le D$. We
  say that a subgroup $\Gamma \leq C$ has {\em $D$-bounded
    product projections} if every $\gamma \in \Gamma$ has $D$-bounded
  product projections. We just say $\Gamma$ has \emph{bounded product
    projections} if there exists some $D > 0$ so that $\Gamma$ has
  $D$-bounded product projections.
\end{definition}

More intuitively, the elements in $C$ with bounded product projections
are precisely those elements $\gamma$ whose geodesic representatives
do not ``travel diagonally'' in a combinatorially embedded Euclidean
2-plane $E \hookrightarrow \daviscx$; ``diagonally'' is in reference
to the product structure $E = \R \times \R$ induced by the cubulation
of $E$. A geodesic representing an element with $D$-bounded product
projections \emph{may} spend an arbitrary amount of time in a 2-flat,
but it must spend all but $D$ of its length traveling parallel to one
of the $\R$ factors.

The following is immediate from \cite[Thm.~4.1.3]{hagen2012geometry}.

\begin{lem}\label{lem:bpp_poset}
  Let $\Gamma$ be a quasiconvex subgroup of $C$. If $\Gamma$ is hyperbolic, then $\Gamma$ has bounded product projections.
\end{lem}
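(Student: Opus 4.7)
The plan is to exploit the existence of a $\Gamma$-invariant convex subcomplex of $\daviscx$ on which $\Gamma$ acts cocompactly, apply Hagen's characterization of hyperbolicity for CAT(0) cube complexes, and then transfer the resulting bound back into a statement about $\walls(\gamma)$.

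First I would use the quasiconvexity of $\Gamma$ (together with the fact recalled from \cite{HW08} that quasiconvexity in $(C,S)$ is equivalent to admitting a $\Gamma$-invariant, $\Gamma$-cocompact convex subcomplex) to produce a convex subcomplex $\tilde{Y} \subset \daviscx$ on which $\Gamma$ acts properly and cocompactly. Since $\Gamma$ is hyperbolic, the Svarc--Milnor lemma gives that $\tilde{Y}$ is itself a hyperbolic CAT(0) cube complex.

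Next I would invoke Hagen's Theorem 4.1.3, applied to $\tilde{Y}$. This result furnishes a uniform constant $D$ such that there do not exist two subsets $\mathcal{A}, \mathcal{B}$ of hyperplanes of $\tilde{Y}$, both of cardinality strictly greater than $D$, with every hyperplane of $\mathcal{A}$ crossing every hyperplane of $\mathcal{B}$.

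Finally I would translate this bound back into the language of walls of $\daviscx$. Fix $\gamma \in \Gamma$, so that both $\identity$ and $\gamma$ lie in $\tilde{Y}$. Convexity of $\tilde{Y}$ inside $\daviscx$ implies that every combinatorial geodesic from $\identity$ to $\gamma$ is contained in $\tilde{Y}$, so each element of $\walls(\gamma)$ crosses $\tilde{Y}$ and restricts to a hyperplane of $\tilde{Y}$. A standard property of convex subcomplexes of CAT(0) cube complexes guarantees that two such walls cross within $\tilde{Y}$ if and only if they cross in $\daviscx$, which by the discussion preceding the lemma is equivalent to their being incomparable in $\walls(\gamma)$. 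Any pair of completely incomparable subsets $A, B \subset \walls(\gamma)$ therefore descends to a pair of subsets of hyperplanes of $\tilde{Y}$ with every element of one crossing every element of the other, and Hagen's bound forces $\min(|A|, |B|) \le D$, as desired. The one point meriting care is the equivalence between crossing of walls in $\daviscx$ and crossing of their restrictions in $\tilde{Y}$; this follows from the convexity of $\tilde{Y}$ and is expected to be routine.
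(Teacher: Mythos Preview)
Your proposal is correct and follows the same approach as the paper, which simply declares the lemma ``immediate from \cite[Thm.~4.1.3]{hagen2012geometry}'' without further detail. You have spelled out the passage to the convex subcomplex $\tilde{Y}$ and the translation between incomparability in $\walls(\gamma)$ and crossing of hyperplanes in $\tilde{Y}$, which is exactly the content implicit in the paper's citation.
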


\begin{remark}
  \label{rem:bpp_hyperbolicity}
  It is shown in \cite{hagen2012geometry} that the converse of \Cref{lem:bpp_poset} also holds, that is, that if $\Gamma$ has bounded product projections, then $\Gamma$ is hyperbolic. In fact, this direction also follows from the proof of Theorem \ref{thm:mainthm}; our proof shows that if~$\Gamma$ has bounded product projections, then certain representations of $C$ restrict to Anosov representations of $\Gamma$, and it follows from \cite{KLP2018}, \cite{BPS} that any group admitting an Anosov representation is hyperbolic.
\end{remark}

Whenever $\mc{W}$ is an itinerary, we let $|\mc{W}|$ denote the number
of walls appearing in $\mc{W}$. If $\mc{W} = W_1, \ldots, W_n$ and
$\mc{U} = U_1, \ldots U_m$ are itineraries, we write $\mc{W}, \mc{U}$
for the concatenation
\[
  W_1, \ldots, W_n, U_1, \ldots, U_m,
\]
as long as this sequence of walls is also an itinerary.

\begin{prop}
  \label{prop:disjoint_walls_I}
  Given $D > 0$, there exists $R > 0$ (depending only on $D$)
  satisfying the following. Suppose that $\gamma \in C$ has
  $D$-bounded product projections. Then any itinerary traversing
  $\gamma$ is equivalent to an itinerary $\mc{U}$ of the form
  \[
    \{W_1\}, \mc{V}_1, \{W_2\}, \mc{V}_2, \ldots, \{W_n\}, \mc{V}_n,
  \]
  such that:
  \begin{enumerate}[label=(\alph*)]
  \item\label{item:smallsep} Every $\mc{V}_i$ satisfies
    $|\mc{V}_i| \le R$,
  \item\label{item:v_intersect} Every wall in $\mc{V}_i$ intersects
    $W_i$,
  \item\label{item:disjoint_daviscx} For every $i \ne j$ we have
    $W_i \cap W_j = \emptyset$,
  \item\label{item:few_intersections} Every wall $W_i$ intersects at
    most $R$ other walls in $\mc{U}$.
  \end{enumerate}
\end{prop}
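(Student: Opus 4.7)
The plan is to construct the itinerary $\mc{U}$ by choosing a chain $W_1 < W_2 < \cdots < W_n$ of pairwise disjoint walls in $\walls(\gamma)$ as a ``skeleton,'' and slotting every remaining wall into an appropriate block $\mc{V}_i$. First I observe that the $D$-bounded product projections hypothesis is equivalent to the antichain-width bound $|A| \le 2D + 1$ for every antichain $A$ in the poset $\walls(\gamma)$: splitting $A$ into two halves produces two completely incomparable subsets of $\walls(\gamma)$, and applying the hypothesis forces the smaller half to have size at most $D$.

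I would take $W_1 < \cdots < W_n$ to be a chain of \emph{maximum length} in $\walls(\gamma)$; since walls of $\walls(\gamma)$ are comparable in $<$ if and only if they are disjoint in $\daviscx$, property (c) is automatic. For each $V \in \walls(\gamma) \setminus \{W_1, \ldots, W_n\}$, set $I(V) := \{i : V \cap W_i \neq \emptyset\}$. By maximality of the chain, $I(V)$ is nonempty; moreover, $I(V)$ is an interval of consecutive integers, a fact I would verify by combining the connectedness of $V$ as a subcomplex of $\daviscx$ with the nested structure of the $\identity$-side half-spaces of the chain walls (a point on $V$ lying on the $\identity$-side of $W_{i_1}$ and a point on $V$ lying on the $\gamma$-side of $W_{i_2}$ force $V$ to have points on both sides of each intermediate $W_j$). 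I then build $\mc{U}$ by placing each non-chain wall $V$ in the block $\mc{V}_{i_1(V)}$, where $i_1(V) := \min I(V)$, ordering walls inside each block by any total order on $\walls(\gamma)$ extending $<$. By \Cref{prop:itinerary_correspondence} this yields a valid itinerary equivalent to the original, and properties (b) (since $V \in \mc{V}_{i_1(V)}$ crosses $W_{i_1(V)}$ by definition) and (c) hold automatically.

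The central obstacle is verifying properties (a) and (d) uniformly in $D$. Since $\mc{V}_i \subseteq N(W_i) := \{V \in \walls(\gamma) : V \cap W_i \neq \emptyset\}$, it suffices to prove $|N(W_i)| \le R$ for some $R = R(D)$. I argue by contradiction: if $|N(W_i)|$ is large, then applying Dilworth's theorem to $N(W_i)$ (which inherits antichain width $\le 2D + 1$) extracts a long chain $V_1 < \cdots < V_m$ of walls all crossing $W_i$. The key extension argument is the following: if at least $k + 1$ walls from this chain lie strictly between $W_{i-k}$ and $W_{i+1}$ in the partial order, then replacing the $k$ chain walls $W_{i-k+1}, \ldots, W_i$ with those $k+1$ walls yields a chain of length $n+1$, contradicting maximality. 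Iterating this observation leftward (or rightward, whichever direction retains more walls at each step) forces most of the $V_1, \ldots, V_m$ to cross many consecutive chain walls; after $O(D)$ iterations, more than $D$ walls from $V_1 < \cdots < V_m$ all cross a common segment of more than $D$ consecutive chain walls, giving two completely incomparable chains each of size $>D$ in $\walls(\gamma)$ and contradicting bounded product projections. This yields the required bound $R = R(D)$ (of order roughly $D^2 \cdot 2^D$), proving (d), and property (a) follows immediately from $\mc{V}_i \subseteq N(W_i)$.
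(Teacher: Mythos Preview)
Your approach is correct and takes a genuinely different route from the paper's. The paper builds the chain $W_1<W_2<\cdots$ \emph{greedily}: it first proves a separate lemma (\Cref{lem:minimal_wall_intersections}) guaranteeing a minimal wall in $\walls(\gamma)$ meeting at most $(2D+1)\cdot 4^D$ others, then repeatedly strips off such a minimal wall together with everything crossing it, and recurses on what remains. This immediately yields (a)--(c), together with the convenient extra property that $W_k$ is disjoint from every wall in $\mc{V}_\ell$ for $\ell>k$; property (d) is then a short counting argument using that extra property.

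Your construction is global: choose a maximum chain once and slot every remaining wall at its first crossing. Two small remarks. First, the interval property of $I(V)$ needs no geometry --- it follows directly from transitivity of~$<$ (if $V$ is incomparable with $W_{i_1}$ and with $W_{i_2}$ then it is incomparable with every $W_j$ in between, since $V<W_j$ would force $V<W_{i_2}$ and $W_j<V$ would force $W_{i_1}<V$). Second, your iterative bound on chains in $N(W_i)$ can be shortened considerably: by bounded product projections at most $D$ of the $V_\ell$ cross all of $W_{i-D},\dots,W_i$, at most $D$ cross all of $W_i,\dots,W_{i+D}$, and every remaining $V_\ell$ has $I(V_\ell)\subseteq[i-D+1,\,i+D-1]$, so by your chain-swap observation there are at most $2D-1$ of them. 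This gives $m\le 4D-1$ and hence $|N(W_i)|\le(2D+1)(4D-1)$, a polynomial bound that improves on both your stated $D^2\cdot 2^D$ and the paper's exponential $R=(2D+1)\cdot 4^D\cdot D+D$.
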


\begin{remark}
  Conditions \ref{item:v_intersect} and \ref{item:few_intersections}
  in the proposition together imply \ref{item:smallsep}, but we still
  list \ref{item:smallsep} above because later we will use
  \ref{item:v_intersect}, \ref{item:disjoint_daviscx} and a stronger
  form of \ref{item:smallsep} to prove \ref{item:few_intersections}.
\end{remark}

\Cref{prop:disjoint_walls_I} tells us in particular that the length of
any geodesic in $\daviscx$ representing some $\gamma \in C$ with
$D$-bounded product projections can be estimated (up to a
multiplicative constant) as the maximal length of a \emph{chain}
$W_1 < W_2 < \ldots < W_n$ in $\walls(\gamma)$. In fact, this weaker
statement holds for \emph{arbitrary} $\gamma \in C$; the point of the
proposition is that when $\gamma$ has $D$-bounded product projections,
every wall in the chain is disjoint from almost every other wall in
$\walls(\gamma)$.

The proof of \Cref{prop:disjoint_walls_I} is purely combinatorial. In
fact, it relies only on the poset structure of the set of walls
separating a pair of elements in $C$.

We first prove a useful lemma:
\begin{lem}
  \label{lem:minimal_wall_intersections}
  Suppose that $\gamma = \alpha^{-1}\beta$ is a nontrivial element in
  $C$ with $D$-bounded product projections. Then there exists a
  minimal wall $W$ in $\walls(\alpha, \beta)$ such that
  $W \cap W' \ne \emptyset$ for at most $(2D + 1) \cdot 4^D$ walls
  $W'$ in $\walls(\alpha, \beta)$.
\end{lem}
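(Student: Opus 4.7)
My plan is to prove the lemma by directly exhibiting a suitable minimal wall, rather than inducting on $D$; in fact this approach yields the stronger bound $|I(W)| \leq 2D(D+1)$, where $I(W) := \{W' \in \walls(\alpha,\beta) \setminus \{W\} : W' \text{ incomparable to } W\}$ is the set of walls distinct from $W$ that intersect $W$. Since $2D(D+1) + 1 \leq (2D+1) \cdot 4^D$ for all $D \geq 0$, this implies the stated bound. First I would show that any antichain $A \subseteq \walls(\alpha,\beta)$ has size at most $2D+1$: if $|A| \geq 2D+2$, one can split $A$ into disjoint subsets both of size $\geq D+1$, which are completely incomparable by the antichain property, violating $D$-BPP. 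In particular, the set $M$ of minimal walls of $\walls(\alpha,\beta)$ satisfies $m := |M| \leq 2D+1$, since minimal walls are pairwise incomparable.

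The case $m = 1$ is trivial: the unique minimal wall $W_0$ is a global minimum, so $I(W_0) = \emptyset$. For $m \geq 2$, I would choose $W^* \in M$ maximizing $|T(W^*)|$, where $T(U) := \{W' \in \walls(\alpha,\beta) : W' > U\}$. For each other minimal wall $W_i$, I would apply BPP to the disjoint pair $A_i := T(W_i) \setminus T(W^*)$ and $B_i := T(W^*) \setminus T(W_i)$. These are completely incomparable: if $a \in A_i$ and $b \in B_i$ satisfied $a < b$, then $a > W_i$ would force $b > W_i$, contradicting $b \notin T(W_i)$; the case $a > b$ is symmetric. Since $|T(W^*)| \geq |T(W_i)|$ implies $|B_i| \geq |A_i|$, the BPP inequality yields $|A_i| \leq D$.

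Finally, to bound $|I(W^*)|$, I would partition $I(W^*)$ as $(M \setminus \{W^*\}) \sqcup (I(W^*) \setminus M)$. Any non-minimal wall $W \in I(W^*)$ lies above some minimal wall $W_j$ (the bottom of a descending chain from $W$), and necessarily $j \neq *$ since $W \notin T(W^*)$; thus $W \in A_j$. A union bound then gives $|I(W^*) \setminus M| \leq (m-1)D$, yielding $|I(W^*)| \leq (m-1) + (m-1)D = (m-1)(D+1) \leq 2D(D+1)$. The argument uses only the poset structure of $\walls(\alpha,\beta)$, consistent with the paper's preceding remark. The only mildly subtle step is the choice of $W^*$: maximizing $|T(W^*)|$ is precisely what places $|A_i|$ as the smaller side of the BPP inequality, and this is the crux of the proof; beyond that, I foresee no serious obstacles.
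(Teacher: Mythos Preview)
Your proof is correct and takes a genuinely different route from the paper's. The paper classifies walls by their ``intersection type'' with the set of minimal walls, i.e.\ by the map $I_{\min}(W) = \{V \in \minwalls(\gamma) : V \cap W \ne \emptyset\}$, argues that preimages of inclusion-incomparable subsets of $\minwalls(\gamma)$ are completely incomparable, and then finds a minimal wall $W_-$ avoided by every ``large'' intersection type; the bound $(2D+1)\cdot 4^D$ comes from summing over the $\le 4^D$ subsets of $\minwalls(\gamma)$ containing $W_-$. Your argument instead selects the minimal wall $W^*$ with the largest upset $T(W^*)$ and uses this extremality to force each set $A_i = T(W_i) \setminus T(W^*)$ to be the smaller side of a BPP comparison, yielding a straightforward union bound. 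Your approach is shorter, more direct, and gives the polynomial bound $2D(D+1)+1$ in place of the paper's exponential $(2D+1)\cdot 4^D$; since the lemma is only ever used via the constant $R' = (2D+1)\cdot 4^D$ in \Cref{prop:disjoint_walls_I}, this improvement propagates but is not essential to the paper's qualitative conclusions. The paper's classification idea is perhaps more structural and could conceivably be reused elsewhere, but for the purpose at hand your argument is cleaner.
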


In particular, the lemma tells us that any group element $\gamma$ with
$D$-bounded product projections is traversed by an itinerary which is
``nearly'' efficient.

\begin{proof}
  Without loss of generality we may assume $\alpha = \identity$ and
  consider the set of walls $\walls(\gamma)$. We let
  $\minwalls(\gamma)$ denote the minimal walls in $\walls(\gamma)$.

  Every minimal element in a poset is incomparable with every other
  minimal element. So, the bounded product projections property
  implies that the number of walls in $\minwalls(\gamma)$ is at most
  $2D + 1$, since otherwise we could partition $\minwalls(\gamma)$
  into disjoint completely incomparable subsets, both containing at
  least $D + 1$ elements.

  We can think of every wall $W \in \walls(\gamma)$ as having one of
  finitely many ``intersection types,'' determined by the walls in
  $\minwalls(\gamma)$ which $W$ intersects. Precisely, we define an
  ``intersection mapping''
  $I_{\min}:\walls(\gamma) \to 2^{\minwalls(\gamma)}$, by:
  \[
    I_{\min}(W) = \{V \in \minwalls(\gamma) : V \cap W \ne \emptyset\}.
  \]
  We claim the following:
  \begin{claim}
    \label{claim:incomparable_pairs}
    Let $U_1, U_2$ be subsets of $\minwalls(\gamma)$, such that
    $U_1 \nsubseteq U_2$ and $U_2 \nsubseteq U_1$. Then the sets of
    walls $A = I_{\min}^{-1}(U_1)$ and $B = I_{\min}^{-1}(U_2)$ are
    completely incomparable.
  \end{claim}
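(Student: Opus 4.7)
The plan is to reduce the partial-order statement to one about intersections, using the dichotomy recalled just before the proposition: two walls in $\walls(\gamma)$ are incomparable under $<$ if and only if they have nonempty intersection. With this in hand, the claim becomes a clean order-theoretic argument built on the minimality of walls in $\minwalls(\gamma)$; no combinatorial or geometric input about $\daviscx$ beyond what has already been invoked is needed.

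First I would observe that $A$ and $B$ must be disjoint: if $W \in A \cap B$ then $U_1 = I_{\min}(W) = U_2$, contradicting $U_1 \nsubseteq U_2$. Then, using $U_1 \nsubseteq U_2$ and $U_2 \nsubseteq U_1$, fix walls $V_1 \in U_1 \setminus U_2$ and $V_2 \in U_2 \setminus U_1$ (both minimal in $\walls(\gamma)$ by construction), and let $W \in A$ and $W' \in B$ be arbitrary. By the very definition of $I_{\min}$, the wall $V_1$ meets $W$ but is disjoint from $W'$, while $V_2$ meets $W'$ but is disjoint from $W$. Note also that $V_1 \ne W'$ and $V_2 \ne W$, since every minimal wall belongs to its own $I_{\min}$-image, whereas $V_1 \notin U_2 = I_{\min}(W')$ and $V_2 \notin U_1 = I_{\min}(W)$. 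The dichotomy therefore promotes $V_1$ and $W'$ from ``non-intersecting'' to ``strictly comparable,'' and minimality of $V_1$ then forces $V_1 < W'$; symmetrically $V_2 < W$.

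Now suppose towards a contradiction that $W$ and $W'$ are comparable. If $W < W'$, transitivity gives $V_2 < W < W'$, hence $V_2 < W'$, contradicting $V_2 \cap W' \ne \emptyset$. If instead $W' < W$, then $V_1 < W' < W$ yields $V_1 < W$, contradicting $V_1 \cap W \ne \emptyset$. Either way we reach a contradiction, so $W$ and $W'$ are incomparable; combined with the disjointness of $A$ and $B$, this is exactly complete incomparability.

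I do not anticipate any real obstacle: the argument is purely poset-theoretic, and the only mildly delicate step is the verification that $V_1 \ne W'$ and $V_2 \ne W$, which is needed before invoking minimality to upgrade ``comparable'' to ``strictly below.'' This in turn reduces to the observation that each minimal wall appears in its own $I_{\min}$-image, which is immediate from the convention $V \cap V \ne \emptyset$.
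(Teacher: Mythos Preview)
Your proof is correct. The overall strategy matches the paper's: pick $V_1 \in U_1 \setminus U_2$ and $V_2 \in U_2 \setminus U_1$, and use minimality together with the intersection/incomparability dichotomy to obtain $V_1 < W'$ and $V_2 < W$ while $V_1$ is incomparable with $W$ and $V_2$ with $W'$. Where you diverge is only in the concluding step: you argue by direct contradiction using transitivity of $<$, whereas the paper invokes the itinerary/total-ordering correspondence to exhibit one compatible ordering in which every element of $A$ precedes every element of $B$ and another in which the reverse holds, hence incomparability. Your route is slightly more elementary since it avoids the total-ordering correspondence entirely; the paper's formulation treats all of $A$ and $B$ at once rather than pairwise, but that difference is cosmetic.
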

  To prove the claim, observe that if the hypothesis holds, then there
  is a wall $V_1 \in U_1 \setminus U_2$ and a wall
  $V_2 \in U_2 \setminus U_1$. Since $V_1$ does not intersect any wall
  in $B$, and $V_1$ is minimal, we have $V_1 < W_B$ for every
  $W_B \in B$. Similarly, we have $V_2 < W_A$ for every $W_A \in
  A$. And, since every wall $W_A \in A$ intersects $V_1$ nontrivially,
  each $W_A$ is incomparable with $V_1$. So, there is a total ordering
  of $\walls(\gamma)$ (compatible with $<$) where each $W_A \in A$
  precedes $V_1$, hence precedes every $W_B \in B$. Arguing
  symmetrically, there is also a compatible ordering of the walls
  where every $W_B \in B$ precedes every $W_A \in A$. Thus, $A$ and
  $B$ are completely incomparable, proving the claim.

  Next, consider the collection of subsets
  $T \subset 2^{\minwalls(\gamma)}$ given by
  \[
    T = \{U \subset \minwalls(\gamma) : |I_{\min}^{-1}(U)| > 2D + 1\}.
  \]
  The previous claim, together with the bounded product projections
  axiom, implies that if $U_1$ and $U_2$ are subsets of
  $\minwalls(\gamma)$, neither of which is a subset of the other, then
  either $|I_{\min}^{-1}(U_1)| \le D$ or
  $|I_{\min}^{-1}(U_2)| \le D$---so in particular at most one of
  $U_1, U_2$ can lie in $T$. This means that there is at most one
  maximal element in $T$, with respect to the partial ordering on
  $2^{\minwalls(\gamma)}$ given by inclusion. However, the element
  $\minwalls(\gamma) \in 2^{\minwalls(\gamma)}$ cannot itself lie in
  $T$: by definition, every element in
  $I_{\min}^{-1}(\minwalls(\gamma))$ is incomparable with every
  minimal wall, so every element in $I_{\min}^{-1}(\minwalls(\gamma))$
  is itself minimal in $\walls(\gamma)$ and therefore
  $|I_{\min}^{-1}(\minwalls(\gamma))| \le |\minwalls(\gamma)| \le 2D +
  1$.

  We conclude that $T$ is either empty, or it has a unique maximal
  element which is not all of $\minwalls(\gamma)$. In either case,
  there must be some wall $W_- \in \minwalls(\gamma)$ such that
  $W_- \notin U$ for any $U \in T$.

  In other words, for any set $U \subset \minwalls(\gamma)$ containing
  $W_-$, we have $|I_{\min}^{-1}(U)| \le 2D + 1$. And by definition,
  the set of walls in $\walls(\gamma)$ intersecting $W_-$ is the union
  of the sets $I_{\min}^{-1}(U)$ over all subsets
  $U \subset \minwalls(\gamma)$ with $W_- \in U$. Since the number of
  such sets $U$ is at most
  $2^{|\minwalls(\gamma) - 1|} \le 2^{2D} = 4^D$, we conclude that the
  number of walls in $\walls(\gamma)$ intersecting $W_-$ is at most
  $(2D + 1) \cdot 4^D$.
\end{proof}

\begin{proof}[Proof of \Cref{prop:disjoint_walls_I}]
  Consider an itinerary traversing $\gamma$. As in the proof of the
  previous lemma, without loss of generality we may assume that this
  itinerary joins $\identity$ to $\gamma$, and contains exactly the
  walls in $\walls(\gamma)$. Because of
  \Cref{prop:itinerary_correspondence}, our goal is to find a total
  ordering on $\walls(\gamma)$, compatible with $<$, which gives an
  itinerary $\mc{U}$ of the desired form.

  We will find the desired itinerary $\mc{U}$ iteratively. We let
  $R' = (2D + 1) \cdot 4^D$. Using
  \Cref{lem:minimal_wall_intersections}, we choose a minimal wall
  $W_1 \in \walls(\gamma)$ which intersects at most $R'$ other walls
  in $\walls(\gamma)$. Then, we let $\mc{V}_1$ be an itinerary
  consisting of the set of walls in $\walls(\gamma)$ which intersect
  $W_1$, arranged in an arbitrary order compatible with $<$. We obtain
  an itinerary traversing $\gamma$ of the form
  \[
    \{W_1\}, \mc{V}_1, \mc{V}_1',
  \]
  where $|\mc{V}_1| \le R'$, and every wall in $\mc{V}_1'$ is disjoint
  from $W_1$.

  Using \Cref{lem:minimal_wall_intersections} again, we pick a minimal
  wall $W_2$ in $\mc{V}_1'$ which intersects at most $R'$ walls in
  $\mc{V}_1'$. We let $\mc{V}_2$ be the walls in
  $\mc{V}_1' \minus \{W_2\}$ which intersect $W_2$ (again arranged in
  an arbitrary compatible order), and obtain another equivalent
  itinerary
  \[
    \{W_1\}, \mc{V}_1, \{W_2\}, \mc{V}_2, \mc{V}_2'.
  \]
  We proceeed iteratively in this fashion until we have eventually
  obtained an itinerary $\mc{U}$ of the form
  \[
    \{W_1\}, \mc{V}_1, \{W_2\}, \mc{V}_2, \ldots, \{W_n\}, V_n,
  \]
  such that each $\mc{V}_k$ satisfies $|\mc{V}_k| \le R'$, and for any
  $k, \ell$ with $k < \ell$, the wall $W_k$ is disjoint from both
  $W_\ell$ and every wall in $\mc{V}_\ell$.

  So, as long as we take $R \ge R' = (2D + 1) \cdot 4^D$, the
  itinerary $\mc{U}$ satisfies conditions \ref{item:smallsep},
  \ref{item:v_intersect}, and \ref{item:disjoint_daviscx} in the
  statement of the proposition. It remains to show that for some
  choice of $R$, this itinerary also satisfies condition
  \ref{item:few_intersections}. We claim that taking $R = R'D + D$ is
  sufficient.

  To see this, fix $k$, and consider the set $I(W_k)$ of walls in
  $\walls(\gamma)$ which intersect $W_k$. We wish to show that
  $|I(W_k)| \le R'D + D$. We know that $W_k$ is disjoint from every
  $W_i$ for $i \ne k$ and from every wall in $\mc{V}_\ell$ for every
  $\ell > k$. So, every wall in $I(W_k)$ is contained in some
  $\mc{V}_i$ for $i \le k$. Since each $\mc{V}_i$ contains at most
  $R'$ walls, there are at most $R'D$ walls contained in the union
  \[
    \bigcup_{j = k - D + 1}^k I(W_k) \cap \mc{V}_j.
  \]
  So, if $I_{k-D}$ denotes the set of walls
  \[
    I_{k-D} = \bigcup_{i=1}^{k - D} I(W_k) \cap \mc{V}_i,
  \]
  we must have $|I(W_k)| \le |I_{k-D}| + R'D$.

  For any $i, j$ with $i < j < k$, we have $W_i < W_j < W_k$. This
  means that if some wall $V$ is incomparable with both $W_i$ and
  $W_k$, it is also incomparable with $W_j$. Now, if
  $V \in I_{k - D}$, we know that $V \cap W_i$ is nonempty for some
  $i \le k - D$, and $V \cap W_k$ is nonempty by assumption, so
  necessarily $V$ intersects $W_j$ for every $j$ with $i \le j \le
  k$. In particular, $V$ intersects $W_j$ for every $j$ with
  $k - D \le j \le k$.

  That is, every wall in $I_{k-D}$ intersects each of the $D + 1$
  walls $W_{k-D}, \ldots, W_k$. But then the fact that $\gamma$ has
  $D$-bounded product projections implies that $|I_{k-D}| \le D$,
  hence $|I(W_k)| \le R'D + D$ as required.
\end{proof}

\begin{cor}
  \label{cor:disjoint_walls}
  The itinerary $\mc{U}$ coming from \Cref{prop:disjoint_walls_I}
  satisfies the following properties:
  \begin{enumerate}[label=(\arabic*)]
  \item\label{item:elts_in_coxbnhd} For every $i < j$, the group element
    $\gamma_{\mc{U}}(W_i, W_j)$ satisfies
    \[
      \gamma_{\mc{U}}(W_i, W_j) = \eta_i \cdot \gamma(W_i, W_j) \cdot
      \eta_j,
    \]
 where $|\eta_i|, |\eta_j| < R$.
\item\label{item:subitineraries_minimal} Suppose $\mc{U}'$ is
  equivalent to $\mc{U}$. Then any sub-itinerary of $\mc{U}'$ with
  length greater than $2R$ is equivalent to an itinerary of the form
    \[
      \mc{Y}_i, \mc{W}_{ij}, \mc{Y}_j,
    \]
    for some $i \le j$, where $\mc{W}_{ij}$ is an efficient itinerary
    between $W_i$ and $W_j$, and $|\mc{Y}_i|, |\mc{Y}_j| < R$.
  \end{enumerate}
\end{cor}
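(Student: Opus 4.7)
The plan for part (1) is to start with the explicit sub-itinerary of $\mc{U}$ from $W_i$ to $W_j$, namely $W_i, \mc{V}_i, W_{i+1}, \mc{V}_{i+1}, \ldots, W_j$, which traverses $\gamma_{\mc{U}}(W_i, W_j)$, and reorder it using \Cref{prop:itinerary_correspondence}. By property \ref{item:v_intersect} the walls of $\mc{V}_i$ each intersect $W_i$, hence are incomparable with $W_i$ in the partial order, so I would move them to precede $W_i$, forming a prefix $\mc{A}$ of length at most $R$ traversing the desired $\eta_i$. Next, I would collect every wall in the sub-itinerary intersecting $W_j$: these are incomparable with $W_j$, and by property \ref{item:few_intersections} there are at most $R$ of them, so they can be moved to follow $W_j$, forming a suffix $\mc{B}$ of length at most $R$ traversing $\eta_j$. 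The walls remaining between $W_i$ and $W_j$ are disjoint from both; since they sit between $W_i$ and $W_j$ in the sub-itinerary's ordering, the partial order forces $W_i < V < W_j$ for each such $V$, so together with $W_i$ and $W_j$ they assemble into an efficient itinerary $\mc{E}$ traversing $\gamma(W_i, W_j)$.

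The crux of part (2) is the following structural lemma: in any equivalent itinerary $\mc{U}'$, at most $2R$ non-spine walls appear between two consecutive spine walls $W_k$ and $W_{k+1}$. To prove this, I would show that \emph{no} wall $V$ satisfies $W_k < V < W_{k+1}$. For a non-spine $V \in \mc{V}_\ell$, the condition $W_k < V$---combined with property \ref{item:v_intersect}, which precludes $\ell = k$ since then $V$ would intersect $W_k$---forces $\ell > k$, while $V < W_{k+1}$ forces $\ell \le k$, a contradiction. A spine wall obviously cannot fall strictly between $W_k$ and $W_{k+1}$ either. Hence every wall between $W_k$ and $W_{k+1}$ in $\mc{U}'$ is incomparable with one of $W_k, W_{k+1}$, and property \ref{item:few_intersections} caps their count at $2R$.

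Given this lemma, I would complete part (2) as follows. Let $\mc{S}$ be a sub-itinerary of $\mc{U}'$ with $|\mc{S}| > 2R$; it necessarily contains some spine wall, and I let $W_i, W_j$ be the first and last such in $\mc{S}$. The walls of $\mc{S}$ preceding $W_i$ in $\mc{U}'$ lie between $W_{i-1}$ and $W_i$, so the lemma bounds their count by $2R$; similarly for walls following $W_j$. I would place these into $\mc{Y}_i$ and $\mc{Y}_j$ respectively, and reorder the remaining middle via the strategy of part (1), absorbing walls intersecting $W_i$ into $\mc{Y}_i$ and walls intersecting $W_j$ into $\mc{Y}_j$ so as to leave an efficient itinerary $\mc{W}_{ij}$ between $W_i$ and $W_j$. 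The bounds $|\mc{Y}_i|, |\mc{Y}_j| < R$ then follow by combining the lemma with property \ref{item:few_intersections}, after enlarging the constant $R$ from \Cref{prop:disjoint_walls_I} by a multiplicative factor (still depending only on $D$).

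The principal obstacle is the structural lemma opening part (2). It is what promotes the spine walls $W_k$ to ``coarse separators'' controlling every equivalent itinerary, and its proof requires carefully unpacking the inductive construction of the $\mc{V}_\ell$ from the proof of \Cref{prop:disjoint_walls_I}. Once that lemma is in place, both parts reduce to bookkeeping with \Cref{prop:itinerary_correspondence} together with the bounded-intersection properties \ref{item:v_intersect}--\ref{item:few_intersections}.
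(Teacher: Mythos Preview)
Your proposal is correct. Part (1) is essentially identical to the paper's argument: both move the walls intersecting $W_i$ before $W_i$ and those intersecting $W_j$ after $W_j$, leaving exactly the efficient itinerary in between.

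For part (2), your route differs slightly from the paper's. The paper argues that since each spine wall $W_k$ is incomparable with at most $R$ walls (property \ref{item:few_intersections}), its index can shift by at most $R$ between $\mc{U}$ and any equivalent $\mc{U}'$; combined with the fact that spine walls appear at intervals $\le R$ in $\mc{U}$, this forces any sufficiently long sub-itinerary of $\mc{U}'$ to contain a spine wall. You instead prove the sharper structural fact that \emph{no} wall lies strictly between consecutive spine walls $W_k < W_{k+1}$ in the partial order (using property \ref{item:v_intersect} and the chain structure of the spine), so that every wall appearing between $W_k$ and $W_{k+1}$ in $\mc{U}'$ must intersect one of them. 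Both arguments yield the same $O(R)$ gap bound and the same final decomposition; yours is a bit more direct and gives a slightly better constant. Your remark about enlarging $R$ is also appropriate: the walls of $\mc{S}$ preceding the minimal spine wall $W_i$ need not all intersect $W_i$ (some may instead intersect $W_{i-1}$), so the bound on $|\mc{Y}_i|$ is naturally $2R$ rather than $R$ --- a point the paper glosses over when it simply invokes ``the argument from the previous case.''
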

\begin{proof}
  \ref{item:elts_in_coxbnhd} Fix $i < j$, and consider the
  sub-itinerary $\mc{U}_{ij} = \{W_i\}, \mc{V}_i, \ldots \{W_j\}$. We
  know that at most $R$ walls in this sub-itinerary intersect $W_i$,
  and at most $R$ walls in this sub-itinerary intersect $W_j$, so
  there is an equivalent itinerary of the form
  $\mc{Y}_i, \{W_i\}, \mc{Y}, \{W_j\}, \mc{Y}_j$, where
  $|\mc{Y}_i|, |\mc{Y}_j| < R$, and the walls in $\mc{Y}_i$ and
  $\mc{Y}_j$ are precisely the walls in $\mc{U}_{ij}$ which
  respectively intersect $W_i$ and $W_j$. Then the walls in $\mc{Y}$
  are precisely the walls separating $W_i$ from $W_j$, so
  \[
    \gamma(\mc{U}_{ij}) = \gamma(\mc{Y}_i)\gamma(W_i,
    W_j)\gamma(\mc{Y}_j).
  \]
  \ref{item:subitineraries_minimal} Let $\mc{U}'$ be equivalent to
  $\mc{U}$, and let $\mc{Y}$ be a sub-itinerary of $\mc{U}'$. Let
  $\mathbf{W}$ be the ordered set of walls $W_1 < \ldots <
  W_n$. First, suppose that $\mc{Y}$ contains at least one wall
  $W_i \in \mathbf{W}$. We can then choose $W_i \le W_j$ to be
  (respectively) minimal and maximal walls in
  $\mathbf{W} \cap \mc{Y}$; then the argument from the previous case
  shows that $\mc{Y}$ is equivalent to an itinerary
  $\mc{Y}_i, \mc{W}_{ij}, \mc{Y}_j$ as required. So, we just need to
  show that if $|\mc{Y}| > 2R$ then $\mc{Y}$ contains at least one
  wall in $\mathbf{W}$.

  To see this, we consider the total orderings of the walls in
  $\mc{U}$ given by the equivalent itineraries $\mc{U}$ and $\mc{U}'$;
  both of these total orderings must be compatible with the partial
  order $<$ on all the walls in $\mc{U}$. For each
  $W_i \in \mathbf{W}$, we let $n(W_i)$ denote the index of $W_i$ in
  $\mc{U}$, and let $n'(W_i)$ denote the index of $W_i$ in
  $\mc{U}'$. Now, since each $W_i$ is independent from at most $R$
  other walls in $\mc{U}$ with respect to the partial order $<$, we
  must have $|n(W_i) - n'(W_i)| \le R$ for every $i$. Then, since
  every sub-itinerary of $\mc{U}$ with length at least $R$ contains at
  least one wall in $\mathbf{W}$, the same is true for every
  sub-itinerary of $\mc{U}'$ with length at least $2R$.
\end{proof}

\section{Reflection groups acting on convex projective domains}

Let $C$ be a right-angled Coxeter group with generating set $S$. In
this section, we discuss the theory of representations
$\rho\colon C \to \SL^\pm(|S|, \R)$ which are \emph{generated by linear
  reflections}, as studied by Vinberg \cite{Vinberg1971}. Such
representations give rise to a ``projective model'' for the Davis
complex of $(C, S)$, in the form of a \emph{convex projective domain}
$\Omega$ preserved by $\rho$. We refer to \cite{Vinberg1971},
\cite{DGKLM} for further background.

\subsection{Cartan matrices and simplicial representations}
\label{sec:cartan_simplicial}

Let $V$ be a real vector space with dimension $d$.

\begin{definition}
  A \emph{linear reflection} is an element in $\SL^\pm(V)$ with a
  $(d-1)$-dimensional eigenspace with eigenvalue 1, and a
  1-dimensional eigenspace with eigenvalue $-1$. Equivalently, a
  linear reflection is any element $r \in \SL^\pm(V)$ which can be
  written $r = \identity - v \ten \alpha$, where $\alpha \in V^*$,
  $v \in V$, and $\alpha(v) = 2$.

  We refer to the $-1$-eigenspace of $r$ (which is spanned by the
  vector $v$) as the \emph{polar} of $r$. The $1$-eigenspace of $r$,
  given by $\ker(\alpha)$, is the \emph{reflection hyperplane} of $r$.
\end{definition}

The vector $v$ and dual vector $\alpha$ are determined up to a choice
of scale: we can replace $v$ with $\lambda v$ and $\alpha$ with
$\lambda^{-1}\alpha$ for any nonzero $\lambda \in \R$ to obtain the
same reflection.

We say that a representation $\rho\colon C \to \SL^\pm(V)$ is
\emph{generated by reflections} if $\rho$ maps each $s \in S$ to some
linear reflection in $\SL^\pm(V)$. There is always at least one
discrete faithful representation $\rho\colon C \to \SLpm(|S|, \R)$ which is
generated by reflections, called the \emph{geometric
  representation}. In fact, whenever $C$ is an infinite right-angled
Coxeter group, there is an uncountable (continuous) family of
conjugacy classes of such representations.

Below, we explain how to construct the representations in this family.

\begin{definition}
  Let $(C, S)$ be a right-angled Coxeter system, and write
  $S = \{s_1, \ldots, s_n\}$. We say that an $|S| \times |S|$ real
  matrix $A$ is a \emph{Cartan matrix} for $(C, S)$ if it satisfies
  the following three criteria:
  \begin{enumerate}
  \item For every $i$, we have $A_{ii} = 2$.\label{cond:compatibility_diagonal}
  \item For all $i \ne j$ such that $s_i$ and $s_j$ commute, we have
    $A_{ij} = 0$. \label{cond:compatibility_commuting}
  \item For all $i \ne j$ such that $s_i$ and $s_j$ do \emph{not}
    commute, we have $A_{ij} < 0$ and
    $A_{ij}A_{ji} \ge 4$.\label{cond:compatibility_not_commuting}
  \end{enumerate}
\end{definition}

\begin{remark}
  It also makes sense to consider Cartan matrices for an   \emph{arbitrary} (i.e.\ not necessarily right-angled) Coxeter group $C$. The definition is slightly more   complicated in this case. We omit it as it is not relevant for the present paper.
\end{remark}

We can use any Cartan matrix $A$ for a Coxeter group $C$ to define a
representation $\rho_A\colon C \to \SLpm(|S|, \R)$: we let $n = |S|$, let
$\{e_1, \ldots, e_n\}$ be the standard basis for $\R^n$, and let
$\{e^1, \ldots, e^n\}$ be the corresponding dual basis. For each $i$,
we set $\alpha_i = e^i$, and let $v_i$ be the vector
\[
  v_i = \sum_{k=1}^n A_{ki}e_k.
\]
Then the group element $\identity - v_i \ten \alpha_i$ is a linear
reflection and for every $i,j$ we have $\alpha_i(v_j) = A_{ij}$. One
can check directly that the assignment
$s_i \mapsto (\identity - v_i \ten \alpha_i)$ determines a
representation of the Coxeter group $C$.

\begin{definition}
  \label{defn:cartan_matrix_rep}
  Given a Cartan matrix $A$ for a Coxeter group $C$, we let
  $\rho_A\colon C \to \SLpm(n, \R)$ denote the representation determined by
  the assignment
  \[
    s_i \mapsto (\identity - v_i \ten \alpha_i)
  \]
  described above.

  We will refer to $\rho_A$ as the \emph{simplicial representation}
  associated to $A$. The terminology is motivated by the fact that
  $\rho_A$ induces a discrete and faithful action of $C$ on a convex
  domain in projective space $\P(\R^n)$, with fundamental domain a
  simplex (see \Cref{thm:vinberg_action} below).
\end{definition}

\begin{definition}\label{defn:Tits_rep}
  Fix a right-angled Coxeter system $(C, S)$, and let $A$ be the
  unique Cartan matrix for $(C, S)$ which satisfies
  $A_{ij} = A_{ji} = -2$ for every $i \ne j$ such that $s_i$ and $s_j$
  do not commute. The simplicial representation $\rho_A$ associated to
  this Cartan matrix is called the \emph{geometric} (or \emph{Tits})
  representation of $C$.
\end{definition}

The geometric representation was first studied by Tits
\cite{bourbaki68}, who proved that it is always faithful with discrete
image in $\SLpm(|S|, \R)$. In \cite{Vinberg1971}, Vinberg proved that
the same is true for every simplicial representation $\rho$.

\begin{remark}
  If the Cartan matrix $A$ is symmetric, then one can define a
  (possibly degenerate) bilinear form $\langle \cdot, \cdot \rangle$
  on $\R^{|S|}$ by setting $\langle e_i, e_j \rangle = A_{ij}$. In
  this case, the simplicial representation $\rho_A$ defined above is
  isomorphic to the \emph{dual} of the representation
  \[
    s_i \mapsto (\identity - \langle \cdot, e_i \rangle e_i),
  \]
  which preserves the bilinear form $\langle \cdot , \cdot
  \rangle$. If the form $\langle \cdot, \cdot \rangle$ is
  nondegenerate, then it induces an isomorphism from $\R^{|S|}$ to its
  dual, and $\rho_A$ preserves the corresponding (nondegenerate)
  bilinear form (meaning $\rho_A(C)$ has image lying in some $O(p,q)$
  with $p + q = |S|$).
\end{remark}

\subsubsection{Nondegeneracy conditions}

Several times in this paper, we will want to consider the restriction
of a simplicial representation $\rho$ of a right-angled Coxeter group
$C$ to some standard subgroup $C(T) \leq C$. When $C(T)$ is a
proper standard subgroup, the restricted representation cannot be
irreducible, as it has an invariant subspace
$V_T = \spn\{v_s : s \in T\}$. So, for each subset $T \subseteq S$, we
let $\rho_T\colon C(T) \to \SLpm(V_T)$ denote the representation induced by
the restriction of $\rho$ to $C(T)$.

To facilitate inductive arguments, we would like to have a condition
which guarantees that the representation $\rho_T$ is isomorphic to a
simplicial representation of $C(T)$, and inherits some nondegeneracy
properties of the original representation $\rho$.

We know that the representation $\rho_T$ will be isomorphic to a
simplicial representation precisely when the set of restrictions
$\{\alpha_t|_{V_T} : t \in T\}$ is a basis for $V_T^*$. The Cartan
matrix associated to $\rho_T$ is then the principal submatrix of the
Cartan matrix for $\rho$ corresponding to the subset $T$. With this in
mind, we define the following:
\begin{definition}
  We will say that a matrix $A$ is \emph{fully nondegenerate} if all
  of its principal minors are nonzero.
\end{definition}

The lemma below is completely elementary, but key to several of our
later arguments.
\begin{lem}
  \label{lem:fully_nondegenerate_transverse}
  Let $\rho\colon C \to \SL_\pm(|S|, \R)$ be a simplicial representation of
  a right-angled Coxeter group $C$ with fully nondegenerate Cartan
  matrix. For any subset $T \subset S$, define
  $V_T = \spn\{v_s : s \in T\}$ and
  $V_T^\perp = \bigcap_{s \in T} \ker(\alpha_s)$. Then there is a
  $C(T)$-invariant decomposition $V = V_T \oplus V_T^\perp$, and the
  representation $\rho_T\colon C(T) \to \SLpm(V_T)$ is isomorphic to a
  simplicial representation.
\end{lem}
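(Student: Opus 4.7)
The plan is to show that full nondegeneracy of the Cartan matrix $A$ forces the principal submatrix $A_T = (A_{st})_{s,t \in T}$ to be invertible, and then to parlay this into every claim in the lemma.

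First I would verify that $\dim V_T = |T|$ and $\dim V_T^\perp = |S| - |T|$. The matrix of the pairing $(s,t) \mapsto \alpha_s(v_t) = A_{st}$ on $\spn\{v_s : s \in T\} \times \spn\{\alpha_s : s \in T\}$ is exactly $A_T$, which is invertible by hypothesis. This immediately gives that $\{v_s : s \in T\}$ is linearly independent in $V_T$, so $\dim V_T = |T|$. Since the full set $\{\alpha_s : s \in S\}$ is a basis of $V^*$ (it is the standard dual basis, by construction of $\rho_A$), the map $V \to \R^{|T|}$ sending $v$ to $(\alpha_s(v))_{s \in T}$ is surjective with kernel $V_T^\perp$, so $\dim V_T^\perp = |S| - |T|$.

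Next I would show $V_T \cap V_T^\perp = 0$: any $w \in V_T \cap V_T^\perp$ can be written $w = \sum_{t \in T} c_t v_t$, and applying $\alpha_s$ for $s \in T$ yields the linear system $A_T \mathbf{c} = 0$, forcing $\mathbf{c} = 0$ by invertibility of $A_T$. Combined with the dimension count, this gives the direct sum decomposition $V = V_T \oplus V_T^\perp$. The $C(T)$-invariance of each summand is a one-line check: for $s \in T$, the reflection $\rho(s) = \identity - v_s \ten \alpha_s$ preserves $V_T$ because $v_s \in V_T$, and preserves $V_T^\perp$ because $\alpha_s$ vanishes on $V_T^\perp$.

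Finally, to identify $\rho_T$ with a simplicial representation, I would take $A_T$ as the candidate Cartan matrix. That $A_T$ is a Cartan matrix for $(C(T), T)$ is inherited entry-by-entry from $A$ (diagonal $2$'s, zeros where generators commute, negative entries with product $\ge 4$ otherwise). To construct the isomorphism $\rho_T \cong \rho_{A_T}$, I would define $\phi\colon V_T \to \R^{|T|}$ by $\phi(v_s) = \sum_{t \in T} A_{ts} e_t$ for $s \in T$; this is a linear isomorphism because both the source basis and the target vectors are bases (the latter since the columns of $A_T$ are independent). A direct computation on the basis $\{v_t\}_{t \in T}$ verifies $\tilde{\alpha}_s \circ \phi = \alpha_s|_{V_T}$, which together with $\phi(v_s) = \tilde{v}_s$ intertwines $\rho_T(s)$ with $\rho_{A_T}(s)$ for every $s \in T$.

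There is no real obstacle here; the entire proof is driven by the single observation that invertibility of every principal submatrix of $A$ is precisely the condition needed to make both the direct sum decomposition and the intertwiner exist. The only mild subtlety is bookkeeping between the representation $\rho_T$ one obtains by restriction (whose natural polar vectors are the $v_s$) and the normalized simplicial representation $\rho_{A_T}$ (built from the standard dual basis on $\R^{|T|}$); the map $\phi$ above is precisely the change of basis resolving this.
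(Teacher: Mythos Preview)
Your proof is correct and follows essentially the same approach as the paper's: both hinge on the observation that invertibility of the principal submatrix $A_T$ gives the transversality $V_T \cap V_T^\perp = 0$ and forces the restrictions $\{\alpha_s|_{V_T}\}$ to be a basis of $V_T^*$. The paper's proof is terser, invoking the criterion stated just before the lemma (that $\rho_T$ is simplicial precisely when these restrictions form a basis) rather than writing down your explicit intertwiner $\phi$, but the content is the same.
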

In the terminology of \cite{DGKLM},
\Cref{lem:fully_nondegenerate_transverse} says that the simplicial
representation $\rho$ associated to a fully nondegenerate Cartan
matrix is both \emph{reduced} and \emph{dual-reduced}, and that the
same is true for the representations
$\rho_T\colon C(T) \to \SLpm(V_T)$ for every $T \subseteq S$.

\begin{proof}
  The nondegeneracy of the Cartan matrix implies that the subsets
  $\{v_s : s \in T\}$ and $\{\alpha_s : s \in T\}$ are both linearly
  independent in $\R^{|S|}$ and its dual, respectively. This implies
  that the subspaces $V_T$ and $V_T^\perp$ have complementary
  dimension. Since the principal minor of the Cartan matrix
  corresponding to the subset $T$ is nonzero, the set of restrictions
  $\{\alpha_s|_{V_T} : s \in T\}$ is also linearly independent in the
  dual $V_T^*$. This implies that $V_T$ and $V_T^\perp$ are
  transverse, and that $\rho_T$ is isomorphic to a simplicial
  representation.
\end{proof}

\begin{remark}
  \label{rem:fully_nondegenerate_exists}
  Given a right-angled Coxeter group $C$, one can always find a fully
  nondegenerate Cartan matrix $A$ for $C$. In fact, the space of fully
  nondegenerate matrices is open and dense in the space of Cartan
  matrices for $C$, since each principal minor of $A$ is a polynomial
  in the parameters determining $A$ which is not identically zero. If
  desired, one can also arrange for this fully nondegenerate matrix to
  be symmetric, or to have integer entries.
\end{remark}

\subsection{Invariant convex projective domains}

Let $V$ be a real vector space. Recall that a subset
$\tilde{\Omega} \subset V$ is a \emph{convex cone} if it is convex and
invariant under multiplication by positive real numbers. A
\emph{convex domain} in $\P(V)$ is the image of an open convex cone
under the projectivization map $V \minus \{0\} \to \P(V)$. A convex
domain is \emph{properly convex} if its closure is a convex subset of
some affine chart in $\P(V)$ (equivalently, if its closure does not
contain any projective line).

Tits and Vinberg proved that simplicial representations are discrete
and faithful by showing that there is a certain $\rho$-invariant
convex domain in $\P(\R^{|S|})$ with nonempty interior, and that the
action of $C$ on this domain is faithful and properly
discontinuous. The structure of this domain is essential to this
paper, so we describe it below.
\begin{definition}
  \label{defn:vinberg_domain}
  Let $A$ be a Cartan matrix for a Coxeter group $C$, which determines
  vectors $v_s \in \R^{|S|}$ and dual vectors
  $\alpha_s \in (\R^{|S|})^*$ for each $s \in S$.
  \begin{itemize}
  \item The \emph{fundamental simplicial cone} for the representation
    $\rho_A$ is the set
    \[
      \tilde{\Delta} = \bigcap_{s \in S} \{v \in \R^{|S|} :
      \alpha_s(v) \le 0 \textrm{ for all } s \in S\}.
    \]
    The \emph{fundamental simplex} $\Delta$ is the projectivization of
    $\tilde{\Delta}$ in $\P(\R^{|S|})$.
  \item The \emph{Tits cone} is the interior of the set
    \[
     \bigcup_{\gamma \in C} \rho_A(\gamma) \tilde{\Delta}.
   \]
    The \emph{Vinberg domain} is the projectivization of the Tits cone
    in $\P(\R^{|S|})$. We usually denote the Tits cone by
    $\uvindomain$, and the Vinberg domain by $\vindomain$.
  \end{itemize}
\end{definition}

\begin{thm}[See Tits \cite{bourbaki68}, Vinberg \cite{Vinberg1971}]
  \label{thm:vinberg_action}
  Let $\rho$ be a simplicial representation of a Coxeter group
  $C$. Then:
  \begin{enumerate}[label=(\roman*)]
  \item The Tits cone $\uvindomain$ is a nonempty convex open subset
    of $\R^{|S|}$;
  \item the action of $C$ on $\vindomain$ is faithful and properly
    discontinuous;
  \item the simplex $\Delta \cap \vindomain$ is a fundamental domain
    for the action;
  \item the dual graph to the tiling of $\vindomain$ by copies of
    $\Delta \cap \vindomain$ is equivariantly identified with the
    Cayley graph of $C$ with generating set $S$.
  \end{enumerate}
\end{thm}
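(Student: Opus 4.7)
The plan is to follow the classical Tits--Vinberg strategy: build the Tits cone iteratively by adjoining translates of the fundamental simplicial cone $\tilde{\Delta}$ across reflecting hyperplanes, and deduce all four parts of the theorem from a single inductive separating-hyperplane lemma. First observe that $\tilde{\Delta}$ has nonempty interior --- since $\alpha_s = e^s$ for $s \in S$, it is literally the closed negative orthant --- and that each reflection $\rho_A(s)$ fixes the facet $\tilde{\Delta} \cap \ker(\alpha_s)$ pointwise and maps $\tilde{\Delta}$ to an adjacent simplicial cone on the opposite side of $\ker(\alpha_s)$, sharing this facet.

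The heart of the argument is the following inductive claim: for every $\gamma \in C$ and every $s \in S$ with $|\gamma s| > |\gamma|$, the translate $\rho_A(\gamma)\tilde{\Delta}$ lies on the side of $\rho_A(\gamma)\ker(\alpha_s)$ opposite to $\rho_A(\gamma s)\tilde{\Delta}$. I would prove this by induction on $|\gamma|$. Writing $\gamma = \gamma' t$ with $|\gamma'| = |\gamma| - 1$, the case where $s$ and $t$ commute reduces immediately to the induction hypothesis, since $\rho_A(s)$ and $\rho_A(t)$ commute and preserve each other's reflection data. The case where $s$ and $t$ do not commute reduces to the infinite dihedral subgroup $\langle s, t \rangle$ and is settled by a direct calculation in the rank-two restriction using the Cartan condition $A_{st}A_{ts} \ge 4$: this inequality guarantees that $\rho_A(st)$ has real positive eigenvalues and hence acts with north-south dynamics on $\P(\spn\{v_s, v_t\})$, from which the required half-space nesting is read off explicitly. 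I expect this dihedral step, together with threading it cleanly through the word-length induction, to be the main technical obstacle.

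Once the separating claim is established, parts (ii)--(iv) follow by combinatorial bookkeeping. The codimension-one facets of $\rho_A(\gamma)\tilde{\Delta}$ correspond bijectively to generators $s \in S$, and each such facet is shared with exactly the translate $\rho_A(\gamma s)\tilde{\Delta}$; this identifies the dual graph of the tiling with $\Cay(C, S)$, giving (iv). For faithfulness in (ii), any $\gamma$ preserving $\tilde{\Delta}$ as a set would force $|\gamma s| > |\gamma|$ for every $s \in S$, hence $\gamma = 1$; proper discontinuity follows from the local finiteness of the tiling (each compact subset of $\uvindomain$ meets only finitely many chambers, again via the separating property). Part (iii) is then immediate from the tiling structure.

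Finally for (i), openness and nonemptiness of $\uvindomain$ are built into the definition, while convexity is obtained by taking two interior points lying in chambers $\rho_A(\gamma_1)\tilde{\Delta}$ and $\rho_A(\gamma_2)\tilde{\Delta}$, tracing the line segment between them across the finitely many separating hyperplanes it crosses, and applying the separating claim at each crossing to verify that the segment remains in $\bigcup_{\gamma \in C} \rho_A(\gamma)\tilde{\Delta}$. The interior of this union is then a nonempty convex open set, which is the Tits cone $\uvindomain$.
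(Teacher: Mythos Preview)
The paper does not prove this theorem; it is quoted as a classical result of Tits and Vinberg, with citations to \cite{bourbaki68} and \cite{Vinberg1971}, and no argument is given. So there is nothing in the paper to compare your proposal against. Your overall strategy is indeed the classical Tits--Vinberg one.

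That said, your key inductive claim as written is trivially true and does not do the work you need. You assert that when $|\gamma s| > |\gamma|$, the chambers $\rho_A(\gamma)\tilde{\Delta}$ and $\rho_A(\gamma s)\tilde{\Delta}$ lie on opposite sides of the hyperplane $\rho_A(\gamma)\ker(\alpha_s)$. But this holds for \emph{every} $\gamma$ and $s$, regardless of word length: the reflection $\rho_A(\gamma)\rho_A(s)\rho_A(\gamma)^{-1}$ fixes that hyperplane and interchanges the two chambers. The statement you actually need relates the \emph{identity} chamber to a wall: for instance, if $|s\gamma| > |\gamma|$ then $\rho_A(\gamma)\tilde{\Delta}$ lies on the same side of $\ker(\alpha_s)$ as $\tilde{\Delta}$ (equivalently $\alpha_s \le 0$ on $\rho_A(\gamma)\tilde{\Delta}$). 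It is this version that yields disjointness of chamber interiors, hence faithfulness and the tiling. Correspondingly, the induction should peel off a \emph{leftmost} letter $\gamma = t\gamma'$, not a rightmost one, since the relevant wall is $\ker(\alpha_s)$ based at the identity. Once you restate the lemma correctly, the dihedral rank-two check and the deduction of (i)--(iv) proceed as you outline, though for convexity you should also justify why the segment meets only finitely many walls before reaching the target chamber.
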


\begin{remark}
  We have only stated \Cref{thm:vinberg_action} for simplicial
  representations of the Coxeter group $C$. However, Vinberg's result
  also applies to a broad family of representations of $C$ generated
  by linear reflections. In particular, a version of the theorem
  applies to representations $\rho\colon C \to \SLpm(V)$ where $\dim V$ is
  not necessarily equal to $|S|$.
\end{remark}

While the Vinberg domain $\vindomain$ associated to a simplicial
representation $\rho$ is always a convex domain, it is not necessarily
\emph{properly} convex. However, Vinberg also gave conditions which
make it possible to guarantee that this holds in broad circumstances:
\begin{prop}[{See \cite[Lemma 15 and Proposition 22]{Vinberg1971}}]
  \label{prop:vinberg_properly_convex}
  Let $A$ be a Cartan matrix for an irreducible Coxeter group $C$, and
  suppose that $\det(A) \ne 0$. Then the following are equivalent:
  \begin{enumerate}
  \item The matrix $A$ is of \emph{negative type}, i.e.\ $A$ has a
    negative eigenvalue.
  \item The Vinberg domain $\vindomain$ for $\rho_A$ is properly
    convex.
  \item The Coxeter group $C$ is infinite.
  \end{enumerate}
\end{prop}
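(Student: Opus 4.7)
The plan is to establish the three-way equivalence via the cycle $(3) \Rightarrow (1) \Rightarrow (2) \Rightarrow (3)$, reducing two of the implications to classical finite reflection group theory and treating $(1) \Rightarrow (2)$ as the main content.

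For $(3) \Rightarrow (1)$, I argue the contrapositive. The compatibility conditions force $A_{ij} A_{ji} > 0$ whenever $s_i$ and $s_j$ do not commute, so the vectors $v_i$ and covectors $\alpha_i$ can be rescaled by positive scalars (which does not change $\rho_A$ up to isomorphism) in order to replace $A$ by a diagonally conjugate symmetric matrix; irreducibility of the nerve makes a consistent rescaling possible. If $A$ has no negative eigenvalue, the symmetric conjugate, having the same spectrum of real eigenvalues, is positive definite (using $\det A \ne 0$). The associated inner product is preserved by $\rho_A(C)$, which therefore lies in a compact orthogonal group. Combined with the discreteness and faithfulness of $\rho_A$ from \Cref{thm:vinberg_action}, this forces $C$ to be finite.

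For $(2) \Rightarrow (3)$, I again use the contrapositive: assuming $C$ is finite, the argument above shows $A$ is positive definite, and $\rho_A(C)$ acts as a finite reflection group on the Euclidean space $(\R^{|S|}, A)$. The classical theory of finite reflection groups then says that the $\rho_A(C)$-translates of the fundamental simplicial cone $\tilde\Delta$ tile $\R^{|S|}$, so $\uvindomain = \R^{|S|}$ and hence $\vindomain = \P(\R^{|S|})$ is not properly convex.

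The main step is $(1) \Rightarrow (2)$. The goal is to produce a linear functional $\phi \in (\R^{|S|})^*$ that is strictly positive on $\overline{\uvindomain} \setminus \{0\}$; this places $\uvindomain$ inside the open half-space $\{\phi > 0\}$, making it a proper convex cone and $\vindomain$ a properly convex domain. After symmetrizing $A$ as above, I would apply Perron--Frobenius to the nonnegative matrix $2I - A$, which is irreducible because the complement of the nerve graph is connected by irreducibility of $(C,S)$: its Perron eigenvector $c$ is then strictly positive with maximal eigenvalue $\lambda_{\max}$. The existence of a negative eigenvalue of $A$ forces $\lambda_{\max} > 2$, so $Ac = (2 - \lambda_{\max})c$ has all coordinates strictly negative, meaning $w := \sum_i c_i v_i$ satisfies $\alpha_j(w) < 0$ for every $j$ and lies in the interior of $\tilde\Delta$. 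Taking $\phi(\cdot) := \langle w, \cdot \rangle_A$, the remaining work---and the main obstacle---is to verify, by induction on the word length of $\gamma \in C$, that $\phi$ stays positive on $\rho_A(\gamma) \tilde\Delta$; the inductive step should use that a reflection $\rho_A(s_i)$ moves a chamber only across its own wall $\ker \alpha_i$ without crossing $\{\phi = 0\}$, with the quantitative control coming from the strict positivity of the Perron vector $c$.
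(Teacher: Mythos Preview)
The paper does not give its own proof of this proposition; it is quoted directly from Vinberg's 1971 paper (Lemma~15 and Proposition~22 there), so there is no argument in the present paper to compare yours against.

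That said, your sketch has a genuine gap that is worth flagging. You assert that the Cartan matrix can always be symmetrized by rescaling the $v_i$ and $\alpha_i$, and that ``irreducibility of the nerve makes a consistent rescaling possible.'' This is false in general. Such a rescaling replaces $A_{ij}$ by $\lambda_i\lambda_j^{-1}A_{ij}$ (after normalizing the diagonal), and symmetry forces $(\lambda_i/\lambda_j)^2 = A_{ji}/A_{ij}$ for every non-commuting pair $s_i,s_j$. Connectedness of the complement of the nerve lets you solve these equations along a spanning tree, but every cycle in that complement graph imposes the compatibility condition $\prod_{\text{cycle}} A_{ij}/A_{ji} = 1$, which a Cartan matrix in the paper's sense need not satisfy. (Already with three pairwise non-commuting generators this is a nontrivial constraint on the six off-diagonal entries.) Both your $(3)\Rightarrow(1)$ argument (which needs an invariant positive-definite form) and your $(1)\Rightarrow(2)$ argument (which needs the pairing $\langle w,\cdot\rangle_A$ to make sense as a $\rho_A$-controlled linear functional) rest on this symmetrization, so neither goes through for non-symmetrizable $A$. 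Your Perron--Frobenius idea for producing a point $w$ in the interior of $\tilde\Delta$ is correct and does not require symmetry, but turning that into a linear functional separating $\uvindomain$ from the origin---the step you yourself flag as ``the main obstacle''---is exactly where Vinberg's argument does real work, and it is not handled by the bilinear form $\langle\cdot,\cdot\rangle_A$ in the non-symmetric case.
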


Sometimes, when working with representations of $C$ generated by
reflections, it will also be convenient to consider $\rho$-invariant
domains $\Omega \subset \P(\R^{|S|})$ \emph{other} than the Vinberg
domain.
\begin{definition}
  Let $\rho\colon C \to \SLpm(|S|, \R)$ be a simplicial representation
  of an irreducible right-angled Coxeter group $C$. We will call any
  $\rho$-invariant nonempty convex domain $\Omega \subset \vindomain$
  a \emph{reflection domain} for $\rho$.
\end{definition}

Typically, we lose nothing by requiring reflection domains to lie
inside of the Vinberg domain $\vindomain$, instead of merely asking
for them to be $\rho$-invariant convex open sets in
$\P(\R^{|S|})$. The reason is the following fact, due to
Danciger-Gu\'eritaud-Kassel-Lee-Marquis:
\begin{prop}[{See \cite[Proposition 4.1]{DGKLM}}]
  \label{prop:vinberg_maximal}
  Let $(C,S)$ be an irreducible infinite right-angled Coxeter system
  with $|S| > 2$, and let $\rho$ be a simplicial representation of $C$
  with nonsingular Cartan matrix. Then $\vindomain$ contains every
  $\rho$-invariant properly convex domain in $\P(\R^{|S|})$.
\end{prop}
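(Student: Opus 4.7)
The hypotheses combined with Proposition~\ref{prop:vinberg_properly_convex} immediately yield that $\vindomain$ is itself properly convex and the Cartan matrix $A$ is of negative type. Moreover, under the assumptions that $(C,S)$ is irreducible, $|S| > 2$, and $A$ is nonsingular, I claim $\rho$ acts irreducibly on $V = \R^{|S|}$: any proper $\rho$-invariant subspace would have to interact with the polars $\{v_s\}_{s \in S}$ in a way forcing the nerve of $(C,S)$ to split across a set of generators with disconnected complement, contradicting irreducibility. I would establish this irreducibility as a preliminary lemma.

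My strategy is then to lift to $V$ and work with cones. A properly convex domain $\Omega' \subset \P(V)$ lifts to a pair of disjoint open properly convex cones $\pm\tilde\Omega' \subset V \setminus \{0\}$, and $\rho(C)$ stabilizes this pair, so an index-at-most-two subgroup $C_0 \le C$ preserves $\tilde\Omega'$. The first step is to show that $\tilde\Omega' \cap \uvindomain \ne \emptyset$, possibly after swapping $\tilde\Omega'$ with $-\tilde\Omega'$. If instead both were disjoint from $\uvindomain$, one could separate $\overline{\tilde\Omega'}$ from $\overline{\uvindomain} \cup \overline{-\uvindomain}$ by a linear hyperplane. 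The set of such separators forms a nonempty $\rho^*$-invariant convex (in the projective sense) subset of $\P(V^*)$, and a fixed-point argument (e.g.\ taking the barycenter of a minimally-supported measure, or using the intersection of iterated nested convex sets) yields a $\rho^*$-fixed hyperplane, equivalently a proper $\rho$-invariant subspace of $V$, contradicting irreducibility.

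With $\tilde\Omega' \cap \uvindomain \ne \emptyset$ secured, I would conclude $\tilde\Omega' \subseteq \uvindomain$ as follows. Pick $v_0 \in \tilde\Omega' \cap \uvindomain$; for arbitrary $w \in \tilde\Omega'$, the segment $[v_0, w]$ lies in $\tilde\Omega'$ by convexity. Using the tiling of $\uvindomain$ by $\rho(C)$-translates of the simplicial cone $\tilde\Delta$ from Theorem~\ref{thm:vinberg_action}, any exit point of $[v_0, w]$ through $\partial \uvindomain$ would either lie on a reflection hyperplane $\ker(\alpha_s)$ at the boundary of some translate $\rho(\gamma)\tilde\Delta$, or be an ideal accumulation point of the tiling. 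In the first case, applying the corresponding reflection $r \in \rho(C_0)$ produces a configuration inside $\tilde\Omega'$ containing a segment and its reflected image across $\ker(\alpha_s)$ that meet transversely at a boundary point, violating proper convexity of $\tilde\Omega'$.

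The main technical obstacle is the second case: boundary points of $\uvindomain$ that are \emph{ideal}, in that they arise as accumulations of $\rho(C)$-orbits rather than points on any single reflection wall. No single reflection argument applies there. Instead, I would argue via accumulation, using that the limit set of $\rho(C)$ in $\P(V)$ is contained in $\partial \vindomain$ and that any $\rho$-invariant properly convex domain has closure containing this limit set. Combined with proper convexity of $\tilde\Omega'$, this forces $\overline{\tilde\Omega'}$ to lie on one side of every supporting hyperplane of $\overline{\uvindomain}$ at such ideal points, completing the inclusion $\tilde\Omega' \subseteq \uvindomain$ and hence $\Omega' \subseteq \vindomain$.
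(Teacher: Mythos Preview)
The paper does not prove this proposition; it simply cites \cite[Proposition~4.1]{DGKLM}. So there is no in-paper argument to compare your proposal against.

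That said, your sketch has a genuine structural gap in the inclusion step. You write that an exit point of the segment $[v_0,w]$ through $\partial\uvindomain$ ``would either lie on a reflection hyperplane $\ker(\alpha_s)$ at the boundary of some translate $\rho(\gamma)\tilde\Delta$, or be an ideal accumulation point.'' But reflection hyperplanes are the \emph{walls of the tiling} and lie in the \emph{interior} of the Tits cone, not on $\partial\uvindomain$. A point of $\partial\uvindomain$ that belongs to some $\rho(\gamma)\tilde\Delta$ lies on a face with \emph{infinite} stabilizer; any reflection fixing that face permutes tiles inside $\uvindomain$, so reflecting the segment does not push anything outside the Tits cone and produces no contradiction with proper convexity of $\tilde\Omega'$. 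Thus your ``first case'' does not arise as described, and the reflection argument you propose does not apply to actual boundary points.

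Your preliminary step also needs repair: you attempt to separate $\overline{\tilde\Omega'}$ from the \emph{non-convex} set $\overline{\uvindomain}\cup\overline{-\uvindomain}$ by a single linear hyperplane, which is not what Hahn--Banach gives you, and the subsequent ``fixed-point argument'' producing a $\rho^*$-fixed hyperplane is left at the level of a slogan. A workable route (closer to what is done in \cite{DGKLM}) is to exploit irreducibility together with the characterization of $\uvindomain$ via stabilizers or via the dual cone, rather than trying to trap a boundary crossing with a single reflection.
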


\subsection{Dual domains and the dual Vinberg domain}
\label{sec:dual_domains}

Let $V$ be a real vector space and let $\tilde{\Omega}$ be an open
convex cone in $V$. The cone $\tilde{\Omega}$ determines a \emph{dual
  cone} $\tilde{\Omega}^* \subset \P(V^*)$, defined by
\[
  \tilde{\Omega}^* = \{w \in V^* : w(v) < 0 \quad \forall x \in
  \overline{\tilde{\Omega}} \minus \{0\}\}.
\]
Then if $\Omega \subset \P(V)$ is some convex domain, the dual domain
$\Omega^* \subset \P(V^*)$ is the projectivization of
$\tilde{\Omega}^*$, for some (any) convex cone $\tilde{\Omega}$
projecting to $\Omega$.

It follows immediately that $\Omega^*$ is invariant under the dual
action of any $g \in \SLpm(V)$ which preserves $\Omega$. In addition,
it is not hard to verify that if $\Omega$ is both properly convex and
open, then so is $\Omega^*$. We note further that duality reverses
inclusions: if $\Omega_1 \subset \Omega_2$, then
$\Omega_2^* \subset \Omega_1^*$.

\subsubsection{Dual simplicial representations}

We now let $V$ be the vector space $\R^{|S|}$, and let
$\rho\colon C \to \SLpm(V)$ be a simplicial representation for a
right-angled Coxeter group $C$. If the associated Cartan matrix is
nonsingular, then the dual representation $\rho^*:C \to \SLpm(V^*)$ is
isomorphic to a simplicial representation of $C$ whose Cartan matrix
is the transpose of the Cartan matrix of $\rho$. Since $\rho$
preserves the Vinberg domain $\vindomain$, the dual representation
preserves the dual domain $\vindomain^*$.

When $C$ is irreducible, infinite, and not virtually cyclic,
\Cref{prop:vinberg_maximal} implies that $\vindomain^*$ is contained
in the Vinberg domain $\dvindomain$ associated to the dual
(simplicial) representation $\rho^*$. Explicitly, if we let
$\tilde{\mathscr{D}}$ be the cone
\[
  \{w \in V^* : v_s(w) \le 0 \quad \forall s \in S\},
\]
and let $\mathscr{D}$ be the projectivization of
$\tilde{\mathscr{D}}$, then $\dvindomain$ is the projectivization of
the interior of
\[
  \udvindomain:= \bigcup_{\gamma \in C} \rho^*(\gamma)
  \tilde{\mathscr{D}}.
\]
\Cref{thm:vinberg_action} also applies to $\rho^*$, with $\mathscr{D}$
taking the place of $\Delta$ and $\dvindomain$ taking the place of
$\vindomain$.

We always have:
\begin{prop}[The dual of a reflection domain is a reflection domain]
  \label{prop:dual_vinberg_domains}
  Let $A$ be a nonsingular Cartan matrix for an infinite irreducible
  right-angled Coxeter group $C$, and let $\rho_A$ be the associated
  simplicial representation. Let $\vindomain, \dvindomain$ be the
  Vinberg domains for $\rho_A, \rho_A^*$, respectively. If
  $\Omega \subseteq \vindomain$ is a reflection domain for $\rho_A$,
  then $\Omega^* \subseteq \dvindomain$.
\end{prop}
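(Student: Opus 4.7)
The plan is to view $\Omega^*$ as a properly convex $\rho_A^*$-invariant domain and then invoke \Cref{prop:vinberg_maximal} applied to the dual representation $\rho_A^*$.

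First I would observe, as noted in the paragraph preceding the statement, that since $A$ is nonsingular the dual representation $\rho_A^*$ is itself isomorphic to a simplicial representation of $C$, whose Cartan matrix is the transpose $A^T$. In particular $A^T$ is again a nonsingular Cartan matrix for the same infinite irreducible right-angled Coxeter group $C$, and the Vinberg domain for $\rho_A^*$ is precisely $\dvindomain$.

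Next I would verify that $\Omega^*$ is a nonempty, properly convex, open, $\rho_A^*$-invariant subset of $\P((\R^{|S|})^*)$. Invariance under $\rho_A^*$ is immediate from the definition of the contragredient action together with the $\rho_A$-invariance of $\Omega$. For the remaining properties, \Cref{prop:vinberg_properly_convex} ensures that $\vindomain$ is properly convex, since $C$ is infinite and $A$ is nonsingular; hence $\Omega \subseteq \vindomain$ is properly convex and open, and therefore so is its dual $\Omega^*$ by standard convex projective duality (nonempty properly convex open domains are taken to such domains under projective duality, and inclusions are reversed).

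Finally I would apply \Cref{prop:vinberg_maximal} to $\rho_A^*$ to conclude $\Omega^* \subseteq \dvindomain$. This step requires $|S| > 2$; the case $|S| = 2$, in which $C$ is the infinite dihedral group and $\P(\R^{|S|}) = \RP^1$, can be handled directly as reflection domains are then open intervals. I do not expect any serious obstacle in this argument: the essential inputs, namely Vinberg's maximality theorem and the identification of $\rho_A^*$ as the simplicial representation associated to $A^T$, are already recorded in the preceding exposition.
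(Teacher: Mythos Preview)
Your proposal is correct and matches the paper's own proof essentially verbatim: the paper also reduces to \Cref{prop:vinberg_maximal} applied to the dual simplicial representation when $C$ is not virtually cyclic (equivalently $|S|>2$), and remarks that the infinite dihedral case can be checked directly. Your additional verification that $\Omega^*$ is properly convex, open, and $\rho_A^*$-invariant is the natural unpacking of what the paper leaves implicit.
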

When $C$ is not virtually cyclic this result follows from
\Cref{prop:vinberg_maximal}, and if $C$ is an infinite dihedral group
this can be checked directly.

\section{Walls and half-cones in reflection domains}

In this section we continue to explore some features of the projective
geometry of reflection domains. Our main goal is to translate our
combinatorial understanding of geodesics in a right-angled Coxeter
group into information about the action of the corresponding sequence
of group elements in $\SLpm(n, \R)$.

\begin{definition}
  Let $C$ be an irreducible right-angled Coxeter group, and let
  $\Omega$ be a reflection domain for a simplicial representation
  $\rho$ of $C$. A \emph{wall} $W$ in $\Omega$ is the fixed-point set
  in $\Omega$ of a reflection $r$ in $C$. Whenever $W$ is a wall, we
  let $\overline{W}$ denote the closure of $W$ in
  $\overline{\Omega}$. Equivalently, $\overline{W}$ is the closure of
  $W$ in projective space $\P(V)$.

  The \emph{polar} of a wall $W$ is the polar of the reflection $r$
  fixing $W$.
\end{definition}

Every wall in $\Omega$ is the intersection of some projective
hyperplane with $\Omega$, so if $\Omega$ is properly convex, then each
wall separates $\Omega$ into two convex components. Further, every
wall $W$ is a union of codimension-1 faces of tiles in $\Omega$,
corresponding exactly to the set of edges in the Cayley graph of $C$
fixed by the reflection that defines $W$. Consequently, we have the
following:
\begin{prop}
  \label{prop:projective_wall_correspondence}
  Let $\Omega$ be a reflection domain for a simplicial representation
  of an infinite irreducible right-angled Coxeter group $C$. There is
  an equivariant one-to-one correspondence between walls in $\Omega$
  and walls in the Davis complex $\daviscx$, which satisfies the
  following properties.
  \begin{enumerate}
  \item Two walls in $\Omega$ intersect if and only if the
    corresponding walls in $\daviscx$ intersect (equivalently, if and
    only if the corresponding reflections in $C$ commute).
  \item Fix a basepoint $x_0$ in the interior of $\Delta \cap \Omega$,
    and let $\gamma \in C$. The set of walls in $\Omega$ separating
    $x_0$ from $\rho(\gamma) \cdot x_0$ corresponds precisely to the
    set of walls in $\daviscx$ separating $\identity$ from $\gamma$.
  \end{enumerate}
\end{prop}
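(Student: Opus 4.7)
The plan is to define the correspondence directly in terms of reflections. By definition every wall in $\Omega$ is the fixed-point set (in $\Omega$) of some reflection $r \in C$, and the reflection $r$ is recovered as the unique nontrivial element of $C$ fixing the wall pointwise; the analogous statement holds in $\daviscx$. Thus both collections of walls are in canonical bijection with the set of reflections of $C$, giving a bijection from the walls of $\Omega$ to the walls of $\daviscx$ that is automatically $C$-equivariant because the $C$-action on walls in either space corresponds to conjugation of reflections.

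For property (1), the Davis-complex side is standard: two walls in $\daviscx$ meet if and only if the reflections fixing them commute. If two reflections $r_1, r_2 \in C$ commute, then $\langle r_1, r_2 \rangle \cong (\Z/2)^2$, and (up to $C$-conjugacy) $r_1, r_2$ are commuting standard generators of $C$. The four tiles in $\daviscx$ at the corners $\{\identity, r_1, r_2, r_1 r_2\}$ meet along a common codimension-2 cube; transporting this through the tiling/Cayley duality of \Cref{thm:vinberg_action}(iv), the corresponding four tiles in $\Omega$ share a codimension-2 projective face that lies in both projective walls, witnessing their intersection inside $\Omega$. Conversely, if $r_1, r_2$ do not commute, then $\langle r_1, r_2 \rangle$ is infinite. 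The intersection of the two projective walls is contained in $\ker(\alpha_1) \cap \ker(\alpha_2)$, which lies in the $(+1)$-eigenspace of each reflection and is therefore pointwise fixed by $\langle r_1, r_2 \rangle$. If this intersection met $\Omega$, the infinite group $\langle r_1, r_2 \rangle$ would fix a point of $\Omega \subseteq \vindomain$, contradicting the proper discontinuity of the $C$-action on $\vindomain$ from \Cref{thm:vinberg_action}.

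For property (2), fix $x_0$ in the interior of $\Delta \cap \Omega$ and a geodesic word $s_{i_1} \cdots s_{i_n}$ representing $\gamma$. By \Cref{thm:vinberg_action}(iv) this word corresponds to a sequence of tiles $\Delta = \Delta_0, \Delta_1, \ldots, \Delta_n = \rho(\gamma) \cdot \Delta$ in $\Omega$, with $\Delta_{k-1}$ and $\Delta_k$ sharing a codimension-one face contained in the wall fixed by the reflection $\rho(s_{i_1} \cdots s_{i_{k-1}}) \cdot \rho(s_{i_k}) \cdot \rho(s_{i_1} \cdots s_{i_{k-1}})^{-1}$. Choosing a path from $x_0$ to $\rho(\gamma) \cdot x_0$ passing through the interiors of these shared faces, the walls it crosses are precisely these $n$ walls; by the equivariant correspondence and \Cref{prop:wall_expression} they match exactly the elements of $\walls(\identity, \gamma)$. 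Since $\Omega$ is convex and each wall is the intersection of $\Omega$ with a projective hyperplane, the set of walls crossed by any path in $\Omega$ coincides with the set of walls separating its endpoints, completing the proof. The main obstacle I expect is the non-commuting case of property (1), namely ensuring that the common hyperplane $\ker(\alpha_1) \cap \ker(\alpha_2)$ avoids $\Omega$; the rest of the proposition reduces fairly directly to the tiling/Cayley duality.
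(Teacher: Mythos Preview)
Your argument is essentially correct and supplies the details that the paper leaves implicit (the paper states the proposition as a consequence of the two sentences preceding it, without a formal proof). The overall structure---parametrize walls by reflections, then use the tiling/Cayley-graph duality of \Cref{thm:vinberg_action}---is exactly the intended one.

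There is one small gap worth patching. In the commuting case of property~(1), you appeal to \Cref{thm:vinberg_action}(iv) to conclude that the four tiles in $\Omega$ share a codimension-$2$ face. But \Cref{thm:vinberg_action}(iv) as stated only records the dual \emph{graph} of the tiling (tiles and their codimension-$1$ adjacencies); it does not immediately guarantee that the codimension-$2$ intersection $\ker(\alpha_{s}) \cap \ker(\alpha_{t})$ meets $\Omega$ when $\Omega \subsetneq \vindomain$. A clean fix: since $s$ and $t$ commute, the reflection $\rho(s)$ preserves the nonempty convex set $W_t = \ker(\alpha_t) \cap \Omega$, and for any $x \in W_t$ the midpoint (in an affine chart containing $\Omega$) of the segment from $x$ to $\rho(s)x$ lies in $W_t \cap \ker(\alpha_s) \cap \Omega$. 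Interestingly, the case you flagged as the main obstacle (non-commuting reflections) is handled very cleanly by your proper-discontinuity argument; it is the ``easy'' commuting case that needs the extra line.

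A final stylistic point: your last sentence in property~(2), ``the set of walls crossed by any path in $\Omega$ coincides with the set of walls separating its endpoints,'' is false as written (a path can cross a wall and return). What you need is only that a wall separates the endpoints if and only if your specific path crosses it an odd number of times; since that path crosses each of the $n$ listed walls exactly once and no others, the conclusion follows.
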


\Cref{prop:projective_wall_correspondence} tells us that the walls in
any reflection domain carry all of the same combinatorial information
as the walls in the Davis complex. So, throughout the rest of this
section, we will freely apply the combinatorial setup and results
regarding walls in Sections \ref{sec:cube_complexes_RACGs} and
\ref{sec:bounded_product_projections} to the walls in a reflection
domain $\Omega$.

In particular, for any group element $\gamma \in C$, we can regard the
poset of walls in $\daviscx$ separating $\identity$ from $\gamma$ as a
poset of projective walls in $\Omega$. Moreover, the notion of an
\emph{itinerary} traversing an element in $C$ also makes sense in this
context.
\begin{definition}
  If $W_1, \ldots, W_n$ is a sequence of walls in a reflection domain
  $\Omega$ corresponding to an itinerary of walls in the Davis complex
  $\daviscx$, we will say that $W_1, \ldots, W_n$ is an
  \emph{itinerary in $\Omega$} or an \emph{$\Omega$-itinerary}. The
  element $\gamma \in C$ \emph{traversed} by this itinerary is the
  element traversed by the corresponding itinerary in $\daviscx$.
\end{definition}

\subsection{Disjoint walls in $\vindomain$ and
  $\overline{\vindomain}$}

The projective walls in the Vinberg domain $\vindomain$ actually carry
some additional combinatorial information beyond what is encoded in
the corresponding walls in the Davis complex $\daviscx$: the
intersection pattern of the \emph{closures} of the set of walls in
$\overline{\vindomain}$ is also informed by the structure of the
Coxeter group $C$.

Specifically, we have the following:
\begin{lem}
  \label{lem:disjoint_walls}
  Let $C$ be a right-angled Coxeter group, and let $\vindomain$ be the
  Vinberg domain for a simplicial representation of $C$. If $W_1$,
  $W_2$ are walls in $\vindomain$, then
  $\overline{W}_1 \cap \overline{W}_2 \ne \emptyset$ if and only if
  $\gamma(W_1, W_2)$ lies in a proper standard subgroup of $C$.
\end{lem}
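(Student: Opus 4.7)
My plan is to establish the two directions separately, relying on Vinberg's description of the Tits cone as tessellated by $C$-translates of the fundamental simplicial cone $\tilde{\Delta}$, together with the classical identification of the pointwise stabilizer of (the relative interior of) each face $\tilde{F}_T$ of $\tilde{\Delta}$ with the standard subgroup $C(T)$.

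For the implication $\gamma := \gamma(W_1, W_2) \in C(T)$ with $T \subsetneq S$ implies $\overline{W}_1\cap\overline{W}_2\neq\emptyset$, I exhibit an explicit common point. Translating $W_1, W_2$ by a common element of $C$ (which leaves $\gamma(W_1,W_2)$ unchanged), I reduce to $W_1 = W(s_1)$ and $W_2 = \gamma\cdot W(s_2)$; since the types along an efficient itinerary from $W_1$ to $W_2$ spell a reduced word for $\gamma$, both $s_1, s_2 \in T$. The face $\tilde{F}_T := \tilde{\Delta} \cap \bigcap_{s \in T} \ker \alpha_s$ has dimension $|S|-|T|\geq 1$ and is fixed pointwise by $C(T)$, in particular by $\gamma$. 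Because $s_1, s_2 \in T$, we have $\tilde{F}_T \subseteq \overline{W(s_1)}$ and $\tilde{F}_T \subseteq \overline{W(s_2)}$, so the projectivization of $\tilde{F}_T$ lies in $\overline{W}_1 \cap \gamma\overline{W(s_2)} = \overline{W}_1\cap\overline{W}_2$.

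Conversely, given $p \in \overline{W}_1 \cap \overline{W}_2$, I apply Vinberg's theorem to find $T_p \subsetneq S$ (proper because $\tilde{F}_S$ degenerates to the origin of $V$, since the $\alpha_s$ span $V^*$) and $\gamma_0 \in C$ with $\mathrm{Stab}_C(p) = \gamma_0 C(T_p) \gamma_0^{-1}$; after translating by $\gamma_0^{-1}$, I assume $\mathrm{Stab}_C(p) = C(T_p)$. For any wall $V$ appearing in an efficient itinerary joining $W_1$ to $W_2$, including $W_1, W_2$ themselves or any wall separating them, the closures $\overline{W}_1$ and $\overline{W}_2$ lie in opposite closed half-spaces bounded by the hyperplane of $V$, which forces $p \in \overline{V}$ and hence the reflection fixing $V$ lies in $C(T_p)$. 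Every reflection in the Coxeter group $C(T_p)$ is a $C(T_p)$-conjugate of some generator in $T_p$; combining this with the fact that in any right-angled Coxeter group two standard generators that are conjugate in $C$ must be equal (apparent upon projecting to the abelianization $C/[C,C] \cong (\Z/2)^S$, where each element of $S$ maps to a distinct basis vector), we conclude $\type{V} \in T_p$ for every such wall. Since the types along the itinerary form a reduced word for $\gamma(W_1,W_2)$, this yields $\gamma(W_1,W_2) \in C(T_p)$, a proper standard subgroup.

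The main obstacle I anticipate is rigorously justifying the face-stabilizer identification at a point $p \in \overline{\vindomain}$ that may lie on the boundary $\partial\vindomain$: in principle $p$ could be a limit of tiles escaping to infinity rather than lying in the closure of any individual tile, in which case its stabilizer a priori need not be a conjugate of a standard subgroup. The strong constraint that $p$ lie on several projective hyperplanes coming from reflections should force $p$ into the closure of some tile of the tessellation, but carefully pinning this down is the delicate part of the converse direction.
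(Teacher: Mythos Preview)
Your forward direction is correct and matches the paper's implicit argument: the face $F_T$ of the fundamental simplex lies in $\overline{W(s)}$ for each $s\in T$, and is fixed by $C(T)$, hence lies in the required intersection.

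For the converse, you correctly reach the point $p\in\bigcap_i\overline{W_i}$ for all walls $W_i$ in an efficient itinerary (this is exactly the paper's \Cref{lem:itinerary_wall_intersect}). However, your subsequent step---identifying $\mathrm{Stab}_C(p)$ with a conjugate of a standard parabolic---has the genuine gap you yourself flag, and it is not easily repaired. Vinberg's description of point stabilizers applies to points of $\vindomain$, not of $\partial\vindomain$; a boundary point can, for instance, be the attracting fixed point of an infinite-order element, so its stabilizer need not be conjugate to any $C(T)$. There is no simple way to force $p$ into the closure of a single tile just from knowing it lies on finitely many reflection hyperplanes.

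The paper sidesteps this entirely. Instead of analyzing $\mathrm{Stab}_C(p)$, it proves directly (\Cref{lem:wall_intersect_gen}) that $\bigcap_i\overline{W_i}=\bigcap_i\overline{W(s_i)}$, by the following inductive trick: writing $\gamma_{n-1}=s_1\cdots s_{n-1}$, one has $W_n=\rho(\gamma_{n-1})W(s_n)$; but each $\rho(s_i)$ fixes $\bigcap_{i<n}\overline{W(s_i)}$ pointwise, so $\rho(\gamma_{n-1})$ does too, and one can absorb the group element into the intersection. Once this equality is established, the converse is immediate: $\bigcap_i\overline{W(s_i)}=\bigcap_i\ker(\alpha_{s_i})\cap\overline{\vindomain}$, and since the $\alpha_s$ are the coordinate functionals, this is nonempty precisely when $\{s_i\}\subsetneq S$. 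This argument is shorter and avoids any appeal to the structure of boundary stabilizers.
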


\Cref{lem:disjoint_walls} will follow from
\Cref{lem:itinerary_wall_intersect} and \Cref{lem:wall_intersect_gen}
below. For these two results, we assume that $\rho$ is a simplicial
representation for a right-angled Coxeter system $(C, S)$.
\begin{lem}
  \label{lem:itinerary_wall_intersect}
  If $W_1, \ldots, W_n$ is an efficient $\vindomain$-itinerary between
  $W_1$ and $W_n$, then
  $\overline{W_1} \cap \overline{W_n} = \bigcap_{i=1}^n
  \overline{W_i}$.
\end{lem}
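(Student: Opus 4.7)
The inclusion $\bigcap_{i=1}^n \overline{W_i} \subseteq \overline{W_1} \cap \overline{W_n}$ is automatic, so the real content is the reverse inclusion. My plan is to fix a point $x \in \overline{W_1} \cap \overline{W_n}$ and, for each intermediate $W_i$ with $1 < i < n$, show $x \in \overline{W_i}$ by approximating $x$ from inside $W_1$ and from inside $W_n$ and exploiting that efficiency forces $W_i$ to genuinely separate $W_1$ from $W_n$ inside $\vindomain$.

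The first step is to promote the combinatorial efficiency hypothesis to genuine projective separation in $\vindomain$. By definition each intermediate $W_i$ is disjoint from $W_1$ and from $W_n$ in $\daviscx$, hence in $\vindomain$ by part (1) of \Cref{prop:projective_wall_correspondence}. Fixing $\alpha, \beta \in C$ joined by the itinerary, the total ordering on $\walls(\alpha, \beta)$ induced by the itinerary is compatible with the partial order $<$ by \Cref{prop:itinerary_correspondence}, which forces $W_1 < W_i < W_n$; unpacking the definition of $<$, this means that $W_1$ lies on the $\alpha$-side of $W_i$ in $\daviscx$ while $W_n$ lies on the opposite side. Transferring this to $\vindomain$ via part (2) of \Cref{prop:projective_wall_correspondence} and using that each $W_j$ in $\vindomain$ is a union of faces of tiles corresponding to the endpoints of its Cayley edges, $W_1$ and $W_n$ lie in distinct connected components of $\vindomain \setminus W_i$.

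For the geometric conclusion, pick sequences $x_k \in W_1$ and $y_k \in W_n$ converging to $x$ in $\overline{\vindomain}$. Working in an affine chart containing a neighborhood of $x$, for $k$ large the segment $[x_k, y_k]$ lies inside $\vindomain$ by convexity, and since $x_k$ and $y_k$ sit in opposite components of $\vindomain \setminus W_i$, this segment crosses the hyperplane section $W_i$ at a unique point $z_k \in W_i$. Since $z_k$ is a convex combination of $x_k$ and $y_k$ and both endpoints converge to $x$, we get $z_k \to x$, whence $x \in \overline{W_i}$. I expect the combinatorial-to-geometric translation in the second paragraph to be the main technical point: once efficiency and partial-order compatibility have been upgraded to genuine projective separation of $W_1$ from $W_n$ by each intermediate $W_i$, the convex-geometric finish via approximating segments is routine.
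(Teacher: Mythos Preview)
Your proof is correct and follows essentially the same approach as the paper: both arguments reduce to showing that efficiency forces each intermediate $W_i$ to genuinely separate $W_1$ from $W_n$ in $\vindomain$, and then use convexity to conclude $\overline{W_1}\cap\overline{W_n}\subset\overline{W_i}$. The only cosmetic difference is in the convexity step: the paper observes directly that $\overline{\vindomain}\setminus\overline{W_i}$ has two components whose closures meet exactly in $\overline{W_i}$, whereas you phrase the same fact via approximating sequences and segments.
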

\begin{proof}
  First, observe that if $W_1 \cap W_n \ne \emptyset$, then $W_1, W_n$
  is already an efficient itinerary between $W_1$ and $W_n$. So we can
  suppose that $W_1 \cap W_n = \emptyset$. It suffices to show that
  $\overline{W_1} \cap \overline{W_n} \subset \overline{W_i}$ for
  every $1 < i < n$, so fix a wall $W_i$ strictly between $W_1$ and
  $W_n$. Since $W_1, \ldots, W_n$ is an efficient itinerary, $W_i$
  separates $W_1$ from $W_n$: that is, $\vindomain \minus W_i$ has
  exactly two connected components $O_-, O_+$, with $W_1 \subset O_-$
  and $W_n \subset O_+$.

  By convexity, $\overline{\vindomain} \minus \overline{W_i}$ also has
  exactly two components $O_-', O_+'$, which satisfy
  $\overline{O_-} = \overline{O_-'}$ and
  $\overline{O_+} = \overline{O_+'}$ (as for walls, the closures are
  taken in projective space). We have
  $\overline{W_1} \subset \overline{O_-}$ and
  $\overline{W_n} \subset \overline{O_+}$, and therefore
  $\overline{W_1} \cap \overline{W_n} \subset \overline{O_-'} \cap
  \overline{O_+'} = \overline{W_i}$.
\end{proof}

\begin{lem}
  \label{lem:wall_intersect_gen}
  Let $W_1, \ldots, W_n$ be an $\vindomain$-itinerary departing from
  the identity, and let $s_i = \type{W_i}$ for $1 \le i \le n$. Then
  $\bigcap_{i=1}^n \overline{W(s_i)} = \bigcap_{i=1}^n
  \overline{W_i}$.
\end{lem}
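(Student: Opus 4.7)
The plan is to prove the identity by induction on the length $n$ of the itinerary. The base case $n=1$ is trivial, since \Cref{prop:wall_expression} gives $W_1 = W(s_1)$.

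For the inductive step with $n \ge 2$, I would first observe that the sub-itinerary $W_2, W_3, \ldots, W_n$ departs from $s_1$, so translating by $\rho(s_1)$ produces a sequence $s_1 W_2, s_1 W_3, \ldots, s_1 W_n$ which is an $\vindomain$-itinerary departing from $\identity$ with the same type sequence $s_2, s_3, \ldots, s_n$ (this is immediate from \Cref{prop:projective_wall_correspondence} together with the definition of an itinerary, since $\rho(s_1)$ acts on the walls of $\vindomain$ in a manner compatible with the combinatorial action on the Davis complex). Applying the inductive hypothesis to this shorter itinerary gives $\bigcap_{i=2}^n \overline{s_1 W_i} = \bigcap_{i=2}^n \overline{W(s_i)}$, and since $\rho(s_1)$ is a projective involution this rearranges to
\[
\bigcap_{i=2}^{n} \overline{W_i} \;=\; s_1 \cdot \bigcap_{i=2}^{n} \overline{W(s_i)}.
\]

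I would then intersect both sides of this identity with $\overline{W_1} = \overline{W(s_1)}$. The essential geometric input is that $\rho(s_1)$ fixes $\overline{W(s_1)}$ pointwise, which holds since $\rho(s_1)$ fixes the entire reflection hyperplane of $s_1$ in $\P(V)$ pointwise. This implies that for any subset $X \subseteq \P(V)$ one has
\[
\overline{W(s_1)} \cap s_1 X \;=\; \overline{W(s_1)} \cap X,
\]
because each $x \in \overline{W(s_1)}$ satisfies $s_1 x = x$ and hence lies in $s_1 X$ exactly when it lies in $X$. Applying this with $X = \bigcap_{i=2}^n \overline{W(s_i)}$ and combining with the preceding display yields
\[
\bigcap_{i=1}^{n} \overline{W_i} \;=\; \overline{W(s_1)} \cap \bigcap_{i=2}^{n} \overline{W(s_i)} \;=\; \bigcap_{i=1}^{n} \overline{W(s_i)},
\]
closing the induction.

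I do not anticipate any serious obstacle: the whole argument reduces to a single pointwise fixed-point observation about the reflection $\rho(s_1)$, bootstrapped by induction. The only point requiring a moment's care is verifying that $s_1 W_2, \ldots, s_1 W_n$ really is an $\vindomain$-itinerary departing from $\identity$ with types $s_2, \ldots, s_n$, which is routine.
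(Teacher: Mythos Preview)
Your proof is correct and follows essentially the same approach as the paper: induction on $n$, reducing via the pointwise-fixing property of a reflection on its wall. The only cosmetic difference is that you peel off the first wall $W_1$ and translate the remaining itinerary by $s_1$, whereas the paper peels off the last wall $W_n$ and uses that the product $\gamma_{n-1}=s_1\cdots s_{n-1}$ fixes $\bigcap_{i=1}^{n-1}\overline{W(s_i)}$ pointwise; the underlying idea is identical.
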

\begin{proof}
  We induct on $n$. The case $n = 1$ is immediate. When $n > 1$, we
  let $\gamma_{n-1} = s_1 \cdots s_{n-1}$. By
  \Cref{prop:wall_expression} we have
  $W_n = \rho(\gamma_{n-1})W(s_n)$, so inductively we have
  \begin{align*}
    \bigcap_{i=1}^n \overline{W_i}
    = \left(\bigcap_{i=1}^{n-1} \overline{W_i} \right) \cap
    \overline{W_n} = \left(\bigcap_{i=1}^{n-1} \overline{W(s_i)} \right) \cap
    \rho(\gamma_{n-1}) \overline{W(s_n)}.
  \end{align*}
  Since $\rho(s_i)$ fixes $\overline{W(s_i)}$ pointwise for each
  $1 \le i \le n$, both $\rho(\gamma_{n-1})$ and
  $\rho(\gamma_{n-1}^{-1})$ fix the intersection
  $\bigcap_{i=1}^{n-1} \overline{W(s_i)}$ pointwise. So, we have
  \begin{align*}
    \left(\bigcap_{i=1}^{n-1} \overline{W(s_i)} \right) \cap
    \rho(\gamma_{n-1}) \overline{W(s_n)}
    &= \rho(\gamma_{n-1})\left(
      \left(\rho(\gamma_{n-1}^{-1})\bigcap_{i=1}^{n-1}\overline{W(s_i)}\right)
      \cap \overline{W(s_n)}\right)\\
    &= \rho(\gamma_{n-1}) \bigcap_{i=1}^n \overline{W(s_i)}
      = \bigcap_{i=1}^n \overline{W(s_i)}.
  \end{align*}
\end{proof}

\begin{proof}[Proof of \Cref{lem:disjoint_walls}]
  Fix an efficient itinerary $\mc{V} = V_1, \ldots, V_n$ whose first
  wall is $W_1$ and whose last wall is $W_2$. For any $h \in C$ we
  have $\elt(W_1, W_2) = \elt(hW_1, hW_2)$. So, after translating
  $W_1, W_2$ and $\mc{V}$ by an appropriate element $h$, we can assume
  that $\mc{V}$ departs from the identity.

  Set $s_i = \type{V_i}$ for $1 \le i \le n$, so that
  $\elt(W_1, W_2) = s_1 \cdots s_n$. From
  \Cref{lem:itinerary_wall_intersect} and
  \Cref{lem:wall_intersect_gen}, we know that
  $\overline{W_1} \cap \overline{W_2}$ is nonempty if and only if the
  intersection $\bigcap_{i=1}^n \overline{W(s_i)}$ is nonempty. But
  since the representation $\rho$ is simplicial, this occurs if and
  only if the set $S'$ of generators appearing in the geodesic word
  $s_1 \cdots s_n$ is a proper subset of $S$, i.e.\ if
  $\elt(V_1, V_n) = s_1 \cdots s_n$ lies in a proper standard subgroup
  of $C$.
\end{proof}

\subsection{Half-cones}

Consider an infinite geodesic sequence $\gamma_n$ in a right-angled
Coxeter group $C$. If we fix a reflection domain $\Omega$ for some
simplicial representation $\rho$ of $C$, the geodesic sequence
corresponds to an (infinite) $\Omega$-itinerary
$\mc{W} = W_1, W_2, \ldots$

Each wall $W_n$ in $\mc{W}$ cuts $\Omega$ into a pair of connected
components, which we call \emph{half-spaces}; we can fix a basepoint
$x_0 \in \Omega$, and let $H_n$ be the half-space which does
\emph{not} contain $x_0$.

By considering the infinite intersection of closed half-spaces
$\bigcap_{n=1}^\infty \overline{H_n}$, we can try and find a
well-defined ``limit point'' for the geodesic $\rho(\gamma_n)$ in
projective space $\P(V)$. However, we will run into some difficulties:
while it is often true that pairs of half-spaces are \emph{nested},
(meaning that $H_{n+k} \subset H_n$ for some $n, k > 0$), they will
never be \emph{strongly} nested, i.e.\ they will \emph{never} satisfy
$\overline{H_{n+k}} \subset H_n$. This makes it hard to guarantee that
the intersections $\bigcap_{i=1}^n \overline{H_n}$ decrease in size at
a ``uniform rate.''

We solve this problem by enlarging  the half-spaces $H_n$ to
subsets of projective space which we call
\emph{half-cones}. Half-cones satisfy the same nesting properties as
half-spaces (see \Cref{lem:halfcones_nest} below). They will often
(but not always) additionally satisfy the \emph{strong} nesting
property as well, which allows us to employ them in asymptotic
arguments later on. We will also be able to give a fairly complete
description of when the strong nesting property fails (see
\Cref{lem:halfcone_nesting_failure}), which will be key to later
inductive arguments.

For the precise definitions, we fix an irreducible infinite
right-angled Coxeter group $C$, let $V$ denote the vector space
$\R^{|S|}$, and let $\rho\colon C \to \SLpm(V)$ be a simplicial
representation of $C$. We let $\Omega \subset \vindomain$ be a
reflection domain for $\rho$, so that $\Omega$ is the projectivization
of an invariant convex sub-cone $\tilde{\Omega} \subset \uvindomain$
of the Tits cone.

We let $\tilde{\Delta}_\Omega$ denote the set
$\tilde{\Delta} \cap \tilde{\Omega}$. Then the projectivization
$\Delta_\Omega$ of $\tilde{\Delta}_\Omega$ is a fundamental domain for
the action of $C$ on $\Omega$.

\begin{definition}
  Let $W$ be a wall in $\Omega$, so that $W$ is the fixed-point set of
  a reflection $\identity - v \ten \alpha$ for some $v \in V$,
  $\alpha \in V^*$ with $\alpha(v) = 2$. Up to replacing $\alpha, v$
  with $-\alpha, -v$, we may assume that $\alpha(x) \le 0$ for
  every $x \in \tilde{\Delta}_\Omega$.

  \begin{itemize}
  \item We let $\uhalfspace_+(W)$ denote the interior of the convex
    cone $\{x \in \tilde{\Omega} : \alpha(x) \ge 0\}$. The
    \emph{positive half-space} over $W$, denoted $\halfspace_+(W)$, is
    the projectivization of $\uhalfspace_+(W)$. Similarly,
    $\uhalfspace_-(W)$ is the interior of the cone
    $\{x \in \tilde{\Omega} : \alpha(x) \le 0\}$, and the
    \emph{negative half-space} $\halfspace_-(W)$ is the
    projectivization of $\uhalfspace_-(W)$.
  \item We let $\uhalfcone_+(W)$ denote the interior of the convex
    hull (in $V$) of $\ker(\alpha) \cap \tilde{\Omega}$ and $v$. The
    \emph{positive half-cone} over $W$, denoted $\halfcone_+(W)$, is
    the projectivization of $\uhalfcone_+(W)$. We define the set
    $\uhalfcone_-(W)$ and the \emph{negative half-cone}
    $\halfcone_-(W)$ in the same way, but with $v$ replaced with
    $-v$. See \Cref{fig:2ii_halfcone}.
  \end{itemize}
\end{definition}

For $s \in S$, we let $\halfspace_\pm(s)$ and $\halfcone_\pm(s)$
denote the positive and negative half-spaces and half-cones over the
wall $W(s)$ fixed by $\rho(s)$.

Note that the positive half-space $\halfspace_+(W)$ over a wall $W$ is
precisely the connected component of $\Omega \minus W$ whose closure
does \emph{not} contain the fundamental domain $\Delta_\Omega$. This
also gives a way to distinguish positive and negative half-cones (see
\Cref{lem:halfspaces_in_halfcones} below).

\begin{figure}[h]
  \centering
  \def\svgwidth{.8\textwidth}
\begingroup%
  \makeatletter%
  \providecommand\color[2][]{%
    \errmessage{(Inkscape) Color is used for the text in Inkscape, but the package 'color.sty' is not loaded}%
    \renewcommand\color[2][]{}%
  }%
  \providecommand\transparent[1]{%
    \errmessage{(Inkscape) Transparency is used (non-zero) for the text in Inkscape, but the package 'transparent.sty' is not loaded}%
    \renewcommand\transparent[1]{}%
  }%
  \providecommand\rotatebox[2]{#2}%
  \newcommand*\fsize{\dimexpr\f@size pt\relax}%
  \newcommand*\lineheight[1]{\fontsize{\fsize}{#1\fsize}\selectfont}%
  \ifx\svgwidth\undefined%
    \setlength{\unitlength}{568.79998779bp}%
    \ifx\svgscale\undefined%
      \relax%
    \else%
      \setlength{\unitlength}{\unitlength * \real{\svgscale}}%
    \fi%
  \else%
    \setlength{\unitlength}{\svgwidth}%
  \fi%
  \global\let\svgwidth\undefined%
  \global\let\svgscale\undefined%
  \makeatother%
  \begin{picture}(1,0.61012661)%
    \lineheight{1}%
    \setlength\tabcolsep{0pt}%
    \put(0,0){\includegraphics[width=\unitlength,page=1]{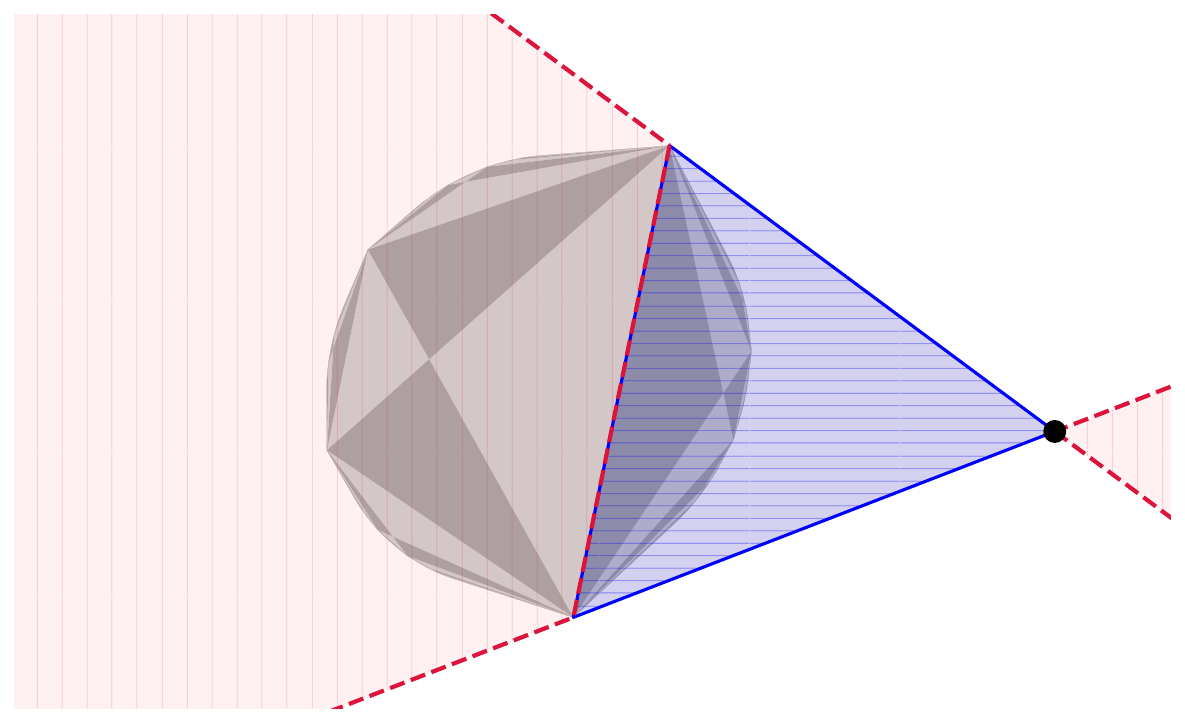}}%
    \put(0.70712025,0.14667726){\color[rgb]{0,0,1}\makebox(0,0)[lt]{\lineheight{1.25}\smash{\begin{tabular}[t]{l}$\halfcone_+(W)$\end{tabular}}}}%
    \put(0.14667722,0.35379749){\color[rgb]{0.8627451,0.07843137,0.23529412}\makebox(0,0)[lt]{\lineheight{1.25}\smash{\begin{tabular}[t]{l}$\halfcone_-(W)$\end{tabular}}}}%
    \put(0.88500002,0.28069621){\makebox(0,0)[lt]{\lineheight{1.25}\smash{\begin{tabular}[t]{l}$v$\end{tabular}}}}%
  \end{picture}%
\endgroup%

  \caption{Positive and negative half-cones over a wall $W$ in a
    Vinberg domain $\vindomain$. The polar of $W$ is $v$. The negative
    half-cone over $W$ is not contained in the depicted affine chart.}
  \label{fig:2ii_halfcone}
\end{figure}

\begin{lem}
  \label{lem:halfcone_formula}
  Let $W_1, \ldots, W_n$ be an $\Omega$-itinerary departing from the
  identity, traversing a geodesic word $s_1 \cdots s_n$ in $C$. Then
  $\halfspace_\pm(W_n) = \rho(s_1 \cdots s_{n-1})\halfspace_\pm(s_n)$
  and
  $\halfcone_\pm(W_n) = \rho(s_1 \cdots s_{n-1})\halfcone_\pm(s_n)$.
\end{lem}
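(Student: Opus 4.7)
The plan is a direct computation using \Cref{prop:wall_expression} together with the reflection formula, the $\rho$-invariance of $\tilde\Omega$, and the linearity of $\rho(s_1 \cdots s_{n-1})$. The only real content lies in tracking the sign convention that distinguishes $\halfcone_+$ from $\halfcone_-$: by definition the normal $\alpha$ to a wall $W$ is fixed by requiring $\alpha \le 0$ on $\tilde\Delta_\Omega$, so the main obstacle will be checking that the translated normal inherits the correct sign. Once that is in place, the half-space identity is immediate and the half-cone identity falls out by commuting a linear map with the convex-hull and interior operations.

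Write $\gamma_{n-1} = s_1 \cdots s_{n-1}$, so that $W_n = \rho(\gamma_{n-1}) W(s_n)$ by \Cref{prop:wall_expression}. Conjugating $\rho(s_n) = \identity - v_{s_n} \ten \alpha_{s_n}$ by $\rho(\gamma_{n-1})$ expresses the reflection fixing $W_n$ as $\identity - v_n \ten \alpha_n$ with $v_n := \rho(\gamma_{n-1})v_{s_n}$ and $\alpha_n := \alpha_{s_n} \circ \rho(\gamma_{n-1})^{-1}$. Since $\rho(\gamma_{n-1}) \tilde\Omega = \tilde\Omega$ and $\rho(\gamma_{n-1}) \ker\alpha_{s_n} = \ker\alpha_n$, linearity gives
\[
  \rho(\gamma_{n-1}) \uhalfcone_+(s_n) \;=\; \operatorname{int}\mathrm{conv}\!\bigl(\ker\alpha_n \cap \tilde\Omega,\ v_n\bigr),
\]
and this coincides with $\uhalfcone_+(W_n)$ provided that $(\alpha_n, v_n)$---rather than $(-\alpha_n, -v_n)$---are the correctly oriented data for $W_n$, i.e.\ provided $\alpha_n \le 0$ on $\tilde\Delta_\Omega$.

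To verify this sign, take $x \in \tilde\Delta_\Omega$ and note $\alpha_n(x) = \alpha_{s_n}(\rho(\gamma_{n-1}^{-1}) x)$, where $\rho(\gamma_{n-1}^{-1}) x$ lies in the tile $\rho(\gamma_{n-1}^{-1}) \tilde\Delta_\Omega$. It suffices to show that $W(s_n)$ does not separate this tile from $\tilde\Delta_\Omega$, equivalently, that $W(s_n) \notin \walls(\identity, \gamma_{n-1}^{-1})$. If it did, \Cref{prop:itinerary_correspondence} would produce a geodesic word for $\gamma_{n-1}^{-1}$ beginning with $s_n$, giving $\gamma_{n-1}^{-1} = s_n g'$ with $|g'| = n - 2$; inverting and right-multiplying by $s_n$ then yields $s_1 \cdots s_n = \gamma_{n-1} s_n = (g')^{-1}$, a word of length at most $n - 2$, contradicting the hypothesis that $s_1 \cdots s_n$ is geodesic of length $n$. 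Hence $\alpha_n \le 0$ on $\tilde\Delta_\Omega$, and so $\halfcone_+(W_n) = \rho(\gamma_{n-1}) \halfcone_+(s_n)$. Replacing $v$ by $-v$ throughout gives the identity for $\halfcone_-$, and the half-space identities follow from the same sign computation, since $\halfspace_\pm(W_n) = \{[x] \in \Omega : \pm \alpha_n(x) > 0\}$.
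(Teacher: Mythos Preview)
Your proof is correct and follows essentially the same approach as the paper: both use \Cref{prop:wall_expression} to transport $W(s_n)$ to $W_n$ and then reduce the question to checking that the transported functional $\alpha_n$ has the correct sign on $\tilde{\Delta}_\Omega$. The only difference is in how that sign is verified: the paper checks directly that $W_n$ separates $\tilde{\Delta}_\Omega$ from $\rho(s_1\cdots s_n)\tilde{\Delta}_\Omega$ (immediate from the definition of an itinerary), whereas you check the equivalent statement that $W(s_n)$ does \emph{not} separate $\tilde{\Delta}_\Omega$ from $\rho(\gamma_{n-1}^{-1})\tilde{\Delta}_\Omega$ via a short word-length contradiction. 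Both are valid; the paper's version is marginally more direct.
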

\begin{proof}
  We know from \Cref{prop:wall_expression} that $W_n$ is the wall
  $\rho(s_1 \cdots s_{n-1})W(s_n)$. So, if we fix $\alpha \in V^*$ so
  that $W_n = [\ker \alpha \cap \tilde{\Omega}]$ and
  $\alpha(\tilde{\Delta}_\Omega) \le 0$, we know that either
  $\alpha = \rho^*(s_1 \cdots s_{n-1})\alpha_{s_n}$ or
  $\alpha = -\rho^*(s_1 \cdots s_{n-1})\alpha_{s_n}$.

  Now, since the walls $W_1, \ldots, W_n$ are precisely the walls
  separating $\tilde{\Delta}_\Omega$ from
  $\rho(s_1 \cdots s_n)\tilde{\Delta}_\Omega$, we must have
  \[
    \alpha(\rho(s_1 \cdots s_n)\tilde{\Delta}_\Omega) \ge 0,
  \]
  or equivalently
  $(\rho^*(s_n \cdots s_1)\alpha)(\tilde{\Delta}_\Omega) \ge 0$. So,
  since
  $(\rho^*(s_n)\alpha_{s_n})(\tilde{\Delta}_\Omega) =
  -\alpha_{s_n}(\tilde{\Delta}_\Omega) \ge 0$, we have
  $\alpha = \rho^*(s_1 \cdots s_{n-1})\alpha_{s_n}$. This proves the
  desired result for half-spaces.

  A dual argument then shows that, if $v \in V$ is a lift of the polar
  of $W$ with $\alpha(v) = 2$, then
  $\rho(s_1 \cdots s_{n-1})v = v_{s_n}$, which proves the result for
  half-cones.
\end{proof}

\begin{lem}
  \label{lem:halfspaces_in_halfcones}
  For every wall $W$ in $\Omega$, we have
  $\halfspace_\pm(W) \subset \halfcone_\pm(W)$.
\end{lem}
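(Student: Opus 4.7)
The plan is to establish the inclusion by writing down an explicit convex decomposition. I will only treat the positive case, since the negative case follows by swapping $(v,\alpha)$ with $(-v,-\alpha)$, which simultaneously swaps $\halfspace_{+}$ with $\halfspace_{-}$ and $\halfcone_{+}$ with $\halfcone_{-}$. So fix the sign convention that $\alpha(\tilde{\Delta}_{\Omega}) \le 0$, let $r = \identity - v \otimes \alpha$ be the reflection in $\rho(C)$ fixing $W$, and take a point $[x] \in \halfspace_{+}(W)$, represented by some $x \in \tilde{\Omega}$ with $\alpha(x) > 0$.

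After rescaling $x$ by a positive constant (which does not change its projective class, and which is harmless because $\tilde{\Omega}$ is a cone), I may assume $\alpha(x) < 2$. Set $t := \alpha(x)/2 \in (0,1)$ and $y := x - tv$. Then $\alpha(y) = \alpha(x) - 2t = 0$, so $y \in \ker(\alpha)$. The key observation is the identity
\[
  y = \tfrac{1}{2}\bigl(x + r(x)\bigr),
\]
which follows from $r(x) = x - \alpha(x) v = x - 2tv$. Since $\Omega$ is a reflection domain for $\rho$, the invariant cone $\tilde{\Omega}$ is preserved by every element of $\rho(C)$, and in particular by $r$; hence $r(x) \in \tilde{\Omega}$, and convexity of $\tilde{\Omega}$ forces $y \in \tilde{\Omega}$. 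Thus $y \in \ker(\alpha) \cap \tilde{\Omega}$.

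Now set $y' := y/(1-t)$. Because $\tilde{\Omega}$ is a cone and $1-t > 0$, the point $y'$ still lies in $\ker(\alpha) \cap \tilde{\Omega}$, and
\[
  x = (1-t)\,y' + t\,v
\]
exhibits $x$ as a genuine convex combination of a point of $\ker(\alpha) \cap \tilde{\Omega}$ and the polar vector $v$. This places $x$ in the convex hull whose interior defines $\uhalfcone_{+}(W)$.

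To upgrade this to membership in the \emph{interior}, I would observe that the set $\{z \in \tilde{\Omega} : \alpha(z) > 0\}$ is open and contains an entire neighborhood of $x$, and that the construction above, applied to any such $z$ with $\alpha(z)<2$, produces a convex decomposition of $z$ of the same form. Hence the convex hull of $\ker(\alpha) \cap \tilde{\Omega}$ and $v$ contains an open neighborhood of $x$, giving $x \in \uhalfcone_{+}(W)$ as desired. I do not foresee a genuine obstacle; the only delicate point is keeping the sign convention on $\alpha$ compatible with the definitions of $\halfspace_{+}$ and $\halfcone_{+}$, i.e.\ making sure that $v$ is the polar vector pointing into the positive side of $W$.
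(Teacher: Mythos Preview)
Your proof is correct and follows essentially the same approach as the paper's: both arguments use the midpoint $\tfrac{1}{2}(x + r(x)) \in \ker(\alpha)\cap\tilde{\Omega}$ to exhibit $x$ as lying on the $v$-side of a point of $\ker(\alpha)\cap\tilde{\Omega}$, and both reduce to the positive case via the reflection symmetry. The only minor differences are that the paper first invokes \Cref{lem:halfcone_formula} to reduce to a wall $W(s)$ for a generator (which you correctly recognize is unnecessary, since the argument uses only $r$-invariance of $\tilde{\Omega}$), and that you rescale to force $t\in(0,1)$ so as to obtain an honest convex combination, whereas the paper writes $x = x_0 + t v_s$ directly.
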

\begin{proof}
  By \Cref{lem:halfcone_formula}, we just to check that
  $\halfspace_\pm(s) \subset \halfcone_\pm(s)$ for every $s \in S$. In
  fact, since the reflection $\rho(s)$ interchanges $\halfspace_+(s)$
  with $\halfspace_-(s)$, and $\halfcone_+(s)$ with $\halfcone_-(s)$,
  we just need to verify that
  $\halfspace_+(s) \subset \halfcone_+(s)$.

  We write $\rho(s) = \identity - v_s \ten \alpha_s$, and consider
  $x \in \uhalfspace_+(s) \minus \ker(\alpha_s)$. Since $\rho(s)$
  interchanges $\uhalfspace_+(s)$ and $\uhalfspace_-(s)$, we know that
  $\rho(s)x \in \uhalfspace_-(s)$. Then since $\tilde{\Omega}$ is a
  convex cone, the line segment $\ell \subset V$ joining $x$ to
  $\rho(s)x$ lies in $\tilde{\Omega}$. The segment $\ell$ is
  $\rho(s)$-invariant, and it must be transverse to $\ker(\alpha_s)$
  at a point $x_0 \in \ker(\alpha_s) \cap \tilde{\Omega}$. So, the
  endpoints of $\ell$ have the form $x_0 \pm t v_s$ for some $t >
  0$. But then since $\alpha_s(x) > 0$ by assumption we must have
  $x = x_0 + tv_s$, hence $x \in \uhalfcone_+(s)$.
\end{proof}

\subsection{Dual walls and dual half-cones}

\begin{definition}
  Given $\Omega$ a reflection domain for a simplicial representation
  $\rho$, and $W$ a wall in $\Omega$, fixed by a reflection
  $\rho(\gamma)$ for $\gamma \in C$, we write $W^*$ to denote the wall in
  the dual reflection domain $\Omega^*$ fixed by the reflection
  $\rho^*(\gamma)$.
\end{definition}

We observe the following useful consequence of
\Cref{lem:disjoint_walls}:
\begin{lem}
  \label{lem:dual_disjoint_walls}
  Let $C$ be an irreducible right-angled Coxeter group, and let
  $\vindomain$ be the Vinberg domain for a simplicial representation
  $\rho$ of $C$ with nonsingular Cartan matrix. Suppose that
  $W_1, W_2$ are two walls in $\vindomain$ such that
  $\overline{W}_1 \cap \overline{W}_2 = \emptyset$. Then
  $\overline{W_1^*} \cap \overline{W_2^*} = \emptyset$.
\end{lem}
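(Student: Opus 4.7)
The plan is to reduce the statement to a double application of \Cref{lem:disjoint_walls}, once for $\rho$ and once for its dual $\rho^*$, via the observation that the element of $C$ associated to an efficient itinerary between two walls depends only on the combinatorics of the Davis complex, not on the reflection domain being used.

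First, I would record that because the Cartan matrix $A$ of $\rho$ is nonsingular, the dual representation $\rho^*\colon C \to \SLpm(V^*)$ is itself isomorphic to a simplicial representation of $C$ (with Cartan matrix $A^T$, which is again nonsingular). In particular, $\rho^*$ has a Vinberg domain $\dvindomain \subset \P(V^*)$ to which \Cref{thm:vinberg_action} and \Cref{lem:disjoint_walls} both apply.

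Next I would use \Cref{prop:projective_wall_correspondence} to identify the walls in $\vindomain$ and in $\dvindomain$ with the walls of the Davis complex $\mathrm{D}(C,S)$: both identifications are equivariant, and under each identification a wall $W$ fixed by $\rho(\gamma)$ corresponds to the unique wall fixed by $\rho^*(\gamma)$, i.e.\ to $W^*$. Since efficient itineraries are defined purely in terms of the incidence and separation structure of walls in $\mathrm{D}(C,S)$, and this structure is shared by $\vindomain$ and $\dvindomain$, any efficient $\vindomain$-itinerary from $W_1$ to $W_2$ yields an efficient $\dvindomain$-itinerary from $W_1^*$ to $W_2^*$ via the correspondence, and these two itineraries traverse the same element of $C$. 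Hence $\gamma(W_1^*, W_2^*) = \gamma(W_1, W_2)$.

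The conclusion then falls out: by \Cref{lem:disjoint_walls} applied to $\rho$, the hypothesis $\overline{W}_1 \cap \overline{W}_2 = \emptyset$ is equivalent to $\gamma(W_1, W_2)$ not lying in any proper standard subgroup of $C$; by \Cref{lem:disjoint_walls} applied to $\rho^*$, the same condition on $\gamma(W_1^*, W_2^*) = \gamma(W_1, W_2)$ is equivalent to $\overline{W_1^*} \cap \overline{W_2^*} = \emptyset$. The only point that requires a little care is checking that the wall-correspondence genuinely preserves the efficient-itinerary structure—this is the main (and only) obstacle, but it is immediate once one notes that incidence of walls in either reflection domain is governed by commutation of the corresponding reflections in $C$ (\Cref{prop:projective_wall_correspondence}(1)), so the partial order $<$ on $\walls(\alpha,\beta)$ transfers verbatim.
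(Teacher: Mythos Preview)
Your approach is essentially the same as the paper's: reduce to \Cref{lem:disjoint_walls} applied once to $\rho$ and once to $\rho^*$, using that the criterion ``$\gamma(W_1,W_2)$ lies in a proper standard subgroup'' is purely combinatorial and hence representation-independent.

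There is one small but genuine gap. By definition, $W_i^*$ is the wall in the \emph{dual reflection domain} $\vindomain^*$ fixed by $\rho^*(\gamma_i)$, whereas \Cref{lem:disjoint_walls} applied to $\rho^*$ is a statement about walls in the Vinberg domain $\dvindomain$ of $\rho^*$. These two domains are not the same in general, so your final sentence ``by \Cref{lem:disjoint_walls} applied to $\rho^*$, \dots\ is equivalent to $\overline{W_1^*}\cap\overline{W_2^*}=\emptyset$'' does not follow directly. What \Cref{lem:disjoint_walls} actually gives you is that the walls in $\dvindomain$ fixed by $\rho^*(\gamma_1),\rho^*(\gamma_2)$ have disjoint closures. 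The paper closes this gap by invoking \Cref{prop:dual_vinberg_domains}: since $\vindomain^*\subseteq\dvindomain$, each $W_i^*$ is contained in the corresponding wall in $\dvindomain$, and hence disjointness of closures in $\dvindomain$ forces disjointness of the closures $\overline{W_1^*},\overline{W_2^*}$. You should insert this step.
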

\begin{proof}
  Let $\gamma_1$ and $\gamma_2$ be the reflections in $C$ such that
  $\rho(\gamma_i)$ fixes $W_i$ for $i = 1,2$. By
  \Cref{lem:disjoint_walls}, the walls $W_1$ and $W_2$ have disjoint
  closures if and only if $\gamma(W_1, W_2)$ does not lie in a proper
  standard subgroup of $C$. But this condition depends only on
  $\gamma_1$ and $\gamma_2$, and not on the specific simplicial
  representation $\rho$.

  Further, since the Cartan matrix of $\rho$ is nonsingular, the dual
  representation $\rho^*$ is also a simplicial representation. This
  means that $W_1$ and $W_2$ have disjoint closures if and only if the
  walls in $\dvindomain$ fixed by $\rho^*(\gamma_1)$,
  $\rho^*(\gamma_2)$ have disjoint closures (see
  \Cref{sec:dual_domains}). However, $\dvindomain$ contains
  $\vindomain^*$ (\Cref{prop:dual_vinberg_domains}), which means that
  these two walls contain $W_1^*$ and $W_2^*$.
\end{proof}

If $\tilde{\Omega} \subset \uvindomain$ is the invariant cone over a
reflection domain for $\rho$, then the dual cone $\tilde{\Omega}^*$ is
an invariant sub-cone of $\udvindomain$, projecting to the dual
reflection domain $\Omega^*$.  So, the previous section tells us that
we can define half-spaces $\halfspace_\pm(W^*)$ and half-cones
$\halfcone_\pm(W^*)$ over the wall $W^*$ in $\Omega^*$.

The half-cones $\halfcone_\pm(W)$ are themselves properly convex
domains, which means that we can also define their duals
$\halfcone_\pm(W)^* \subset \P(V^*)$ (see the beginning of
\Cref{sec:dual_domains}).
\begin{lem}
  \label{lem:halfcone_duality}
  For any wall $W$ in a reflection domain $\Omega$, we have
  $\halfcone_+(W)^* = \halfcone_-(W^*)$.
\end{lem}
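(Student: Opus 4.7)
The plan is to pass to explicit conic lifts in $V$ and $V^*$, compute the dual cone of one, and identify it with the other. Let $r = \identity - v \otimes \alpha \in \rho(C)$ denote the reflection fixing $W$, normalized so that $\alpha(v) = 2$. Since $\ker\alpha \cap \tilde\Omega$ is already a convex cone, the set of positive combinations
\[
\tilde H_+ := \{\lambda y + \mu v : y \in \ker\alpha \cap \tilde\Omega,\ \lambda, \mu \geq 0\} \subset V
\]
projectivizes to the same set as the interior of the convex hull, so $\halfcone_+(W) = [\tilde H_+]$. Symmetrically, $\halfcone_-(W^*) = [\tilde H_-]$, where $\tilde H_- := \{\lambda \psi - \mu \alpha : \psi \in v^\perp \cap \tilde\Omega^*,\ \lambda, \mu \geq 0\} \subset V^*$. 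The lemma therefore reduces to the identification of closed dual cones $\tilde H_+^* = \overline{\tilde H_-}$, where $\tilde H_+^* := \{\phi \in V^* : \phi \leq 0 \text{ on } \tilde H_+\} = \{\phi : \phi|_{\ker\alpha \cap \tilde\Omega} \leq 0 \text{ and } \phi(v) \leq 0\}$.

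The inclusion $\overline{\tilde H_-} \subseteq \tilde H_+^*$ is an immediate check: for $\phi = \lambda\psi - \mu\alpha$ with $\psi \in \overline{v^\perp \cap \tilde\Omega^*}$ and $\lambda,\mu \geq 0$, we have $\phi(v) = -2\mu \leq 0$, and on $\ker\alpha$ the functional $\phi$ agrees with $\lambda \psi$, which is nonpositive on $\ker\alpha \cap \tilde\Omega \subset \tilde\Omega$ since $\psi \in \overline{\tilde\Omega^*}$. The reverse inclusion is the main content. Given $\phi \in \tilde H_+^*$, I set $\phi_0 := \phi - (\phi(v)/2)\alpha$, which lies in $v^\perp$ by construction, and write $\phi = \phi_0 - \mu\alpha$ with $\mu := -\phi(v)/2 \geq 0$. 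It thus suffices to show $\phi_0 \in \overline{\tilde\Omega^*}$. For this I use the key trick that for any $x \in \tilde\Omega$ the midpoint $y := (x + rx)/2 = x - (\alpha(x)/2) v$ lies in $\ker\alpha$ (by direct computation) and in $\tilde\Omega$ (since $r \in \rho(C)$ preserves $\tilde\Omega$ and $\tilde\Omega$ is convex), so $y \in \ker\alpha \cap \tilde\Omega$. A short computation then gives $\phi_0(x) = \phi(y) \leq 0$ by hypothesis, so $\phi_0 \in \overline{\tilde\Omega^*}$, completing the proof.

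The main obstacle is precisely this projection step: it is the $r$-invariance of the reflection domain, combined with convexity, that lets one shift an arbitrary functional satisfying the dual half-cone conditions by a multiple of $\alpha$ to land in $v^\perp$ while retaining nonpositivity on all of $\tilde\Omega$. Once $\tilde H_+^* = \overline{\tilde H_-}$ is established, projectivizing interiors yields $\halfcone_+(W)^* = \halfcone_-(W^*)$ as claimed.
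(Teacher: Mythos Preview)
Your argument is correct and follows essentially the same route as the paper: decompose a functional in the dual cone as a multiple of $\alpha$ plus a component in $v^\perp$, and use reflection-invariance of $\tilde\Omega$ (via the midpoint $y = (x+rx)/2 \in \ker\alpha \cap \tilde\Omega$) to show the $v^\perp$ component lies in $\overline{\tilde\Omega^*}$; the paper does exactly this, citing \Cref{lem:halfspaces_in_halfcones} for the projection you compute explicitly. The one difference is that the paper first reduces to $W = W_s$ via \Cref{lem:halfcone_formula}, which in particular justifies the sign identification $\halfcone_-(W^*) = [\tilde H_-]$ (i.e.\ that the correctly normalized polar of $W^*$ is $\alpha$ rather than $-\alpha$); you assume this without comment, so a sentence invoking the dual version of \Cref{lem:halfcone_formula} would make your setup airtight.
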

\begin{proof}
  We will verify that this holds when $W$ is the projective wall $W_s$
  fixed by a reflection $\rho(s)$ for $s \in S$. In this case $W_s^*$
  is the projective wall in $\Omega^*$ fixed by $\rho^*(s)$. We can
  use \Cref{lem:halfcone_formula} (and its dual version) to see that
  the general case follows from this one: let $\mc{W}$ be an
  $\Omega$-itinerary $W_1, \ldots, W_n$ departing from the identity,
  with $W_n = W$, and let $s_1 \cdots s_n$ be the geodesic word
  traversed by $\mc{W}$. Then:
  \begin{align*}
    \halfcone_+(W)^* &= (\rho(s_1 \cdots s_{n-1})\halfcone_+(W_{s_n}))^*\\
                     &= \rho^*(s_1 \cdots s_{n-1})\halfcone_+(W_{s_n})^*\\
                     &= \rho^*(s_1 \cdots s_{n-1})\halfcone_-(W_{s_n}^*)\\
                     &= \halfcone_-(W^*).
  \end{align*}
  So, now fix $s \in S$. By definition, we have
  \[
    \uhalfcone_+(W_s) = \{x + tv_s : x \in \ker(\alpha_s) \cap
    \tilde{\Omega}, t > 0\},
  \]
  which means that $\uhalfcone_+(W_s)^*$ is the interior of the set
  \[
    \{w \in V_S^* : w(x + tv_s) \le 0 \textrm{ for all }x \in
    \ker(\alpha_s) \cap \tilde{\Omega}, t > 0\}.
  \]
  Now let $w$ be any point in $\uhalfcone_+(W_s)^*$. Since $v_s$ does
  not lie in $\ker(\alpha_s)$, we can uniquely write $w = w_1 + w_2$
  for $w_1 \in \R \alpha_s$ and $w_2 \in \ker(v_s)$ (viewing $v_s$ as
  an element of $V^{**}$). Since $v_s$ is a nonzero vector in the
  closure of $\uhalfcone_+(W_s)$, we know $w(v_s) < 0$. Then because
  $w(v_s) = w_1(v_s)$, and $\alpha_s(v_s) = 2$, we must have
  $w_1 = t\alpha_s$ for $t < 0$. And, any
  $x \in \ker(\alpha_s) \cap \tilde{\Omega}$ lies in the closure of
  $\uhalfcone_+(W_s)$ also, so we know $w(x) < 0$, hence $w_2(x) < 0$.

  By \Cref{lem:halfspaces_in_halfcones}, we know that for any
  $v \in \tilde{\Omega}$, we can write $v = x + tv_s$ for
  $x \in \ker(\alpha_s) \cap \tilde{\Omega}$ and $t \in \R$. Then
  $w_2(v) = w_2(x) < 0$, which means that
  $w_2 \in \ker(v_s) \cap \tilde{\Omega}^*$. This tells us that we can
  write $\uhalfcone_+(W_s)^*$ as the interior of
  \[
    \{w + t\alpha_s : w \in \ker(v_s) \cap \tilde{\Omega}^*, t < 0\},
  \]
  which is precisely the definition of $\uhalfcone_-(W_s^*)$.
\end{proof}

\subsection{Nested half-cones}
\label{sec:half_cone_nesting}

For the rest of the section, we will work exclusively with the Vinberg
domain $\vindomain$ for a simplicial representation $\rho$, rather
than an arbitrary reflection domain. Our goal is to understand when
the half-cones over a pair of walls $W_1, W_2$ in $\vindomain$ are
(strongly) nested inside of each other.

\begin{lem}[Half-cones nest]
  \label{lem:halfcones_nest}
  Let $C$ be an irreducible infinite right-angled Coxeter group, and
  let $\rho\colon C \to \SLpm(V)$ be a simplicial representation. Suppose
  that $W_1$ and $W_2$ are walls in $\vindomain$ such that
  $W_1 \cap W_2 = \emptyset$ and
  $\halfspace_+(W_2) \subset \halfspace_+(W_1)$. Then
  $\halfcone_+(W_2) \subset \halfcone_+(W_1)$. Moreover, if
  $\overline{W_1} \cap \overline{W_2} = \emptyset$, then
  $\partial \halfcone_+(W_1) \cap \overline{\halfcone_+(W_2)}$ is
  contained in $\partial \halfcone_+(W_1) \cap \overline{W_2}$.
\end{lem}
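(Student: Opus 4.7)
The plan is to treat the two claims in turn: the main inclusion is essentially convex-geometric bookkeeping, while the boundary statement requires a finer analysis using the disjoint-closure hypothesis.

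For the main inclusion, I identify the closed cone $\overline{\uhalfcone_+(W_i)}$ in $V$ as the closed convex cone generated by $\overline{\ker \alpha_i \cap \tilde{\Omega}}$ and the ray $\R_{\ge 0}v_i$, so the inclusion $\overline{\uhalfcone_+(W_2)} \subseteq \overline{\uhalfcone_+(W_1)}$ will follow once both generators of the former are shown to lie in the latter. The base cone $\overline{\ker \alpha_2 \cap \tilde{\Omega}}$ projectivizes to $\overline{W_2}$, and the hypotheses $\halfspace_+(W_2) \subset \halfspace_+(W_1)$ together with $W_1 \cap W_2 = \emptyset$ force $W_2 \subset \halfspace_+(W_1) \subset \halfcone_+(W_1)$, giving the first inclusion on taking closures. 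For the apex ray, I take any $y \in \ker \alpha_2 \cap \tilde{\Omega}$ and observe that $y + sv_2 \in \uhalfspace_+(W_2) \subset \uhalfspace_+(W_1) \subset \uhalfcone_+(W_1)$ for all $s > 0$; rescaling by $1/s$ and sending $s \to \infty$ exhibits $v_2$ in the closure. Since $\uhalfcone_+(W_2)$ is an open convex subset of $V$, it equals the interior of its closure, and the interior of $\overline{\uhalfcone_+(W_1)}$ is $\uhalfcone_+(W_1)$, yielding $\uhalfcone_+(W_2) \subseteq \uhalfcone_+(W_1)$.

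For the moreover part, I reformulate the goal as $\overline{\halfcone_+(W_2)} \setminus \overline{W_2} \subseteq \halfcone_+(W_1)$. A preliminary observation under the stronger hypothesis $\overline{W_1} \cap \overline{W_2} = \emptyset$ is that $\alpha_1$ is \emph{strictly} positive on $\overline{\ker \alpha_2 \cap \tilde{\Omega}} \setminus \{0\}$: it is nonnegative by the part-one inclusion, and a zero would place a point in $\overline{W_1} \cap \overline{W_2}$. Next I take $p \in \overline{\halfcone_+(W_2)} \setminus \overline{W_2}$ and lift to $\tilde{p} = y + sv_2$ with $y \in \overline{\ker \alpha_2 \cap \tilde{\Omega}}$; the condition $p \notin \overline{W_2}$ forces $s > 0$. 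In the generic case $[y] \in W_2$, both $[y] \in \halfcone_+(W_1)$ (open) and $[v_2] \in \overline{\halfcone_+(W_1)}$ (by Part 1), so by convexity every strictly intermediate point of the projective segment from $[y]$ to $[v_2]$ lies in $\halfcone_+(W_1)$, and $p$ is such a point.

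The remaining cases ($y = 0$, so $p = [v_2]$, and $y \neq 0$ with $[y] \in \partial \overline{W_2}$) I reduce to the generic one by analyzing the projective line from $[v_1]$ through $[\tilde{p}]$: writing $\tilde{p} = x + tv_1$ with $t = \alpha_1(\tilde{p})/2 > 0$ (positive by the strict positivity above), the point $x = \tfrac{1}{2}(\tilde{p} + \rho(s_1)\tilde{p})$ lies in $\overline{\ker\alpha_1 \cap \tilde{\Omega}}$, and I aim to show it lies in the relative interior. The disjoint-closure hypothesis is used here to prevent $[x]$ from landing on $\partial \overline{W_1}$, which is the only obstruction to the line crossing into the open wall. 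I expect this last step to be the main obstacle: chasing the projective geometry through $[v_1]$ and a boundary point of $\overline{W_2}$ (or the apex $[v_2]$, which typically lies outside $\overline{\vindomain}$) in order to rule out a simultaneous tangency at $\partial \overline{W_1}$ using only the hypothesis $\overline{W_1} \cap \overline{W_2} = \emptyset$.
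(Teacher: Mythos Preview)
Your scaling argument for the main inclusion has a real gap. You assert that $y + sv_2 \in \uhalfspace_+(W_2)$ for all $s > 0$, but this requires $y + sv_2 \in \uvindomain$ for all $s > 0$. Since $C$ is infinite irreducible, $\vindomain$ is properly convex, and in fact $[v_2] \notin \overline{\vindomain}$ (the kernel of $v_2$, viewed as a functional on $V^*$, meets the interior of the dual domain in the reflection wall for $\rho^*(\gamma_2)$). Hence the ray $\{y + sv_2 : s \ge 0\}$ exits $\uvindomain$ at some finite $s$. You only get $y + sv_2 \in \uhalfcone_+(W_1)$ for \emph{small} $s$, and rescaling those does not converge to $v_2$. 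The same problem recurs in your ``remaining cases'' for the moreover part: you claim $x = \tfrac{1}{2}(\tilde p + \rho(s_1)\tilde p) \in \overline{\ker\alpha_1 \cap \tilde\Omega}$, but when $\tilde p$ is $v_2$ (or has large $v_2$-component) it need not lie in $\overline{\uvindomain}$ at all, so there is no reason its reflection average lands in $\overline{\tilde W_1}$.

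The missing ingredient is a duality argument showing that the plane $V_{12} = \spn\{v_1, v_2\}$ meets $\overline{\uvindomain}$ nontrivially. If it did not, some $w \in \uvindomain^*$ would vanish on both $v_1$ and $v_2$, hence be fixed by $\rho^*(\gamma_1)$ and $\rho^*(\gamma_2)$, giving $W_1^* \cap W_2^* \neq \emptyset$ and forcing the two reflections to commute---contradicting $W_1 \cap W_2 = \emptyset$. Picking $x \in V_{12} \cap \overline{\uvindomain}$ and using $v_1 \notin \overline{\uvindomain}$ together with \Cref{lem:halfspaces_in_halfcones}, one writes $x = x_0 + tv_1$ with $x_0 \in \overline{\tilde W_1}$, so $v_2$ lies on the line through $x_0$ and $v_1$, hence in $\overline{\uhalfcone_+(W_1)} \cup \overline{\uhalfcone_-(W_1)}$. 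A direct computation (apply $\rho(\gamma_2)$ to a point of $\tilde W_1$) shows $\alpha_1(v_2) > 0$, ruling out the negative half-cone. For the moreover part, the hypothesis $\overline{W_1} \cap \overline{W_2} = \emptyset$ upgrades the intersection to $V_{12} \cap \uvindomain \neq \emptyset$ via \Cref{lem:dual_disjoint_walls}, so $x_0 \in \tilde W_1$ and $v_2 \in \uhalfcone_+(W_1)$ (open). Once $v_2$ is in the open half-cone, your own convexity argument (open convex set, one endpoint in the interior, the other in the closure) finishes both parts cleanly; the extra case analysis on $[y]$ is then unnecessary.
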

\begin{proof}
  We let $\gamma_1, \gamma_2$ be the elements of $C$ such that
  $\rho(\gamma_i)$ fixes $W_i$ for $i = 1,2$. We fix
  $\alpha_1, \alpha_2 \in V^*$ so that $\uhalfspace_+(W_1)$ lies in
  the positive half-space of $V$ defined by $\alpha_1$, and similarly
  for $\alpha_2, \uhalfspace_+(W_2)$. Then we let
  $\tilde{W}_1 = \ker \alpha_1 \cap \uvindomain$ and
  $\tilde{W}_2 = \ker \alpha_2 \cap \uvindomain$, and fix vectors
  $v_1, v_2 \in V$ so that $\uhalfcone_+(W_i)$ is the interior of the
  convex hull of $\tilde{W}_i$ and $v_i$ for $i = 1,2$. Since
  $\halfspace_+(W_2) \subset \halfspace_+(W_1)$, we know that
  $\tilde{W}_2 \subset \uhalfcone_+(W_1)$. We will show that
  $v_2 \in \overline{\uhalfcone_+(W_1)}$, and that if
  $\overline{W_2} \cap \overline{W_1} = \emptyset$, then
  $v_2 \in \uhalfcone_+(W_1)$. Then both parts of the lemma will
  follow from convexity of $\halfcone_+(W_1)$.

  Consider the 2-dimensional subspace spanned $V_{12}$ by $v_1$ and
  $v_2$. If $V_{12} \cap \overline{\uvindomain}$ is trivial, then
  since $\vindomain$ is properly convex there is some
  $w \in \uvindomain^*$ which vanishes on both $v_1$ and $v_2$. But
  then $w$ is fixed by both $\rho^*(\gamma_1)$ and $\rho^*(\gamma_2)$,
  implying that $W_1^* \cap W_2^*$ is nonempty. But this occurs if and
  only if the reflections fixing $W_1^*$ and $W_2^*$ commute, which we
  know is not the case because $W_1 \cap W_2 = \emptyset$. We conclude
  that $V_{12} \cap \overline{\uvindomain}$ is nontrivial, and let $x$
  be a nonzero point in this intersection.

  Next, suppose that $V_{12} \cap \uvindomain$ is trivial. Repeating
  the same argument from before, we see there is some nonzero
  $w \in \overline{\uvindomain^*}$ which vanishes on $v_1$ and $v_2$
  and therefore
  $\overline{W_1^*} \cap \overline{W_2^*} \ne \emptyset$. But by
  \Cref{lem:dual_disjoint_walls} this is impossible if
  $\overline{W_1} \cap \overline{W_2} = \emptyset$. We conclude that
  if $\overline{W_1} \cap \overline{W_2} = \emptyset$, then we can
  take our nonzero point $x \in V_{12} \cap \overline{\uvindomain}$ to
  lie in $V_{12} \cap \uvindomain$.

  Now, we can view $v_1$ as a linear functional on $V^*$, whose kernel
  intersects the interior of the dual domain $\uvindomain^*$ in the
  reflection wall for $\rho^*(\gamma_1)$. Equivalently, $v_1$ does not
  lie in the closure of the dual domain to $\uvindomain^*$ in
  $V^{**}$. But this dual domain is canonically $\uvindomain$, which
  means that $v_1 \notin \overline{\uvindomain}$.
  
  Because of this, \Cref{lem:halfspaces_in_halfcones} implies that
  $x = x_0 + tv_1$ for some $x_0 \in \overline{\tilde{W}_1}$ and
  $t \in \R$, hence $v_2 = x_0 + t'v_1$ for $t' \in \R$. Thus, we know
  that $v_2$ lies in the closure of the set
  $U = \uhalfcone_-(W_1) \cup \uhalfcone_+(W_1) \cup
  \tilde{W}_1$. Moreover, if
  $\overline{W_1} \cap \overline{W_2} = \emptyset$, we may take
  $x_0 \in \tilde{W}_1$, which implies that in fact $v_2 \in U$.

  Now, since $\uhalfspace_+(W_1)$ contains $\uhalfspace_+(W_2)$, we
  must have $\tilde{W_1} \subset \uhalfspace_-(W_2)$ and therefore
  \[
    \rho(\gamma_2)\tilde{W_1} \subset \uhalfspace_+(W_2) \subset
    \uhalfspace_+(W_1).
  \]
  Letting $y \in \tilde{W_1}$, we see that
  $\alpha_1(\rho(\gamma_2)y) > 0$, i.e.\
  $\alpha_1(y - \alpha_2(y)v_2) > 0$ and therefore
  $\alpha_1(v_2) > 0$. Thus, $v_2$ cannot lie in the closure of
  $\uhalfcone_-(W_1)$. This proves that
  $v_2 \in \overline{\uhalfcone_+(W_1)}$, and that
  $v_2 \in \uhalfcone_+(W_1)$ if
  $\overline{W_1} \cap \overline{W_2} = \emptyset$.
\end{proof}

\subsubsection{Strong nesting}
\label{sec:strong_nesting}
Now consider a pair of disjoint walls $W_1, W_2$ in $\vindomain$, such
that $\halfspace_+(W_2) \subset \halfspace_+(W_1)$. We want to know
precisely when the half-cone $\halfcone_+(W_2)$ is \emph{strongly}
nested inside of $\halfcone_+(W_1)$, i.e.\ when the closure of
$\halfcone_+(W_2)$ is contained in $\halfcone_+(W_1)$.

It is clear that this strong nesting cannot occur if
$\overline{W_1} \cap \overline{W_2}$ is nonempty (meaning, by
\Cref{lem:disjoint_walls}, that the group element $\elt(W_1, W_2)$
lies in a proper standard subgroup of $C$). One might hope that this
is the only situation in which strong nesting fails, and indeed, if
this were true, it would greatly simplify several inductive arguments
in the final section of this paper. But it turns out that this is not
the case; see \Cref{sec:appendix} for an explicit counterexample.

However, in the special case where a geodesic representing the group
element $\gamma = \elt(W_1, W_2)$ lies in a hyperbolic subcomplex of
$\daviscx$, the lemma below gives us some control over the situations
where the half-cones over $W_1, W_2$ fail to strongly nest.

\begin{lem}
  \label{lem:halfcone_nesting_failure}
  Let $\mc{W} = W_0, \ldots, W_{n+1}$ be an efficient itinerary in
  $\vindomain$, and suppose that
  $\halfcone_+(W_{n+1}) \subset \halfcone_+(W_0)$ but
  $\overline{\halfcone_+(W_{n+1})} \not\subset
  \halfcone_+(W_0)$. Suppose further that $\elt(\mc{W})$ has
  $D$-bounded product projections. Then, for a constant $R$ depending
  only on $D$, $\mc{W}$ is equivalent to an $\vindomain$-itinerary of
  the form $\mc{U}_-, \mc{U}_0, \mc{U}_+$, such that:
  \begin{enumerate}
  \item The itinerary $\mc{U}_0$ satisfies $|\mc{U}_0| < 2R$;
  \item The itineraries $\mc{U}_-, \mc{U}_+$ are both efficient;
  \item Both intersections $\bigcap_{U \in \mc{U}_+} \overline{U}$ and
    $\bigcap_{U \in \mc{U}_-}\overline{U}$ are nonempty.
  \end{enumerate}
\end{lem}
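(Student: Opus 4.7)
The plan is to apply \Cref{prop:disjoint_walls_I} to extract a structured landmark decomposition of $\gamma(\mc{W})$, and then to combine two successive applications of the moreover clause of \Cref{lem:halfcones_nest} to show that failure of strong nesting forces each landmark to cluster near either $W_0$ or $W_{n+1}$ in the partial order.

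First, applying \Cref{prop:disjoint_walls_I} to $\gamma := \gamma(\mc{W})$, which has $D$-bounded product projections, yields a constant $R_0 = R_0(D)$ and a structured itinerary $\mc{U} = \{W_1^*\}, \mc{V}_1, \ldots, \{W_m^*\}, \mc{V}_m$ equivalent to $\mc{W}$, whose landmarks $W_1^* < \cdots < W_m^*$ are pairwise disjoint. Since $\mc{W}$ is efficient, every non-endpoint wall of $\mc{W}$ is disjoint from (hence strictly greater than, in the partial order) $W_0$; combined with the fact that $W_1^*$ is chosen minimal in the partial order during the construction in the proof of \Cref{prop:disjoint_walls_I}, this forces $W_0 = W_1^*$, and symmetrically $W_{n+1} = W_m^*$.

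The core geometric claim is the following: if $W$ is any wall in $\vindomain$ with $W_0 < W < W_{n+1}$ in the partial order and $\overline{W_0} \cap \overline{W} = \overline{W} \cap \overline{W_{n+1}} = \emptyset$, then strong nesting $\overline{\halfcone_+(W_{n+1})} \subset \halfcone_+(W_0)$ must hold, contrary to our hypothesis. To prove it, comparability forces $W_0, W, W_{n+1}$ to be pairwise disjoint in $\vindomain$, so \Cref{lem:halfcones_nest} gives nested half-cones $\halfcone_+(W_{n+1}) \subset \halfcone_+(W) \subset \halfcone_+(W_0)$. Any point $p \in \overline{\halfcone_+(W_{n+1})} \cap \partial\halfcone_+(W_0)$ lies in $\overline{\halfcone_+(W)}$, so the moreover clause of \Cref{lem:halfcones_nest} applied to the pair $(W_0, W)$ forces $p \in \overline{W} \subset \partial\halfcone_+(W)$; a second application to $(W, W_{n+1})$ then yields $p \in \overline{W_{n+1}}$, contradicting $\overline{W} \cap \overline{W_{n+1}} = \emptyset$. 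Specializing to landmarks and taking the contrapositive, failure of strong nesting implies every landmark $W_l^*$ satisfies $\overline{W_0} \cap \overline{W_l^*} \ne \emptyset$ or $\overline{W_l^*} \cap \overline{W_{n+1}} \ne \emptyset$.

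By \Cref{lem:disjoint_walls}, the set of indices $l$ with $\overline{W_0} \cap \overline{W_l^*} \ne \emptyset$ is downward-closed in $\{1, \ldots, m\}$ (the geodesic word for $\gamma(W_0, W_{l'}^*)$ uses a subset of the generators used by $\gamma(W_0, W_l^*)$ whenever $l' \le l$), and symmetrically the indices with $\overline{W_l^*} \cap \overline{W_{n+1}} \ne \emptyset$ form an upward-closed set; their union being $\{1, \ldots, m\}$ gives $\ell_1 \le \ell_0 + 1$, where $\ell_0$ is the largest index of the former type and $\ell_1$ the smallest of the latter. I then take $\mc{U}_-$ to be the efficient itinerary from $W_0$ to $W_{\ell_0}^*$, take $\mc{U}_+$ to be the efficient itinerary from $W_{\ell_0+1}^*$ to $W_{n+1}$ (the degenerate case $\ell_0 = m$ is handled by setting $\mc{U}_- = \mc{W}$ and $\mc{U}_0, \mc{U}_+$ empty), and let $\mc{U}_0$ collect the remaining walls in a partial-order-compatible ordering. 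By \Cref{lem:itinerary_wall_intersect}, $\bigcap_{U \in \mc{U}_\pm} \overline{U}$ reduces to $\overline{W_0} \cap \overline{W_{\ell_0}^*}$ and $\overline{W_{\ell_0+1}^*} \cap \overline{W_{n+1}}$ respectively, both nonempty by the choice of $\ell_0, \ell_1$. Finally, any wall strictly between $W_{\ell_0}^*$ and $W_{\ell_0+1}^*$ in the partial order would have to appear between them in the $\mc{U}$-ordering and hence lie in $\mc{V}_{\ell_0}$; but walls in $\mc{V}_{\ell_0}$ intersect $W_{\ell_0}^*$ and are therefore incomparable with it, a contradiction. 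So every wall of $\mc{U}_0$ intersects $W_{\ell_0}^*$ or $W_{\ell_0+1}^*$, and property (d) of \Cref{prop:disjoint_walls_I} yields $|\mc{U}_0| \le 2R_0 < 2R$ for $R := R_0 + 1$. The main obstacle is the two-stage moreover argument in the third paragraph: it is what converts the analytic hypothesis on half-cone closures into a combinatorial constraint on wall-closure intersections, which is what allows the bounded product projections machinery to be deployed.
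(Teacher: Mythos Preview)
Your proof is correct and takes a genuinely different route from the paper's. The paper argues geometrically: it fixes a boundary point $x_{n+1}\in\partial W_{n+1}\cap\partial\halfcone_+(W_0)$, builds a projective segment $\ell\subset\partial\vindomain$ from $\partial W_0$ to $x_{n+1}$ inside $\partial\halfcone_+(W_0)$, shows (via a $2$-plane through the polars $v_0,v_{n+1}$) that every intermediate wall's closure meets $\overline\ell$ at some $x_i$, and then partitions walls according to whether $x_i$ lies in the interior of $\ell$ or equals $x_{n+1}$; \Cref{lem:minimal_wall_intersections} is used to trim each side to an efficient sub-itinerary, and a separate supporting-hyperplane argument plus \Cref{lem:dual_disjoint_walls} handles the nonemptiness of $\bigcap_{U\in\mc U_-}\overline U$. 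Your argument bypasses all of this concrete geometry: the two-stage application of the ``moreover'' clause of \Cref{lem:halfcones_nest} gives a clean dichotomy $\overline{W_0}\cap\overline W\neq\emptyset$ or $\overline W\cap\overline{W_{n+1}}\neq\emptyset$ for every comparable intermediate wall, and the landmark structure from \Cref{prop:disjoint_walls_I} then organizes this into the required decomposition with the size bound coming directly from condition~(d). Your approach is shorter and more combinatorial; the paper's approach extracts more explicit projective information (the segment $\ell$ and incidence points $x_i$), though this is not used elsewhere.

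Two small points worth tightening. First, ``symmetrically $W_{n+1}=W_m^*$'' is not quite right, since the iterative construction in the proof of \Cref{prop:disjoint_walls_I} always selects \emph{minimal} walls; the correct justification is that $W_{n+1}$ intersects no other wall of $\mc W$ (by efficiency), hence cannot lie in any $\mc V_i$, hence is a landmark, and is maximal among the pairwise-disjoint landmarks. Second, the inclusion $\overline W\subset\partial\halfcone_+(W)$ in your core claim deserves a word: points of $\overline W$ satisfy $\alpha=0$ while points of $\halfcone_+(W)$ satisfy $\alpha>0$, so $\overline W$ lies in $\overline{\halfcone_+(W)}\setminus\halfcone_+(W)$.
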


In light of \Cref{lem:disjoint_walls}, the lemma above tells us that a
pair of disjoint walls in the Vinberg domain will fail to have
strictly nested half-cones only if the group element traversed by an
efficient itinerary between them is (roughly) a product of two
elements which each lie in a proper standard subgroup.

\begin{proof}
  First, if $\overline{W_0} \cap \overline{W_{n+1}}$ is nonempty, then
  by \Cref{lem:itinerary_wall_intersect} we can just take
  $\mc{U}_- = \mc{W}$ and $\mc{U}_0, \mc{U}_+$ to be empty. So, assume
  $\overline{W_0} \cap \overline{W_{n+1}} = \emptyset$.
  
  From \Cref{lem:halfcones_nest}, if $\partial \halfcone_+(W_0)$
  intersects $\overline{\halfcone_+(W_{n+1})}$, then
  $\partial W_{n+1}$ contains a point $x_{n+1}$ in the boundary of
  $\halfcone_+(W_0)$. Then since
  $\overline{W_0} \cap \overline{W_{n+1}} = \emptyset$, there is a
  point $x_0 \in \partial W_0$ and a nontrivial projective segment
  $\ell \subset \partial \halfcone_+(W_0)$ joining $x_0$ to $x_{n+1}$,
  such that the projective span $L$ of $\ell$ contains the tip $v_0$
  of $\halfcone_+(W_0)$. By convexity, $\ell$ is contained in
  $\overline{\vindomain}$. And, since
  $\ell \subset \partial \halfcone_+(W_0)$,
  \Cref{lem:halfspaces_in_halfcones} implies that
  $\ell \subset \partial \vindomain$. Further, the reflection $R_0$
  fixing $W_0$ acts by a reflection on $L$ fixing $x_0$, and preserves
  $\partial \vindomain$. So the union $\ell' = \ell \cup R_0\ell$ is a
  projective segment in $\partial \vindomain$ containing $x_0$ in its
  interior.

  Let $v_{n+1}$ be the polar of $W_{n+1}$, and consider the projective
  subspace $P$ spanned by $L$ and $v_{n+1}$. First, suppose that $P$
  does not intersect the interior of $\vindomain$. Then, there is a
  \emph{supporting hyperplane} $H$ of $\vindomain$ containing $P$,
  i.e.\ a hyperplane in $\P(V)$ which intersects
  $\overline{\vindomain}$ but not $\vindomain$. In particular, this
  supporting hyperplane contains both $v_{n+1}$ and $v_0$, so it is
  preserved by the reflections fixing $W_0$ and $W_{n+1}$. So, $H$
  lies in $\overline{W_0^*} \cap \overline{W_{n+1}^*}$. But,
  \Cref{lem:dual_disjoint_walls} implies that this intersection is
  empty once $\overline{W_0} \cap \overline{W_{n+1}}$ is empty.

  We conclude that $P$ intersects the interior of $\vindomain$. In
  particular $L$ is a proper subspace of $P$, so $P$ is a projective
  $2$-plane. Since each wall $W_i$ for $1 < i < n$ is disjoint from
  both $W_0$ and $W_{n+1}$, the intersection $W_i \cap P$ cannot span
  $P$. So, $W_i \cap P$ is a projective segment $\ell_i$. Each
  $\ell_i$ must separate $W_0 \cap P$ from $W_{n+1} \cap P$ in
  $\vindomain \cap P$, so the closure of $\ell_i$ intersects the
  (closed) segment $\ell$ at a point $x_i$. See
  \Cref{fig:halfcone_nest_failure}.

  Then, by reordering the walls in $\mc{W}$, we can obtain an
  equivalent efficient itinerary where the corresponding ordering of
  the walls $W_i$ is compatible with the (partial) ordering of the
  $x_i$ along $\ell$. We partition this itinerary into two pieces
  $\mc{U}_-', \mc{U}_+'$, where $\mc{U}_-'$ consists of the walls
  $W_i$ such that $x_i \in \ell \minus \{x_{n+1}\}$, and $\mc{U}_+'$
  consists of the walls $W_i$ such that $x_i = x_{n+1}$.

  \begin{figure}[h]
    \centering
\begingroup%
  \makeatletter%
  \providecommand\color[2][]{%
    \errmessage{(Inkscape) Color is used for the text in Inkscape, but the package 'color.sty' is not loaded}%
    \renewcommand\color[2][]{}%
  }%
  \providecommand\transparent[1]{%
    \errmessage{(Inkscape) Transparency is used (non-zero) for the text in Inkscape, but the package 'transparent.sty' is not loaded}%
    \renewcommand\transparent[1]{}%
  }%
  \providecommand\rotatebox[2]{#2}%
  \newcommand*\fsize{\dimexpr\f@size pt\relax}%
  \newcommand*\lineheight[1]{\fontsize{\fsize}{#1\fsize}\selectfont}%
  \ifx\svgwidth\undefined%
    \setlength{\unitlength}{250.92628539bp}%
    \ifx\svgscale\undefined%
      \relax%
    \else%
      \setlength{\unitlength}{\unitlength * \real{\svgscale}}%
    \fi%
  \else%
    \setlength{\unitlength}{\svgwidth}%
  \fi%
  \global\let\svgwidth\undefined%
  \global\let\svgscale\undefined%
  \makeatother%
  \begin{picture}(1,0.52495097)%
    \lineheight{1}%
    \setlength\tabcolsep{0pt}%
    \put(0,0){\includegraphics[width=\unitlength,page=1]{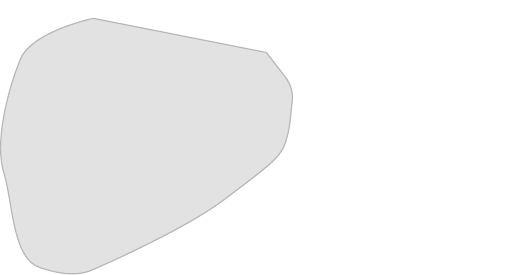}}%
    \put(0.38901352,0.45971627){\color[rgb]{0,0,0}\makebox(0,0)[lt]{\lineheight{1.25}\smash{\begin{tabular}[t]{l}$\ell$\end{tabular}}}}%
    \put(0,0){\includegraphics[width=\unitlength,page=2]{halfcone_nest_failure.pdf}}%
    \put(0.19859008,0.28302354){\color[rgb]{0,0,0}\makebox(0,0)[lt]{\lineheight{1.25}\smash{\begin{tabular}[t]{l}$W_0$\end{tabular}}}}%
    \put(0.90359557,0.37584059){\color[rgb]{0,0,0}\makebox(0,0)[lt]{\lineheight{1.25}\smash{\begin{tabular}[t]{l}$v_0$\end{tabular}}}}%
    \put(0.26183131,0.49222095){\color[rgb]{0,0,0}\makebox(0,0)[lt]{\lineheight{1.25}\smash{\begin{tabular}[t]{l}$x_0$\end{tabular}}}}%
    \put(0.49523612,0.44364163){\color[rgb]{0,0,0}\makebox(0,0)[lt]{\lineheight{1.25}\smash{\begin{tabular}[t]{l}$x_{n+1}$\end{tabular}}}}%
    \put(0,0){\includegraphics[width=\unitlength,page=3]{halfcone_nest_failure.pdf}}%
  \end{picture}%
\endgroup%

    \caption{Illustration for the proof of
      \Cref{lem:halfcone_nesting_failure}. The walls separating $W_0$
      from $W_{n+1}$ all intersect either the interior of
      $\ell \cup R_0\ell$, or the endpoint $x_{n+1}$.}
    \label{fig:halfcone_nest_failure}
  \end{figure}

  \Cref{lem:minimal_wall_intersections} then says that there is a
  uniform constant $R$ so that $\mc{U}_+'$ is equivalent to an
  itinerary $\mc{U}_0'', \mc{U}_+$, where
  $|\mc{U}_0''| \le R$ and $\mc{U}_+$ contains a unique
  minimal wall. Since $\mc{U}_+$ also contains the (unique) maximal
  wall $W_{n+1}$ in $\mc{W}$, it must be efficient. Similarly, we can
  find an itinerary $\mc{U}_-, \mc{U}_0'$ equivalent to $\mc{U}'_-$,
  so that $\mc{U}_-$ is efficient, and $|\mc{U}_0'| \le R$. We let
  $\mc{U}_0 = \mc{U}_0', \mc{U}_0''$, so that $\mc{W}$ is equivalent
  to $\mc{U}_-, \mc{U}_0, \mc{U}_+$.

  Since $x_{n+1} \in \overline{W_i}$ for each $W_i$ in $\mc{U}_+$, 
  we already know that the intersection
  $\bigcap_{U \in \mc{U}_+} \overline{U}$ is nonempty. To see that the
  intersection $\bigcap_{U \in \mc{U}_-}\overline{U}$ is also
  nonempty, let $W_m$ be the (unique) maximal wall in $\mc{U}_-$, so
  that $\overline{W_m}$ intersects the interior of the projective line
  segment $\ell' \subset \partial \vindomain$ at the point $x_m$. Any
  supporting hyperplane of $\vindomain$ at $x_m$ must contain $\ell'$,
  which means that any such hyperplane contains the projective line
  $L$, and in particular contains the point $v_0$.

  Let $v_m$ be the polar of the wall $W_m$. By
  \Cref{lem:halfspaces_in_halfcones}, the projective line spanned by
  $x_m$ and $v_m$ does not intersect $\vindomain$, so it is contained
  in some supporting hyperplane $H$ of $\vindomain$ at $x_m$. Then $H$
  is an element of $\overline{\vindomain^*}$ which contains both $v_m$
  and $v_0$, so it is preserved by the reflections fixing $W_0$ and
  $W_m$. That is, $H \in \overline{W_0^*} \cap \overline{W_m^*}$, so
  \Cref{lem:dual_disjoint_walls} implies that the intersection
  $\overline{W_0} \cap \overline{W_m}$ is also nonempty. Then we are
  done after applying \Cref{lem:itinerary_wall_intersect}.
\end{proof}

\section{Singular values, stable and unstable subspaces, and
  regularity}
\label{sec:singular_values}

The definition of an Anosov representation used in this paper
(\Cref{defn:anosov}) is rooted in linear algebra---specifically, in
the \emph{singular value decomposition} of matrices in $\GL(d,
\R)$. The main purpose of this section is to provide various methods
for estimating singular values and singular value gaps, which we will
use throughout the rest of the paper.

Much of the content of this section can also be interpreted in terms
of the geometry and dynamics of semisimple Lie groups or their
associated Riemannian symmetric spaces, and some of the cited results
rely on this perspective (or other complicated tools) for their
proofs. However, we will state all of the estimates we need in
elementary terms, and refer to \cite[Sect. 7]{BPS} for an overview of
the connection between the different viewpoints.

\subsection{Singular values} \label{subsec:singular values}

We equip $\R^d$ with its standard Euclidean inner product. Given a
matrix $g \in \GL(d,\R)$, recall that the $i$\textsuperscript{th}
singular value $\sigma_i(g)$ is the $i$\textsuperscript{th} largest
eigenvalue of the linear map $\sqrt{gg^T}$, where $\sqrt{gg^T}$ is the
unique positive-definite matrix squaring to the positive-definite
matrix $gg^T$.

The definition of $\sigma_i(g)$ depends on the choice of
positive-definite inner product on $\R^d$. We obtain different
singular values for $g$ by choosing a different inner product
$\langle \cdot, \cdot \rangle$, and using the adjoint of $g$ with
respect to $\langle \cdot, \cdot \rangle$ in place of the transpose
matrix $g^T$. Geometrically, the $i$\textsuperscript{th} singular
value is the length of the $i$\textsuperscript{th} longest axis of the
image ellipsoid
\[
  \{gv : v \in \R^d, \|v\| = 1\},
\]
measured with respect to the chosen inner product on $\R^d$.

If $\sigma_i(g) > \sigma_{i+1}(g)$, then we define the unstable
subspace $\unstable{i}(g)$ as the subspace of $\R^d$ spanned by the
$i$ longest axes of this image ellipsoid. If
$\sigma_{d-i}(g) > \sigma_{d-i+1}(g)$, we also have the stable space
$\stable{i}(g) = \unstable{i}(g^{-1})$, which is the subspace spanned
by the $i$ least expanded axes of the unit sphere
$\{v \in \R^d : \|v\| = 1\}$.

All of this information is combined in the singular value
decomposition (or $KAK$ decomposition) of $g$: any element
$g \in \GL(d, \R)$ can be written as a product $g = ka\ell$, where $a$
is a diagonal matrix with nonzero entries
$\sigma_1, \ldots, \sigma_d$, and $k, \ell \in \mathrm{O}(d)$ are such
that $k$ takes the span of the first $i$ standard basis vectors to
$E_i^+(g)$, and $\ell$ takes $E_i^-(g)$ to the span of the last $i$
standard basis vectors.
  
We let $\mu_i(g)$ denote $\log\sigma_i(g)$, and collect the logarithms
together in the vector
$\mu(g) := (\mu_1(g), \mu_2(g), \dots, \mu_d(g))$. We will also write
$\mu_{i,i+1}(g)$ as shorthand for $\mu_i(g) - \mu_{i+1}(g)$.

\subsection{Additivity estimates for singular value gaps}
As noted in the introduction, a representation
$\rho\colon \Gamma \to \GL(d,\R)$ of a finitely-generated group
$\Gamma$ is $1$-Anosov if and only if there exist constants $A, B > 0$
such that $\mu_{1,2}(\rho(\gamma)) \geq A|\gamma| - B$ for all
$\gamma \in \Gamma$.

This definition depends on a choice of word metric determined by a
choice of finite generating set for the group $\Gamma$, but this does
not matter since all such word metrics are quasi-isometrically
equivalent. It also depends on the choice of inner product used to
define singular values, but this also turns out not to matter. One
justification is the following error estimate on logarithms of
singular values:

\begin{lem}[Additivity estimate for $\mu$; see {\cite[Fact
    2.18]{ggkw2017anosov}}]
  \label{lem:additive_root_bound_general}
  For any norm $||\cdot||$ on $\R^d$, there is a constant $K > 0$ such
  that for any $g, h_1, h_2 \in \SLpm(d, \R)$, we have
  \[
    ||\mu(h_1gh_2) - \mu(g)|| \le K(||\mu(h_1)|| + ||\mu(h_2)||).
  \]
  In particular, for any $1 \le i < d$, there is some $K' > 0$ such
  that
  \[
    |\mu_{i,i+1}(h_1gh_2) - \mu_{i,i+1}(g)| \le K'(||\mu(h_1)|| +
    ||\mu(h_2)||).
  \]
\end{lem}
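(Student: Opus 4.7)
The plan is to reduce to the case of the $\ell^\infty$ norm via equivalence of norms on $\R^d$, and then control $\|\mu(h_1gh_2)-\mu(g)\|_\infty$ coordinatewise using the classical Weyl-type multiplicative inequalities for singular values. The key fact I would invoke (or prove from Courant--Fischer by taking $V$ a subspace of dimension $d-i+1$ and using $\|ABv\| \leq \sigma_1(A)\|Bv\|$) is that for any $A,B \in \GL(d,\R)$ and any $1 \le i \le d$,
\[
\sigma_d(A)\,\sigma_i(B)\;\le\;\sigma_i(AB)\;\le\;\sigma_1(A)\,\sigma_i(B),
\]
and the analogous bound with the roles of the first and second factor reversed. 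Taking logarithms, this becomes
\[
\mu_d(A)\;\le\;\mu_i(AB)-\mu_i(B)\;\le\;\mu_1(A),
\]
so that $|\mu_i(AB)-\mu_i(B)| \le \max(\mu_1(A),-\mu_d(A)) \le \|\mu(A)\|_\infty$ for every $i$.

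Applying this estimate twice---once to write $h_1gh_2 = h_1(gh_2)$ and once (in the form with factors swapped) to write $gh_2 = (g)h_2$---and chaining via the triangle inequality, I get
\[
|\mu_i(h_1gh_2)-\mu_i(g)| \;\le\; \|\mu(h_1)\|_\infty + \|\mu(h_2)\|_\infty
\]
for every $1 \le i \le d$. This yields
\[
\|\mu(h_1gh_2)-\mu(g)\|_\infty \;\le\; \|\mu(h_1)\|_\infty + \|\mu(h_2)\|_\infty,
\]
and since any two norms on $\R^d$ are comparable up to a multiplicative constant, the first inequality of the lemma follows with a constant $K$ depending only on $\|\cdot\|$.

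For the second inequality, I would simply write $\mu_{i,i+1}(h_1gh_2) - \mu_{i,i+1}(g)$ as the difference of the $i$\textsuperscript{th} and $(i+1)$\textsuperscript{th} coordinates of $\mu(h_1gh_2)-\mu(g)$, so that
\[
|\mu_{i,i+1}(h_1gh_2)-\mu_{i,i+1}(g)| \;\le\; 2\,\|\mu(h_1gh_2)-\mu(g)\|_\infty,
\]
and apply the first part, absorbing the factor of $2$ into the constant $K'$.

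I do not anticipate any real obstacle: the whole argument is essentially a direct consequence of the Weyl singular value inequality together with norm equivalence on a finite-dimensional space. The only point requiring a modicum of care is keeping track of which factor plays the role of $A$ versus $B$ when applying the bound twice, which is easily handled by remembering that $\sigma_i(M)=\sigma_i(M^T)$ so the inequality is symmetric in the two factors.
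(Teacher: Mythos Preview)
Your argument is correct. The paper does not actually prove this lemma; it simply cites it as \cite[Fact~2.18]{ggkw2017anosov} and uses it as a black box, so there is no ``paper's own proof'' to compare against. The argument you outline---Weyl's multiplicative inequalities $\sigma_d(A)\sigma_i(B)\le\sigma_i(AB)\le\sigma_1(A)\sigma_i(B)$ applied twice, followed by equivalence of norms on $\R^d$---is the standard elementary proof of this fact and goes through exactly as you describe.
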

As any pair of positive-definite inner products on $\R^d$ differ by
composition with some fixed element $h \in \GL(d, \R)$, the lemma
implies in particular that choosing a different inner product on
$\R^d$ to define singular values only changes each $\mu_k(g)$ by a
uniformly bounded additive amount.

The lemma also tells us that if $\Gamma$ is any finitely generated
subgroup of $\SLpm(d, \R)$, equipped with a word metric $|\cdot|$,
then $||\mu(\gamma)||$ is at most $K|\gamma|$ for a uniform constant
$K$. Thus, we can also apply the lemma to obtain the following useful
result:
\begin{lem}[``Triangle inequality'' for $\mu_{i,i+1}$]
  \label{lem:root_triangle_inequality}
  Let $\Gamma$ be a finitely generated subgroup of $\SLpm(d,\R)$,
  equipped with a word metric $|\cdot|$. There exists a constant
  $K > 0$ such that for any $\gamma, \eta_1, \eta_2 \in \Gamma$, we
  have
  \[
    |\mu_{i,i+1}(\eta_1\gamma\eta_2) - \mu_{i,i+1}(\gamma)| \le
    K(|\eta_1| + |\eta_2|).
  \]
\end{lem}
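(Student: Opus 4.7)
The plan is to derive the lemma directly from \Cref{lem:additive_root_bound_general}, once we establish the intermediate estimate that $\norm{\mu(\eta)} \le K_0|\eta|$ for every $\eta \in \Gamma$, where $K_0$ depends only on the finite generating set $S$ defining the word metric. This intermediate estimate is a purely linear-algebraic fact about finitely generated matrix groups, and once it is in hand, the lemma follows by substituting into the second bound of \Cref{lem:additive_root_bound_general} with $g = \gamma$, $h_1 = \eta_1$, $h_2 = \eta_2$, which yields
\[
  |\mu_{i,i+1}(\eta_1 \gamma \eta_2) - \mu_{i,i+1}(\gamma)| \le K'\bigl(\norm{\mu(\eta_1)} + \norm{\mu(\eta_2)}\bigr) \le K' K_0 \bigl(|\eta_1| + |\eta_2|\bigr),
\]
so we may set $K = K' K_0$.

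To prove $\norm{\mu(\eta)} \le K_0|\eta|$, I would use the sub-multiplicativity of the operator norm on $\R^d$. Since all norms on $\R^d$ are equivalent, it suffices to bound the $\ell^\infty$ norm of $\mu(\eta)$. For any $g \in \SLpm(d,\R)$ we have $\mu_1(g) = \log\norm{g}_{\mathrm{op}}$ and $\mu_d(g) = -\log\norm{g^{-1}}_{\mathrm{op}}$, and because $\mu_1(g) \ge \mu_2(g) \ge \cdots \ge \mu_d(g)$ while $\sum_i \mu_i(g) = 0$, each $|\mu_i(g)|$ is bounded by $\max(\mu_1(g), -\mu_d(g))$. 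Sub-multiplicativity of the operator norm gives $\mu_1(gh) \le \mu_1(g) + \mu_1(h)$, and applied to $(gh)^{-1} = h^{-1} g^{-1}$ it also gives $-\mu_d(gh) \le -\mu_d(g) - \mu_d(h)$.

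Now write $\eta$ as a minimal word $s_1 \cdots s_n$ in the generators, with $n = |\eta|$, and set $M = \max_{s \in S \cup S^{-1}} \max(\mu_1(s), -\mu_d(s))$. Iterating the two sub-additivity inequalities above along the word decomposition yields $\mu_1(\eta) \le nM$ and $-\mu_d(\eta) \le nM$, hence $\norm{\mu(\eta)}_\infty \le M |\eta|$. Passing back to the fixed norm $\norm{\cdot}$ on $\R^d$ introduces only another multiplicative constant, giving the required bound.

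The only potential obstacle is ensuring the chain of inequalities along the word decomposition picks up no accumulating multiplicative factor; this is why I rely on honest sub-multiplicativity of the operator norm (which is constant-free) rather than applying \Cref{lem:additive_root_bound_general} iteratively, as the latter would introduce a factor of $K$ at each step and produce exponential growth in $|\eta|$. With that care taken, the rest of the argument is mechanical.
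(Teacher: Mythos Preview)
Your proof is correct and takes essentially the same approach as the paper: establish the linear bound $\norm{\mu(\eta)} \le K_0|\eta|$ for $\eta \in \Gamma$, then substitute into \Cref{lem:additive_root_bound_general}. The paper derives the linear bound by iterating \Cref{lem:additive_root_bound_general} along a word decomposition (which, contrary to your final-paragraph concern, gives a linear and not exponential bound, since the telescoping estimates $\norm{\mu(s_1\cdots s_k) - \mu(s_1\cdots s_{k-1})} \le K\norm{\mu(s_k)}$ sum via the triangle inequality to $K|\eta|\max_s\norm{\mu(s)}$), whereas you use operator-norm sub-multiplicativity directly; both routes are fine.
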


We will frequently apply this lemma throughout the rest of the paper,
as it allows us to estimate singular value gaps for the images of
``nearby'' elements in a finitely generated group $\Gamma$ under some
representation $\rho$.

We will also sometimes want to estimate singular value gaps for the
product of a pair of \emph{large} elements $\gamma_1, \gamma_2$ in
$\Gamma$. This is possible as long as the stable and unstable
subspaces of $\rho(\gamma_1), \rho(\gamma_2)$ exist and are
\emph{uniformly transverse}. In this situation, we can apply the
following:
\begin{lem}[{Uniform transversality estimate for $\mu_{1,2}$; see
    \cite[Lemma A.7]{BPS}}]
  \label{lem:root_additivity}
  Let $g$, $h$ be two elements of $\SLpm(d, \R)$, and suppose that
  $\toproot(g) > 0$ and $\toproot(h) > 0$. If
  $\theta = \angle(\stable{d-1}(g), \unstable{1}(h))$, then we have
  \[
    \toproot(gh) \ge \toproot(g) + \toproot(h) + 2\log(\sin\theta).
  \]
\end{lem}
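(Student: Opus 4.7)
The plan is to prove this by combining two estimates: a lower bound on $\sigma_1(gh)$ arising from transversality of $E^+_1(h)$ with $E^-_{d-1}(g)$, and a routine submultiplicativity bound on $\sigma_1(gh)\sigma_2(gh)$ obtained from the second exterior power. The key observation that makes it all work is the identity $\sigma_1(M)\sigma_2(M) = \sigma_1(\wedge^2 M)$ for any $M \in \SLpm(d,\R)$, which lets one convert the ratio $\sigma_1(gh)/\sigma_2(gh)$ into $\sigma_1(gh)^2/\sigma_1(\wedge^2 gh)$, separating the ``top singular value'' question from the ``singular value gap'' question.

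First I would prove the key linear-algebra lemma: for any $g \in \SLpm(d,\R)$ and any vector $w$, one has
\[
\norm{gw} \ge \sigma_1(g) \cdot \norm{w} \cdot \sin\angle(w, \stable{d-1}(g)).
\]
This follows immediately by writing $g = k a \ell^T$ in its singular value decomposition, decomposing $\ell^T w$ along $\R e_1 \oplus \spn(e_2,\dots,e_d)$, and reading off that the first coordinate has absolute value $\norm{w}\sin\angle(w, \stable{d-1}(g))$. Applying this to $w = hv$, where $v$ is a unit vector achieving $\norm{hv} = \sigma_1(h)$ (so that $hv$ lies in $\unstable{1}(h)$), we obtain
\[
\sigma_1(gh) \ge \norm{ghv} \ge \sigma_1(g)\sigma_1(h)\sin\theta.
\]

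Next, I would use the exterior power identity. Writing $\wedge^2(gh) = (\wedge^2 g)(\wedge^2 h)$ and using submultiplicativity of the operator norm on $\wedge^2 \R^d$ (with its induced inner product),
\[
\sigma_1(gh)\sigma_2(gh) = \sigma_1(\wedge^2(gh)) \le \sigma_1(\wedge^2 g)\sigma_1(\wedge^2 h) = \sigma_1(g)\sigma_2(g)\sigma_1(h)\sigma_2(h).
\]

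Finally, combining the two estimates yields
\[
\frac{\sigma_1(gh)}{\sigma_2(gh)} = \frac{\sigma_1(gh)^2}{\sigma_1(gh)\sigma_2(gh)} \ge \frac{\sigma_1(g)^2\sigma_1(h)^2\sin^2\theta}{\sigma_1(g)\sigma_2(g)\sigma_1(h)\sigma_2(h)} = \frac{\sigma_1(g)}{\sigma_2(g)} \cdot \frac{\sigma_1(h)}{\sigma_2(h)} \cdot \sin^2\theta,
\]
and taking logarithms gives exactly the claimed inequality $\toproot(gh) \ge \toproot(g) + \toproot(h) + 2\log(\sin\theta)$. The hypotheses $\toproot(g), \toproot(h) > 0$ are only needed to ensure that $\stable{d-1}(g)$ and $\unstable{1}(h)$ are well-defined so that $\theta$ makes sense; they play no other role in the estimate.

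There is no real obstacle here---the only mildly delicate point is getting the geometric meaning of $\sin\theta$ right in the first step, namely observing that $\stable{d-1}(g) = \ell \cdot \spn(e_2,\dots,e_d)$ in the SVD $g = ka\ell^T$, so that the distance from a unit vector $w$ to $\stable{d-1}(g)$ is precisely $|\langle w, \ell e_1\rangle|$; everything else is a bookkeeping computation.
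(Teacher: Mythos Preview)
Your proof is correct. The paper does not actually give its own proof of this lemma; it is stated with a citation to \cite[Lemma~A.7]{BPS}, so there is nothing in the paper to compare against directly. Your argument --- bounding $\sigma_1(gh)$ below via the singular value decomposition and the transversality angle, bounding $\sigma_1(gh)\sigma_2(gh)$ above via submultiplicativity of the operator norm on $\wedge^2\R^d$, and then taking the quotient --- is exactly the standard approach, and is essentially the proof given in the cited reference. Your remark about the role of the hypotheses $\toproot(g),\toproot(h)>0$ is also accurate: they serve only to make $\stable{d-1}(g)$ and $\unstable{1}(h)$ well-defined.
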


\subsection{Regularity, uniform gaps, and the local-to-global
  principle}

The next estimates in this section are less elementary in nature,
but can still be stated in terms of linear algebra.
\begin{definition}[Uniform regularity]
  \label{defn:uniform_regular}
  Let $g_n$ be a (finite or infinite) sequence in $\GL(d, \R)$. We say
  that the sequence $g_n$ is \emph{$(A, B)$-regular} for
  constants $A \ge 0, B \ge 0$ if for all $n, m$, we have
  \begin{equation}
    \label{eq:uniform_regularity}
    \mu_{1,2}(g_n^{-1}g_m) \ge A\mu_{1,d}(g_n^{-1}g_m) - B.
  \end{equation}
  If an infinite sequence $g_n$ is $(A, B)$-regular for some $A, B$,
  then we say that $g_n$ is \emph{uniformly regular}.
\end{definition}
Sequences as in \Cref{defn:uniform_regular} are a special case of the
``coarsely uniformly regular'' sequences defined by
Kapovich-Leeb-Porti in \cite{KLP2018}; in general a different type of
``regularity'' can be defined for each singular value gap
$\mu_{i,i+1}$. For sequences lying in a finitely generated subgroup
$\Gamma < \GL(d, \R)$, we can also strengthen this notion of uniform
regularity as follows:
\begin{definition} 
  \label{defn:URU_geodesic}
  Let $\Gamma$ be a finitely generated subgroup of $\GL(d,\R)$, and
  let $\gamma_n$ be a sequence in $\Gamma$ which is a geodesic with
  respect to a word metric $|\cdot|$. We say the geodesic $\gamma_n$
  \emph{has $(A,B)$-gaps} if
  \[
    \mu_{1,2}(\gamma_n^{-1}\gamma_m) \ge A|m - n| - B.
  \]
  If a geodesic $\gamma_n$ has $(A,B)$-gaps for some $A,B$, we also
  say it has \emph{uniform gaps}.
\end{definition}
A geodesic $\gamma_n$ with uniform gaps is always uniformly regular:
for any $g \in \GL(d, \R)$, we have
$\mu_{1,d}(g) = \log||g|| + \log||g^{-1}||$, where $||\cdot||$ is the
operator norm on $\GL(d, \R)$. This means that there is a constant $K$
(depending only on $\Gamma$ and on the choice of finite generating set
determining $|\cdot|$) so that
\[
  \mu_{1,d}(\gamma_n^{-1} \gamma_m) \leq K|m-n|.
\]

Geodesics with uniform gaps as in \Cref{defn:URU_geodesic} are a
special case of the \emph{uniformly regular undistorted} (or
\emph{URU}) sequences defined in the work of Kapovich--Leeb--Porti
\cite{klp2017characterizations}; ``undistorted'' refers to the fact
that these sequences map to quasi-geodesics in the Riemannian
symmetric space associated to the semisimple Lie group $\PGL(d,
\R)$.
We note that, in this language, 1-Anosov representations are precisely those which send sequences in the group which are geodesic with respect to a word metric to uniformly regular and undistorted sequences. 

A key feature of geodesics with uniform gaps is that they have
well-defined ``limit points'' in both projective space $\P(\R^d)$ and
its dual (indeed, this is one way of defining limit maps for 1-Anosov representations). To state this result, we let $\dproj$ denote the metric on
projective space obtained by viewing $\P(\R^d)$ as the quotient of the
unit sphere in $\R^d$. Then we let $\dproj^*$ denote the metric on the
Grassmannian $\mathrm{Gr}_{d-1}(\R^d)$ obtained by viewing the
projectivization of each $(d-1)$-plane in $\mathrm{Gr}_{d-1}(\R^d)$ as
a subset of $\P(\R^d)$, and taking Hausdorff distance with respect to
$\dproj$.
\begin{lem}[{Uniform convergence for geodesics with uniform gaps; see
    \cite[Proposition 2.4]{BPS}}]
  \label{lem:uniform_convergence_regular}
  Let $\Gamma$ be a finitely generated subgroup of $\GL(d, \R)$. If
  $\gamma_n$ is a geodesic in $\Gamma$ with $(A,B)$-gaps, then the
  limits
  \[
    \lim_{n \to \infty} \unstable{1}(\gamma_n), \qquad \lim_{n \to
      \infty} \stable{d-1}(\gamma_n)
  \]
  exist in $\P(\R^d)$ and $\mathrm{Gr}_{d-1}(\R^d)$, respectively, and
  are uniform in $A, B$ with respect to the metrics $\dproj$ and
  $\dproj^*$.
\end{lem}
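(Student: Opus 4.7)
The plan is to prove the Cauchy property of the sequence $U_n := \unstable{1}(\gamma_n)$ in $\P(\R^d)$ with respect to $\dproj$, at a rate depending only on $A, B$ (and $d$). The convergence of $\stable{d-1}(\gamma_n)$ then follows by applying the same argument to the sequence $(\gamma_n^{-T})$: the transpose-inverse is a group automorphism of $\GL(d, \R)$ preserving word length, satisfies $\toproot(g) = \toproot(g^{-T})$, and canonically identifies $\stable{d-1}(g)$ with the Euclidean-perpendicular of $\unstable{1}(g^{-T})$ via the $KAK$ decomposition.

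The key linear-algebraic input is the standard \textbf{contraction estimate} extracted from the singular value decomposition: there is a universal constant $C = C(d) > 0$ such that for any $g \in \GL(d, \R)$ with $\toproot(g) > 0$ and any $p \in \P(\R^d)$ with $\dproj(p, \stable{d-1}(g)) \geq \delta$,
\[
\dproj(g \cdot p, \unstable{1}(g)) \leq \frac{C e^{-\toproot(g)}}{\sin \delta}.
\]
This follows directly from the $KAK$ decomposition $g = k a \ell$: the diagonal part $a$ contracts any unit vector whose $e_1$-component has magnitude at least $\sin \delta$ onto $[e_1]$ at rate $e^{-\toproot(g)}/\sin \delta$, and the orthogonal frames convert this into the projective distance bound.

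To exploit this for the Cauchy step, fix $n < m$ both large and decompose $\gamma_m = \gamma_n \cdot \eta$ with $\eta := \gamma_n^{-1}\gamma_m$. Let $q_m := \gamma_n^{-1} \cdot U_m$, so that $U_m = \gamma_n \cdot q_m$. Provided $\dproj(q_m, \stable{d-1}(\gamma_n)) \geq \delta_0$ for some uniform $\delta_0 = \delta_0(A, B) > 0$, applying the contraction estimate to $\gamma_n$ at $q_m$ gives
\[
\dproj(U_m, U_n) \leq \frac{C e^{-\toproot(\gamma_n)}}{\sin \delta_0} \leq \frac{C' e^{-An + B}}{\sin \delta_0},
\]
using $\toproot(\gamma_n) \geq An - B$ from the $(A,B)$-gap hypothesis (after shifting the base index so $\gamma_0 = \identity$). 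This summable rate establishes Cauchyness at rate $e^{-An}$, with the constants and rate depending only on $A, B$ and $d$, yielding the limit together with the claimed uniformity.

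The main obstacle is the \textbf{uniform transversality} $\dproj(q_m, \stable{d-1}(\gamma_n)) \geq \delta_0(A, B)$ holding for all sufficiently large $n < m$. This is the step where the regularity of the geodesic is used substantively. I would apply \Cref{lem:root_additivity} to the factorization $\gamma_m = \gamma_n \cdot \eta$ to extract a lower bound on $\log \sin \angle(\stable{d-1}(\gamma_n), \unstable{1}(\eta))$ in terms of the additivity defect $\toproot(\gamma_m) - \toproot(\gamma_n) - \toproot(\eta)$, and then argue (via the contraction estimate applied to $\eta$) that $q_m$ is close to $\unstable{1}(\eta)$ once $\toproot(\eta)$ is large. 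A single application of \Cref{lem:root_additivity} does not immediately yield a $\delta_0$ independent of $m$ when combined only with the linear bounds of \Cref{lem:additive_root_bound_general}; the standard approach underlying \cite[Proposition~2.4]{BPS} is to combine the additivity estimate with a bootstrap along sub-geodesics, promoting a coarse transversality into a uniform one by iterating over segments whose singular value gaps add up in a coordinated way consistent with the uniform regularity of the whole geodesic.
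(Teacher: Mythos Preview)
The paper does not actually prove this lemma; it is quoted as \cite[Proposition~2.4]{BPS} and used as a black box. So there is no ``paper's proof'' to compare against---only the argument in \cite{BPS}.

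Your sketch has the right shape but a genuine gap at exactly the point you flag. You want a uniform lower bound $\delta_0(A,B)$ on $\dproj(\gamma_n^{-1}\unstable{1}(\gamma_m),\,\stable{d-1}(\gamma_n))$, and you correctly observe that \Cref{lem:root_additivity} alone does not give this: that lemma is a one-sided inequality $\toproot(gh) \ge \toproot(g)+\toproot(h)+2\log\sin\theta$, so it bounds $\toproot(gh)$ from below in terms of $\theta$, not $\theta$ from below in terms of the gaps. Your closing sentence defers the missing step back to \cite{BPS}, so the proposal is not self-contained.

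There is a much simpler route that sidesteps transversality entirely. Instead of comparing $\unstable{1}(\gamma_n)$ with $\unstable{1}(\gamma_m)$ for arbitrary $m>n$, compare consecutive terms. The estimate from \cite[Lemma~A.5]{BPS} (which the present paper itself invokes in the proof of \Cref{lem:stable_subspaces_standard_subgroups}) gives
\[
\dproj\bigl(\unstable{1}(\gamma_n),\,\unstable{1}(\gamma_{n+1})\bigr) \le e^{\mu_{1,d}(\gamma_n^{-1}\gamma_{n+1}) - \toproot(\gamma_n)}.
\]
Since $\gamma_n^{-1}\gamma_{n+1}$ is a single generator, $\mu_{1,d}(\gamma_n^{-1}\gamma_{n+1})$ is bounded by a constant $K$ depending only on the generating set, while $\toproot(\gamma_n) \ge An - B$. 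Hence the consecutive distances are bounded by $e^{K+B}e^{-An}$, which is summable with tail controlled by $A,B$ alone. This yields the Cauchy property with the claimed uniformity directly, and your duality reduction via $g\mapsto g^{-T}$ then handles $\stable{d-1}$.
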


\begin{remark}
  \Cref{lem:uniform_convergence_regular} also follows from the
  \emph{higher-rank Morse lemma} of Kapovich--Leeb--Porti
  \cite{KLP2018} (discussed briefly below), but we refer to the result
  in \cite{BPS} since it is closer to the form that we actually
  need. Both of these results are stronger and more technical than
  what we have stated here---in particular, they imply a version of
  \Cref{lem:uniform_convergence_regular} which does not require the
  sequence $\gamma_n$ to lie in a finitely generated subgroup of
  $\GL(d, \R)$.

  We refer also to \cite[Sect. 5]{ggkw2017anosov} for a related result
  that holds under the assumption that $\Gamma$ is a word-hyperbolic
  group.
\end{remark}

The next estimate in this section follows from several results of
Kapovich--Leeb--Porti. The first, proved in \cite{KLP2018}, is a
higher-rank version of the Morse lemma. It implies that uniformly
regular undistorted sequences in a finitely generated subgroup
$\Gamma < \SLpm(d, \R)$ are essentially equivalent to uniformly
\emph{Morse} quasi-geodesic sequences in the associated Riemannian
symmetric space $X$, i.e. quasi-geodesic sequences which stay
uniformly close to certain ``Morse subsets'' in $X$. The next result,
proved in \cite{KLP2014} (or \cite{KLP2023}), says that these Morse
quasi-geodesics satisfy a \emph{local-to-global} principle; a more
precise version of this theorem was later proved by Riestenberg
\cite{Riestenberg}.

Combining these results gives us a local-to-global principle for
geodesics with uniform gaps in a finitely generated group $\Gamma$,
which we can state as follows:
\begin{thm}[{Local-to-global principle; see \cite[Theorem
    1.3]{KLP2018} and \cite[Theorem 1.1]{KLP2023} or \cite[Theorem
    7.18]{KLP2014} or \cite[Theorem 1.1]{Riestenberg}}]
  \label{thm:local_to_global}
  Let $\Gamma$ be a finitely generated subgroup of $\SLpm(d,
  \R)$. Given $A, B > 0$, there exist constants
  $A', B', \lambda > 0$ satisfying the following. Suppose that
  $\gamma_n$ is a sequence in $\Gamma$ which is a geodesic with
  respect to a word metric $|\cdot|$, and that for every $i < j$ with
  $|i - j| \le \lambda$, the sub-geodesic $\gamma_i, \ldots, \gamma_j$
  has $(A, B)$-gaps. Then $\gamma_n$ has $(A',B')$-gaps.
\end{thm}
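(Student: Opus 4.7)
The plan is to reduce the statement to two deep theorems from the Kapovich--Leeb--Porti program that are combined here as a black box, and to carefully translate the linear-algebraic $(A,B)$-gap language into the metric language of the Riemannian symmetric space $X = \SLpm(d,\R)/\mathrm{SO}(d)$. First, I would fix a basepoint $x_0 \in X$ and study the orbit sequence $x_n := \rho(\gamma_n) \cdot x_0$. The $\mathrm{KAK}$-decomposition identifies the vector-valued distance between $x_0$ and $g \cdot x_0$ with $\mu(g) \in \R^d$, so the hypothesis that a sub-geodesic $\gamma_i, \ldots, \gamma_j$ has $(A,B)$-gaps says exactly that the corresponding finite piece of $(x_n)$ is uniformly $\mu_{1,2}$-regular. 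Moreover, because $\Gamma$ is finitely generated in $\SLpm(d,\R)$, there is a uniform upper bound $\mu_{1,d}(\gamma_m^{-1}\gamma_n) \le K|m-n|$ (as observed just after \Cref{defn:URU_geodesic}), so these local pieces are automatically undistorted with constants depending only on $A$, $B$, and the generating set.

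Next, I would invoke the higher-rank Morse lemma of Kapovich--Leeb--Porti \cite[Thm.~1.3]{KLP2018}: uniformly $\mu_{1,2}$-regular undistorted sequences in $X$ fellow-travel, at uniform Hausdorff distance, canonical objects built from parallel sets and diamonds associated to a $\mu_{1,2}$-limit flag. Thus the local $(A,B)$-gap assumption upgrades, on each window of length at most $\lambda$, to a uniform \emph{Morse quasi-geodesic} condition in $X$ in the sense of \cite{KLP2014, KLP2023, Riestenberg}, with Morse constants that depend only on $A$, $B$, $d$, and the generating set of $\Gamma$.

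The heart of the argument is then to apply the Morse local-to-global principle, either in the form of \cite[Thm.~1.1]{KLP2023} / \cite[Thm.~7.18]{KLP2014} or in the effective form \cite[Thm.~1.1]{Riestenberg}. This principle provides a scale $\lambda$, depending only on the Morse constants (hence on $A$, $B$, $\Gamma$), with the property that if every sub-path of length $\le \lambda$ is a uniform Morse quasi-geodesic, then the whole sequence $(x_n)$ is a Morse quasi-geodesic of $X$ with degraded but uniform constants. One simply chooses the threshold $\lambda$ in the statement of our theorem to exceed the scale demanded by the local-to-global principle.

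Finally, I would translate back. Running the higher-rank Morse lemma in reverse, a Morse quasi-geodesic $(x_n)$ in $X$ obtained this way is uniformly $\mu_{1,2}$-regular and undistorted, which gives constants $A', B' > 0$ (depending only on $A$, $B$, $\lambda$, $d$, and the generating set) so that
\[
\mu_{1,2}(\gamma_n^{-1}\gamma_m) \ge A'|m-n| - B'
\]
for all $n,m$, i.e.\ the geodesic $\gamma_n$ has $(A', B')$-gaps. The only genuine obstacle is bookkeeping: one must check that the Morse constants produced by \Cref{lem:additive_root_bound_general}-type additive estimates, the higher-rank Morse lemma, and the local-to-global principle can all be tracked to depend only on $(A, B)$ and the fixed data of $\Gamma$, and in particular do not depend on the specific geodesic $(\gamma_n)$. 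This bookkeeping is done explicitly in the cited works, so the present statement is obtained by composing their conclusions in the order above.
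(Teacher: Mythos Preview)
Your proposal is correct and follows exactly the approach the paper intends. The paper does not give a detailed proof of this theorem; it simply states the result with citations and, in the preceding paragraph, indicates that it follows by combining the higher-rank Morse lemma of \cite{KLP2018} with the local-to-global principle for Morse quasi-geodesics from \cite{KLP2014,KLP2023,Riestenberg}. Your outline---translate $(A,B)$-gaps into the uniformly regular undistorted condition in the symmetric space via the $KAK$ decomposition, invoke the Morse lemma on local windows, apply the Morse local-to-global principle, and translate back---is precisely that combination spelled out.
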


\subsection{Estimating singular value gaps using the Hilbert metric}

Our last estimate on singular values comes from convex projective
geometry. Any properly convex domain $\Omega$ in real projective space
$\P(\R^d)$ can be endowed with a natural metric $d_{\Omega}$ called
the \emph{Hilbert metric}. The Hilbert metric on $\Omega$ is always
proper, geodesic, and invariant under any projective transformations
preserving $\Omega$.

We will use the Hilbert metric as a tool to study the behavior of
\emph{nested sequences} of properly convex domains in
$\P(\R^d)$. Roughly, we will use the Hilbert metric to measure how
much a projective transformation $g$ \emph{fails} to preserve a given
properly convex domain $\Omega$, and then use this to estimate the
singular value gaps of $g$.

We provide this estimate in
\Cref{lem:hilbert_metric_estimates_sv_gaps} below, but first we recall
some basic definitions.

\begin{definition}
  Let $\Omega$ be a properly convex domain. For $x, y \in \Omega$, we
  define the Hilbert distance $d_\Omega(x, y)$ by
  \begin{equation*}
    d_{\Omega}(x,y) = \frac12\log([a, b; x, y]),
  \end{equation*}
  where $a, b$ are the unique points in $\partial \Omega$ such that
  $a, x, y, b$ lie on a projective line in that order, and
  $[\cdot, \cdot; \cdot, \cdot]$ is the projective cross-ratio with
  formula
  \begin{equation*}
    [a, b ; c, d] = \frac{\norm{a-c}}{\norm{a-d}}\cdot\frac{\norm{b-d}}{\norm{b-c}}.
  \end{equation*}
\end{definition}

The distances appearing in the cross-ratio formula can be measured
using any Euclidean metric on any affine chart containing
$a, b, c, d$; the value of the cross-ratio does not depend on the
choice of chart or metric. From this, it follows immediately that for
any properly convex domain $\Omega \subset \P(V)$, any $g \in \GL(V)$,
and any $x, y \in \Omega$, we have
\begin{equation}
  d_{\Omega}(x, y) = d_{g\Omega}(gx, gy).
\end{equation}

For our purposes, the most important property of the Hilbert metric is
the following standard result, which can be verified by a
straightforward computation:
\begin{lem}[Expansion of the Hilbert metric on nested domains]
  \label{lem:hilbert_metric_contracts}
  Let $\Omega_1, \Omega_2$ be properly convex domains in $\P(\R^d)$,
  and suppose $\Omega_1 \subseteq \Omega_2$. Then for all $x, y \in
  \Omega_1$, we have
  \[
    d_{\Omega_1}(x,y) \ge d_{\Omega_2}(x,y).
  \]
  Further, if the strong inclusion
  $\overline{\Omega_1} \subset \Omega_2$ also holds, then there is a
  constant $\lambda > 1$ (depending only on $\Omega_1$ and $\Omega_2$)
  so that for all $x, y \in \Omega_1$, we have
  \[
    d_{\Omega_1}(x,y) \ge \lambda \cdot d_{\Omega_2}(x,y).
  \]
\end{lem}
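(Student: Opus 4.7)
The plan is to pass through the Finsler description of the Hilbert metric. For a properly convex domain $\Omega$ sitting in an affine chart with Euclidean norm $|\cdot|$, the Hilbert distance is realized by projective line segments, and along such a segment the length is the integral of the Finsler norm
\[
\|v\|_{\Omega, x} = \frac{|v|}{2}\left(\frac{1}{|xa^+|} + \frac{1}{|xa^-|}\right),
\]
where $a^\pm \in \partial \Omega$ are the boundary points cut out by the rays $x \pm tv$, $t > 0$. A direct computation of the cross-ratio along a parametrized projective segment verifies this formula, matching the definition of $d_{\Omega}$.

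For the first inequality, fix $x, y \in \Omega_1 \subseteq \Omega_2$ and consider the projective line segment from $x$ to $y$; it is a geodesic for both Hilbert metrics simultaneously. At each interior point $p$ of this segment, the ray $p + tv$ (where $v$ is tangent to the segment) exits $\Omega_1$ at $a_1^\pm$ before it exits $\Omega_2$ at $a_2^\pm$, so $|p a_1^\pm| \le |p a_2^\pm|$. Thus $\|v\|_{\Omega_1, p} \ge \|v\|_{\Omega_2, p}$ pointwise, and integration along the common geodesic gives $d_{\Omega_1}(x,y) \ge d_{\Omega_2}(x,y)$.

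For the strong-nesting statement, fix an affine chart containing $\overline{\Omega_2}$ together with a Euclidean metric on it, and set $\delta = d_{\mathrm{Euc}}(\overline{\Omega_1}, \partial \Omega_2)$ and $M = \mathrm{diam}_{\mathrm{Euc}}(\Omega_1)$. Compactness of $\overline{\Omega_1}$ together with $\overline{\Omega_1} \subset \Omega_2$ ensures $\delta > 0$ and $M < \infty$. For any $x \in \Omega_1$ and any direction $v$, writing $s^\pm = |x a_1^\pm|$ and $t^\pm = |x a_2^\pm|$, we have $s^\pm \le M$ and $t^\pm \ge s^\pm + \delta$, so
\[
\frac{1}{s^\pm} \ge \left(1 + \frac{\delta}{M}\right)\frac{1}{t^\pm}.
\]
Summing and using the Finsler formula yields $\|v\|_{\Omega_1, x} \ge \lambda \|v\|_{\Omega_2, x}$ with $\lambda := 1 + \delta/M > 1$, and integrating along the projective geodesic from $x$ to $y$ gives the desired inequality.

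No step presents a genuine obstacle; the only care needed is in verifying that the constants $\delta$ and $M$ can be chosen uniformly, which is immediate from compactness of $\overline{\Omega_1}$ and the proper convexity of $\Omega_2$ (guaranteeing a single affine chart in which both domains are bounded).
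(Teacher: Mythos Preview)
Your argument is correct. The paper itself does not give a proof of this lemma; it simply labels the result as standard and says it ``can be verified by a straightforward computation.'' Your Finsler-metric approach is one of the standard routes to that computation: the monotonicity of the Finsler norm under domain inclusion is immediate from the definition, and the uniform contraction constant under strong inclusion follows from the compactness argument you give. The only detail worth flagging is that projective segments are always Hilbert geodesics (this is built into the definition via the cross-ratio), so integrating the Finsler norm along the segment recovers $d_\Omega$ exactly rather than merely bounding it---you use this implicitly and correctly.
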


Our main application of the Hilbert metric in this paper is the
following. Here, and elsewhere, if $\Omega$ is a properly convex
domain and $X \subseteq \Omega$, then $\mathrm{diam}_{\Omega}(X)$
refers to the diameter of $X$ with respect to the Hilbert metric
$d_\Omega$.
\begin{lem}[Hilbert metric estimates singular value gaps]
  \label{lem:hilbert_metric_estimates_sv_gaps}
  Fix properly convex domains $\Omega_1, \Omega_2 \subset
  \P(\R^d)$. There exists a constant $D > 0$, depending only on
  $\Omega_1, \Omega_2$, such that for any $g \in \GL(d, \R)$ with
  $g\overline{\Omega_1} \subset \Omega_2$, we have
  \begin{equation}\label{eq:hilbert_metric_estimates_sv_gaps}
    \mu_{1,2}(g) \ge -\log(    \mathrm{diam}_{\Omega_2}(g\Omega_1)) - D.
  \end{equation}
\end{lem}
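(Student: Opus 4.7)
The strategy is to show that $\mathrm{diam}_{\Omega_2}(g\Omega_1) \ge c \cdot e^{-\mu_{1,2}(g)}$ for a constant $c > 0$ depending only on $\Omega_1$ and $\Omega_2$; taking logarithms then yields the lemma with $D = -\log c$.

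The first step is to fix, once and for all, a finite collection of points $p_0, \ldots, p_N \in \Omega_1$ with unit-norm lifts $\tilde{p}_i \in \R^d$ chosen in sufficiently general linear position. A compactness argument on the Stiefel variety of orthonormal $2$-frames $(v_1, v_2)$ in $\R^d$ then produces constants $c_1, c_2 > 0$ such that, for every such $2$-frame, there exist indices $i \ne j$ with $|\langle \tilde{p}_i, v_1 \rangle|, |\langle \tilde{p}_j, v_1 \rangle| \ge c_1$ and such that the slopes $\langle \tilde{p}_i, v_2\rangle/\langle \tilde{p}_i, v_1 \rangle$ and $\langle \tilde{p}_j, v_2\rangle/\langle \tilde{p}_j, v_1 \rangle$ differ by at least $c_2$. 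Informally, a sufficiently generic configuration of points in $\Omega_1$ cannot project onto a single line through the origin in any $2$-plane of $\R^d$.

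Given $g \in \GL(d, \R)$ with $g\overline{\Omega_1} \subset \Omega_2$, I would apply the previous step with $(v_1, v_2)$ the top two right singular vectors of $g$. Setting $\alpha := \sigma_2(g)/\sigma_1(g)$ and expanding the normalized images $\bar{p}_i := g\tilde{p}_i/\|g\tilde{p}_i\|$ in the basis of left singular vectors of $g$ to first order in $\alpha$, a direct computation shows that for the pair $i, j$ selected above, $\dproj(gp_i, gp_j) \ge c_2' \alpha$ for some $c_2' > 0$, provided $\alpha$ is below a threshold depending on $c_1, c_2$. Finally, the cross-ratio formula together with the proper convexity of $\Omega_2$ yields a constant $C > 0$, depending only on $\Omega_2$, such that $d_{\Omega_2}(y_1, y_2) \ge C \cdot \dproj(y_1, y_2)$ for all $y_1, y_2 \in \Omega_2$. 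Combining gives $\mathrm{diam}_{\Omega_2}(g\Omega_1) \ge C c_2' \alpha$; the case when $\alpha$ is not small (equivalently, $\mu_{1,2}(g)$ is bounded above) is handled trivially by enlarging $D$.

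The main obstacle is the compactness step: to secure $c_1, c_2$ uniformly across the Stiefel variety, one must pick $p_0, \ldots, p_N$ more carefully than merely requiring any $d$ lifts to be linearly independent---one must also rule out degenerate configurations in which, for some $2$-plane, all of the $\tilde{p}_i$ with reasonably large projection onto $v_1$ align on a common line through the origin.
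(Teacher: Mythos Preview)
Your approach is correct and takes a genuinely different route from the paper. The paper argues by reduction to a model case: it first treats $\Omega_1 = \Omega_2 = B$, the unit ball in a standard affine chart, with $g$ block-diagonal fixing both the center of $B$ and the hyperplane at infinity; there $gB$ is an ellipsoid with semi-major axis $e^{-\mu_{1,2}(g)}$, and one computes $\diam_B(gB) \ge 2e^{-\mu_{1,2}(g)}$ directly. The general case is then reduced to this one by (i) pre-composing with a fixed element to arrange $\overline{\Omega_2} \subset \Omega_1$, (ii) applying the Brouwer fixed-point theorem (and its dual) to produce a $g$-fixed point in $\Omega_2$ and a $g$-invariant hyperplane uniformly transverse to it, (iii) conjugating by a bounded element to place these in standard position, and (iv) sandwiching the domains between concentric balls; each step alters $\mu_{1,2}(g)$ only by a uniformly bounded additive amount. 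Your direct SVD computation avoids fixed-point theorems entirely; the paper's reduction is slicker once one spots the Brouwer step, and it sidesteps the genericity bookkeeping you flagged.

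On that flagged obstacle: the compactness step does go through. The failure mode is that, for some $2$-frame, every $\tilde p_k$ with nonzero $v_1$-component has a common slope $r$; unwinding, this says all the $\tilde p_k$ lie in $v_1^\perp \cup (v_2 - r v_1)^\perp$, a union of two hyperplanes. Taking $2d-1$ points in $\Omega_1$ with no $d$ of their lifts lying in a common hyperplane (a nonempty Zariski-open condition on points in an open set) therefore suffices. Separately, your threshold on $\alpha$ is unnecessary: since $|\langle \tilde p_i, v_k\rangle / \langle \tilde p_i, v_1\rangle| \le 1/c_1$ for all $k$, the affine images of $p_i, p_j$ stay in the fixed ball of radius $\sqrt{d-1}/c_1$ for every $\alpha \le 1$, so the Euclidean-to-$\dproj$ comparison is uniform and $\dproj(gp_i, gp_j) \ge c_2' \alpha$ holds without restriction. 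This matters because the ``enlarge $D$'' fallback is not self-evidently justified: with only $\alpha \ge \alpha_0$, there is no a priori lower bound on $\diam_{\Omega_2}(g\Omega_1)$ independent of your main estimate.
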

\begin{proof}
  Let $A_d$ be the standard affine chart
  $\{[a_1 : \ldots : a_{d-1} : 1] \subset \P(\R^d) : a_i \in \R\}$,
  equipped with the Euclidean metric induced by the standard inner
  product on $\R^d$. We let $B$ be the unit ball in $A_d$ centered at
  $[e_d] = [0 : \ldots : 0 : 1]$, and first consider the special case
  of the lemma where $\Omega_1 = \Omega_2 = B$, and $g$ fixes both
  $e_d$ and its orthogonal complement
  $e_d^\perp = \spn\{e_1, \ldots, e_{d-1}\}$. That is, we assume that
  $g$ is block-diagonal, of the form
  \[
    \begin{pmatrix}
      H \\
      & \lambda
    \end{pmatrix}
  \]
  for $H \in \GL(e_d^\perp)$ and $\lambda \in \R$. Then $gB$ is an
  ellipsoid in $A_d$, centered at $[e_d]$, whose semi-major axis has
  length $\sigma_1(H) / |\lambda|$. Then if $g\overline{B} \subset B$,
  we have $\sigma_1(H) / |\lambda| < 1$ and thus
  $\sigma_1(g)/\sigma_2(g) = |\lambda|/\sigma_1(H)$.

  The diameter of $gB$ with respect to the Hilbert metric $d_B$ is
  therefore given by
  \begin{align*}
    \diam_B(gB) &= \log\left(\frac{1+e^{-\mu_{1,2}(g)}}{1 -
                  e^{-\mu_{1,2}(g)}}\right)\\ &\geq 2\log(1 + e^{-\mu_{1,2}(g)}) \\
                &\geq 2e^{-\mu_{1,2}(g)}.
  \end{align*}
  This proves that \eqref{eq:hilbert_metric_estimates_sv_gaps} holds
  in this special case, when we take $D = \log 2$. The rest of the
  proof amounts to reducing the general case to this one.

  First, we reduce the general situation to the case where
  $\overline{\Omega_2} \subset \Omega_1$, by choosing some
  $h_1 \in \SL(d, \R)$ so that
  $\overline{\Omega_2} \subset h_1\Omega_1$. Then, if
  $\Omega_1' = h_1\Omega_1$, we have
  \[
    \diam_{\Omega_2}(g\Omega_1) = \diam_{\Omega_2}(gh_1^{-1}\Omega_1'),
  \]
  and by \Cref{lem:root_triangle_inequality} the difference
  $|\toproot(g) - \toproot(gh_1^{-1})|$ is bounded by a constant only
  depending on $h_1$. So, we can replace $\Omega_1$ with $\Omega_1'$
  and $g$ with $gh_1^{-1}$. This introduces uniformly bounded additive error
  to the left-hand side of
  \eqref{eq:hilbert_metric_estimates_sv_gaps}, and does not affect the
  right-hand side.

  Now, since $g\overline{\Omega_1} \subset \Omega_2 \subset \Omega_1$,
  the Brouwer fixed-point theorem implies that $g$ fixes some
  $x \in \Omega_2$. Since duality reverses inclusions, we also know
  that
  $g^{-1}\overline{\Omega_2^*} \subset \Omega_1^* \subset \Omega_2^*$,
  and therefore $g$ also fixes an element of $\Omega_1^*$, which we
  can view as a hyperplane $U \subset \R^d$. Since
  $\overline{\Omega_2} \subset \Omega_1$, there is a uniform lower
  bound $\eps > 0$ on the distance between $x$ and $U$, in a fixed
  metric on $\P(\R^d)$. So, there is a compact set $K = K(\eps)$ in
  $\SL(d, \R)$ such that for some $h \in K$, the spaces $hx, hU$ are
  orthogonal with respect to the standard inner product on $\R^d$.

  By projective invariance of the Hilbert metric we know that
  \[
    \diam_{\Omega_2}(g\Omega_1) = \diam_{h\Omega_2}(hg\Omega_1).
  \]
  So, if we replace $\Omega_1, \Omega_2$ with $h\Omega_1, h\Omega_2$,
  and replace $g$ with $hgh^{-1}$, the right-hand side of
  \eqref{eq:hilbert_metric_estimates_sv_gaps} again stays the same,
  and (again applying \Cref{lem:root_triangle_inequality}) the
  left-hand side only changes by an additive amount, bounded uniformly
  in terms of $\Omega_1, \Omega_2$. Then, after further conjugating by
  an orthogonal matrix, we can assume the fixed point of $g$ in
  $\Omega_2$ is $[e_d]$, and that $\overline{\Omega_1}$ lies in the
  $g$-invariant affine chart $A_d$.

  For any $r > 0$, we let $B_r$ denote the open ball of radius $r$
  centered at $[e_d]$ in the affine chart $A_d$. Since
  $\Omega_1, \Omega_2$ are both bounded open convex subsets in $A_d$,
  we can find radii $r_1, r_2 > 0$ such that
  $B_{r_1} \subset \Omega_1$, and
  $\overline{\Omega_2} \subset B_{r_2}$. We let $h_1, h_2$ denote the
  dilations in $A_d$ by a factor of $r_1, r_2$, respectively, and we
  let $g' = h_2^{-1}gh_1$.

  Using \Cref{lem:hilbert_metric_contracts}, we see that
  \[
    \diam_{B_1}(g'B_1) = \diam_{B_{r_2}}(gB_{r_1}) <
    \diam_{\Omega_2}(g\Omega_1),
  \]
  and one more application of \Cref{lem:root_triangle_inequality}
  tells us that the difference between $\toproot(g)$ and $\toproot(g')$ is bounded by a constant depending only on $\Omega_1, \Omega_2$. This reduces the problem to the case that we
  originally proved.
\end{proof}

\section{Stable and unstable subspaces in half-cones}

Let $(C, S)$ be an infinite irreducible right-angled Coxeter system
with $|S| = d$, and let $\rho\colon C \to \SLpm(d, \R)$ be a simplicial
representation. As in the previous section, we assume $V = \R^d$ is
endowed with its standard inner product, which allows us to define
$\sigma_k(g)$ and $\mu_k(g)$ for every $g \in \SLpm(V)$; if
$\mu_{k, k+1}(g) \ne 0$ we can also define $\unstable{k}(g)$ and
$\stable{d-k}(g)$.

In this section, we establish some estimates on the location of the
subspaces $\unstable{1}(\rho(\gamma))$ and
$\stable{d-1}(\rho(\gamma))$ for certain elements $\gamma \in C$, in
terms of the walls of an $\vindomain$-itinerary $\mc{W}$ traversing
$\gamma$. Ultimately, we will use these estimates to argue that the
stable and unstable subspaces of certain group elements
$\rho(\gamma_1), \rho(\gamma_2)$ are \emph{uniformly transverse},
which allows us to apply \Cref{lem:root_additivity} to get an estimate
on $\mu_{1,2}(\rho(\gamma_1\gamma_2))$.

The idea behind the first estimate is as follows. An unstable subspace
$\unstable{1}(g)$ should be thought of as an ``attracting'' subspace
for $g$ in $\P(V)$. If $\gamma$ is a group element separated from the
identity by walls $W_1, \ldots, W_n$, and $x_0$ is a fixed basepoint
in a reflection domain $\Omega$, then $\rho(\gamma) \cdot x_0$ is
separated from $x_0$ by this same set of walls, and thus an
``attracting'' subspace for $\rho(\gamma)$ should at least lie in the
positive half-space and positive half-cone bounded by the wall
$W_1$. In fact, this ``attracting'' subspace should lie in the
positive half-cone bounded by the wall $W_k$, as long as $k$ is much
smaller than $n$.

To make this precise, we first need a general fact about group actions
on convex projective domains:
\begin{lem}
  \label{lem:p1_divergent_accumulation}
  Let $\Omega$ be a properly convex domain, and let
  $g_n \in \SLpm(d, \R)$ be any sequence of elements such that
  $g_n\Omega = \Omega$, and $\mu_{1,2}(g_n) \to \infty$. Then for any
  point $x \in \Omega$, the set of accumulation points of the sequence
  $g_nx$ is the same as the set of accumulation points of the sequence
  $\unstable{1}(g_n)$.
\end{lem}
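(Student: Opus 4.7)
The plan is to show that along any subsequence on which both $\unstable{1}(g_{n_k})$ and $g_{n_k}x$ converge in projective space, the two limits coincide; since every accumulation point of either sequence arises as such a subsequential limit, this will force the two accumulation sets to agree.

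By compactness of $O(d)$, I may pass to a subsequence and write the singular value decompositions $g_{n_k} = k_{n_k} a_{n_k} \ell_{n_k}$ with $k_{n_k} \to k$ and $\ell_{n_k} \to \ell$ in $O(d)$, so that $\unstable{1}(g_{n_k}) = k_{n_k}[e_1] \to k[e_1]$. Since $\mu_{1,2}(g_{n_k}) \to \infty$ forces $\sigma_j(g_{n_k})/\sigma_1(g_{n_k}) \to 0$ for every $j \geq 2$, the rescaled matrix $g_{n_k}/\sigma_1(g_{n_k})$ converges in $M_d(\R)$ to the rank-one operator $T = u_1 v_1^T$, with $u_1 = k e_1$ and $v_1 = \ell^T e_1$. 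Setting $H := \P(\ker v_1^T)$, for any $x \in \Omega \setminus H$ a direct computation from the SVD yields $g_{n_k} x \to [Tx] = [u_1] = k[e_1]$, which is exactly the limit of $\unstable{1}(g_{n_k})$. So the lemma reduces to showing that the hyperplane $H$ is disjoint from $\Omega$.

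This hyperplane-disjointness is the main obstacle, and is where the invariance $g_n\Omega = \Omega$ enters. Lift $\Omega$ to an open convex cone $\tilde\Omega \subset \R^d$; each $g_n$ either preserves $\tilde\Omega$ or swaps it with $-\tilde\Omega$, and after replacing $g_n$ by $-g_n$ where necessary (which affects neither the projective action nor $\sigma_i$ or $\unstable{1}$), I may assume $g_n\tilde\Omega = \tilde\Omega$ for all $n$. For any $\tilde x \in \tilde\Omega$ we then have $g_{n_k}\tilde x \in \tilde\Omega$, and passing to the limit gives $T\tilde x = (v_1^T \tilde x)u_1 \in \overline{\tilde\Omega}$. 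Since $\overline{\tilde\Omega}$ is a convex cone containing no line (proper convexity), after possibly negating $v_1$ I may arrange that $u_1 \in \overline{\tilde\Omega}$; then $-u_1 \notin \overline{\tilde\Omega}$, so the inclusion $(v_1^T \tilde x)u_1 \in \overline{\tilde\Omega}$ forces $v_1^T \tilde x \geq 0$ for every $\tilde x \in \tilde\Omega$. If $H$ met $\Omega$ at some point $z$, then the nonnegative linear functional $v_1^T$ would vanish at an interior point $\tilde z$ of the open cone $\tilde\Omega$, which forces $v_1^T \equiv 0$ on $V$ by linearity and contradicts $\Omega \not\subset H$. This completes the proof plan; without the invariance hypothesis $g_n\Omega = \Omega$, the disjointness $H \cap \Omega = \emptyset$ can fail, and the accumulation points of $g_n x$ may genuinely depend on $x$ via second-order singular directions.
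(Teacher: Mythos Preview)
Your proof is correct and follows the same overall structure as the paper's: both reduce to showing that the limiting hyperplane $H$ (the limit of the stable subspaces $\stable{d-1}(g_{n_k})$) is disjoint from $\Omega$, and then use the $KAK$ decomposition to conclude that $g_{n_k}x \to \lim \unstable{1}(g_{n_k})$ for $x \notin H$. The difference lies in how this disjointness is justified. The paper invokes the Hilbert metric to see that $\Aut(\Omega)$ acts properly on $\Omega$, and then cites an external result (\cite[Proposition~5.6]{IZ21}) to conclude that accumulation points of $\stable{d-1}(g_n)$ are supporting hyperplanes. You instead give a self-contained argument: passing to the limit of the rescaled matrices $g_{n_k}/\sigma_1(g_{n_k}) \to T = u_1 v_1^T$, you use invariance of the lifted cone $\tilde\Omega$ and proper convexity to force $v_1^T \ge 0$ on $\tilde\Omega$, hence $v_1^T > 0$ on the open cone. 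This is more elementary and avoids both the Hilbert metric and the external citation; the paper's route, on the other hand, fits naturally into the convex-projective framework used elsewhere in the article.

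One tiny wording issue: when you write ``after possibly negating $v_1$, I may arrange that $u_1 \in \overline{\tilde\Omega}$,'' you mean simultaneously negating both $u_1$ and $v_1$ (so that $T = u_1 v_1^T$ is unchanged); negating $v_1$ alone does not move $u_1$. This does not affect the validity of the argument.
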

\begin{proof}
  Fix $x \in \Omega$, and choose an arbitrary subsequence of $g_n$ so
  that both of the sequences $g_nx$ and $\unstable{1}(g_n)$
  converge. We will show that these sequences converge to the same
  point.

  Let $\Aut(\Omega) < \SLpm(d, \R)$ denote the subgroup
  $\{g \in \SLpm(d, \R) : g \Omega = \Omega\}$. The Hilbert metric on
  $\Omega$ (see the previous section) is always proper and
  $\Aut(\Omega)$-invariant, which implies that the action of
  $\Aut(\Omega)$ on $\Omega$ is proper. Using this, one can check (see
  e.g.\ \cite[Proposition 5.6]{IZ21}) that any accumulation point of
  the sequence of stable subspaces $\stable{d-1}(g_n)$ is a supporting
  hyperplane of $\Omega$ (recall from the proof of
  \Cref{lem:halfcone_nesting_failure} that a supporting hyperplane is
  a projective hyperplane which intersects $\overline{\Omega}$, but
  not $\Omega$ itself). Up to subsequence, $\stable{d-1}(g_n)$
  converges to a fixed supporting hyperplane $H$. We can then appeal
  to the singular value decomposition (or $KAK$ decomposition, see
  \Cref{subsec:singular values}) of $g_n$ to see that if $z$ is any
  point in $\P(V) \minus H$, the sequence $g_nz$ converges to the
  limit of $\unstable{1}(g_n)$. In particular this holds for $z = x$
  and we are done.
\end{proof}

Before stating and proving our first estimate, we establish some more
notation. As in the previous section, we let $\dproj$ denote a
Riemannian metric on $\P(V)$ induced by the standard inner product on
$V$.
\begin{definition}
  For $x \in \P(V)$ and $\eps > 0$, we let $B_\eps(x)$ denote the open
  $\eps$-ball about $x$, with respect to the metric
  $\dproj$. Similarly, if $Z \subset \P(V)$ is a subset, $N_\eps(Z)$
  denotes the $\eps$-neighborhood of $Z$ in $\P(V)$.

  For $\eps > 0$, the ``$-\eps$-neighborhood'' of a set
  $Z \subset \P(V)$ is by convention the complement of the
  $\eps$-neighborhood of the complement of $Z$, i.e.\ the set
  $N_{-\eps}(Z) := \{x \in Z : B_\eps(x) \subset Z\}.$
\end{definition}

We now prove the first main estimate we need from this section:
\begin{lem}[Unstable subspaces lie close to half-cones]
  \label{lem:stable_subspace_in_halfcones}
  For any $\eps > 0$ and any $N > 0$, there exists a constant $M > 0$
  satisfying the following. Suppose that $\mc{W}$ is an
  $\Omega$-itinerary $W_1, \ldots, W_n$ traversing a group element
  $\gamma \in C$, and suppose that $\mu_{1,2}(\rho(\gamma)) > M$. Then
  for any $k \le \min(n, N)$:
  \begin{enumerate}
  \item If $\mc{W}$ departs from the identity, then the unstable
    subspace $\unstable{1}(\rho(\gamma))$ is contained in
    $N_\eps(\halfcone_+(W_k))$.
  \item If $\mc{W}$ arrives at the identity, then the stable subspace
    $\stable{d-1}(\rho(\gamma))$ (viewed as a subset of $\P(V)$) is
    disjoint from $N_{-\eps}(\halfcone_-(W_{n-k}))$.
  \end{enumerate}
\end{lem}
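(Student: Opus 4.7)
The plan is to establish part (1) by a compactness-and-contradiction argument grounded in \Cref{lem:p1_divergent_accumulation}, and to deduce part (2) from part (1) by dualizing.

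For part (1), I would fix $\eps, N > 0$ and suppose toward a contradiction that no $M$ works. This gives a sequence of itineraries $\mc{W}^{(m)} = W_1^{(m)}, \ldots, W_{n_m}^{(m)}$ departing from $\identity$, each traversing an element $\gamma_m \in C$, together with indices $k_m \le \min(n_m, N)$ such that $\mu_{1,2}(\rho(\gamma_m)) \to \infty$ while $\unstable{1}(\rho(\gamma_m)) \not\subset N_\eps(\halfcone_+(W_{k_m}^{(m)}))$ for every $m$. The crucial observation is that \Cref{prop:wall_expression} expresses $W_{k_m}^{(m)}$ purely in terms of the first $k_m \le N$ generators of the geodesic word traversed by $\mc{W}^{(m)}$, and there are only finitely many such initial segments; after passing to a subsequence I may assume $k_m = k$ and $W_{k_m}^{(m)} = W$ are independent of $m$. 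Now I would fix a basepoint $x_0 \in \Delta_\Omega$. Since $W$ separates $\identity$ from $\gamma_m$ in $\daviscx$, \Cref{prop:projective_wall_correspondence} shows that the projective wall $W$ separates $x_0$ from $\rho(\gamma_m) x_0$ in $\Omega$, so $\rho(\gamma_m) x_0 \in \halfspace_+(W) \subseteq \halfcone_+(W)$ by \Cref{lem:halfspaces_in_halfcones}. Because $\vindomain$ is properly convex (\Cref{prop:vinberg_properly_convex}) and $\rho(\gamma_m)$-invariant, \Cref{lem:p1_divergent_accumulation} applied with $g_m = \rho(\gamma_m)$ and $x = x_0$ forces every accumulation point of $\unstable{1}(\rho(\gamma_m))$ to lie in $\overline{\halfcone_+(W)} \subseteq N_\eps(\halfcone_+(W))$, contradicting the standing hypothesis for all large $m$.

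For part (2), I would apply part (1) to the dual simplicial representation $\rho^*\colon C \to \SLpm(V^*)$ (well-defined since the fully nondegenerate Cartan matrix is in particular nonsingular) together with the reversed itinerary. If $\mc{W}$ arrives at $\identity$ and traverses $\gamma$, then its reversal departs from $\identity$, traverses $\gamma^{-1}$, and has $k$-th wall equal to $W_{n-k+1}^*$ in the dual reflection domain $\Omega^*$ provided by \Cref{prop:dual_vinberg_domains}. Using the canonical identification $\mathrm{Gr}_{d-1}(V) \cong \P(V^*)$, a direct calculation from the singular value decomposition shows that $\stable{d-1}(\rho(\gamma))$ corresponds to $\unstable{1}(\rho^*(\gamma^{-1}))$ and that $\mu_{1,2}(\rho^*(\gamma^{-1})) = \mu_{1,2}(\rho(\gamma))$. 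Part (1) then places $\unstable{1}(\rho^*(\gamma^{-1}))$ inside $N_\eps(\halfcone_+(W_{n-k+1}^*))$, which equals $N_\eps(\halfcone_-(W_{n-k+1})^*)$ by \Cref{lem:halfcone_duality}. By the definition of the projective dual, this translates to $\stable{d-1}(\rho(\gamma))$ (viewed as a hyperplane in $\P(V)$) avoiding an $\eps'$-interior neighborhood of $\halfcone_-(W_{n-k+1})$; the off-by-one mismatch between $W_{n-k+1}$ and $W_{n-k}$ is absorbed by enlarging $N$ to $N+1$.

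The main technical wrinkle I anticipate is in the very last step: converting closeness in $\P(V^*)$ to $\halfcone_-(W)^*$ into avoidance of an interior neighborhood of $\halfcone_-(W)$ in $\P(V)$ uniformly in $W$. This is a statement about the uniform continuity of the incidence correspondence $\{([x],[\alpha]) : \alpha(x) = 0\} \subset \P(V) \times \P(V^*)$ restricted to neighborhoods of the relevant compact sets, and the uniformity of $\eps \mapsto \eps'$ across the finitely many choices of $W_{n-k+1}$ (modulo translating so that some endpoint is at $\identity$) follows from the same finiteness principle used in part (1). Once the compactness reduction and the duality dictionary are correctly set up, the rest of the proof is formal.
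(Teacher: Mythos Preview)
Your proposal is correct and follows essentially the same route as the paper: a compactness/finiteness reduction to a fixed wall combined with \Cref{lem:p1_divergent_accumulation} for part (1), and duality via $\rho^*$, the reversed itinerary, and \Cref{lem:halfcone_duality} for part (2). The only cosmetic difference is that the paper sidesteps your ``technical wrinkle'' by using the metric $\ddproj$ on $\P(V^*)$, which is \emph{defined} as Hausdorff distance of the corresponding hyperplanes in $\P(V)$; this makes the conversion from closeness in $\P(V^*)$ to avoidance of $N_{-\eps}(\halfcone_-(W_{n-k}))$ immediate rather than requiring a separate uniform-continuity argument.
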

\begin{proof}
  Let $\eps, N$ be given, and fix $k \le N$. We will prove the first
  statement by contradiction: suppose that there is a sequence of
  group elements $\gamma_m \in C$ so that as $m \to \infty$,
  $\mu_{1,2}(\rho(\gamma_m))$ tends to infinity, but for some
  $\Omega$-itinerary $W_{0, m}, \ldots W_{|\gamma_m|, m}$ joining the
  identity to $\gamma_m$, the point $\unstable{1}(\rho(\gamma_m))$
  does \emph{not} lie in the $\eps$-neighborhood of
  $\halfcone_+(W_{k,m})$. Since $W_{k,m}$ is the last wall in an
  itinerary of length $k$ departing from the identity, after
  extracting a subsequence we can assume that $W_{k,m} = W_k$ for a
  wall $W_k$ independent of $m$.

  Fix a basepoint $x$ in the fundamental domain $\Delta \cap
  \Omega$. Then by definition, $W_k$ separates $x$ from
  $\rho(\gamma_m)x$, so $\rho(\gamma_m)x$ lies in the halfspace
  $\halfspace_+(W_k)$, and therefore $\rho(\gamma_m)x$ accumulates in
  $\overline{\halfspace_+(W_k)}$. Then by
  \Cref{lem:p1_divergent_accumulation}, $\unstable{1}(\rho(\gamma_m))$
  also accumulates in $\overline{\halfspace_+(W_k)}$. But we know that
  $\overline{\halfspace_+(W_k)} \subset \overline{\halfcone_+(W_k)}$
  by \Cref{lem:halfspaces_in_halfcones}, so eventually
  $\unstable{1}(\rho(\gamma_m))$ lies in the $\eps$-neighborhood of
  $\halfcone_+(W_k)$, contradiction.

  The second statement follows from the first via duality. Here are
  the details. Given $\gamma \in C$, we consider the group element
  $\rho^*(\gamma^{-1}) \in \SLpm(V^*)$. If $W_1, \ldots, W_n$ is an
  $\Omega$-itinerary traversing $\gamma$ and arriving at the identity,
  then $W_n, \ldots, W_1$ is an $\Omega$-itinerary traversing
  $\gamma^{-1}$ and departing from the identity. The sequence of dual
  walls $W_n^*, \ldots, W_1^*$ is an $\Omega^*$-itinerary, also
  traversing $\gamma^{-1}$ and departing from the identity.

  Observe that the singular values of $\rho^*(\gamma^{-1})$ are
  precisely the same as the singular values of $\rho(\gamma)$, so in
  particular
  $\mu_{1,2}(\rho^*(\gamma^{-1})) = \mu_{1,2}(\rho(\gamma))$. Then,
  the first part of the lemma implies that if
  $\mu_{1,2}(\rho(\gamma))$ is sufficiently large, the unstable
  subspace $\unstable{1}(\rho^*(\gamma^{-1})) \in \P(V^*)$ is
  contained in an arbitrarily small neighborhood of
  $\halfcone_+(W_{n-k}^*) = \halfcone_-(W_{n-k})^*$, with respect to
  the metric $\ddproj$ on $\P(V^*)$; recall that this is the metric on
  $\P(V^*)$ obtained by viewing $\P(V^*)$ as the space of projective
  hyperplanes in $\P(V)$, and then taking Hausdorff distance with
  respect to $\dproj$.

  Therefore, if $K \subset \P(V)$ is the kernel of
  $\unstable{1}(\rho^*(\gamma^{-1}))$, then as long as
  $\mu_{1,2}(\rho(\gamma))$ is sufficiently large, there is a
  projective hyperplane $H$, within Hausdorff distance $\eps$ of $K$,
  so that every point in the closure of $\halfcone_-(W_{n-k})$ is not
  contained in $H$.

  The inner product on $V$ induces an isomorphism $V^* \to V$, which
  identifies $\rho^*(\gamma^{-1}) \in \SLpm(V^*)$ with the transpose
  $\rho(\gamma)^T \in \SLpm(V)$. The kernel $K$ of
  $\unstable{1}(\rho^*(\gamma^{-1}))$ is the orthogonal complement of
  the subspace $\unstable{1}(\rho(\gamma)^T)$, which is in turn the
  stable subspace $\stable{d-1}(\rho(\gamma))$. We conclude that if
  $x \in \halfcone_-(W_{n-k})$ is contained in
  $\stable{d-1}(\rho(\gamma))$, then $\dproj(x, H) < \eps$, hence $x$
  is distance at most $\eps$ from a point in the complement of
  $\halfcone_-(W_{n-k})$. That is, no point in
  $N_{-\eps}(\halfcone_-(W_{n-k}))$ is contained in
  $\stable{d-1}(\rho(\gamma))$.
\end{proof}

\subsection{Bounding unstable subspaces away from walls}

The estimate given by the previous lemma tells us that unstable
subspaces for $\rho(\gamma)$ are located near half-cones over the
walls $W_1, \ldots, W_n$ in $\vindomain$ separating a basepoint $x_0$
from $\rho(\gamma)x_0$. We will want to apply this lemma to see that
this unstable subspace is uniformly far from some hyperplane in the
complement of $\halfcone_+(W_1)$.

If the sequence of halfcones $\halfcone_+(W_k)$ is \emph{strongly
  nested} (see \Cref{sec:strong_nesting}), then this poses little
problem. However, if the closed walls separating $x_0$ from
$\rho(\gamma)x_0$ have a common intersection in the boundary of
$\vindomain$, then so do the boundaries of the half-cones over these
walls. A priori, the unstable subspace $\unstable{1}(\rho(\gamma))$
could lie near this intersection of boundaries, and we would have no
way to bound the distance between $\unstable{1}(\rho(\gamma))$ and the
complement of $\halfcone_+(W_1)$.
\begin{figure}[h]
  \centering
\begingroup%
  \makeatletter%
  \providecommand\color[2][]{%
    \errmessage{(Inkscape) Color is used for the text in Inkscape, but the package 'color.sty' is not loaded}%
    \renewcommand\color[2][]{}%
  }%
  \providecommand\transparent[1]{%
    \errmessage{(Inkscape) Transparency is used (non-zero) for the text in Inkscape, but the package 'transparent.sty' is not loaded}%
    \renewcommand\transparent[1]{}%
  }%
  \providecommand\rotatebox[2]{#2}%
  \newcommand*\fsize{\dimexpr\f@size pt\relax}%
  \newcommand*\lineheight[1]{\fontsize{\fsize}{#1\fsize}\selectfont}%
  \ifx\svgwidth\undefined%
    \setlength{\unitlength}{234.58628497bp}%
    \ifx\svgscale\undefined%
      \relax%
    \else%
      \setlength{\unitlength}{\unitlength * \real{\svgscale}}%
    \fi%
  \else%
    \setlength{\unitlength}{\svgwidth}%
  \fi%
  \global\let\svgwidth\undefined%
  \global\let\svgscale\undefined%
  \makeatother%
  \begin{picture}(1,0.53910533)%
    \lineheight{1}%
    \setlength\tabcolsep{0pt}%
    \put(0,0){\includegraphics[width=\unitlength,page=1]{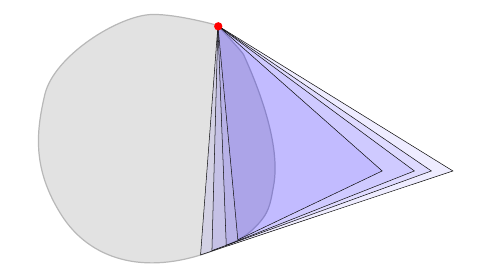}}%
    \put(0.16320365,0.21009496){\color[rgb]{1,0,0}\makebox(0,0)[lt]{\lineheight{1.25}\smash{\begin{tabular}[t]{l}$\P(V_T)$\end{tabular}}}}%
    \put(0.45062242,0.51224891){\color[rgb]{1,0,0}\makebox(0,0)[lt]{\lineheight{1.25}\smash{\begin{tabular}[t]{l}$\P(V_T^\perp)$\end{tabular}}}}%
    \put(0,0){\includegraphics[width=\unitlength,page=2]{wall_intersection.pdf}}%
  \end{picture}%
\endgroup%

  \caption{A sequence of nested half-cones over walls in an itinerary
    $W_1, \ldots, W_n$ where
    $\overline{W_1} \cap \overline{W_n} \ne
    \emptyset$. \Cref{lem:stable_subspace_in_halfcones} and
    \Cref{lem:stable_subspaces_standard_subgroups} show that if the
    element traversed by this itinerary has large singular value gaps,
    then its unstable subspace lies in $\halfcone_+(W_k)$ for some large
    $k$, and is far from $\overline{W_1} \cap \overline{W_n}$. In
    particular, the subspace is far from the complement of
    $\halfcone_+(W_1)$.}
  \label{fig:wall_intersection}
\end{figure}

The lemma below tells us that this does not occur. The idea is that,
when the closed walls $\overline{W_1}, \ldots, \overline{W_n}$ have a
common intersection in $\dee \vindomain$, then $\gamma$ lies ``close''
to a proper standard subgroup $C(T) < C$. The intersection of
these walls lies in a ``trivial subspace'' for the action of this
subgroup, which must be far from any attracting subspace for
$\gamma$. See \Cref{fig:wall_intersection}.

\begin{lem}
  \label{lem:stable_subspaces_standard_subgroups}
  Let $\Omega$ be a reflection domain for a simplicial representation
  $\rho$ with fully nondegenerate Cartan matrix. Then there exist
  $\eps_0, M > 0$ satisfying the following. Suppose that
  $W_1, \ldots, W_n$ is an $\Omega$-itinerary departing from the
  identity, satisfying $W_1 \cap W_n = \emptyset$ and
  $\overline{W_1} \cap \overline{W_n} \ne \emptyset$, and let
  $\gamma = \elt(W_1, W_n)$. If $\mu_{1,2}(\rho(\gamma)) > M$, then
  the distance between $\unstable{1}(\rho(\gamma))$ and
  $\overline{W_1} \cap \overline{W_n}$ is at least $\eps_0$.
\end{lem}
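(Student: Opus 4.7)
The plan is to argue by contradiction and compactness, using the fact that a fully nondegenerate Cartan matrix provides a $\rho(\gamma)$-invariant splitting of $V$ in which $\overline{W_1} \cap \overline{W_n}$ is constrained to lie in a fixed factor. Suppose the conclusion fails: there are sequences $\gamma_m \in C$ and $\Omega$-itineraries $\mc{W}^{(m)} = W_1^{(m)}, \ldots, W_{n_m}^{(m)}$ satisfying the hypotheses, with $\toproot(\rho(\gamma_m)) \to \infty$ but $\dproj\bigl(\unstable{1}(\rho(\gamma_m)), \overline{W_1^{(m)}} \cap \overline{W_{n_m}^{(m)}}\bigr) \to 0$. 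By \Cref{lem:itinerary_wall_intersect}, I may replace each $\mc{W}^{(m)}$ by an efficient itinerary between $W_1^{(m)}$ and $W_{n_m}^{(m)}$; this preserves both $\gamma_m = \elt(W_1^{(m)}, W_{n_m}^{(m)})$ and the closure intersection. Let $T_m \subseteq S$ be the set of types appearing in this itinerary and extract a subsequence so that $T_m = T$ is constant.

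By \Cref{lem:wall_intersect_gen} combined with \Cref{lem:itinerary_wall_intersect},
\[
  \overline{W_1^{(m)}} \cap \overline{W_{n_m}^{(m)}} \;=\; \bigcap_{s \in T} \overline{W(s)} \;\subseteq\; \P(V_T^\perp),
\]
so nonemptiness of the left-hand side forces $V_T^\perp$ to be nontrivial and hence $T \subsetneq S$; also $\gamma_m \in C(T)$ since the traversed word uses only letters in $T$. Since the Cartan matrix is fully nondegenerate, \Cref{lem:fully_nondegenerate_transverse} provides a $C(T)$-invariant decomposition $V = V_T \oplus V_T^\perp$, and each reflection $\rho(s) = \identity - v_s \otimes \alpha_s$ with $s \in T$ is trivial on $V_T^\perp$, so $\rho(\gamma_m)|_{V_T^\perp} = \identity$. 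Let $\pi_T, \pi_T^\perp$ denote the linear projections of $V$ onto $V_T, V_T^\perp$ along this splitting, with a common operator-norm bound $K$ depending only on the decomposition.

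Since $\det \rho(\gamma_m) = \pm 1$ forces the singular values to multiply to $1$, the divergence $\toproot(\rho(\gamma_m)) \to \infty$ implies $\sigma_1(\rho(\gamma_m)) \to \infty$. Let $v_m$ be a unit vector attaining $\sigma_1(\rho(\gamma_m))$, so that $\unstable{1}(\rho(\gamma_m)) = [\rho(\gamma_m) v_m]$. Using that $\rho(\gamma_m)$ is the identity on $V_T^\perp$,
\[
  \rho(\gamma_m) v_m \;=\; \rho(\gamma_m)(\pi_T v_m) + \pi_T^\perp v_m,
\]
where $\rho(\gamma_m)(\pi_T v_m) \in V_T$ and $\|\pi_T^\perp v_m\| \le K$, while $\|\rho(\gamma_m) v_m\| = \sigma_1(\rho(\gamma_m)) \to \infty$. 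Hence $\|\rho(\gamma_m)(\pi_T v_m)\| \to \infty$, and dividing through by this norm presents $\unstable{1}(\rho(\gamma_m))$ as a unit vector in $V_T$ plus a vector of norm $o(1)$. After passing to a subsequence, $\unstable{1}(\rho(\gamma_m)) \to x_\infty \in \P(V_T)$. Choosing $y_m \in \overline{W_1^{(m)}} \cap \overline{W_{n_m}^{(m)}}$ with $\dproj(\unstable{1}(\rho(\gamma_m)), y_m) \to 0$ and extracting once more gives $y_m \to y_\infty \in \P(V_T^\perp)$ and $x_\infty = y_\infty \in \P(V_T) \cap \P(V_T^\perp)$. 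This intersection is empty since $V_T \cap V_T^\perp = \{0\}$, a contradiction.

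The main technical point is the argument in the third paragraph: showing that $\unstable{1}(\rho(\gamma_m))$ concentrates asymptotically in $\P(V_T)$ and not in $\P(V_T^\perp)$. This rests on the simple observation that because $\rho(\gamma_m)|_{V_T^\perp} = \identity$, the $V_T^\perp$-component of any image vector stays bounded and therefore cannot account for the divergent singular value $\sigma_1(\rho(\gamma_m)) \to \infty$, forcing the top expanding direction to lie essentially in $V_T$.
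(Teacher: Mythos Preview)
Your proof is correct and follows the same overall structure as the paper's: reduce to an efficient itinerary, use \Cref{lem:itinerary_wall_intersect} and \Cref{lem:wall_intersect_gen} to trap $\overline{W_1}\cap\overline{W_n}$ inside $\P(V_T^\perp)$ for a fixed proper $T\subsetneq S$, invoke \Cref{lem:fully_nondegenerate_transverse} for the invariant splitting $V=V_T\oplus V_T^\perp$, and then argue that $\unstable{1}(\rho(\gamma))$ must lie near $\P(V_T)$.

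The only real difference is in this last step. The paper conjugates by a fixed $h\in\SL(V)$ taking $V_T\oplus V_T^\perp$ to an orthogonal decomposition, so that $\unstable{1}(h\rho(\gamma)h^{-1})$ lies exactly in $hV_T$, and then applies the quantitative estimates of \cite[Lemmas~A.4, A.5]{BPS} to bound $\dproj\bigl(\unstable{1}(\rho(\gamma)),\,h^{-1}\unstable{1}(h\rho(\gamma)h^{-1})\bigr)$ explicitly by $e^{-\mu_{1,2}(\rho(\gamma))}$ times a constant depending on~$h$. You instead work directly with the non-orthogonal splitting: writing the maximally stretched image vector as $\rho(\gamma_m)v_m=\rho(\gamma_m)(\pi_T v_m)+\pi_T^\perp v_m$ and observing that the $V_T^\perp$-component stays bounded while the total norm $\sigma_1(\rho(\gamma_m))$ diverges. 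This is more elementary---it avoids the BPS lemmas entirely and needs only the triangle inequality and compactness---at the cost of being non-constructive (you obtain $\eps_0, M$ by contradiction rather than exhibiting them). Since the statement only asks for existence of the constants, nothing is lost.
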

\begin{proof}
  We can replace the itinerary $W_1, \ldots, W_n$ with an efficient
  itinerary from $W_1$ to $W_n$ also departing from the identity,
  since this does not affect the element $\elt(W_1, W_n)$. For each
  $i$, we write $s_i \in S$ for the type $\type{W_i}$ of the wall
  $i$. By \Cref{lem:itinerary_wall_intersect} the intersection
  $\overline{W_1} \cap \overline{W_n}$ is equal to the intersection
  $\bigcap_{i=1}^{n} \overline{W_i}$, and by
  \Cref{lem:wall_intersect_gen} this intersection is in turn contained
  in $\bigcap_{i=1}^{n}\P(\ker(\alpha_{s_i}))$.

  Since this intersection is nonempty, the set of elements $s_i$ for
  $1 \le i \le n$ lie in a proper subset $T \subsetneq S$. Then
  $\gamma \in C(T)$. Since the Cartan matrix for $\rho$ is fully
  nondegenerate, \Cref{lem:fully_nondegenerate_transverse} implies
  that $\rho(\gamma)$ acts block-diagonally on $V$ with respect to a
  decomposition $V_T \oplus V_T^\perp$, where
  \[
    V_T = \spn\{v_s : s \in T\}, \qquad V_T^\perp = \bigcap_{s \in T}
    \ker(\alpha_s).
  \]
  Since these subspaces are transverse, we can choose $\eps_0 > 0$ so
  that the distance between the projective subspaces
  $\P(V_T), \P(V_T^\perp) \subset \P(V)$ is at least $2\eps_0$. Since
  there are only finitely many possible choices for $T$, we may choose
  this $\eps_0$ independently of $\gamma$.
  
  Next, we can choose a linear map $h \in \SL(V)$, depending only on
  $T$, so that the decomposition $hV_T \oplus hV_T^\perp$ is
  orthogonal. Then the conjugate $g = h\rho(\gamma)h^{-1}$ acts
  block-diagonally on a pair of orthogonal subspaces, and
  \Cref{lem:root_triangle_inequality} implies that there is a bound on
  the difference $|\mu_{1,2}(\rho(\gamma)) - \mu_{1,2}(g)|$, depending
  only on $h$ (and therefore only on $T$). In particular, this bound
  is independent of $\gamma$, so if $\mu_{1,2}(\rho(\gamma))$ is
  sufficiently large, we know that $\mu_{1,2}(g) > 0$. Since $g$ acts
  by the identity on $hV_T^\perp$, this means that $\unstable{1}(g)$
  lies in the subspace $hV_T$.

  Then, Lemma A.4 and A.5 in \cite{BPS} imply that we have
  \begin{align*}
  \dproj\left( \unstable{1}(\rho(\gamma)), h^{-1} \unstable{1}(g) \right) & \leq \dproj\left( \unstable{1}(\rho(\gamma)), \unstable{1}(\rho(\gamma) h^{-1}) \right) + \dproj\left( \unstable{1}(\rho(\gamma)h^{-1}), h^{-1} \unstable{1}(g) \right) \\
   & \leq e^{\mu_{1,d}(h) -\mu_{1,2}(\rho(\gamma))} + e^{\mu_{1,d}(h) - \mu_{1,2}(\rho(\gamma) h^{-1})} \\
   & \leq e^{-\mu_{1,2}(\rho(\gamma))} \left( e^{\mu_{1,d}(h)}  + e^{2\mu_{1,d}(h)} \right)
  \end{align*}
which is smaller than $\eps_0$ once
$\mu_{1,2}(\rho(\gamma))$ is sufficiently large. Since $h^{-1}\unstable{1}(g)$ lies in $V_T$, we conclude that the unstable
  subspace $\unstable{1}(\rho(\gamma))$ lies within distance $\eps_0$
  of $\P(V_T)$, hence it lies at least distance $\eps_0$ from
  $\P(V_T^\perp)$. But we have ensured that the intersection
  $\overline{W_1} \cap \overline{W_n}$ lies in $\P(V_T^\perp)$, so we
  get the desired result.
\end{proof}

\section{Proof of main theorem}
\label{sec:main_thm}

In this section we finally set about proving \Cref{thm:mainthm}. We
assume that $\Gamma$ is a hyperbolic quasiconvex subgroup of a
right-angled Coxeter group $C$ with generating set $S$. We let
$d = |S|$, and fix a simplicial representation
$\rho\colon C \to \SLpm(d, \R)$ with fully nondegenerate Cartan
matrix.

We want to prove the following uniform inequality: there exist
constants $A, B >0$ such that for any $\gamma \in \Gamma$,
$$
\mu_{1,2}(\rho(\gamma)) \geq A|\gamma| - B.
$$
To do so, it will be more convenient to work on a larger subset of $C$
containing $\Gamma$:
\begin{definition}
  Let $D > 0$. We let $\BP(D) \subset C$ denote the subset of group
  elements with $D$-bounded product projections (see \Cref{defn:bpp}).
\end{definition}

\begin{remark}
  If $C$ is not hyperbolic, then $\BP(D)$ is not a subgroup of $C$ for
  any $D$.
\end{remark}

\Cref{lem:bpp_poset} says precisely that there exists a uniform
constant $D$ such that $\Gamma \subseteq \BP(D)$. We choose to work
with the set $\BP(D)$ rather than the subgroup $\Gamma$ because
$\BP(D)$ has the following useful property: if $w$ is a geodesic word
representing some $\gamma \in \BP(D)$, and $w'$ is a subword of $w$,
then $w'$ also represents an element of $\BP(D)$. This means that we
can prove statements about group elements in $\BP(D)$ by cutting those
elements up into smaller pieces and applying inductive arguments.

Recall that for each subset $T \subseteq S$, the $C(T)$-invariant
subspace $V_T \subseteq V$ is given by $\spn\{v_s : s \in T\}$. We let
$\rho_T\colon C(T) \to \SLpm(V_T)$ denote the representation of $C(T)$
obtained by restricting the $\rho$-action of $C(T)$ to $V_T$.

Below, we will show the following.
\begin{prop}
  \label{prop:inductive_statement}
  For any subset $T \subseteq S$ and any $D > 0$, there exist
  constants $A, B > 0$ such that, if $\gamma \in \BP(D) \cap C(T)$,
  then $\mu_{1,2}(\rho_T(\gamma)) \ge A |\gamma| - B$.
\end{prop}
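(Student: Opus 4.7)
The plan is to induct on $|T|$. The base case $|T| \le 1$ is immediate since $C(T)$ is finite. For the inductive step, first handle the case where $(C(T), T)$ is reducible: write $T = T_1 \sqcup T_2$ with every element of $T_1$ commuting with every element of $T_2$. For any $\gamma = \gamma_1 \gamma_2 \in \BP(D) \cap C(T)$, the walls in $\walls(\gamma)$ of types in $T_1$ and of types in $T_2$ pairwise intersect, so they form two completely incomparable subsets, and the bounded product projections condition forces $\min(|\gamma_1|, |\gamma_2|) \le D$. Applying the inductive hypothesis to the long factor, together with the $\rho_T$-invariant decomposition $V_T = V_{T_1} \oplus V_{T_2}$ coming from \Cref{lem:fully_nondegenerate_transverse} and the additivity estimate \Cref{lem:root_triangle_inequality}, then yields the desired gap bound for $\rho_T(\gamma)$. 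Thus we may assume $(C(T), T)$ is irreducible and infinite; by \Cref{lem:fully_nondegenerate_transverse}, $\rho_T$ is itself a simplicial representation of $C(T)$ with fully nondegenerate Cartan matrix, so after replacing $(C, S)$ by $(C(T), T)$ we may assume $T = S$, and $\rho$ preserves a Vinberg domain $\vindomain \subset \P(V)$.

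In this irreducible case the plan is to invoke the local-to-global principle (\Cref{thm:local_to_global}): choose constants $A_0, B_0 > 0$, obtain the corresponding window size $\lambda$, and then verify that every length-$\lambda$ sub-geodesic in $\BP(D) \cap C$ has $(A_0, B_0)$-gaps. Since sub-words of geodesic representatives of elements of $\BP(D)$ again represent elements of $\BP(D)$, this reduces to establishing the bound $\mu_{1,2}(\rho(\gamma)) \ge A_0|\gamma| - B_0$ for every $\gamma \in \BP(D) \cap C$ with $|\gamma| \le \lambda$, for constants with $A_0 \lambda > B_0$. To produce this bound I would apply \Cref{prop:disjoint_walls_I} to extract an itinerary traversing $\gamma$ with pairwise disjoint spine walls $W_1, \ldots, W_n$ where $n$ is comparable to $|\gamma|$. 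The associated half-cones $\halfcone_+(W_1) \supseteq \cdots \supseteq \halfcone_+(W_n)$ are nested by \Cref{lem:halfcones_nest}; whenever strong nesting $\overline{\halfcone_+(W_{i+1})} \subset \halfcone_+(W_i)$ holds, the Hilbert metric contracts strictly (\Cref{lem:hilbert_metric_contracts}), and stringing many such contractions together via \Cref{lem:hilbert_metric_estimates_sv_gaps} produces singular value gap growth in the ``good'' case where strong nesting holds throughout.

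The main obstacle is handling the ``bad'' stretches where strong nesting fails. By \Cref{lem:halfcone_nesting_failure}, any such stretch from $W_i$ to $W_j$ is equivalent to an itinerary $\mc{U}_-, \mc{U}_0, \mc{U}_+$ with $|\mc{U}_0|$ uniformly bounded and with the efficient itineraries $\mc{U}_\pm$ each having a common intersection of their closed walls; by \Cref{lem:disjoint_walls}, the elements traversed by $\mc{U}_\pm$ therefore lie in proper standard subgroups, so the inductive hypothesis will supply gap growth on these two inner pieces. To assemble the gap estimates coming from the strongly nested stretches and from the inductive inner pieces into a single bound for $\rho(\gamma)$, I would iteratively apply \Cref{lem:root_additivity}, using \Cref{lem:stable_subspace_in_halfcones} to confine the relevant unstable subspaces inside deep nested half-cones and \Cref{lem:stable_subspaces_standard_subgroups} to bound them away from the boundary intersections $\overline{W_i} \cap \overline{W_j}$ where strong nesting fails. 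The latter step is crucial because these boundary intersections are precisely the locus where an unstable subspace could accumulate on a hyperplane that destroys the transversality hypothesis of \Cref{lem:root_additivity}; quantitatively controlling the angles $\angle(\stable{d-1}, \unstable{1})$ uniformly across all configurations of bad transitions will be the principal technical difficulty.
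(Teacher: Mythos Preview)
Your induction setup and the handling of the reducible case match the paper's reduction to \Cref{prop:inductive_statement_2}. The gap is in your top-level use of local-to-global for the irreducible step. You propose to fix $A_0, B_0$, obtain the window size $\lambda = \lambda(A_0, B_0)$, and then verify the window condition by a half-cone argument on elements of length at most $\lambda$. But the half-cone argument you sketch does not produce gap constants that are independent of $\lambda$: the number of ``bad'' transitions (where strong nesting fails) inside a window of length $\lambda$ can be proportional to $\lambda$, and each application of \Cref{lem:root_additivity} at such a transition costs a fixed additive error. Worse, \Cref{lem:root_additivity} and \Cref{lem:stable_subspace_in_halfcones} only apply once $\mu_{1,2}$ on the pieces already exceeds a threshold $M$, so short pieces between bad transitions cannot be glued this way at all. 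The upshot is that the constants your verification produces degrade as $\lambda$ grows, and the loop does not close: you need $A_0, B_0$ fixed \emph{before} $\lambda$ is determined, but your argument only yields usable constants \emph{after} $\lambda$ is fixed. The remark that controlling the angles ``will be the principal technical difficulty'' understates this; as stated, the scheme is circular.

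The paper avoids this by not reducing to bounded windows at the top level. It cuts the full itinerary at walls $Z_i$ chosen so that consecutive half-cones \emph{do} strongly nest, and then runs a dichotomy (\Cref{prop:short_intervals_bound} versus \Cref{prop:long_intervals_bound}): either a definite proportion of the resulting sub-itineraries are short, in which case Hilbert-metric contraction along the strongly nested $Z_i$ gives linear growth directly; or most sub-itineraries are long, in which case each one individually is shown to have uniform $(A,B)$-gaps. This last step is \Cref{prop:halfcone_intervals_regular}, and it is \emph{here}, inside a single piece bounded by walls whose half-cones fail to strongly nest, that local-to-global is actually invoked---with the window hypothesis supplied entirely by the inductive hypothesis on proper $C(T')$ via \Cref{lem:disjoint_walls} and \Cref{lem:halfcone_nesting_failure}, never by the statement being proved. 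The long pieces are then glued by a separate bookkeeping argument (\Cref{lem:wellsep_walls_I}, \Cref{lem:wellsep_walls_II}, \Cref{lem:wellsep_gluing}, \Cref{prop:final_gluing}) that exploits the strong nesting at the $Z_i$ to guarantee uniform transversality once pieces are long enough, and absorbs the accumulated additive errors into the choice of the threshold $\tau$.
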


This gives us the main theorem by taking $T = S$.

\subsection{Setting up the inductive statement}

Since we have assumed that the simplicial representation $\rho$ has
fully nondegenerate Cartan matrix,
\Cref{lem:fully_nondegenerate_transverse} implies that for any subset
$T \subseteq S$, the restriction of the representation $\rho$ to the
subgroup $C(T)$ decomposes as a product $\rho_{T} \oplus \mathrm{id}$
on $V_{T} \oplus V_T^\perp$, where $\mathrm{id}$ denotes the trivial
representation $C(T) \to \SLpm(V_T^\perp)$. After a bounded change in
inner product (which only changes singular values up to bounded
multiplicative error), we can assume that the subspaces
$V_T, V_T^\perp$ are orthogonal.

So, when $\gamma \in C(T)$, the inequality in
\Cref{prop:inductive_statement} is satisfied with uniform constants
for $\rho_T(\gamma)$ if and only if it is satisfied for $\rho(\gamma)$
(possibly with different constants). Further, the representation
$\rho_T$ is also a simplicial representation of the right-angled
Coxeter group $C(T)$ with fully nondegenerate Cartan matrix.

So, to prove \Cref{prop:inductive_statement}, we assume inductively
that there are constants $A, B > 0$ so that for any proper subset
$T \subsetneq S$, if $\gamma \in C(T) \cap \BP(D)$, then
\begin{equation}
  \label{eq:subgroup_inequality}
  \mu_{1,2}(\rho(\gamma)) \ge A |\gamma| - B.
\end{equation}
We wish to show that, possibly after changing the constants $A$ and
$B$, this inequality also holds for any $\gamma \in \BP(D)$. If
$|S| = 1$, the conclusion of \Cref{prop:inductive_statement} is
obvious, since in this case $C \cong \Z/2$. So we just need to
consider the inductive step. Before doing so, we make some simplifying
arguments.

First, if $C$ is finite, then so is $\BP(D)$, and the desired
inequality holds trivially. So, we may assume $C$ is infinite.

Next, for any $\gamma \in \BP(D)$, we can find a (possibly trivial)
product decomposition of $C$ into $C(S_1) \times C(S_2)$ so that
$C(S_1)$ is irreducible and the image of $\gamma$ under the projection
$C \to C(S_2)$ has length at most $D$; this holds because every wall
in $\daviscx$ whose type lies in $S_1$ intersects every wall whose
type lies in $S_2$, and vice versa. Then, writing
$\gamma = \gamma_1\gamma_2$ for $\gamma_1 \in C(S_1)$,
$\gamma_2 \in C(S_2)$, we apply \Cref{lem:root_triangle_inequality} to
see that
\[
  \mu_{1,2}(\rho(\gamma)) \ge \mu_{1,2}(\rho(\gamma_1)) - KD
\]
for a uniform constant $K$. So, by replacing $\gamma$ with $\gamma_1$,
for the purposes of proving \Cref{prop:inductive_statement} we can
assume that $\gamma \in \BP(D)$ always lies in a subgroup $C(T)$ of
$C$ which is both infinite and irreducible.

If this subgroup is proper, then we can directly apply the inductive
assumption \eqref{eq:subgroup_inequality} to obtain the desired
lower bound on $\mu_{1,2}(\rho(\gamma))$. So we may assume that $C$ is
itself irreducible.

We have therefore reduced the inductive step of our proposition to the
following statement:
\begin{prop}
  \label{prop:inductive_statement_2}
  Suppose that $C$ is infinite and irreducible, and fix $D > 0$. If
  there exist uniform constants $A_0, B_0 > 0$ such that, for any
  proper subset $T \subsetneq S$ and any $\eta \in C(T) \cap \BP(D)$,
  we have
  \begin{equation} \label{eq:vertex_sv_gap} \mu_{1,2}(\rho(\eta)) \ge
    A_0|\eta| - B_0,
  \end{equation}
  then there are constants $A, B > 0$ such that for any
  $\gamma \in \BP(D)$, we have
  \[
    \mu_{1,2}(\rho(\gamma)) \ge A|\gamma| - B.
  \]
\end{prop}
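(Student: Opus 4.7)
The plan is to deduce the desired bound from the local-to-global principle (\Cref{thm:local_to_global}): it suffices to find $\lambda, A_1, B_1$, depending only on $D$, $A_0$, $B_0$, and $\rho$, such that every sub-geodesic of length at most $\lambda$ in any geodesic representative of $\gamma$ has $(A_1, B_1)$-gaps. Using \Cref{prop:disjoint_walls_I}, \Cref{cor:disjoint_walls}(2), and \Cref{lem:root_triangle_inequality} to absorb bounded prefixes and suffixes, the task reduces to bounding $\mu_{1,2}(\rho(\eta))$ linearly in $j - i$ for elements of the form $\eta = \gamma(W_i, W_j)$, where $W_i, W_j$ are backbone walls of the canonical itinerary of $\gamma$ furnished by \Cref{prop:disjoint_walls_I}, and the sub-itinerary between them is efficient.

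I would then split the estimate of $\mu_{1,2}(\rho(\eta))$ into three cases. First, when $\overline{W_i} \cap \overline{W_j} \neq \emptyset$: \Cref{lem:disjoint_walls} yields $\eta \in C(T)$ for a proper $T \subsetneq S$, and since the $D$-bounded product projections property descends to geodesic sub-words of elements of $\BP(D)$, the inductive hypothesis \eqref{eq:vertex_sv_gap} applies to $\eta$ directly. Second, when $\overline{W_i} \cap \overline{W_j} = \emptyset$ and the half-cones strongly nest throughout the backbone segment from $W_i$ to $W_j$: here, using \Cref{lem:halfcone_formula}, $\rho(\eta)$ carries a half-cone over a generator to $\halfcone_+(W_j)$, which sits strongly inside $\halfcone_+(W_i)$; iterating \Cref{lem:hilbert_metric_contracts} across the finitely many (up to the $\rho(C)$-action) types of consecutive backbone pairs yields uniform exponential contraction of the relevant Hilbert diameter, and \Cref{lem:hilbert_metric_estimates_sv_gaps}, applied with the pair of domains normalized to lie in a fixed finite family via projective invariance of the Hilbert metric, converts this to a linear lower bound on $\mu_{1,2}(\rho(\eta))$.

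The third and most delicate case is when $\overline{W_i} \cap \overline{W_j} = \emptyset$ but strong nesting of the half-cones fails somewhere between $W_i$ and $W_j$. Here \Cref{lem:halfcone_nesting_failure} supplies an equivalent decomposition $\mc{U}_-, \mc{U}_0, \mc{U}_+$ with $|\mc{U}_0| < 2R$ and each $\mc{U}_\pm$ efficient with a nonempty common closed-wall intersection; by \Cref{lem:disjoint_walls}, each $\gamma_\pm := \gamma(\mc{U}_\pm)$ lies in a proper standard subgroup of $C$ and inherits $D$-bounded product projections, so the inductive hypothesis bounds $\mu_{1,2}(\rho(\gamma_\pm))$ from below. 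One then glues via \Cref{lem:root_additivity}, which demands a uniform positive lower bound on the angle between $\stable{d-1}(\rho(\gamma_-))$ and $\unstable{1}(\rho(\gamma_+))$: \Cref{lem:stable_subspace_in_halfcones} locates these subspaces near half-cones over the outer walls of $\mc{U}_\pm$, and \Cref{lem:stable_subspaces_standard_subgroups} forces each of them a uniform distance from the low-dimensional standard-subgroup loci where the relevant half-cones might collide, supplying the needed transversality. The main obstacle is precisely this gluing: strong-nesting failure is genuinely richer than the closed-wall intersection scenario (see \Cref{sec:strong_nesting}), and combining the structural content of \Cref{lem:halfcone_nesting_failure} with the asymptotic-positioning content of \Cref{lem:stable_subspace_in_halfcones} and \Cref{lem:stable_subspaces_standard_subgroups}, while preserving the quantitative inductive bounds, is the technical crux of the whole argument.
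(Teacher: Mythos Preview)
Your three-case division has a genuine gap. Case~2 requires strong nesting at \emph{every} consecutive backbone pair between $W_i$ and $W_j$, and Case~3 is its complement: strong nesting fails at \emph{some} consecutive pair. But \Cref{lem:halfcone_nesting_failure}, which you invoke in Case~3, needs a different hypothesis: strong nesting must fail between the \emph{endpoints}, i.e.\ $\overline{\halfcone_+(W_j)} \not\subset \halfcone_+(W_i)$. That condition is not implied by yours. It is entirely possible that $\overline{\halfcone_+(W_j)} \subset \halfcone_+(W_i)$ while $\overline{\halfcone_+(W_{k+1})} \not\subset \halfcone_+(W_k)$ for some intermediate consecutive backbone pair $W_k, W_{k+1}$; in that situation neither your Case~2 Hilbert iteration nor your Case~3 invocation of \Cref{lem:halfcone_nesting_failure} applies. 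Conversely, if you rephrase Case~2 as ``strong nesting holds between the endpoints,'' you get only a single contraction step, not the iterated contraction needed for a linear bound in $j-i$.

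This is exactly why the paper does not attempt a single top-level trichotomy. Instead it extracts a subset $\mathbf{Z} \subset \mathbf{W}$ by taking $Z_{j+1}$ to be the \emph{first} backbone wall after $Z_j$ whose half-cone strongly nests into $\halfcone_+(Z_j)$. By minimality, strong nesting fails between $Z_j$ and every intermediate backbone wall, so \Cref{lem:halfcone_nesting_failure} (packaged as \Cref{prop:halfcone_intervals_regular}) genuinely applies and yields uniform gaps on each $\mc{U}(Z_j, Z_{j+1})$. The Hilbert-metric contraction then acts only across consecutive $Z$-walls, and suffices when a definite proportion of the length lies in short $Z$-segments (\Cref{prop:short_intervals_bound}); the complementary ``mostly long'' case requires a separate multi-stage gluing argument (\Cref{prop:long_intervals_bound}) that interleaves the transversality estimates with \emph{segment-level} applications of local-to-global. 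Your single top-level use of \Cref{thm:local_to_global} is either redundant---if your cases produced length-independent $(A_1,B_1)$ you would already be done---or circular, since the scale $\lambda$ is determined by $(A_1,B_1)$.
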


The rest of the section is devoted to the proof of this
proposition. So, from now on, we assume that the hypotheses of the
proposition hold. Since we assume $C$ is infinite and irreducible, we
may assume that the Vinberg domain $\vindomain$ is properly convex by
\Cref{prop:vinberg_properly_convex}.

\subsection{Cutting itineraries into pieces}

Fix an element $\gamma \in \BP(D)$. The first step in proving the
estimate in \Cref{prop:inductive_statement_2} is to cut an itinerary
traversing $\gamma$ into several sub-itineraries, as follows. We let
$\mc{U}$ be an itinerary traversing $\gamma$ of the form
\[
  \{W_1\}, \mc{V}_1, \{W_2\}, \mc{V}_2, \ldots, \{W_n\}, \mc{V}_n,
\]
which satisfies the conclusions of \Cref{prop:disjoint_walls_I} (and
of \Cref{cor:disjoint_walls}). In particular, in the partial order $<$
on walls in $\mc{U}$, we have $W_1 < \ldots < W_n$. We let
$\mathbf{W}$ denote the set $\{W_i : 1 \le i \le n\}$, and for any
$W_i, W_j \in \mathbf{W}$ with $W_i < W_j$, we let $\mc{U}(W_i, W_j)$
denote the sub-itinerary of $\mc{U}$ starting with $W_i$ and ending
with $W_j$. We also assume that the itinerary $\mc{U}$ departs from
the identity; this means that
$\halfspace_+(W_j) \subset \halfspace_+(W_i)$ whenever $i < j$.

We then choose a subset $\mathbf{Z} = \{Z_1, \ldots, Z_N\}$ of the
walls $W_i$ as follows: we take $Z_1 = W_1$. Then, for each $j > 1$,
we take $Z_j$ to be the first wall $W_i > Z_{j-1}$ such that
$\overline{\halfcone_+(W_i)} \subset \halfcone_+(Z_{j-1})$. If there
is no such wall, and $Z_{j-1}$ is not already maximal in $\mathbf{W}$,
we let $Z_j = Z_N$ be the maximal wall in $\mathbf{W}$. It follows
that for every $i < j < N$, we have
\begin{equation}
  \label{eq:itinerary_strict_halfcones}
  \overline{\halfcone_+(Z_j)} \subset \halfcone_+(Z_i).
\end{equation}

Each pair of walls $Z_i, Z_{i+1}$ now determines a sub-itinerary
$\mc{U}(Z_i, Z_{i+1})$ of $\mc{U}$.

\subsection{Outline of the rest of the proof}
\label{sec:cutting_gluing}

We separate our sub-itineraries $\mc{U}(Z_i, Z_{i+1})$ into ``short''
and ``long'' ones, where the distinction depends on some uniform
threshold on the length to be specified later.

{\bf If our itinerary spends a uniform proportion of its lifetime
  inside ``short'' sub-itineraries} (i.e.\ a proportion above a
uniform threshold $\tau$, specified independent of $\gamma$), then the
number of ``short'' sub-itineraries is at least a constant times
$|\gamma|$. Using \eqref{eq:itinerary_strict_halfcones}, we obtain a
sequence of strictly nested half-cones. The Hilbert diameters of these
half-cones are decreasing at a uniform exponential rate. Then we use
\Cref{lem:hilbert_metric_estimates_sv_gaps} to get a corresponding
uniform exponential estimate on the singular value gap.

{\bf Otherwise, we can assume the geodesic spends a large fraction of
  its lifetime inside of ``long'' sub-itineraries.} For this case, we
use the results of \Cref{sec:half_cone_nesting} to observe that each
sub-itinerary traverses a group element which (up to uniformly bounded
error) either lies in a proper standard subgroup of $C$, or else can
be decomposed into into a product of two group elements, each of which
lies in a proper standard subgroup of $C$. Using our inductive
assumption, we can then show that each of our sub-itineraries
traverses a group element whose singular value gap is uniformly
exponential in its length.

We then need to show that when we ``glue together'' all of these
sub-itineraries, we obtain an itinerary which also traverses a group
element with uniformly exponential singular value gap. There are
several different ``gluing'' steps in this process, but each of them
essentially relies on just two techniques. First, if two adjacent
``long'' sub-itineraries are separated by a short sub-itinerary whose
initial and final walls are well-separated in $\vindomain$, then we
can use this separation to estimate positions of stable and unstable
subspaces, and use uniform transversality (\Cref{lem:root_additivity})
to estimate singular value gaps.

On the other hand, if we \emph{cannot} separate two adjacent ``long''
sub-itineraries $\mc{V}_-, \mc{V}_+$ by a pair of well-separated
walls, then every sub-itinerary which ``overlaps'' both $\mc{V}_-$ and
$\mc{V}_+$ must (up to some uniform error) traverse a group element
lying in a proper standard subgroup of $C$. Once again, our inductive
assumption implies that these overlapping sub-itineraries also have
uniformly exponential singular value gaps. By applying the
Kapovich-Leeb-Porti local-to-global principle
(\Cref{thm:local_to_global}), we get a uniform gap estimate for the
geodesic traversed by the concatenation $\mc{V}_-, \mc{V}_+$. The
length constant in the local-to-global lemma is ultimately what
determines the threshold for a ``long'' sub-itinerary.

By repeatedly using these two gluing arguments, we are eventually able
to show that if the proportion of time spent in ``long''
sub-itineraries is sufficiently close to 1, then we obtain a uniform
gap estimate for the entire itinerary. Combining this estimate with
the previous case completes the proof.

\subsection{Estimating time spent in ``short'' sub-itineraries}

Given any itinerary $\mc{V}$, recall that the length $|\mc{V}|$ is the
word-length of the element $\mc{V}$ traverses, or equivalently the
number of walls appearing in $\mc{V}$. If $W_i < W_j < W_k$ are three
walls in $\mathbf{W}$, then we have
\[
  |\mc{U}(W_i, W_k)| = |\mc{U}(W_i, W_j)| + |\mc{U}(W_j, W_k)| - 1,
\]
since the wall $W_j$ is counted twice on the right-hand side. It will
therefore be convenient to introduce some additional notation, and let
$\udist(W_i, W_j)$ denote $|\mc{U}(W_i, W_j)| - 1$. With this
notation, for any $W_i < W_j < W_k$, we have
\begin{equation}
  \label{eq:udist_additive}
  \udist(W_i, W_k) = \udist(W_i, W_j) + \udist(W_j, W_k).
\end{equation}

Now suppose we have fixed a quantity $L > 0$. For any pair of walls
$W_i < W_j$ in $\mathbf{W}$, we define the truncated length
$\utruncl{W_i, W_j}$ by
\[
  \utruncl{W_i, W_j} :=
  \begin{cases}
    \udist(W_i, W_j), &\udist(W_i, W_j) < L\\
    0, &\udist(W_i, W_j) \ge L.
  \end{cases}
\]
Similarly, we define $\ulongl{W_i, W_j}$ to be $\udist(W_i, W_j)$ if
this is at least $L$, and $0$ otherwise.

Given the collection of walls $\mathbf{Z} = Z_1 < \ldots < Z_N$, we
define the quantity
\[
  r_L(\mathbf{Z}) := \frac{1}{\udist(Z_1, Z_N)}\sum_{i=1}^{N-1}
  \utruncl{Z_i, Z_{i+1}}.
\]
Roughly, $r_L(\mathbf{Z})$ is the proportion of time $\mc{U}$ spends
inside of ``short'' sub-itineraries bounded by elements of
$\mathbf{Z}$, where an itinerary is ``short'' if its length is at most
$L$.

\begin{notation}
  To simplify notation, for the rest of the paper, if $\mc{W}$ is an
  itinerary, we abbreviate $\toproot(\rho(\elt(\mc{W})))$ to
  $\iroot(\mc{W})$. Further, if $W_1, W_2$ are walls in $\vindomain$,
  we write $\iroot(W_1, W_2)$ for $\toproot(\rho(\elt(W_1, W_2)))$.

  Similarly, we write $\stableunstable{i}(\mc{W})$ for $\stableunstable{i}(\rho(\elt(\mc{W})))$, and $\stableunstable{i}(W_1, W_2)$ for
  $\stableunstable{i}(\rho(\elt(W_1, W_2)))$.
\end{notation}

\subsection{Mostly short sub-itineraries}

We first want to prove the following:
\begin{prop}
  \label{prop:short_intervals_bound}
  For any $L > 0$ and $\tau \in (0, 1)$, there are constants
  $A \ge 0, B > 0$ (depending on $L, \tau$) such that if
  $r_L(\mathbf{Z}) \ge \tau$, then $\iroot(\mc{U}) \ge A|\mc{U}| - B$.
\end{prop}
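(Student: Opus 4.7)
The plan is to cascade Hilbert-metric contractions along the strictly nested sequence $\halfcone_+(Z_1) \supset \cdots \supset \halfcone_+(Z_N)$ supplied by the construction of $\mathbf{Z}$, and then convert the resulting diameter bound into a singular-value-gap bound via \Cref{lem:hilbert_metric_estimates_sv_gaps}.

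First I would introduce notation. Write $s_j = \type{Z_j}$, and let $\nu_j \in C$ be the element traversed by the prefix of $\mc{U}$ ending just before the wall $Z_j$, so that $\nu_1 = \identity$ and \Cref{lem:halfcone_formula} gives $\halfcone_+(Z_j) = \rho(\nu_j)\halfcone_+(W(s_j))$. Setting $\delta_j = \nu_j^{-1}\nu_{j+1}$, the defining nesting $\overline{\halfcone_+(Z_{j+1})} \subset \halfcone_+(Z_j)$ becomes $\rho(\delta_j)\overline{\halfcone_+(W(s_{j+1}))} \subset \halfcone_+(W(s_j))$, a relation whose geometry depends only on the triple $(s_j, \delta_j, s_{j+1})$. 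The key observation is that when $j$ is a \emph{short} index (i.e.\ $\udist(Z_j,Z_{j+1}) < L$), $\delta_j$ has word-length strictly less than $L$, so short indices correspond to triples drawn from a finite set depending only on $L$ and $(C,S)$. Applying the strict-nesting clause of \Cref{lem:hilbert_metric_contracts} to each of these finitely many configurations and taking a minimum yields a uniform constant $\lambda_0 > 1$ such that
\[
  d_{\halfcone_+(Z_j)}(x,y) \le \lambda_0^{-1}\, d_{\halfcone_+(Z_{j+1})}(x,y)
\]
for all $x,y \in \halfcone_+(Z_{j+1})$ whenever $j$ is short; for long $j$ the weak inequality $d_{\halfcone_+(Z_j)} \le d_{\halfcone_+(Z_{j+1})}$ still holds by plain nesting.

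Next I would estimate $\diam_{\halfcone_+(Z_1)}(\halfcone_+(Z_N))$. Let $N_s$ be the number of short indices; since $\utruncl{Z_j,Z_{j+1}} < L$ for each short $j$, the hypothesis $r_L(\mathbf{Z}) \ge \tau$ forces $N_s > \tau(|\mc{U}|-1)/L$. Take $j_*$ to be the largest short index. The finite-configuration argument also supplies a uniform $C_0$ with $\diam_{\halfcone_+(Z_{j_*})}(\halfcone_+(Z_{j_*+1})) \le C_0$, and because $\halfcone_+(Z_N) \subseteq \halfcone_+(Z_{j_*+1})$ this upgrades to $\diam_{\halfcone_+(Z_{j_*})}(\halfcone_+(Z_N)) \le C_0$. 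Telescoping back to $Z_1$ through the remaining $N_s - 1$ short contractions (absorbing the long steps via weak inequality) then gives
\[
  \diam_{\halfcone_+(Z_1)}(\halfcone_+(Z_N)) \le C_0\, \lambda_0^{-(N_s - 1)}.
\]

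Finally I would apply \Cref{lem:hilbert_metric_estimates_sv_gaps} to $g = \rho(\nu_N)$ with the fixed pair $\Omega_1 = \halfcone_+(W(s_N))$ and $\Omega_2 = \halfcone_+(W(s_1))$; there are only $|S|^2$ such pairs, so the additive constant $D$ can be taken uniform. Since $\rho(\nu_N)\Omega_1 = \halfcone_+(Z_N)$ and $\Omega_2 = \halfcone_+(Z_1)$, one gets $\mu_{1,2}(\rho(\nu_N)) \ge (N_s - 1)\log \lambda_0 - \log C_0 - D$. The element $\elt(\mc{U})$ differs from $\nu_N$ only by the tail $s_N \cdot \elt(\mc{V}_n)$, whose word-length is at most $1 + R$ by \Cref{prop:disjoint_walls_I}, so \Cref{lem:root_triangle_inequality} transfers the bound to $\iroot(\mc{U})$ up to a uniform additive loss; substituting $N_s > \tau(|\mc{U}|-1)/L$ produces $\iroot(\mc{U}) \ge A|\mc{U}| - B$ with $A = \tau(\log \lambda_0)/L$. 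The main obstacle is the uniformity of $\lambda_0$: one must verify carefully that short indices correspond, up to the ambient $\rho(C)$-action, to only finitely many configurations of strictly nested half-cones, with the strict nesting guaranteed by the construction of $\mathbf{Z}$ so that \Cref{lem:hilbert_metric_contracts} delivers a nontrivial contraction factor at each.
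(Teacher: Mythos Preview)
Your argument is essentially the paper's own proof: count the short sub-itineraries, use the finitely many local configurations $(s_j,\delta_j,s_{j+1})$ together with \Cref{lem:hilbert_metric_contracts} to extract a uniform contraction factor $\lambda_0>1$ at each short step, telescope through the nested half-cones to bound $\diam_{\halfcone_+(Z_1)}(\halfcone_+(Z_N))$, and finish with \Cref{lem:hilbert_metric_estimates_sv_gaps} and \Cref{lem:root_triangle_inequality}.

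There is one small slip. The construction of $\mathbf{Z}$ guarantees the \emph{strict} inclusion $\overline{\halfcone_+(Z_{j+1})}\subset\halfcone_+(Z_j)$ only for $j+1<N$; the terminal wall $Z_N$ may have been forced to be the maximal wall of $\mathbf{W}$ without any strong nesting into $\halfcone_+(Z_{N-1})$. Hence if your largest short index $j_*$ happens to equal $N-1$, neither the contraction inequality nor the uniform bound $\diam_{\halfcone_+(Z_{j_*})}(\halfcone_+(Z_{j_*+1}))\le C_0$ is justified. The paper avoids this by explicitly taking the last short step with index strictly below $N$; this discards at most one short index, so $N_s$ drops by at most $1$ and the loss is absorbed into the additive constant $B$. (Also, $\udist(Z_1,Z_N)=|\mc U|-1-|\mc V_n|$ rather than $|\mc U|-1$, but you account for $\mc V_n$ later anyway.)
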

\begin{proof}
  We let $L > 0$ and $\tau \in (0, 1)$ be given, and let $m$ denote
  the number of indices $i$ in $1, \ldots, N$ such that
  $\udist(Z_i, Z_{i+1}) < L$. Thus, we have
  \[
    m \cdot L \ge \sum_{i=1}^{N-1} \utruncl{Z_i, Z_{i+1}},
  \]
  and since we assume $r_L(\mathbf{Z}) \ge \tau$, we have
  \[
    m \ge \frac{\tau}{L} \cdot \udist(Z_1, Z_N).
  \]
  We now consider the sequence of half-cones
  $\halfcone_+(Z_1), \ldots, \halfcone_+(Z_N)$. For every index $n$,
  let $\gamma_n$ denote the group element traversed by the itinerary
  $\mc{U}(Z_1, \ldots, Z_n)$, and let $s_n = \type{Z_n}$. Then
  \Cref{lem:halfcone_formula} implies that
  \[
    \halfcone_+(Z_n) = \rho(\gamma_n) \halfcone_-(s_n),
  \]
  where, for any $s \in S$, $\halfcone_+(s)$ denotes the positive
  half-cone over the reflection wall for $\rho(s)$.

  Let $d_n$ denote the Hilbert metric on the half-cone
  $\halfcone_+(Z_n)$. As the half-cones $\halfcone_+(Z_{n-1})$ and
  $\halfcone_+(Z_n)$ are always nested,
  \Cref{lem:hilbert_metric_contracts} implies that $d_n \ge d_{n-1}$
  for all $n \le N$.

  When $n < N$, then the half-cones $\halfcone_+(Z_{n-1})$ and
  $\halfcone(Z_n)$ are \emph{strongly} nested. Equivalently,
  \begin{equation}
    \label{eq:halfcone_sequence_nest}
    \rho(\gamma_{n-1}^{-1}\gamma_n)\overline{\halfcone_-(s_n)} \subset
    \halfcone_-(s_{n-1}).
  \end{equation}
  For any $n$, we have
  $\gamma_{n-1}^{-1}\gamma_n = s_{n-1}\elt(Z_{n-1}, Z_n)$. So, if
  $\udist(Z_{n-1}, Z_n) < L$, there only are finitely many possible
  choices for the group element $\rho(\gamma_{n-1}^{-1}\gamma_n)$ in
  the inclusion \eqref{eq:halfcone_sequence_nest} above, as well as
  finitely many possible choices for the half-cones $\halfcone_-(s_n)$
  and $\halfcone_-(s_{n-1})$.

  We can then apply the sharper form of
  \Cref{lem:hilbert_metric_contracts} to see that there is a uniform
  constant $\lambda > 1$ (depending only on $\rho, L$) so that, if
  $n < N$, $\udist(Z_{n-1}, Z_n) < L$ and $x, y \in \halfcone_+(Z_n)$,
  then
  \begin{equation}
    \label{eq:hilbert_contract}
    d_n(x,y) \ge \lambda \cdot d_{n-1}(x,y).
  \end{equation}
  Now let $\ell$ be the last index smaller than $N$ such that
  $\udist(Z_{\ell - 1}, Z_\ell) < L$, and let $D_\ell$ be the diameter
  of $\halfcone_+(Z_\ell)$ with respect to the Hilbert metric on
  $\halfcone_+(Z_{\ell - 1})$, or equivalently the diameter of
  $\rho(\gamma_{\ell-1}^{-1}\gamma_\ell)\halfcone_-(s_\ell)$ with
  respect to the Hilbert metric on $\halfcone_-(s_{\ell-1})$. Since
  there are only finitely many possible values for these two strongly
  nested half-cones, we can bound $D_\ell$ beneath a uniform constant
  $D$, which depends only on $L$.

  From \eqref{eq:hilbert_contract}, the inequality $d_n \ge d_{n-1}$,
  and \eqref{eq:halfcone_sequence_nest} it follows that the diameter
  of the half-cone $\halfcone_+(Z_N) = \rho(\gamma_N)\halfcone_-(s_N)$
  with respect to the Hilbert metric on $\halfcone_+(Z_1)$ is at most
  $\lambda^{-(m-1)}D$.
  
  As there are only finitely many possible values for
  $\halfcone_+(Z_1)$ and $\halfcone_-(s_N)$, we can then apply
  \Cref{lem:hilbert_metric_estimates_sv_gaps} to see that
  \[
    \iroot(\rho(\gamma_N)) \ge m \cdot \log(\lambda) - B
  \]
  for some uniform constant $B$. By \Cref{prop:disjoint_walls_I}, the
  group element $\gamma_N = \elt_{\mc{U}}(Z_1, Z_N)$ is within
  uniformly bounded distance of $\elt(\mc{U})$ in the Coxeter group
  $C$, and $m$ is uniformly linear in $\udist(Z_1, Z_N)$ (which is
  within uniformly bounded additive error of $|\mc{U}|$). So we can
  apply \Cref{lem:root_triangle_inequality} to complete the proof.
\end{proof}

\subsection{Mostly long sub-itineraries}

Our goal for the rest of the section is to prove the following:
\begin{prop}
  \label{prop:long_intervals_bound}
  There exist constants $A > 0$, $B, L \ge 0$ and $\tau \in (0,1)$ so
  that if $r_L(\mathbf{Z}) \le \tau$, then
  $\iroot(\mc{U}) \ge A|\mc{U}| - B$.
\end{prop}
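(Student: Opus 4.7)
The plan is to combine the inductive singular value gap estimate on elements of proper standard subgroups with the structural information about long sub-itineraries provided by \Cref{lem:halfcone_nesting_failure}, and then to glue the pieces using the two techniques advertised in \Cref{sec:cutting_gluing}. First I fix constants: let $R$ be the constant from \Cref{prop:disjoint_walls_I}, let $\lambda$, $A_1$, $B_1$ be the output of \Cref{thm:local_to_global} applied to the inductive constants $(A_0/2, B_0)$, choose $L$ much larger than $\max(\lambda, R)$, and choose $\tau \in (0, 1)$ small enough that short sub-itineraries contribute negligibly to $|\mc{U}|$ when $r_L(\mathbf{Z}) \le \tau$.

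For each long sub-itinerary $\mc{U}(Z_i, Z_{i+1})$, I exploit the greedy choice of $Z_{i+1}$: for any wall $W_*$ strictly between $Z_i$ and $Z_{i+1}$, the half-cones $\halfcone_+(Z_i)$ and $\halfcone_+(W_*)$ are nested but not strongly nested. Applying \Cref{lem:halfcone_nesting_failure} to the efficient itinerary from $Z_i$ to the penultimate wall of $\mc{U}(Z_i, Z_{i+1})$ yields a decomposition $\mc{U}_-^i, \mc{U}_0^i, \mc{U}_+^i$ with $|\mc{U}_0^i| < 2R$ and with both $\bigcap_{U \in \mc{U}_\pm^i}\overline{U}$ nonempty. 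By \Cref{lem:disjoint_walls} this forces $\gamma(\mc{U}_\pm^i) \in C(T_\pm^i)$ for proper subsets $T_\pm^i \subsetneq S$. Since $D$-bounded product projections is inherited by sub-itineraries, both elements lie in $\BP(D)$, and the inductive hypothesis \eqref{eq:vertex_sv_gap} delivers $\iroot(\mc{U}_\pm^i) \ge A_0 |\mc{U}_\pm^i| - B_0$.

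I then glue within a single long sub-itinerary by dichotomy. If the maximal walls of $\mc{U}_-^i$ and the minimal walls of $\mc{U}_+^i$ are separated by a pair of walls in $\vindomain$ with disjoint closures, then \Cref{lem:stable_subspace_in_halfcones} together with \Cref{lem:stable_subspaces_standard_subgroups} place $\stable{d-1}(\mc{U}_-^i)$ and $\unstable{1}(\mc{U}_+^i)$ in small neighborhoods of opposite half-cones that are uniformly transverse in $\P(V)$; \Cref{lem:root_additivity} then converts the two inductive gaps into a uniform gap for $\gamma(\mc{U}_-^i)\gamma(\mc{U}_+^i)$, and \Cref{lem:root_triangle_inequality} absorbs the bounded middle $\mc{U}_0^i$ and the trailing reflection at $Z_{i+1}$. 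Otherwise, a further application of \Cref{lem:halfcone_nesting_failure} (combined with \Cref{cor:disjoint_walls}\ref{item:subitineraries_minimal}) shows that every length-$\lambda$ subsegment of $\mc{U}(Z_i, Z_{i+1})$ itself traverses, up to bounded error, an element of a proper standard subgroup with BPP; the inductive hypothesis gives each such subsegment uniform $(A_0, B_0)$-gaps, and \Cref{thm:local_to_global} upgrades this to a uniform gap for the whole sub-itinerary. The same dichotomy, applied across the strongly nested boundary $\overline{\halfcone_+(Z_{i+1})} \subset \halfcone_+(Z_i)$ separating consecutive long sub-itineraries, allows me to concatenate all long pieces into a single gap estimate.

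Finally, the short sub-itineraries between long ones are negligible by the choice of $\tau$: their total length is at most $\tau \cdot \udist(Z_1, Z_N)$, and \Cref{lem:root_triangle_inequality} lets me absorb them as bounded additive error, producing the desired $\iroot(\mc{U}) \ge A |\mc{U}| - B$. I expect the main obstacle to lie in making the ``well-separated vs.\ not'' dichotomy fully quantitative: the transversality branch requires translating abstract separation of walls in $\vindomain$ into a uniform lower bound on angles between stable and unstable subspaces (juggling \Cref{lem:stable_subspace_in_halfcones}, \Cref{lem:stable_subspaces_standard_subgroups}, and the fully nondegenerate Cartan hypothesis via \Cref{lem:fully_nondegenerate_transverse}), while the local-to-global branch requires a precise verification that all subsegments of length $\lambda$ straddling the decomposition $\mc{U}_-^i, \mc{U}_0^i, \mc{U}_+^i$ at every scale still traverse elements of proper standard subgroups with $D$-bounded product projections.
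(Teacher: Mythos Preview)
Your outline follows the paper's overall architecture, but there is a genuine gap in the transversality branch of your within-sub-itinerary dichotomy. You assert that if the boundary between $\mc{U}_-^i$ and $\mc{U}_+^i$ is crossed by a pair of walls with disjoint closures in $\overline{\vindomain}$, then \Cref{lem:stable_subspace_in_halfcones} and \Cref{lem:stable_subspaces_standard_subgroups} place the relevant stable and unstable subspaces in uniformly transverse regions. This does not follow: disjoint wall closures do \emph{not} imply strongly nested half-cones (this is exactly the phenomenon exhibited in \Cref{sec:appendix}), so the half-cones you land in via \Cref{lem:stable_subspace_in_halfcones} may touch along their boundaries, and \Cref{lem:stable_subspaces_standard_subgroups} only helps when closures \emph{intersect}, the opposite case. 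The paper closes this gap with \Cref{lem:wellsep_walls_II}, whose proof is the technical heart of the argument: one moves a bounded distance further into $\mc{U}_+^i$ to a wall $W$, splits into cases on whether $\overline{W_k}\cap\overline{W}$ is empty, and in the hard case combines \Cref{lem:uniform_convergence_regular} (which needs the full $(A,B)$-gap hypothesis, not just a large $\mu_{1,2}$) with \Cref{lem:stable_subspaces_standard_subgroups} to force $\unstable{1}$ away from the common boundary locus. You flag this as ``the main obstacle'' at the end, and it is; your proposal does not yet contain the idea that resolves it.

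Two further points. First, your Case~B justification is garbled: no ``further application of \Cref{lem:halfcone_nesting_failure}'' is needed or correct there. If no crossing pair of walls within distance $\lambda$ has disjoint closures, then by \Cref{lem:disjoint_walls} every such crossing element already lies in a proper standard subgroup, and \Cref{lem:local_to_global_itineraries} applies directly. Second, your final concatenation (``the same dichotomy, applied across the strongly nested boundary\ldots'') and your absorption of short pieces are not yet quantitative. Each gluing via \Cref{lem:root_additivity} costs an additive constant, and the number of gluings is linear in $|\mc{U}|$; likewise each short piece costs an error proportional to its length via \Cref{lem:wall_additive_error}. The paper controls this with two successive modifications of $\mathbf{Z}$ (removing walls so that adjacent long pieces are either merged via local-to-global or separated by a short piece with a nearby strongly-nested pair) followed by a careful induction (\Cref{prop:final_gluing}) that tracks $A_0\,t_L^+ - B_0\,t_L^-$ rather than a single linear bound. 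Your sketch needs an analogous mechanism to prevent the accumulated errors from swamping the gain.
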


Since the constants $A, B, L, \tau$ above can all be chosen
independently of $\mc{U}$ and $\gamma$, putting
\Cref{prop:long_intervals_bound} together with
\Cref{prop:short_intervals_bound} will imply
\Cref{prop:inductive_statement_2}, which in turn gives us the desired
\Cref{prop:inductive_statement} (and thus our main
theorem). Throughout the section, we will obtain many intermediate
estimates for various sub-itineraries $\mc{U}'$ of $\mc{U}$, which all
roughly have the form
\[
  \iroot(\mc{U}') \ge A|\mc{U}'| - B
\]
for some constants $A, B$. We will not keep track of the different
constants $A,B$ for all of these different estimates.

Most of this section involves obtaining and refining various estimates
for $\iroot(\mc{U}(W_i, W_j))$ and $\iroot(W_i, W_j)$ for certain
walls $W_i, W_j \in \mathbf{W}$. Due to \Cref{cor:disjoint_walls}.\ref{item:elts_in_coxbnhd} and \Cref{lem:root_triangle_inequality}, we
can go back and forth between $\elt_{\mc{U}}(W_i, W_j)$ and
$\elt(W_i, W_j)$ when we make these estimates. Precisely, we can say
the following:
\begin{lem}
  \label{lem:itinerary_wall_estimates}
  There are uniform constants $R_1, R_2$ so that for any walls
  $W_i < W_j$ in $\mathbf{W}$, we have
  \[
    |\iroot(\mc{U}(W_i, W_j)) - \iroot(W_i, W_j)| < R_1
  \]
  and
  \[
    \left| |\elt(W_i, W_j)| - \udist(W_i, W_j) \right| < R_2.
  \]
\end{lem}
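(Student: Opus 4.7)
The plan is to deduce both inequalities directly from Corollary~\ref{cor:disjoint_walls}\ref{item:elts_in_coxbnhd}, which furnishes a uniform constant $R > 0$ and (for each $W_i < W_j$ in $\mathbf{W}$) elements $\eta_i, \eta_j \in C$ with $|\eta_i|, |\eta_j| < R$ satisfying
\[
  \elt_{\mc{U}}(W_i, W_j) = \eta_i \cdot \elt(W_i, W_j) \cdot \eta_j.
\]
Both bounds then reduce to comparing the quantities attached to $\elt_{\mc{U}}(W_i, W_j)$ and $\elt(W_i, W_j)$, which differ only by left and right multiplication by elements of word length $< R$.

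For the first bound, I would apply the ``triangle inequality'' for $\mu_{1,2}$ (\Cref{lem:root_triangle_inequality}) to the finitely generated subgroup $\rho(C) \leq \SLpm(d, \R)$, equipped with the word metric coming from the generating set $\rho(S)$. Since $\rho$ is a homomorphism, $\rho(\eta_i)$ has $\rho(S)$-word length at most $|\eta_i| < R$, and similarly for $\eta_j$. The lemma then yields a uniform constant $K > 0$ (depending only on $\rho(C)$ and $\rho(S)$) such that
\[
  |\iroot(\mc{U}(W_i, W_j)) - \iroot(W_i, W_j)|
  = |\mu_{1,2}(\rho(\eta_i)\rho(\elt(W_i, W_j))\rho(\eta_j)) - \mu_{1,2}(\rho(\elt(W_i, W_j)))|
  \le 2KR,
\]
so one may take $R_1 := 2KR$.

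For the second bound, I would use the definition $\udist(W_i, W_j) = |\mc{U}(W_i, W_j)| - 1$ together with the observation that $|\mc{U}(W_i, W_j)|$, the number of walls in the sub-itinerary, equals the $S$-word length of the element it traverses, namely $|\elt_{\mc{U}}(W_i, W_j)|$. Applying the triangle inequality for the word metric on $C$ to $\elt_{\mc{U}}(W_i, W_j) = \eta_i \cdot \elt(W_i, W_j) \cdot \eta_j$ gives
\[
  \big| |\elt_{\mc{U}}(W_i, W_j)| - |\elt(W_i, W_j)| \big|
  \le |\eta_i| + |\eta_j| < 2R,
\]
and hence $\big| |\elt(W_i, W_j)| - \udist(W_i, W_j) \big| < 2R + 1$; so one may take $R_2 := 2R + 1$.

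There is no real obstacle here: the lemma is a bookkeeping statement whose entire content is packaged in Corollary~\ref{cor:disjoint_walls}\ref{item:elts_in_coxbnhd} and \Cref{lem:root_triangle_inequality}. The only mild point to check is that the word-length bounds on $\eta_i, \eta_j$ in $C$ translate to word-length bounds on $\rho(\eta_i), \rho(\eta_j)$ in the ambient matrix group, which is automatic since $\rho$ maps the generating set $S$ to the generating set $\rho(S)$.
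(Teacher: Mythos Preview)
Your proposal is correct and follows exactly the approach indicated in the paper: the lemma is stated there as an immediate consequence of \Cref{cor:disjoint_walls}\ref{item:elts_in_coxbnhd} and \Cref{lem:root_triangle_inequality}, and you have simply filled in the routine details of that deduction.
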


Using \Cref{lem:root_triangle_inequality} and the above, we also
obtain the following:
\begin{lem}
  \label{lem:wall_additive_error}
  There is a uniform constant $K > 0$ so that for any walls
  $W_i < W_j < W_k$ in $\mathbf{W}$, we have
  \begin{align*}
        |\iroot(W_i, W_k) - \iroot(W_i, W_j)| &\leq K \cdot \udist(W_j, W_k), \\
        |\iroot(W_i, W_k) - \iroot(W_j, W_k)| &\leq K \cdot \udist(W_i, W_j).
  \end{align*}
\end{lem}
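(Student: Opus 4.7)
The plan is to deduce the lemma from the triangle-type bound for $\mu_{1,2}$ given by \Cref{lem:root_triangle_inequality}, after a short bookkeeping reduction via \Cref{lem:itinerary_wall_estimates}. Since $\iroot(W_a, W_b)$ differs from $\iroot(\mc{U}(W_a, W_b))$ by a uniformly bounded additive constant (namely $R_1$), it suffices to establish both inequalities with $\iroot(\mc{U}(\cdot, \cdot))$ in place of $\iroot(\cdot, \cdot)$, at the cost of enlarging $K$. The additive error that arises can be absorbed into the multiplicative constant whenever $\udist(W_j, W_k) \ge 1$ (respectively $\udist(W_i, W_j) \ge 1$), and if these quantities vanish then $W_j = W_k$ (respectively $W_i = W_j$), in which case the traversed group elements coincide and the inequality is trivial.

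The key identity I will use is the following. Write $\gamma_{ab} := \elt(\mc{U}(W_a, W_b))$ and $s_j = \type{W_j}$. A geodesic word for $\gamma_{ij}$ ends with the letter $s_j$ (read off from the final wall of $\mc{U}(W_i, W_j)$), while a geodesic word for $\gamma_{jk}$ begins with $s_j$. Concatenating these words and cancelling the duplicated letter $s_j$ produces a geodesic word for $\gamma_{ik}$, yielding the identity
\[
  \gamma_{ik} \;=\; \gamma_{ij} \cdot s_j^{-1} \cdot \gamma_{jk}.
\]
In particular, $\gamma_{ik}$ is obtained from $\gamma_{ij}$ by right-multiplication by the element $s_j^{-1} \gamma_{jk}$, whose word length in $C$ is at most $|\gamma_{jk}| + 1 = \udist(W_j, W_k) + 2$. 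Symmetrically, $\gamma_{ik}$ is obtained from $\gamma_{jk}$ by left-multiplication by $\gamma_{ij} s_j^{-1}$, of length at most $\udist(W_i, W_j) + 2$.

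Applying \Cref{lem:root_triangle_inequality} to $\rho(C) \leq \SLpm(d, \R)$ (which is finitely generated, with word metric pulled back from $|\cdot|$ on $C$ via the faithful representation $\rho$) with $\eta_1 = \identity$ and $\eta_2 = s_j^{-1}\gamma_{jk}$ yields the first inequality, and the analogous application with $\eta_1 = \gamma_{ij} s_j^{-1}$ and $\eta_2 = \identity$ yields the second. Enlarging $K$ to absorb the additive $+2$ and the $R_1$-error from \Cref{lem:itinerary_wall_estimates} completes the argument. I do not anticipate a substantive obstacle: the lemma simply records that $\iroot(W_i, \cdot)$ and $\iroot(\cdot, W_k)$ are coarsely Lipschitz in their free arguments with respect to $\udist$, and this follows directly from the fact that perturbing the endpoint of an efficient itinerary by $\udist$-distance $\ell$ perturbs the traversed element by a bounded multiple of $\ell$ letters in the Cayley graph of $C$.
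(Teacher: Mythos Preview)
Your proposal is correct and follows essentially the same approach as the paper, which simply states that the lemma follows from \Cref{lem:root_triangle_inequality} together with \Cref{lem:itinerary_wall_estimates}. Your factorization $\gamma_{ik} = \gamma_{ij}\, s_j^{-1}\, \gamma_{jk}$ makes this precise; note also that since the hypothesis is $W_i < W_j < W_k$ (strict), we always have $\udist(W_j,W_k)\ge 1$, so your case analysis for $\udist = 0$ is vacuous but harmless.
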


\begin{definition}
  Let $W_i, W_\ell$ be walls in $\mathbf{W}$ with $W_i < W_\ell$. For
  constants $A, B > 0$, we say that the pair of walls $W_i, W_\ell$
  \emph{has $(A, B)$-gaps} if for every pair of walls $W_j, W_k$ with
  $W_i \le W_j < W_k \le W_\ell$, we have
  \begin{equation}
    \label{eq:regular_pair_inequality}
    \iroot(W_j, W_k) \ge A|\elt(W_j, W_k)| - B.
  \end{equation}
  If $\mc{V}$ is any itinerary equivalent to a sub-itinerary of
  $\mc{U}$, we say that $\mc{V}$ \emph{has $(A, B)$-gaps} if every
  pair of walls $W_i < W_\ell$ in $\mc{V} \cap \mathbf{W}$ has
  $(A, B)$-gaps.
\end{definition}

\begin{remark}
  \Cref{lem:itinerary_wall_estimates} means that we could equivalently
  define pairs of walls with $(A,B)$-gaps by replacing
  $\iroot(W_j, W_k)$ with $\iroot(\mc{U}(W_j, W_k))$ and
  $|\elt(W_j, W_k)|$ with $|\mc{U}(W_j, W_k)|$ (or with
  $\udist(W_j, W_k)$) in \eqref{eq:regular_pair_inequality} above---if
  we do, the constants $A, B$ change, but by a controlled amount
  depending only on the group $C$ and the representation $\rho$.

  Using \Cref{lem:root_triangle_inequality} and
  \Cref{cor:disjoint_walls}.\ref{item:subitineraries_minimal}, one can
  also see that, if $\mc{V}$ is a sub-itinerary of some itinerary
  equivalent to $\mc{U}$, then $\mc{V}$ has $(A,B)$-gaps if and only
  the geodesic following $\mc{V}$ has uniform gaps in the sense of
  \Cref{defn:URU_geodesic} (with constants depending on $A, B$).
\end{remark}

\subsubsection{Finding sub-itineraries with uniform gaps}

We have used the set of walls $\mathbf{Z}$ to cut the itinerary
$\mc{U}$ into pieces, so our first main task is to show that each
piece corresponds to a geodesic segment in $\Gamma$ with uniform
gaps. This will follow from:
\begin{prop}
  \label{prop:halfcone_intervals_regular}
  There are constants $A, B > 0$ so that every pair of walls $W < W'$
  in $\mathbf{W}$ which satisfies
  $\overline{\halfcone_+(W')} \not\subset \halfcone_+(W)$ has
  $(A, B)$-gaps.
\end{prop}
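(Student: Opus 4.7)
The plan is to establish the gap inequality directly for every subpair $W \le W_j < W_k \le W'$ in $\mathbf{W}$, by combining a decomposition argument with the inductive hypothesis. First I would verify that the outer hypothesis $\overline{\halfcone_+(W')} \not\subset \halfcone_+(W)$ propagates to every subpair: if $\overline{\halfcone_+(W_k)} \subset \halfcone_+(W_j)$ held internally, chaining with the halfcone nestings $\halfcone_+(W') \subset \halfcone_+(W_k)$ and $\halfcone_+(W_j) \subset \halfcone_+(W)$ supplied by \Cref{lem:halfcones_nest} would yield $\overline{\halfcone_+(W')} \subset \halfcone_+(W)$, contradicting the hypothesis on the outer pair.

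For each subpair I would then apply \Cref{lem:halfcone_nesting_failure} to an efficient itinerary from $W_j$ to $W_k$, obtaining an equivalent decomposition $\mc{U}_-, \mc{U}_0, \mc{U}_+$ with $|\mc{U}_0|$ bounded by a universal constant and $\mc{U}_\pm$ efficient with nonempty common wall-closure intersections. Writing $\gamma_\pm := \elt(\mc{U}_\pm)$ and $\eta := \elt(\mc{U}_0)$, \Cref{lem:disjoint_walls} then forces each $\gamma_\pm$ to lie in a proper standard subgroup of $C$. Since subwords of geodesic words for elements of $\BP(D)$ inherit the $D$-bounded product projections property, the inductive hypothesis \eqref{eq:vertex_sv_gap} yields $\mu_{1,2}(\rho(\gamma_\pm)) \ge A_0|\gamma_\pm| - B_0$. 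The bounded middle $\eta$ will be absorbed via \Cref{lem:root_triangle_inequality}, so combining these into a lower bound on $\mu_{1,2}(\rho(\gamma_-\eta\gamma_+))$ reduces to applying \Cref{lem:root_additivity}, whose hypothesis demands a uniform lower bound on $\theta := \angle(\stable{d-1}(\rho(\gamma_-\eta)), \unstable{1}(\rho(\gamma_+)))$.

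Establishing this uniform transversality is the main obstacle. My strategy would be to use \Cref{lem:stable_subspace_in_halfcones} (after translating each itinerary to depart from the identity, which only changes singular values up to bounded additive error) to confine $\unstable{1}(\rho(\gamma_+))$ within an $\eps$-neighborhood of a halfcone $\halfcone_+(V_+)$ determined by an initial wall of $\mc{U}_+$, and dually to bound $\stable{d-1}(\rho(\gamma_-\eta))$ away from a halfcone $\halfcone_-(V_-)$ over a terminal wall of $\mc{U}_-\mc{U}_0$. Because $V_+$ and $V_-$ sit on opposite sides of the bounded middle $\mc{U}_0$, these halfcones naturally inhabit different regions of $\vindomain$. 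The delicate configuration arises when walls within $\mc{U}_+$ (respectively $\mc{U}_-$) have common intersections in $\partial \vindomain$; there \Cref{lem:stable_subspaces_standard_subgroups} further localizes $\unstable{1}(\rho(\gamma_+))$ near $\P(V_{T_+})$ and uniformly away from $\P(V_{T_+}^\perp)$, where $T_+$ indexes the proper standard subgroup containing $\gamma_+$, and a symmetric statement controls the stable side. As the data $(T_\pm, V_\pm)$ ranges over finitely many configurations (since $|S|$ is finite), compactness aggregates these local estimates into a single uniform lower bound on $\theta$. Subpairs where the inductive bound on one of $\gamma_\pm$ is trivial because the corresponding piece is short can be handled separately via \Cref{lem:root_triangle_inequality}, absorbing the bounded contribution into the additive constant $B$.
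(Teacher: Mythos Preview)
Your reduction to subpairs and the invocation of \Cref{lem:halfcone_nesting_failure} are correct and match the paper.  The genuine gap is the uniform transversality step.  You propose to bound $\theta = \angle(\stable{d-1}(\rho(\gamma_-\eta)),\unstable{1}(\rho(\gamma_+)))$ from below by saying that the data $(T_\pm, V_\pm)$ ranges over a finite set and then invoking ``compactness''.  But for a fixed configuration $(T_+, T_-, s_+, s_-)$, the angle $\theta$ still depends on the infinitely many possible group elements $\gamma_\pm$, and nothing you have cited rules out $\theta\to 0$ along some sequence.  \Cref{lem:stable_subspace_in_halfcones} only confines $\unstable{1}(\rho(\gamma_+))$ to $N_\eps(\halfcone_+(V_+))$ and pushes $\stable{d-1}(\rho(\gamma_-\eta))$ out of $N_{-\eps}(\halfcone_-(V_-))$; after the basepoint change these two half-cones satisfy $\halfcone_+(V_+)\subset\halfcone_-(V_-)$ but \emph{not} the strong inclusion, so their boundaries can meet and the two regions need not be separated.  \Cref{lem:stable_subspaces_standard_subgroups} keeps $\unstable{1}(\rho(\gamma_+))$ away from $\P(V_{T_+}^\perp)$, but $\stable{d-1}(\rho(\gamma_-\eta))$ is a hyperplane that need not lie anywhere near $\P(V_{T_+}^\perp)$; there is no reason the two constraints combine into a positive lower bound on $\theta$.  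Indeed, when $T_+=T_-$, both $\gamma_\pm$ act nontrivially only on the same $V_T$, and for suitable choices of $\gamma_\pm$ within $C(T)$ the angle can be made small.

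The paper resolves exactly this difficulty by a dichotomy you have not reproduced.  After obtaining the decomposition $\mc{W}_-,\mc{W}_0,\mc{W}_+$, it first records via \Cref{lem:inductive_statement_itineraries} that each side has uniform $(A_0,B_0)$-\emph{gaps} along its entire length (not merely at the endpoints), then fixes a local-to-global scale $\lambda$ and asks whether every pair of walls $W_n\in\mc{W}_-',\,W_m\in\mc{W}_+$ with $\udist(W_n,W_m)<\lambda$ still satisfies $\overline{W_n}\cap\overline{W_m}\neq\emptyset$.  If so, every such bridging pair lies in a proper standard subgroup and the local-to-global principle (\Cref{lem:local_to_global_itineraries}) glues the gap estimates without any transversality computation.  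If not, one has found a specific pair with $\overline{W_n}\cap\overline{W_m}=\emptyset$, and only then is the transversality argument carried out, in the form of \Cref{lem:wellsep_walls_II}; that lemma's proof uses the disjoint-closure hypothesis together with the uniform convergence of unstable subspaces along a geodesic with uniform gaps (\Cref{lem:uniform_convergence_regular}) in an essential way.  Your proposal skips the local-to-global branch and lacks the disjoint-closure hypothesis that makes the transversality branch work.
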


We need several intermediate lemmas in order to prove
\Cref{prop:halfcone_intervals_regular}. First, we observe that our
inductive assumption \eqref{eq:vertex_sv_gap} from
\Cref{prop:inductive_statement_2} and the quasiconvexity of standard
subgroups in $C$ implies the following:
\begin{lem}
  \label{lem:inductive_statement_itineraries}
  There are uniform constants $A, B$ so that, if $\mc{V}$ is any
  itinerary equivalent to a sub-itinerary of $\mc{U}$, and
  $\elt(\mc{V}) \in C(T)$ for some $T \subsetneq S$, then $\mc{V}$ has
  $(A, B)$-gaps.
\end{lem}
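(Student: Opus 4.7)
The plan is to apply the inductive hypothesis~\eqref{eq:vertex_sv_gap} to the element $\elt_{\mc{V}}(W_j, W_k)$ traversed by the sub-itinerary of $\mc{V}$ from $W_j$ to $W_k$, and then transfer the resulting gap estimate to $\elt(W_j, W_k)$ using \Cref{lem:root_triangle_inequality}.

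First I would verify that every wall in $\mc{V}$ has type in $T$. The itinerary $\mc{V}$ determines a reduced expression for $\elt(\mc{V}) \in C(T)$; in a right-angled Coxeter group any two reduced expressions for the same element differ by swaps of adjacent commuting generators, so the multiset of generators used is invariant. Since $\elt(\mc{V})$ admits at least one reduced expression using only generators in $T$, all generators appearing in $\mc{V}$'s word must lie in $T$, i.e.\ every wall in $\mc{V}$ has type in $T$. In particular, $\elt_{\mc{V}}(W_j, W_k) \in C(T)$ for any $W_j < W_k$ in $\mc{V} \cap \mathbf{W}$.

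Next I would show $\elt_{\mc{V}}(W_j, W_k) \in \BP(D)$. By hypothesis, $\elt(\mc{V}) = \elt(\mc{V}')$ for some sub-itinerary $\mc{V}'$ of $\mc{U}$; since $\mc{V}'$'s word is a literal sub-word of the geodesic word for $\gamma \in \BP(D)$, the sub-word inheritance property of $\BP(D)$ gives $\elt(\mc{V}) \in \BP(D)$. Applying that property once more, now to the sub-word of $\mc{V}$'s reduced expression corresponding to $\mc{V}(W_j, W_k)$, yields $\elt_{\mc{V}}(W_j, W_k) \in \BP(D)$.

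Finally, each wall in $\mathbf{W}$ intersects at most $R$ walls of $\mc{U}$ by \Cref{prop:disjoint_walls_I}, so the same bound holds in $\mc{V}$, whose walls form a subset of $\mc{U}$'s. Adapting the argument of the second part of \Cref{cor:disjoint_walls} to $\mc{V}$ then realizes $\mc{V}(W_j, W_k)$ as equivalent to an itinerary of the form $\mc{Y}_j, \mc{W}_{jk}, \mc{Y}_k$, where $\mc{W}_{jk}$ is an efficient itinerary from $W_j$ to $W_k$ and $|\mc{Y}_j|, |\mc{Y}_k| < R$. This gives the reduced factorization
$$
\elt_{\mc{V}}(W_j, W_k) = \elt(\mc{Y}_j) \cdot \elt(W_j, W_k) \cdot \elt(\mc{Y}_k),
$$
in particular $|\elt_{\mc{V}}(W_j, W_k)| \ge |\elt(W_j, W_k)|$. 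Combining \eqref{eq:vertex_sv_gap} applied to $\elt_{\mc{V}}(W_j, W_k) \in C(T) \cap \BP(D)$ with \Cref{lem:root_triangle_inequality} then gives, for a uniform constant $K$,
$$
\iroot(\elt(W_j, W_k)) \ge \iroot(\elt_{\mc{V}}(W_j, W_k)) - 2KR \ge A_0 |\elt(W_j, W_k)| - (B_0 + 2KR),
$$
so the pair $W_j, W_k$ has $(A_0, B_0 + 2KR)$-gaps. The only real subtlety is the $\BP(D)$-bookkeeping in the third paragraph, namely that the sub-word inheritance property applies to every reduced expression of $\elt(\mc{V})$ and not just a distinguished one; beyond that the argument is a direct application of the machinery already established in Sections~\ref{sec:cube_complexes_RACGs} and~\ref{sec:bounded_product_projections}.
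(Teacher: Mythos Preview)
Your proof is correct and fills in precisely the details the paper leaves implicit: the paper states the lemma as an immediate consequence of the inductive hypothesis \eqref{eq:vertex_sv_gap} together with ``the quasiconvexity of standard subgroups in $C$,'' without further argument. Your reduced-expression argument in the second paragraph is exactly the convexity of $C(T)$ in disguise, and the rest is the expected bookkeeping via \Cref{cor:disjoint_walls} and \Cref{lem:root_triangle_inequality}.

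One small streamlining: rather than working with $\elt_{\mc{V}}(W_j,W_k)$ and adapting \Cref{cor:disjoint_walls} to $\mc{V}$, you could apply the inductive hypothesis directly to $\elt_{\mc{U}}(W_j,W_k)$. Since $\mc{V}$ is equivalent to a contiguous sub-itinerary $\mc{V}'$ of $\mc{U}$ and $W_j,W_k\in\mc{V}'$, every wall of $\mc{U}(W_j,W_k)$ lies in $\mc{V}'$, hence in $\mc{V}$, and so has type in $T$; thus $\elt_{\mc{U}}(W_j,W_k)\in C(T)\cap\BP(D)$, and \Cref{lem:itinerary_wall_estimates} transfers the estimate to $\elt(W_j,W_k)$ without any adaptation. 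Either route works; yours is fine.
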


We next need two different ``straightness'' lemmas for sub-itineraries
of $\mc{U}$. Both \Cref{lem:wellsep_walls_I} and
\Cref{lem:wellsep_walls_II} below essentially say that if two adjacent
itineraries $\mc{V}_-, \mc{V}_+$ in $\mc{U}$ have sufficiently large
gaps, and there are walls between $\mc{V}_-$ and $\mc{V}_+$ which are
well-separated in $\vindomain$, then $\iroot(\mc{V})$ is approximately
$\iroot(\mc{V}_-) + \iroot(\mc{V}_+)$. The conclusion of both lemmas
is nearly the same, but in \Cref{lem:wellsep_walls_II}, we assume
slightly weaker separation hypotheses and slightly stronger hypotheses
on gaps than in \Cref{lem:wellsep_walls_I} (we refer to
\Cref{sec:appendix} for a discussion of the difference between the
separation hypotheses in these lemmas). We need both lemmas for the
arguments throughout the rest of the section.

\begin{lem}[Additivity for well-separated itineraries, I]
  \label{lem:wellsep_walls_I}
  Given $\lambda > 0$, there are constants $M, D > 0$ so that the
  following holds. Suppose that $W_i < W_j < W_k < W_\ell$ are walls in
  $\mathbf{W}$ satisfying:
  \begin{enumerate}
  \item\label{item:roots_large_I} $\iroot(W_i, W_j) > M$ and
    $\iroot(W_k, W_\ell) > M$;
  \item $\overline{\halfcone_+(W_k)} \subset \halfcone_+(W_j)$.
  \item $\udist(W_j, W_k) < \lambda$.
  \end{enumerate}
  Then we have
  \[
    \iroot(W_i, W_\ell) \ge \iroot(W_i, W_j) + \iroot(W_k, W_\ell) - D.
  \]
\end{lem}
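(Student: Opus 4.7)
The plan is to apply the uniform transversality estimate \Cref{lem:root_additivity}. Using \Cref{cor:disjoint_walls} and splitting an efficient itinerary from $W_i$ to $W_\ell$ at the walls $W_j$ and $W_k$, one can write
$$\gamma(W_i, W_\ell) = \mu_1 \cdot \gamma(W_i, W_j) \cdot \eta \cdot \gamma(W_k, W_\ell) \cdot \mu_2,$$
where $|\mu_1|, |\mu_2|$ are bounded uniformly (by the constant $R$ of \Cref{cor:disjoint_walls}) and $|\eta| \le \udist(W_j, W_k) + O(1) \le \lambda + O(1)$. Setting $g_1 := \rho(\gamma(W_i, W_j))$ and $g_2 := \rho(\gamma(W_k, W_\ell))$ and absorbing the bounded-length prefactors via \Cref{lem:root_triangle_inequality}, the lemma reduces to showing
$$\iroot(g_1 g_2) \ge \iroot(g_1) + \iroot(g_2) - D_0$$
for some $D_0 = D_0(\lambda)$.

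By \Cref{lem:root_additivity}, this in turn follows from a uniform lower bound $\sin \theta \ge \delta_0 > 0$ on the angle $\theta := \angle(\stable{d-1}(g_1), \unstable{1}(g_2))$. This is where the strict nesting hypothesis enters. By \Cref{lem:stable_subspace_in_halfcones}, choosing $M$ sufficiently large (as a function of $\lambda$ and the desired $\delta_0$) forces $\unstable{1}(g_2)$ to lie in an arbitrarily small $\dproj$-neighborhood of a translate of $\halfcone_+(W_k)$, and forces the hyperplane $\stable{d-1}(g_1)$ to avoid an arbitrarily small $\dproj$-inner-neighborhood of a translate of $\halfcone_-(W_j)$. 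Dualizing $\stable{d-1}(g_1)$ via \Cref{lem:halfcone_duality}, the strict nesting $\overline{\halfcone_+(W_k)} \subset \halfcone_+(W_j)$ together with \Cref{lem:dual_disjoint_walls} yields a uniformly positive $\dproj^*$-distance between the dual point of $\stable{d-1}(g_1)$ in $\P(V^*)$ and the polar of $\halfcone_+(W_k)$. By compactness of $\P(V)$ and after absorbing the bounded Cayley-graph translations, this disjointness converts to a uniform lower bound on $\sin\theta$, which completes the proof.

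The main obstacle is the base-point dependence in \Cref{lem:stable_subspace_in_halfcones}: the natural itineraries used to locate $\unstable{1}(g_2)$ (departing from identity) and $\stable{d-1}(g_1)$ (arriving at identity) are based at different Cayley-graph vertices, namely the vertices just before $W_k$ and just after $W_j$ respectively, which differ by an element of length at most $\udist(W_j, W_k) + O(1) \le \lambda + O(1)$. Since the ambient projective metric $\dproj$ is not $\SL(d,\R)$-invariant, this base-point change introduces a nontrivial distortion of the half-cone location estimates; however, the hypothesis $\udist(W_j, W_k) < \lambda$ guarantees that the conjugating element ranges over a finite set depending only on $\lambda$ (up to translation in $C$), so the resulting distortion is uniformly controlled and can be absorbed into $\delta_0$ and ultimately into $D$. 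This is essentially the only place the assumption $\udist(W_j, W_k) < \lambda$ is used.
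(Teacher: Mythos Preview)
Your overall plan matches the paper's: decompose the itinerary, locate $\unstable{1}$ and $\stable{d-1}$ via \Cref{lem:stable_subspace_in_halfcones}, extract uniform transversality from the strict nesting hypothesis, and conclude with \Cref{lem:root_additivity} plus \Cref{lem:root_triangle_inequality}/\Cref{lem:wall_additive_error}. The basepoint-change issue you flag in your final paragraph is also exactly what the paper handles: placing the identity between $W_j$ and $W_k$, there are only finitely many possible pairs $(\halfcone_-(W_j),\halfcone_+(W_k))$ once $\udist(W_j,W_k)<\lambda$, and this is the sole use of that hypothesis.

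There is, however, a genuine gap in your transversality step. The sentence invoking \Cref{lem:halfcone_duality} and \Cref{lem:dual_disjoint_walls} does not do what you claim: \Cref{lem:halfcone_duality} identifies the dual domain of a half-cone with another half-cone, it does not ``dualize $\stable{d-1}(g_1)$''; \Cref{lem:dual_disjoint_walls} is a qualitative statement about disjointness of wall closures, with no quantitative content; and ``the polar of $\halfcone_+(W_k)$'' is not a defined object in the paper. None of this yields the uniform angle bound you need. The paper's argument at this point is direct and avoids duality entirely. After re-basing so that $\mc{V}_-$ arrives at and $\mc{V}_+$ departs from the identity, the hypothesis becomes $\overline{H_+}\subset H_-$ with $H_+=\halfcone_+(W_k)$ and $H_-=\halfcone_-(W_j)$. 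Since the pair $(H_-,H_+)$ ranges over a finite set determined by geodesic words of length at most $\lambda$, one may choose a uniform $\eps>0$ with $N_{2\eps}(H_+)\subset N_{-2\eps}(H_-)$. Then \Cref{lem:stable_subspace_in_halfcones} gives $\unstable{1}(\mc{V}_+)\in N_\eps(H_+)$ and $\stable{d-1}(\mc{V}_-)\cap N_{-\eps}(H_-)=\emptyset$; combining these with the inclusion yields $\dproj\bigl(\unstable{1}(\mc{V}_+),\stable{d-1}(\mc{V}_-)\bigr)\ge\eps$ immediately. Replace your duality paragraph with this compactness argument and the proof goes through.
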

\begin{proof}
  Let $\mc{V}$ be an efficient itinerary from $W_i$ to $W_\ell$, and
  write $\mc{V}$ as a concatenation $\mc{V}_-, \mc{V}_+$ so that
  $\mc{V}_-$ contains $W_i, W_j$ and $\mc{V}_+$ contains
  $W_k, W_\ell$. We change basepoints so that $\mc{V}_-$ arrives at
  the identity and $\mc{V}_+$ departs from the identity. After doing
  so, the inclusion $\halfspace_+(W_k) \subset \halfspace_+(W_j)$
  becomes $\halfspace_+(W_k) \subset \halfspace_-(W_j)$, which means
  we can assume that
  $\overline{\halfcone_+(W_k)} \subset \halfcone_-(W_j)$. We write
  $H_- = \halfcone_-(W_j)$ and $H_+ = \halfcone_+(W_k)$.

  We choose $\eps > 0$ so that
  $N_{2\eps}(H_+) \subset N_{-2\eps}(H_-)$. Since the distance
  $|\elt(W_j, W_k)| \le |\mc{U}(W_j, W_k)|$ is bounded by $\lambda$,
  this $\eps$ can be chosen uniformly in $\lambda$. Next, we choose
  $M' > 0$ (depending only on $\eps$ and $\lambda$) as in
  \Cref{lem:stable_subspace_in_halfcones}, so that if
  $\iroot(\mc{V}_+) > M'$, then
  $\unstable{1}(\rho(\elt(\mc{V}_+))) \in N_\eps(H_+)$, and if
  $\iroot(\mc{V}_-) > M'$, then $\stable{d-1}(\rho(\elt(\mc{V}_-)))$
  is disjoint from $N_{-\eps}(H_-)$. Finally, since
  $\udist(W_j, W_k) < \lambda$, we know that
  $\elt(\mc{V}_-) = \elt(W_i, W_j)\eta_-$ and
  $\elt(\mc{V}_+) = \eta_+\elt(W_k, W_\ell)$ for $\eta_\pm$ satisfying
  $|\eta_\pm| < \lambda$. So, by \Cref{lem:root_triangle_inequality},
  we can find $M$ so that if $\iroot(W_i, W_j) > M$, then
  $\iroot(\mc{V}_-) > M'$, and similarly for $W_k, W_\ell$ and
  $\mc{V}_+$.

  Now, applying \Cref{lem:stable_subspace_in_halfcones} and our
  hypothesis \eqref{item:roots_large_I} above, we see that an
  $\eps$-neighborhood of $\unstable{1}(\mc{V}_+)$ is disjoint from
  $\stable{d-1}(\mc{V}_-)$. That is, this pair of subspaces is
  uniformly transverse (see \Cref{fig:wellsep_walls_I}).

  \begin{figure}[h]
    \centering
\begingroup%
  \makeatletter%
  \providecommand\color[2][]{%
    \errmessage{(Inkscape) Color is used for the text in Inkscape, but the package 'color.sty' is not loaded}%
    \renewcommand\color[2][]{}%
  }%
  \providecommand\transparent[1]{%
    \errmessage{(Inkscape) Transparency is used (non-zero) for the text in Inkscape, but the package 'transparent.sty' is not loaded}%
    \renewcommand\transparent[1]{}%
  }%
  \providecommand\rotatebox[2]{#2}%
  \newcommand*\fsize{\dimexpr\f@size pt\relax}%
  \newcommand*\lineheight[1]{\fontsize{\fsize}{#1\fsize}\selectfont}%
  \ifx\svgwidth\undefined%
    \setlength{\unitlength}{293.83265458bp}%
    \ifx\svgscale\undefined%
      \relax%
    \else%
      \setlength{\unitlength}{\unitlength * \real{\svgscale}}%
    \fi%
  \else%
    \setlength{\unitlength}{\svgwidth}%
  \fi%
  \global\let\svgwidth\undefined%
  \global\let\svgscale\undefined%
  \makeatother%
  \begin{picture}(1,0.64657306)%
    \lineheight{1}%
    \setlength\tabcolsep{0pt}%
    \put(0,0){\includegraphics[width=\unitlength,page=1]{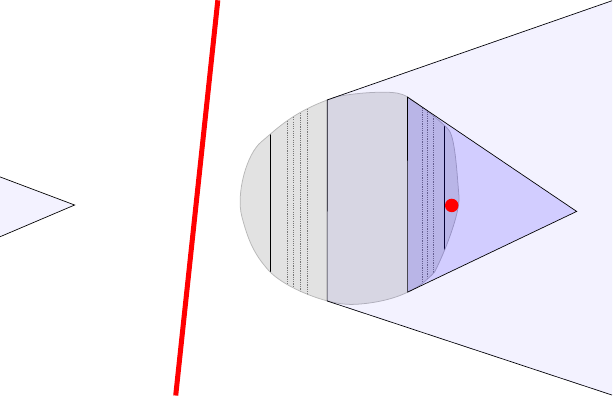}}%
    \put(0.413395,0.15658747){\color[rgb]{0,0,0}\makebox(0,0)[lt]{\lineheight{1.25}\smash{\begin{tabular}[t]{l}$W_i$\end{tabular}}}}%
    \put(0.51767147,0.10619846){\color[rgb]{0,0,0}\makebox(0,0)[lt]{\lineheight{1.25}\smash{\begin{tabular}[t]{l}$W_j$\end{tabular}}}}%
    \put(0.64858548,0.13235185){\color[rgb]{0,0,0}\makebox(0,0)[lt]{\lineheight{1.25}\smash{\begin{tabular}[t]{l}$W_k$\end{tabular}}}}%
    \put(0.7346711,0.23378655){\color[rgb]{0,0,0}\makebox(0,0)[lt]{\lineheight{1.25}\smash{\begin{tabular}[t]{l}$W_\ell$\end{tabular}}}}%
    \put(0.35558552,0.50827983){\color[rgb]{1,0,0.00392157}\makebox(0,0)[lt]{\lineheight{1.25}\smash{\begin{tabular}[t]{l}$\stable{d-1}(\mc{V}_-)$\end{tabular}}}}%
    \put(0.73374629,0.33212143){\color[rgb]{1,0,0.00392157}\makebox(0,0)[lt]{\lineheight{1.25}\smash{\begin{tabular}[t]{l}$\unstable{1}(\mc{V}_+)$\end{tabular}}}}%
  \end{picture}%
\endgroup%

    \caption{Illustration for the proof of
      \Cref{lem:wellsep_walls_I}. Since the half-cones over $W_j, W_k$
      strongly nest, the subspaces $\unstable{1}(\mc{V}_+)$ and
      $\stable{d-1}(\mc{V}_-)$ are uniformly transverse.}
    \label{fig:wellsep_walls_I}
  \end{figure}

  Thus, the additivity estimate (\Cref{lem:root_additivity}) together
  with \Cref{lem:wall_additive_error} gives us the desired bound.
\end{proof}

\begin{lem}[Additivity for well-separated itineraries, II]
  \label{lem:wellsep_walls_II}
  Given $A, \lambda > 0$ and $B \ge 0$, there is a constant $D > 0$ so that
  the following holds. Suppose $W_i < W_j < W_k < W_\ell$ are walls in
  $\mathbf{W}$ satisfying:
  \begin{enumerate}
  \item The pairs of walls $W_i, W_j$ and $W_k, W_\ell$ both have
    $(A, B)$-gaps.
  \item The intersection $\overline{W_j} \cap \overline{W_k}$ is
    empty.
  \item We have $\udist(W_j, W_k) < \lambda$.
  \end{enumerate}
  Then we have
  \begin{equation}
    \label{eq:disjoint_wall_ineq}
    \iroot(W_i, W_\ell) \ge \iroot(W_i, W_j) + \iroot(W_k, W_\ell) - D.
  \end{equation}
\end{lem}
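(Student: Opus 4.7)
Plan: We mirror the proof of \Cref{lem:wellsep_walls_I}, adapted to handle the weaker separation hypothesis by exploiting the stronger gap hypothesis. We split into cases based on whether $\iroot(W_i, W_j)$ or $\iroot(W_k, W_\ell)$ is small. If $\iroot(W_i, W_j) \le M$ for a threshold $M$ to be chosen (or symmetrically $\iroot(W_k, W_\ell) \le M$), the $(A, B)$-gap hypothesis forces $|\elt(W_i, W_j)| \le (M+B)/A$; combined with $\udist(W_j, W_k) < \lambda$, the element $\elt(W_i, W_j)\,\elt(W_j, W_k)$ has bounded word length, so we absorb it into $\elt(W_k, W_\ell)$ using \Cref{lem:root_triangle_inequality} and \Cref{lem:wall_additive_error}, and \eqref{eq:disjoint_wall_ineq} follows with bounded additive error.

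In the remaining case where both $\iroot$'s exceed $M$, we follow the setup of \Cref{lem:wellsep_walls_I}. Let $\mc{V}$ be an efficient itinerary from $W_i$ to $W_\ell$, split as $\mc{V}_-, \mc{V}_+$ with $W_j$ the last wall of $\mc{V}_-$ and $W_k$ near the start of $\mc{V}_+$, and reparametrize so that $\mc{V}_-$ arrives at the identity and $\mc{V}_+$ departs from the identity; set $g = \elt(\mc{V}_-)$ and $h = \elt(\mc{V}_+)$, so that $\elt(\mc{V}) = gh$. After reparametrization the ordering flips to $\halfspace_+(W_k) \subset \halfspace_-(W_j)$, and \Cref{lem:halfcones_nest} together with the disjoint-closure hypothesis gives $\halfcone_+(W_k) \subset \halfcone_-(W_j)$ with $\partial\halfcone_-(W_j) \cap \overline{\halfcone_+(W_k)} \subset \overline{W_k} \cap \partial\vindomain$. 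For $M$ large, \Cref{lem:stable_subspace_in_halfcones} places $\unstable{1}(h) \in N_\eps(\halfcone_+(W_k))$ and keeps $\stable{d-1}(g)$ outside $N_{-\eps}(\halfcone_-(W_j))$. We then want to invoke \Cref{lem:root_additivity} on $gh$, which requires a uniform transversality bound between these subspaces.

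The main obstacle, which is absent from \Cref{lem:wellsep_walls_I} because of strong nesting there, is that $\unstable{1}(h)$ could in principle drift towards a touching point $p \in \overline{W_k} \cap \partial\halfcone_-(W_j)$, where transversality with $\stable{d-1}(g)$ might fail. To rule this out we use the full strength of the $(A, B)$-gap hypothesis on $W_k, W_\ell$: by \Cref{lem:uniform_convergence_regular}, the unstable subspaces $\unstable{1}(\elt(W_k, W_m))$ converge uniformly to $\unstable{1}(h)$ as $W_m$ ranges over sufficiently long sub-itineraries of $\mc{V}_+$, so if $\unstable{1}(h)$ lay within $\eps$ of such a $p$, the same would hold for all long $\elt(W_k, W_m)$. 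But then \Cref{lem:halfcone_nesting_failure} applied to these sub-itineraries (which inherit $D$-bounded product projections from $\gamma$) forces a decomposition into efficient pieces with nonempty common wall-closure intersection; by \Cref{lem:wall_intersect_gen} each piece traverses an element of a proper standard subgroup of $C$, so the inductive hypothesis \eqref{eq:vertex_sv_gap} supplies enough $\iroot$-growth to invoke \Cref{lem:stable_subspaces_standard_subgroups}, which pins the corresponding unstable subspaces uniformly away from the common closure intersection — and hence, by our choice of constants, away from $p$ itself, a contradiction. This yields uniform transversality, and \Cref{lem:root_additivity} combined with \Cref{lem:wall_additive_error} completes the proof.
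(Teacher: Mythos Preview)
Your initial reductions and setup match the paper's proof closely, and you correctly identify the crux: ruling out that $\unstable{1}(h)$ drifts toward a touching point of $\partial\halfcone_-(W_j)$ and $\overline{\halfcone_+(W_k)}$. However, your proposed mechanism for ruling this out has a genuine gap.

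You invoke \Cref{lem:halfcone_nesting_failure} on ``these sub-itineraries,'' but that lemma requires as hypothesis that the half-cones over the \emph{endpoints} of the itinerary fail to strongly nest. For a sub-itinerary $W_k, \ldots, W_m$ inside $\mc{V}_+$ there is no such hypothesis available: the only nesting failure we know about is between $\halfcone_-(W_j)$ and $\halfcone_+(W_k)$, not between $\halfcone_+(W_k)$ and $\halfcone_+(W_m)$. So the decomposition you describe is not justified, and the chain of implications leading to \Cref{lem:stable_subspaces_standard_subgroups} breaks. (In fact \Cref{lem:halfcone_nesting_failure} is not used at all in the paper's proof of this lemma; it enters only later, in \Cref{prop:halfcone_intervals_regular}.)

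The paper's approach is more direct. After the same setup, it uses \Cref{cor:disjoint_walls} to pick a single wall $W \in \mathbf{W}$ with $W_k < W < W_\ell$ and $|\elt(W_k, W)|$ in a bounded window $[N, N+R]$, where $N$ is chosen via \Cref{lem:uniform_convergence_regular} so that $\unstable{1}(\elt_{\mc{V}_+}(W))$ is within $\eps_0/2$ of $\unstable{1}(\mc{V}_+)$ and $\iroot(W_k, W) > M_0$. One then case-splits on whether $\overline{W_k} \cap \overline{W}$ is empty. If it is, two applications of \Cref{lem:halfcones_nest} show $\overline{\halfcone_+(W)} \subset \halfcone_-(W_j)$ is a \emph{strong} inclusion, and one reduces to \Cref{lem:wellsep_walls_I} with $W$ in place of $W_k$. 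If it is not, the touching region $\partial\halfcone_+(W) \cap \partial\halfcone_-(W_j)$ lies in $\overline{W_k} \cap \overline{W}$, and now \Cref{lem:stable_subspaces_standard_subgroups} applies directly to the itinerary from $W_k$ to $W$ (its hypothesis $\overline{W_k} \cap \overline{W} \ne \emptyset$ is exactly the case assumption), pinning $\unstable{1}(\elt_{\mc{V}_+}(W))$---and hence $\unstable{1}(\mc{V}_+)$---uniformly away from that intersection. The point you are missing is that introducing this bounded-distance wall $W$ both localizes the touching region to $\overline{W_k} \cap \overline{W}$ and simultaneously supplies the nonempty-intersection hypothesis needed for \Cref{lem:stable_subspaces_standard_subgroups}.
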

\begin{proof}
  First, observe that if either $|\elt(W_i, W_j)|$ or
  $|\elt(W_k, W_\ell)|$ is smaller than some constant $N$, then we can
  use \Cref{lem:wall_additive_error} to find some constant $D$
  depending only on $N$ so that the inequality
  \eqref{eq:disjoint_wall_ineq} holds. So, throughout the proof, we
  will be able to assume that both $|\elt(W_i, W_j)|$ and
  $|\elt(W_k, W_\ell)|$ are larger than any given constant that depends
  only on $A, B, \lambda$. Since we have assumed that the pairs
  $W_i, W_j$ and $W_k, W_\ell$ have $(A,B)$-gaps, this means we can also
  assume that $\iroot(W_i, W_j)$ and $\iroot(W_k, W_\ell)$ are larger
  than any given constant depending on $A, B, \lambda$.

  \Cref{lem:halfcones_nest} implies that we have a nesting of
  half-cones $\halfcone_+(W_k) \subset \halfcone_+(W_j)$. If this
  nesting is strict, the reasoning from the previous paragraph tells
  us that we can apply \Cref{lem:wellsep_walls_I} and we will be
  done. So, we now assume that $\overline{\halfcone_+(W_k)}$ is not
  contained in $\halfcone_+(W_j)$.

  As in the proof of \Cref{lem:wellsep_walls_I}, we let $\mc{V}$ be an
  efficient itinerary from $W_i$ to $W_\ell$, and write it as a
  concatenation $\mc{V}_-, \mc{V}_+$ for $\mc{V}_-$ containing
  $W_i, W_j$ and $\mc{V}_+$ containing $W_k, W_\ell$; in fact, we can
  ensure that the first wall of $\mc{V}_+$ is $W_k$. We can again
  assume that $\mc{V}_-$ arrives at the identity and $\mc{V}_+$
  departs from the identity. As before, set $H_- = \halfcone_-(W_j)$
  and $H_+ = \halfcone_+(W_k)$, so that $H_+$ (but not
  $\overline{H_+}$) is contained in $H_-$.

  For each wall $W \in \mc{V}_+ \cap \mathbf{W}$, we let
  $\unstable{1}(W)$ denote the 1-dimensional unstable subspace of
  $\rho(\elt_{\mc{V}_+}(W))$, where $\elt_{\mc{V}_+}(W)$ is the group
  element traversed by a sub-itinerary of $\mc{V}_+$ starting with the
  initial wall of $\mc{V}_+$ and ending in $W$.

  Let $\eps_0, M_0$ be the fixed constants from
  \Cref{lem:stable_subspaces_standard_subgroups}. We know that
  $\mc{V}_+ = \mc{U}(W_k, W_\ell)$ has $(A, B)$-gaps. So, by applying the
  uniform convergence property for geodesics with uniform
  gaps in $\SLpm(d, \R)$ (\Cref{lem:uniform_convergence_regular}), we
  see that there is a fixed constant $N$ (depending only on $\eps_0$,
  $A$, $B$) so that if $W$ is any wall in $\mathbf{W}$ separating
  $W_k$ and $W_\ell$, with $|\elt(W_k, W)| \ge N$, we have
  \begin{equation}
    \label{eq:uniform_dist_estimate}
    \dproj(\unstable{1}(W), \unstable{1}(\mc{V}_+)) < \eps_0/2.
  \end{equation}
  By increasing $N$ if necessary, we can again apply the fact that
  $\mc{V}_+$ has uniform gaps to also ensure that as long as
  $|\elt(W_k, W)| \ge N$, then $\iroot(W_k, W) > M_0$.
  
  Now, we may assume that $N > 2R$, and that
  $|\elt(W_k, W_\ell)| \ge N$. Then, \Cref{cor:disjoint_walls}
  \ref{item:subitineraries_minimal} implies that there is a uniform
  $R > 0$ and some wall $W$ in $\mathbf{W}$ with $W_k < W < W_\ell$
  and $N < |\elt(W_k, W)| \le N + R$.  We let $H_+'$ denote the
  half-cone $\halfcone_+(W)$; from \Cref{lem:halfcones_nest}, we know
  that $H_+' \subset H_+ \subset H_-$. This tells us that
  $\dee H_+' \cap \dee H_- \subset \dee H_+ \cap \dee H_-$. But,
  \Cref{lem:halfcones_nest} also implies that
  $\dee H_+' \cap \dee H_-$ is a subset of $\dee W$, and that
  $\dee H_+ \cap \dee H_-$ is a subset of $\dee W_k$, which means we
  have
  \begin{equation}
    \label{eq:halfcone_boundary_intersection}
    \dee H_+' \cap \dee H_- \subset \overline{W} \cap \overline{W_k}.
  \end{equation}

  We now consider two possibilities.
  \begin{description}
  \item[Case 1] $\overline{W_k} \cap \overline{W} = \emptyset$.  In
    this case, the inclusion \eqref{eq:halfcone_boundary_intersection}
    means that the inclusion of half-cones $H_+' \subset H_-$ is
    strict, i.e.\ that $\overline{H_+'} \subset H_-$. So, we can
    replace $W_k$ with $W$ and apply \Cref{lem:wellsep_walls_I} (with
    $\lambda + N + R$ in place of $\lambda$); since the pairs
    $W_i, W_j$ and $W_k, W_\ell$ have uniform gaps, as long as
    $|\elt(W_i, W_j)|$ and $|\elt(W_k, W_\ell)|$ are sufficiently large,
    the hypotheses of \Cref{lem:wellsep_walls_I} are satisfied.

  \item[Case 2] $\overline{W_k} \cap \overline{W} \ne \emptyset$.  In
    this case, we will show that the subspaces
    $\unstable{1}(\mc{V}_+)$ and $\stable{d-1}(\mc{V}_-)$ are
    uniformly transverse by arguing that $\unstable{1}(\mc{V}_+)$ lies
    close to $H_+'$ and far from $\overline{W} \cap \overline{W_k}$,
    and $\stable{d-1}(\mc{V}_-)$ does not lie too close to $H_-$; see
    \Cref{fig:wellsep_walls_II}.

  \begin{figure}[h]
    \centering
    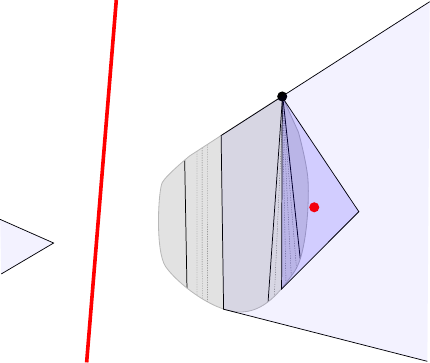
    \caption{The subspaces $\unstable{1}(\mc{V}_+)$ and
      $\stable{d-1}(\mc{V}_-)$ are uniformly transverse, since
      $\unstable{1}(\mc{V}_+)$ cannot lie close to the intersection of
      the boundaries of the half-cones $H_-, H_+'$.}
    \label{fig:wellsep_walls_II}
  \end{figure}

  Since the closures $\overline{H_+'}$ and $\overline{H_-}$ are
  compact, we can choose some $\eps > 0$ so that the intersection
  \[
    N_{2\eps}(H_+') \cap N_{\eps}(\P(V) \minus H_-)
  \]
  is contained in the $\eps_0/4$-neighborhood of
  $\dee H_+' \cap \dee H_-$ (and thus, by
  \eqref{eq:halfcone_boundary_intersection}, in the
  $\eps_0/4$-neighborhood of $\overline{W} \cap
  \overline{W_k}$). Since $|\elt(W_j, W)| < \lambda + N + R$, we can
  choose this $\eps$ uniformly in $\lambda, N, R, \eps_0$. We can also
  ensure that $\eps < \eps_0/4$.

  We now choose a constant $M$ which is larger than the corresponding
  $M$ from \Cref{lem:stable_subspace_in_halfcones}, taking the
  constant $\eps$ in the lemma to be our chosen $\eps$, and $N$ to be
  $\lambda + N + R$. We may assume that both $\iroot(\mc{V}_+)$ and
  $\iroot(\mc{V}_-)$ are at least $M$.

  We claim that the distance between $\unstable{1}(\mc{V}_+)$ and
  $\stable{d-1}(\mc{V}_-)$ must then be at least $\eps$. To see this,
  suppose for a contradiction that there is some point
  $x \in \stable{d-1}(\mc{V}_-)$ which is contained in the
  $\eps$-neighborhood of
  $\unstable{1}(\mc{V}_+)$. \Cref{lem:stable_subspace_in_halfcones}
  implies that $\unstable{1}(\mc{V}_+)$ is contained in $N_\eps(H_+')$
  and $\stable{d-1}(\mc{V}_-)$ is disjoint from $N_{-\eps}(H_-)$. This
  means that $x$ must lie in the intersection
  \[
    N_{2\eps}(H_+') \cap (\P(V) \minus N_{-\eps}(H_-)) =
    N_{2\eps}(H_+') \cap N_\eps(\P(V) \minus H_-).
  \]
  In turn, this means that $x$ is contained in the
  $\eps_0/4$-neighborhood of $\overline{W_k} \cap \overline{W}$ and so
  $\unstable{1}(\mc{V}_+)$ is contained in the $\eps_0/2$-neighborhood
  of $\overline{W_k} \cap \overline{W}$. Then, using
  \eqref{eq:uniform_dist_estimate}, we see that $\unstable{1}(W)$ is
  contained in the $\eps_0$-neighborhood of
  $\overline{W_k} \cap \overline{W}$. However, this contradicts
  \Cref{lem:stable_subspaces_standard_subgroups}.

  We finally conclude that the subspaces $\unstable{1}(\mc{V}_+)$ and
  $\stable{d-1}(\mc{V}_-)$ are uniformly transverse, which gives us
  the desired inequality by \Cref{lem:root_additivity} and
  \Cref{lem:wall_additive_error}. \qedhere
  \end{description}
\end{proof}

The last intermediate lemma we need for the proof of
\Cref{prop:halfcone_intervals_regular} is a direct consequence of the
Kapovich--Leeb--Porti local-to-global principle
(\Cref{thm:local_to_global}):
\begin{lem}[Local-to-global for pairs with uniform gaps]
  \label{lem:local_to_global_itineraries}
  For every $A, B > 0$, there exists $\lambda > 0$ and $A', B' > 0$
  satisfying the following. Suppose that $W_i, W_j$ are walls in
  $\mathbf{W}$ such that for every pair $n, m$ with
  $\udist(W_n, W_m) < \lambda$ and $W_i \le W_n < W_m \le W_j$, the
  pair $W_n, W_m$ has $(A,B)$-gaps. Then $W_i, W_j$ has
  $(A',B')$-gaps.
\end{lem}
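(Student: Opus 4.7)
The plan is to apply the Kapovich--Leeb--Porti local-to-global principle (\Cref{thm:local_to_global}) to the geodesic in $\rho(C) \le \SLpm(d, \R)$ obtained by reading off the reduced word traversed by $\mc{U}(W_i, W_j)$ from either endpoint. Since $\rho$ is faithful by Tits--Vinberg and $\rho(C)$ is finitely generated by $\rho(S)$, the map $\rho$ identifies word metrics. Given $A, B > 0$, I would invoke \Cref{thm:local_to_global} on input constants $A_0 := A$ and a sufficiently large $B_0$ (to be specified), receiving a threshold $\lambda_0$ and output constants $A_0', B_0'$. I would then set $\lambda := \lambda_0 + 2R + 1$ (where $R$ is the constant from \Cref{prop:disjoint_walls_I}), take $A' := A_0'$, and take $B' := B_0' + C$ for a constant $C$ absorbing the additive error in \Cref{lem:itinerary_wall_estimates}.

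The substantive step is to verify the hypothesis of \Cref{thm:local_to_global}: every sub-geodesic of length at most $\lambda_0$ inside the geodesic $\gamma_0, \gamma_1, \ldots, \gamma_N$ associated to $\mc{U}(W_i, W_j)$ must have $(A_0, B_0)$-gaps. Such a sub-geodesic corresponds to a sub-itinerary $\mc{V}$ of $\mc{U}(W_i, W_j)$ with $|\mc{V}| \le \lambda_0$; it suffices to bound $\iroot(\mc{V}) \ge A_0 |\mc{V}| - B_0$, since the same reasoning will apply verbatim to any sub-sub-itinerary. If $|\mc{V}| \le 2R$, \Cref{lem:root_triangle_inequality} gives a trivial bound suitable once $B_0$ is chosen large enough. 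Otherwise, \Cref{cor:disjoint_walls} presents $\mc{V}$ as equivalent to a concatenation $\mc{Y}_n, \mc{W}_{nm}, \mc{Y}_m$ in which $\mc{W}_{nm}$ is an efficient itinerary between walls $W_n, W_m \in \mathbf{W}$ and $|\mc{Y}_n|, |\mc{Y}_m| < R$. Then $\udist(W_n, W_m) \le |\elt(W_n, W_m)| + 2R \le |\mc{V}| + 2R < \lambda$, so the hypothesis of the lemma yields $(A, B)$-gaps on the pair $W_n, W_m$. Applying \Cref{lem:root_triangle_inequality} to absorb the short prefixes $\mc{Y}_n$ and suffixes $\mc{Y}_m$ converts this into the required $(A_0, B_0)$-gap bound on $\mc{V}$, with $B_0$ depending only on $A$, $B$, $R$ and a constant from \Cref{lem:root_triangle_inequality}.

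\Cref{thm:local_to_global} then yields $(A_0', B_0')$-gaps for the full geodesic $\rho(\gamma_0), \ldots, \rho(\gamma_N)$, and one final application of \Cref{lem:itinerary_wall_estimates} rephrases this as $(A', B')$-gaps for the pair $W_i, W_j$ in the sense of the lemma. The only real obstacle is keeping track of the various notions of length ($|\mc{V}|$, $|\elt|$, $\udist$) and of gap (pair gaps vs.\ URU gaps of geodesics) and verifying they are all equivalent up to bounded additive error; the essential analytic input is entirely packaged inside the local-to-global principle.
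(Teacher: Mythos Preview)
Your proposal is correct and matches the paper's approach: the paper states this lemma as ``a direct consequence of the Kapovich--Leeb--Porti local-to-global principle (\Cref{thm:local_to_global})'' without giving further detail, and the translation between pair gaps and URU gaps via \Cref{cor:disjoint_walls}\ref{item:subitineraries_minimal} and \Cref{lem:root_triangle_inequality} that you spell out is exactly what the remark following the definition of $(A,B)$-gaps indicates. One minor bookkeeping point: the additive error between $\udist(W_n,W_m)$ and $|\elt(W_n,W_m)|$ is the constant $R_2$ from \Cref{lem:itinerary_wall_estimates} rather than $2R$, so your choice of $\lambda$ should be adjusted accordingly, but this does not affect the argument.
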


\begin{proof}[Proof of \Cref{prop:halfcone_intervals_regular}]
  Fix walls $W_i < W_j$ in $\mathbf{W}$ so that
  $W \le W_i < W_j \le W'$. Since $W_i$ and $W_j$ separate $W$ and
  $W'$, \Cref{lem:halfcones_nest} and the assumption
  $\overline{\halfcone_+(W')} \not\subset \halfcone_+(W)$ means that
  the strong inclusion
  $\overline{\halfcone_+(W_j)} \subset \halfcone_+(W_i)$ cannot
  hold. We want to find uniform constants $A, B$ so that
  $\mu_{1,2}(W_i, W_j) \ge A|\elt(W_i, W_j)| - B$.

  If $\overline{W}_n \cap \overline{W}_m \ne \emptyset$, then
  \Cref{lem:disjoint_walls} implies that $\elt(W_i, W_j) \in C(T)$ for
  a proper standard subgroup $C(T) < C$, and we are done after
  applying \Cref{lem:inductive_statement_itineraries}. So, assume that
  $\overline{W_i} \cap \overline{W_j} = \emptyset$.

  We consider an efficient itinerary $\mc{V}$ between $W_i$ and
  $W_j$. By \Cref{lem:halfcone_nesting_failure}, $\mc{V}$ is
  equivalent to an itinerary of the form
  $\mc{W}_-, \mc{W}_0, \mc{W}_+$, where $|\mc{W}_0| < 2R$ for a
  uniform constant $R$, the itineraries $\mc{W}_-$ and $\mc{W}_+$ are
  efficient, and the intersections
  $\bigcap_{W \in \mc{W}_-} \overline{W}$ and
  $\bigcap_{W \in \mc{W}_+} \overline{W}$ are both nonempty. By
  \Cref{lem:disjoint_walls}, both $\elt(\mc{W}_-)$ and
  $\elt(\mc{W}_+)$ lie in (possibly different) proper standard
  subgroups of $C$.

  We let $\mc{W}_-'$ be the concatenation $\mc{W}_-, \mc{W}_0$, so
  that $\mc{V} = \mc{W}_-', \mc{W}_+$. By
  \Cref{lem:inductive_statement_itineraries} and
  \Cref{lem:wall_additive_error}, we see that $\mc{W}_-'$ and
  $\mc{W}_+$ both have $(A_0, B_0)$-gaps, for uniform constants
  $A_0, B_0$. We then choose constants $A, B, \lambda$ as in the
  local-to-global \cref{lem:local_to_global_itineraries}, depending on
  $A_0$, $B_0$.

  Suppose that for every pair of walls $W_n, W_m \in \mathbf{W}$
  such that $W_n \in \mc{W}_-'$, $W_m \in \mc{W}_+$, and
  $\udist(W_n, W_m) < \lambda$, we have
  $\overline{W_n} \cap \overline{W_m} \ne
  \emptyset$. \Cref{lem:disjoint_walls} then implies that each such
  pair satisfies $\elt(W_n, W_m) \in C(T)$ for some proper subgroup
  $C(T) < C$, and then \Cref{lem:inductive_statement_itineraries}
  implies that each such pair has $(A_0, B_0)$-gaps. Then
  \Cref{lem:local_to_global_itineraries} implies that the pair
  $W_i, W_j$ has $(A,B)$-gaps, and we are done.

  So, we may now suppose that there is some pair of walls
  $W_n, W_m \in \mathbf{W}$ such that $W_n \in \mc{W}_-'$,
  $W_m \in \mc{W}_+$, and $\udist(W_n, W_m) < \lambda$, but
  $\overline{W_n} \cap \overline{W_m} = \emptyset$. Since
  $\mc{W}_-', \mc{W}_+$ both have $(A_0, B_0)$-gaps, so do the pairs
  $W_i, W_n$ and $W_m, W_j$. We can then apply
  \Cref{lem:wellsep_walls_II} to the walls $W_i < W_n < W_m < W_j$ to
  complete the proof.
\end{proof}

As a consequence of \Cref{prop:halfcone_intervals_regular}, we obtain:
\begin{prop}
  \label{prop:z_wall_gaps}
  There are uniform constants $A, B > 0$ so that every pair of
  consecutive walls $Z_i < Z_{i+1}$ in $\mathbf{Z}$ has $(A,B)$-gaps.
\end{prop}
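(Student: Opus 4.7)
The plan is to deduce this proposition from \Cref{prop:halfcone_intervals_regular} together with a small amount of bounded-error bookkeeping, using crucially the \emph{minimality} built into the construction of $\mathbf{Z}$. Fix an index $i$, and let me verify that there exist uniform $A, B$ such that for every pair of walls $W_j < W_k$ in $\mathbf{W}$ with $Z_i \le W_j < W_k \le Z_{i+1}$, one has
\[
 \iroot(W_j, W_k) \ge A\,|\elt(W_j, W_k)| - B.
\]

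The first step is the ``interior'' case $W_k < Z_{i+1}$. By the construction of $\mathbf{Z}$, the wall $Z_{i+1}$ was chosen to be the \emph{first} wall $W$ after $Z_i$ such that $\overline{\halfcone_+(W)} \subset \halfcone_+(Z_i)$ (or, failing that, the maximal wall of $\mathbf{W}$; I address this edge case below). Consequently, for $W_k < Z_{i+1}$ one has $\overline{\halfcone_+(W_k)} \not\subset \halfcone_+(Z_i)$. Since $Z_i \le W_j$, \Cref{lem:halfcones_nest} gives $\halfcone_+(W_j) \subseteq \halfcone_+(Z_i)$, so also $\overline{\halfcone_+(W_k)} \not\subset \halfcone_+(W_j)$. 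I can then apply \Cref{prop:halfcone_intervals_regular} directly to the pair $(W_j, W_k)$ to obtain the desired inequality with uniform constants.

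The second step handles the boundary case $W_k = Z_{i+1}$, which is the only genuine subtlety: by construction, the hypothesis of \Cref{prop:halfcone_intervals_regular} \emph{fails} for the pair $(Z_i, Z_{i+1})$ itself (the half-cones do strongly nest), so I cannot apply that proposition directly. Instead, let $W$ denote the wall of $\mathbf{W}$ immediately preceding $Z_{i+1}$. If $W_j = W$, then \Cref{prop:disjoint_walls_I} bounds $\udist(W, Z_{i+1}) \le R + 1$ uniformly, and \Cref{lem:itinerary_wall_estimates} then bounds $|\elt(W_j, W_k)|$ uniformly, so the inequality holds trivially once $B$ is taken large enough. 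If instead $W_j < W$, then by the first step the pair $(W_j, W)$ satisfies the uniform gap inequality, and I can pass to $(W_j, Z_{i+1})$ using \Cref{lem:wall_additive_error} (which controls $|\iroot(W_j, Z_{i+1}) - \iroot(W_j, W)| \le K\,\udist(W, Z_{i+1})$) together with the triangle inequality $|\elt(W_j, Z_{i+1})| \le |\elt(W_j, W)| + |\elt(W, Z_{i+1})|$; the resulting additive loss is absorbed into new uniform constants $A', B'$.

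The remaining edge case is when $Z_{i+1}$ is the maximal wall of $\mathbf{W}$ chosen because no wall after $Z_i$ produces strong nesting. In this situation the minimality reasoning from the first step applies to \emph{every} wall $W_k$ with $Z_i < W_k \le Z_{i+1}$ (including $W_k = Z_{i+1}$ itself), so \Cref{prop:halfcone_intervals_regular} applies uniformly to all sub-pairs and there is nothing more to do. The main subtlety of the proof is thus narrow and arithmetic: identifying that the only pair not covered by \Cref{prop:halfcone_intervals_regular} is $(W_j, Z_{i+1})$ in the ``normal'' case, and that this pair differs from the adjacent pair $(W_j, W)$ only by a uniformly bounded itinerary, hence has uniform gaps after an adjustment of constants.
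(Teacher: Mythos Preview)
Your proof is correct and takes essentially the same approach as the paper. The paper's version is slightly more streamlined: rather than splitting into ``interior'' and ``boundary'' cases, it uniformly replaces an arbitrary pair $(W_n, W_m)$ with $Z_i \le W_n < W_m \le Z_{i+1}$ by the pair $(W_n, W_{m-1})$ (after disposing of the adjacent case $W_m = W_{n+1}$), observes that $W_{m-1} < Z_{i+1}$ forces the non-strong-nesting hypothesis of \Cref{prop:halfcone_intervals_regular}, and then passes back to $(W_n, W_m)$ via \Cref{lem:wall_additive_error}; but this is exactly your boundary-case argument applied uniformly.
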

\begin{proof}
  Consider walls $W_n < W_m$ in $\mathbf{W}$ with
  $Z_i \le W_n < W_m \le Z_{i+1}$. We want to find constants $A, B$ so
  that $\iroot(W_n, W_m) \ge A|\elt(W_n, W_m)| - B$. We may assume
  that $W_m > W_{n+1}$: otherwise, by \Cref{prop:disjoint_walls_I}
  \ref{item:v_intersect} we have
  $|\elt(W_n, W_m)| = |\elt(W_n, W_{n+1})| = 1$, and the desired
  inequality holds as long as we ensure $B > A$.

  Now consider the pair of walls $W_n < W_{m - 1}$. We know that
  $W_{m-1} < Z_{i+1}$, which means that the strong inclusion of
  half-cones
  $\overline{\halfcone_+(W_{m - 1})} \subset \halfcone_+(W_n)$ cannot
  hold: in this case \Cref{lem:halfcones_nest} would imply that also
  $\overline{\halfcone_+(W_{m-1})} \subset \halfcone_+(Z_i)$, and
  $Z_{i+1}$ was chosen to be the minimal wall in $\mathbf{W}$ whose
  positive half-cone strongly nests into $\halfcone_+(Z_i)$. Then
  \Cref{prop:halfcone_intervals_regular} implies that
  $\iroot(W_n, W_{m-1}) \ge A|\elt(W_n, W_{m-1})| - B$ for some
  uniform constants $A, B$. So, we can apply
  \Cref{lem:wall_additive_error} and the fact that
  $\udist(W_{m-1}, W_m) < R$ to complete the proof.
\end{proof}

\subsubsection{Combining sub-itineraries with uniform gaps using
  the local-to-global principle}

\Cref{prop:halfcone_intervals_regular} shows that each pair of walls
$Z_i, Z_{i+1}$ defines a sub-itinerary $\mc{U}(Z_i, Z_{i+1})$ which
has $(A,B)$-gaps, for uniform $A, B$. For the next step of the proof,
we remove walls from the collection $\mathbf{Z}$ to ``combine''
adjacent sub-itineraries; we do so in a way which ensures that each
sub-itinerary bounded by consecutive walls in $\mathbf{Z}$ has
$(A',B')$-gaps, for uniform constants $A', B'$ which do not depend on
the itinerary $\mc{U}$.

More precisely, we modify $\mathbf{Z}$ via the following
procedure. Using \Cref{prop:halfcone_intervals_regular} and
\Cref{prop:z_wall_gaps}, we choose constants $A, B > 0$ so that every
pair of consecutive walls $Z_i, Z_{i+1}$ in $\mathbf{Z}$ has
$(A,B)$-gaps, \emph{and} so that every pair of walls $W < W'$ in
$\mathbf{W}$ satisfying
$\overline{\halfcone_+(W')} \not\subset \halfcone_+(W)$ also has
$(A,B)$-gaps. Then, we fix local-to-global constants $\lambda, A', B'$
from \Cref{lem:local_to_global_itineraries}, depending on our chosen
$A, B$. Now, suppose we have a sequence of consecutive elements
$Z_j, Z_{j+1}, \ldots, Z_{j + n}$ in $\mathbf{Z}$ which satisfies the
following property:
\begin{enumerate}[label=(\textdagger)]
\item \label{item:adjacent_nosep} For each wall $Z_i$ in $\mathbf{Z}$
  with $Z_j < Z_i < Z_{j + n}$, and every pair of walls
  $W, W' \in \mathbf{W}$ such that $W < Z_i < W'$ and
  $\udist(W, W') < \lambda$, we have
  $\overline{\halfcone_+(W')} \not\subset \halfcone_+(W)$.
\end{enumerate}
It immediately follows from \Cref{prop:z_wall_gaps},
\Cref{prop:halfcone_intervals_regular} and
\Cref{lem:local_to_global_itineraries} that the pair of walls
$Z_j, Z_{j + n}$ has $(A',B')$-gaps. So, for each maximal subsequence
$Z_j, \ldots, Z_{j + n}$ in $\mathbf{Z}$ satisfying
\ref{item:adjacent_nosep}, we delete all elements except $Z_j$ and
$Z_{j + n}$ from $\mathbf{Z}$.

After this modification (and after reindexing $\mathbf{Z}$
appropriately), it is still true that each pair of consecutive walls
$Z_i, Z_{i+1}$ has $(A, B)$-gaps for uniform $A, B$. Further, since we
have only deleted walls from $\mathbf{Z}$ (and we do not delete the
walls $Z_1$ or $Z_N$), the quantity $r_L(\mathbf{Z})$ can only
decrease for any fixed $L$. So we can still assume that
$r_L(\mathbf{Z}) < \tau$ if this was true before we deleted walls from
$\mathbf{Z}$.

After this modification, however, we gain the following. We now know
that our $\mathbf{Z}$ satisfies the following additional property:
\begin{prop}[Long adjacent itineraries are well-separated]
  \label{prop:separating_walls}
  Let $\lambda$ be the local-to-global constant defined above. Suppose
  that $Z_{i-1}, Z_i, Z_{i+1}$ are three consecutive walls in
  $\mathbf{Z}$ such that $\udist(Z_{i-1}, Z_i) \ge \lambda$ and
  $\udist(Z_i, Z_{i+1}) \ge \lambda$. Then there is a pair of walls
  $W, W' \in \mathbf{W}$ satisfying:
  \begin{enumerate}
  \item $Z_{i-1} < W < Z_i < W' < Z_{i+1}$;
  \item $\udist(W, W') < \lambda$;
  \item $\overline{\halfcone_+(W')} \subset \halfcone_+(W)$.
  \end{enumerate}
\end{prop}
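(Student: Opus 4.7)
The plan is to unpack what the deletion procedure says about which walls survive in the modified $\mathbf{Z}$, and then use additivity of $\udist$ to force the separating pair $W, W'$ to lie in the desired range. The key observation is that the walls that remain in $\mathbf{Z}$ after the modification are precisely the two endpoints $Z_1, Z_N$ of the original $\mathbf{Z}$ together with the ``bad'' walls, where I call a wall $Z_j \in \mathbf{Z}$ (in the original indexing) \emph{bad} if there exist $W, W' \in \mathbf{W}$ with $W < Z_j < W'$, $\udist(W, W') < \lambda$, and $\overline{\halfcone_+(W')} \subset \halfcone_+(W)$. Indeed, a consecutive subsequence $Z_a, \ldots, Z_b$ of the original $\mathbf{Z}$ satisfies \ref{item:adjacent_nosep} if and only if every strictly interior $Z_j$ (with $a < j < b$) is not bad, so a maximal such subsequence has $a \in \{1\} \cup \{\text{indices of bad walls}\}$ and $b \in \{N\} \cup \{\text{indices of bad walls}\}$.

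Now fix a consecutive triple $Z_{i-1}, Z_i, Z_{i+1}$ in the modified $\mathbf{Z}$ as in the statement. Because $Z_{i-1}$ and $Z_{i+1}$ both exist (and are distinct from $Z_i$), the wall $Z_i$ is not the first or last wall of the modified $\mathbf{Z}$; since $Z_1$ and $Z_N$ of the original $\mathbf{Z}$ always survive and are the global minimum/maximum of $\mathbf{Z}$ in the order on $\mathbf{W}$, this forces $Z_i \notin \{Z_1, Z_N\}$ in the original indexing. Hence by the characterization above, $Z_i$ must be bad, producing walls $W, W' \in \mathbf{W}$ with $W < Z_i < W'$, $\udist(W, W') < \lambda$, and $\overline{\halfcone_+(W')} \subset \halfcone_+(W)$. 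These already give conditions (2) and (3) of the proposition, so it only remains to upgrade $W < Z_i < W'$ to $Z_{i-1} < W < Z_i < W' < Z_{i+1}$.

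This is where I use the length hypotheses on $\udist(Z_{i-1}, Z_i)$ and $\udist(Z_i, Z_{i+1})$. Suppose for contradiction that $W \le Z_{i-1}$. Then $W \le Z_{i-1} < Z_i < W'$, so by additivity of $\udist$ along chains of walls (equation \eqref{eq:udist_additive}) together with nonnegativity of $\udist$,
\[
\udist(W, W') \;=\; \udist(W, Z_i) + \udist(Z_i, W') \;\ge\; \udist(Z_{i-1}, Z_i) \;\ge\; \lambda,
\]
contradicting $\udist(W, W') < \lambda$; so $W > Z_{i-1}$. Symmetrically, if $W' \ge Z_{i+1}$ then $\udist(W, W') \ge \udist(Z_i, Z_{i+1}) \ge \lambda$, again a contradiction, so $W' < Z_{i+1}$.

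The proof is essentially bookkeeping, so there is no real obstacle beyond getting the characterization of surviving walls right; the only subtlety to be careful with is that $\udist$ is only defined on ordered pairs in $\mathbf{W}$, so one should handle the edge cases $W = Z_{i-1}$ and $W' = Z_{i+1}$ separately (each trivially gives $\udist(W, W') \ge \lambda$), but these reduce to the same contradiction.
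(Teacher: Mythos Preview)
Your proof is correct and is precisely the argument the paper has in mind; the paper states \Cref{prop:separating_walls} as an immediate consequence of the deletion procedure without spelling out the details, and your characterization of the surviving walls as $\{Z_1, Z_N\} \cup \{\text{bad walls}\}$ together with the additivity bound on $\udist$ is exactly how that implicit argument goes.
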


\subsubsection{Combining well-separated adjacent sub-itineraries}

For the next step, we again combine sequences of adjacent
sub-itineraries by deleting elements from $\mathbf{Z}$.

Given $L_0 > 0$, we consider maximal subsequences
$Z_j, Z_{j+1}, \ldots, Z_{j + n}$ of $\mathbf{Z}$ which
satisfy:
\begin{enumerate}[label=($\star L_0$)]
\item\label{item:long_consecutive_itineraries} for all indices $i$
  with $j \le i < j + n$, we have
  $\udist(Z_i, Z_{i+1}) \ge L_0$.
\end{enumerate}

We claim the following:
\begin{lem}
  \label{lem:wellsep_gluing}
  There are uniform constants $A, L_0 > 0$ so that, if
  $Z_j, Z_{j + 1}, \ldots, Z_{j + n}$ is a subsequence of
  $\mathbf{Z}$ satisfying \ref{item:long_consecutive_itineraries},
  then
  \[
    \iroot(Z_j, Z_{j + n}) \ge A \cdot \udist(Z_j, Z_{j +
      n}).
  \]
\end{lem}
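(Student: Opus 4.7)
The plan is to apply the additivity lemma \Cref{lem:wellsep_walls_I} iteratively, using the separating walls provided by \Cref{prop:separating_walls} as the ``glue.'' For each index $i$ with $j < i < j+n$, since the subsequence satisfies ($\star L_0$) with $L_0 \ge \lambda$, \Cref{prop:separating_walls} supplies walls $W_i^-, W_i^+ \in \mathbf{W}$ with $Z_{i-1} < W_i^- < Z_i < W_i^+ < Z_{i+1}$, $\udist(W_i^-, W_i^+) < \lambda$, and $\overline{\halfcone_+(W_i^+)} \subset \halfcone_+(W_i^-)$. Provided $L_0 > 2\lambda$, these walls interleave: $Z_j < W_{j+1}^- < W_{j+1}^+ < W_{j+2}^- < W_{j+2}^+ < \cdots < W_{j+n-1}^+ < Z_{j+n}$.

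Setting $W_j^+ := Z_j$ and $W_{j+n}^- := Z_{j+n}$ for notational convenience, I decompose the full interval into $n$ ``pieces'' $P_i := (W_{j+i}^+, W_{j+i+1}^-)$ for $0 \le i \le n-1$, separated by $n-1$ short ``bridges'' $(W_i^-, W_i^+)$ across each intermediate $Z_i$. The key observation is that each piece $P_i$ lies inside the interval $[Z_{j+i}, Z_{j+i+1}]$, which has $(A, B)$-gaps (with the uniform constants fixed just before the lemma statement). Hence $\iroot(P_i) \ge A \cdot \udist(P_i) - (AR_2 + B) \ge A(L_0 - 2\lambda) - (AR_2 + B)$, which exceeds any prescribed threshold by choosing $L_0$ large.

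Next, taking $L_0$ large enough to ensure $\iroot(P_i) > 2M + D$ for every $i$, where $M, D$ are the constants from \Cref{lem:wellsep_walls_I} (with $\lambda$ as given), I iteratively apply that lemma to the four walls $Z_j < W_{j+i}^- < W_{j+i}^+ < W_{j+i+1}^-$ at step $i$. The hypothesis $\iroot(Z_j, W_{j+i}^-) > M$ is maintained by induction because each piece contributes at least $2M + D$ while the lemma only loses $D$ per step. After $n-1$ iterations, and one final application going from $W_{j+n-1}^-$ to $Z_{j+n}$ across the last bridge, I obtain
\[
\iroot(Z_j, Z_{j+n}) \;\ge\; \sum_{i=0}^{n-1} \iroot(P_i) \;-\; (n-1)D.
\]

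To conclude, I note that the bridges absorb at most $(n-1)\lambda$ of the total length, so $\sum_i \udist(P_i) \ge \udist(Z_j, Z_{j+n}) - (n-1)\lambda$. Combining this with the lower bound for each $\iroot(P_i)$ gives
\[
\iroot(Z_j, Z_{j+n}) \;\ge\; A \cdot \udist(Z_j, Z_{j+n}) \;-\; n(A\lambda + AR_2 + B + D).
\]
Since ($\star L_0$) forces $n L_0 \le \udist(Z_j, Z_{j+n})$, the additive error is bounded by $\tfrac{1}{L_0}(A\lambda + AR_2 + B + D) \cdot \udist(Z_j, Z_{j+n})$. Enlarging $L_0$ once more so that this coefficient is less than $A/2$ yields the desired conclusion with constant $A/2$ in place of $A$. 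The base case $n=1$ is handled identically (directly from the $(A,B)$-gap property and one absorption of the additive error into $\udist \ge L_0$). The only real delicacy is the bookkeeping needed to keep the running $\iroot$ above $M$ throughout the induction, which is arranged purely by taking $L_0$ sufficiently large.
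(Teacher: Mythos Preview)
Your proof is correct and follows essentially the same approach as the paper: both invoke \Cref{prop:separating_walls} at each intermediate $Z_i$ to obtain a pair of well-separated walls, then apply \Cref{lem:wellsep_walls_I} inductively to glue the pieces, absorbing the accumulated additive error by taking $L_0$ large relative to the per-step loss. The paper organizes the induction slightly differently (gluing one new $[Z_{j+n-1},Z_{j+n}]$ piece at a time rather than fixing all the $W_i^\pm$ in advance), but the mechanism and constants are the same.
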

\begin{proof}
  As each pair of consecutive walls $Z_i < Z_{i+1}$ has uniform gaps,
  we know there are constants $A', B'$ so that
  $\iroot(Z_i, Z_{i+1}) \ge A'\udist(Z_i, Z_{i+1}) - B'$. Let $K$ be
  the constant from \Cref{lem:wall_additive_error}, let $\lambda$ be
  the local-to-global constant from \Cref{prop:separating_walls}, and
  let $M, D$ be the constants from \Cref{lem:wellsep_walls_I}
  depending on $\lambda$. Then, choose $L_0$ large enough so that
  \[
    A'L_0 - 3\lambda K - B' - D > M,
  \]
  and define $A = (A'L_0 - 2\lambda K - B' - D)/L_0$, so that
  $A L_0 > M + \lambda K$.

  To prove the proposition, we induct on the length of the sequence
  $Z_j, \ldots, Z_{j + n}$. In the base case $n = 1$, we have
  \begin{align*}
    \iroot(Z_j, Z_{j + 1})
    &\ge A'\udist(Z_j, Z_{j+1}) - B'\\
    &=\left(A + \frac{2\lambda K + B' + D}{L_0}\right)\udist(Z_j, Z_{j+1}) - B'.
  \end{align*}
  Since $\udist(Z_j, Z_{j+1}) \ge L_0$ by assumption, we see that
  \[
    \iroot(Z_j, Z_{j+1}) \ge A \cdot \udist(Z_j, Z_{j+1}) + 2\lambda K
    + D > A \cdot \udist(Z_j, Z_{j+1}).
  \] 
  
  We now assume that $n > 1$, so that inductively we have
  \[
    \iroot(Z_j, Z_{j + n - 1}) \ge A \cdot \udist(Z_j,
    Z_{j + n - 1}).
  \]
  From \Cref{prop:separating_walls}, we can find walls $W, W'$ with
  $W < Z_{j + n - 1} < W'$ satisfying
  $\overline{\halfcone_+(W')} \subset \halfcone_+(W)$ and
  $\udist(W, W') < \lambda$. In particular we know
  $\udist(W, Z_{j + n - 1}) < \lambda$, so by
  \Cref{lem:wall_additive_error} we know that
  \begin{equation}
    \label{eq:inductive_bound}
    \iroot(Z_j, W) \ge A \cdot \udist(Z_j, Z_{j + n - 1}) -
    \lambda K.
  \end{equation}
  In particular, since $\udist(Z_j, Z_{j + n - 1}) \ge L_0$, it
  follows that $\iroot(Z_j, W) > M$.

  We can also combine \Cref{lem:wall_additive_error} with the argument
  from the base case and the fact that
  $\udist(W', Z_{j + n}) < \lambda$ to see that
  \begin{equation}
    \label{eq:length1_bound}
    \iroot(W', Z_{j + n}) \ge A \cdot \udist(Z_{j + n - 1}, Z_{j +
      n}) + \lambda K + D.
  \end{equation}
  In particular, this tells us that
  $\iroot(W', Z_{j + n}) > M + 2\lambda K + D > M$.

  We now apply \Cref{lem:wellsep_walls_I} to the walls
  $Z_j < W < W' < Z_{j + n}$ to see that
  \begin{align*}
    \udist(Z_{j}, Z_{j + n})
    &\ge \iroot(Z_{j}, W) + \iroot(W', Z_{j + n}) - D. 
  \end{align*}
  Putting this inequality together with \eqref{eq:inductive_bound} and
  \eqref{eq:length1_bound} and using the fact that
  $\udist(Z_{j}, Z_{j + n}) = \udist(Z_{j}, Z_{j + n - 1}) +
  \udist(Z_{j + n - 1}, Z_{j + n})$ completes the proof.
\end{proof}

We keep the constant $L_0$ from \Cref{lem:wellsep_gluing} fixed for
the rest of the paper. As in the previous step, we now modify the
collection $\mathbf{Z}$ by replacing all maximal sequences
$Z_j, Z_{j + 1}, \ldots, Z_{j + n}$ satisfying
\ref{item:long_consecutive_itineraries} with $Z_j, Z_{j + n}$. Once
again, we know that for any $L$, the quantity $r_L(\mathbf{Z})$ can
only decrease after this modification.

After reindexing, the previous lemma now gives us the following:
\begin{prop}
  \label{prop:subitinerary_linear_growth}
  For a uniform constant $A_0$, and every $i$ with $1 \le i < N$, if
  $\udist(Z_i, Z_{i+1}) \ge L_0$, then
  $\iroot(Z_i, Z_{i+1}) > A_0 \cdot \udist(Z_i, Z_{i+1})$.
\end{prop}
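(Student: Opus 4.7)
The plan is to observe that this proposition follows almost immediately from \Cref{lem:wellsep_gluing} once the second modification of $\mathbf{Z}$ is properly accounted for. Recall that the second modification collapses each maximal subsequence $Z_j, Z_{j+1}, \ldots, Z_{j+n}$ of the (first-modified) $\mathbf{Z}$ satisfying condition \ref{item:long_consecutive_itineraries} to just its endpoints $Z_j, Z_{j+n}$, and that \Cref{lem:wellsep_gluing} gives a uniform constant $A > 0$ and the threshold $L_0$ for which such collapsed subsequences automatically satisfy $\iroot(Z_j, Z_{j+n}) \ge A \cdot \udist(Z_j, Z_{j+n})$.

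First I would verify the following identification: in the twice-modified $\mathbf{Z}$ (after reindexing), every consecutive pair $(Z_i, Z_{i+1})$ with $\udist(Z_i, Z_{i+1}) \ge L_0$ arises by collapsing some maximal $(\star L_0)$-subsequence $Z_j, Z_{j+1}, \ldots, Z_{j+n}$ (with $n \ge 1$) of the pre-modification $\mathbf{Z}$, in such a way that the endpoint walls $Z_j, Z_{j+n}$ coincide, as walls in $\vindomain$, with $Z_i, Z_{i+1}$. This is a direct consequence of how the modification is defined: if in the pre-modification $\mathbf{Z}$ the pair $(Z_i, Z_{i+1})$ already had $\udist \ge L_0$ and was not contained in a longer $(\star L_0)$-run, then the maximal run in question is the pair itself ($n=1$) and the modification changes nothing; otherwise $(Z_i, Z_{i+1})$ lay strictly inside a longer maximal $(\star L_0)$-run, which is collapsed so that its two endpoints become consecutive in the new indexing.

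Then I would apply \Cref{lem:wellsep_gluing} directly to the pre-modification subsequence $Z_j, \ldots, Z_{j+n}$ to obtain
\[
\iroot(Z_j, Z_{j+n}) \ge A \cdot \udist(Z_j, Z_{j+n}).
\]
Since the walls $Z_j$ and $Z_{j+n}$ coincide with $Z_i$ and $Z_{i+1}$, and $\iroot$ and $\udist$ depend only on the pair of walls (not on the indexing), this gives $\iroot(Z_i, Z_{i+1}) \ge A \cdot \udist(Z_i, Z_{i+1})$. Finally, setting $A_0 := A/2$ (or any strictly smaller positive constant) converts this to the strict inequality demanded by the statement, since $\udist(Z_i, Z_{i+1}) \ge L_0 > 0$.

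I do not expect any substantive obstacle: this is essentially a bookkeeping corollary of \Cref{lem:wellsep_gluing}, with all the real work (the inductive additivity argument based on \Cref{lem:wellsep_walls_I} and \Cref{prop:separating_walls}) already carried out there. The only point requiring minor care is verifying that the identification of a post-modification consecutive pair with a maximal pre-modification $(\star L_0)$-subsequence is correct, which as noted is immediate from the definition of the second modification.
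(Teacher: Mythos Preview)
Your proposal is correct and matches the paper's approach exactly: the paper simply states this proposition as an immediate consequence of \Cref{lem:wellsep_gluing} after performing the second modification of $\mathbf{Z}$ (``After reindexing, the previous lemma now gives us the following''), and your write-up spells out precisely this bookkeeping, including the observation that halving the constant converts the non-strict inequality of the lemma into the strict one in the proposition.
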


In addition, $\mathbf{Z}$ now satisfies:
\begin{prop}[No adjacent long sub-itineraries]
  \label{prop:no_adjacent_long_itineraries}
  Let $Z_{i-1}, Z_i, Z_{i+1}$ be consecutive walls in
  $\mathbf{Z}$. Then either $\udist(Z_{i-1}, Z_i) < L_0$ or
  $\udist(Z_i, Z_{i+1}) < L_0$.
\end{prop}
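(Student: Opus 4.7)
My plan is a direct contradiction argument that exploits only the explicit combinatorial nature of the modification just applied to $\mathbf{Z}$. Writing $\mathbf{Z}^{\mathrm{old}}$ for the collection immediately before that modification, the key observation I will record is the following: a wall $Z \in \mathbf{Z}^{\mathrm{old}}$ that is not one of the two extreme walls is deleted by the modification if and only if both of its $\udist$-adjacent distances in $\mathbf{Z}^{\mathrm{old}}$ are at least $L_0$. Indeed, a wall is deleted precisely when it lies strictly interior to some maximal subsequence satisfying $(\star L_0)$ of length $\geq 3$, and this is exactly the local condition that both neighboring distances are $\geq L_0$.

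I will then suppose for contradiction that there are three consecutive walls $Z_{i-1}, Z_i, Z_{i+1}$ in the (post-modification) $\mathbf{Z}$ with $\udist(Z_{i-1}, Z_i) \geq L_0$ and $\udist(Z_i, Z_{i+1}) \geq L_0$. My strategy is to transfer each of these inequalities back to a pre-modification adjacent distance at $Z_i$ and then invoke the deletion characterization above to conclude that $Z_i$ could not have survived.

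The transfer on the right side goes as follows. Let $Z_+$ be the wall immediately succeeding $Z_i$ in $\mathbf{Z}^{\mathrm{old}}$. If $Z_+ = Z_{i+1}$, then $\udist(Z_i, Z_+) = \udist(Z_i, Z_{i+1}) \geq L_0$ directly. Otherwise $Z_+$ lies strictly between $Z_i$ and $Z_{i+1}$ in $\mathbf{Z}^{\mathrm{old}}$ and was therefore deleted; applying the deletion characterization to $Z_+$ yields in particular $\udist(Z_i, Z_+) \geq L_0$. The left side is handled symmetrically by considering the wall $Z_-$ immediately preceding $Z_i$ in $\mathbf{Z}^{\mathrm{old}}$. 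Consequently, both of $Z_i$'s pre-modification adjacent distances are $\geq L_0$, which contradicts the fact that $Z_i$ survived the modification.

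I do not anticipate any serious obstacle here: the proposition is essentially a bookkeeping consequence of the definition of the $(\star L_0)$ collapse, and requires no results from earlier in the paper beyond a careful record of exactly which walls were deleted and why. The only mildly delicate point is making sure the transfer step correctly handles the case in which walls were deleted between the consecutive walls of the new $\mathbf{Z}$, but the local nature of the deletion criterion makes this immediate.
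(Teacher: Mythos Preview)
Your argument is correct. The paper states this proposition without proof, treating it as an immediate consequence of the $(\star L_0)$ collapse just performed; your contradiction argument via the local deletion criterion is exactly the natural verification of this bookkeeping fact.
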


\subsubsection{Choosing the threshold for ``long'' sub-itineraries}

Let $N = |\mathbf{Z}|$. For any given $L > 0$, and for each
$1 < i < j \le N$, we now define
\[
  t^+_L(i,j) := \sum_{n=i}^j \untrunc{L}{\udist(Z_{n-1}, Z_n)}.
\]
We similarly define $t^-_L(i, j)$ via the truncated length:
\[
  t_L^-(i, j) := \sum_{n=i}^j \trunc{L}{\udist(Z_{n-1}, Z_n)}.
\]
That is, $t^+_L(i, j)$ is the amount of time the itinerary
$\mc{U}(Z_i, Z_j)$ spends inside of sub-itineraries
$\mc{U}(Z_n, Z_{n+1})$ whose length is at least $L$, and $t^-_L(i, j)$
is the amount of time it spends inside of itineraries with length
bounded strictly above by $L$.

It follows from \eqref{eq:udist_additive} that for any $L$ and any
$i,j$ we have
\begin{equation}
  \label{eq:short_long_sum}
  t_L^+(i, j) + t_L^-(i, j) = \udist(Z_i, Z_j).
\end{equation}

We also define the quantity
\[
  r_L(i,j) := t_L^-(i,j) / \udist(Z_i, Z_j).
\]
By definition, we have $r_L(\mathbf{Z}) = r_L(1, N)$.

\begin{definition} \label{defn:long threshold prelim}
  We now let $A_0, L_0$ be the constants from
  \Cref{prop:subitinerary_linear_growth}. We then fix constants
  $D_0, M_0$ as in \Cref{lem:wellsep_walls_I}, where the constant $\lambda$
  in the lemma is chosen to be $L_0$. Then, we define
  \[
    L_1 = \max\{M_0/A_0, D_0/A_0, L_0\}
  \]
  and then choose constants $D_1, M_1$ as in
  \Cref{lem:wellsep_walls_I} again, but this time taking $\lambda = L_1$.
\end{definition}

\begin{lem}
  Given $A, B, C > 0$, there exists $L > 0, \tau_1 \in (0,1)$ so that
  for any $i < j$, if $r_L(i,j) < \tau_1$, then
  \[
    A \cdot t_L^+(i,j) - B\cdot t_L^-(i,j) > C.
  \]
\end{lem}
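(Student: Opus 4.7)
The plan is to observe that this is essentially a straightforward algebraic manipulation, exploiting the fact that $t_L^+(i,j)$ is a sum whose nonzero terms are each at least $L$, so whenever $t_L^+(i,j)$ is positive it must actually be at least $L$. Combined with the identity $t_L^+(i,j) + t_L^-(i,j) = \udist(Z_i, Z_j)$, this will let us convert a small value of $r_L(i,j)$ into a useful lower bound on $\udist(Z_i, Z_j)$.

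Concretely, I would first rewrite the expression $A \cdot t_L^+(i,j) - B \cdot t_L^-(i,j)$ using \eqref{eq:short_long_sum} and the definition of $r_L(i,j) = t_L^-(i,j)/\udist(Z_i,Z_j)$. This gives
\[
A \cdot t_L^+(i,j) - B \cdot t_L^-(i,j) = \bigl(A - (A+B) r_L(i,j)\bigr) \cdot \udist(Z_i, Z_j).
\]
I would then choose $\tau_1 \in (0,1)$ strictly less than $A/(A+B)$, for instance $\tau_1 := A/(2(A+B))$, so that the constant $\epsilon := A - (A+B)\tau_1$ is strictly positive. For any $r_L(i,j) < \tau_1$, the displayed expression above is then bounded below by $\epsilon \cdot \udist(Z_i, Z_j)$.

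To finish, I would choose $L > C/\epsilon$. The key observation is that $r_L(i,j) < \tau_1 < 1$ forces $t_L^+(i,j) > 0$, and by the very definition of $t_L^+$ each nonzero summand $\untrunc{L}{\udist(Z_{n-1}, Z_n)}$ is at least $L$. Hence $t_L^+(i,j) \ge L$, which implies $\udist(Z_i, Z_j) \ge t_L^+(i,j) \ge L > C/\epsilon$. Combining this with the lower bound from the previous paragraph yields
\[
A \cdot t_L^+(i,j) - B \cdot t_L^-(i,j) \ge \epsilon \cdot \udist(Z_i, Z_j) > \epsilon \cdot (C/\epsilon) = C,
\]
as desired.

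There is essentially no obstacle here: the argument is purely arithmetic manipulation of the definitions, and the only subtlety is remembering that $t_L^+$ is ``quantized'' in multiples of at least $L$, which is what allows a smallness assumption on the ratio $r_L(i,j)$ to be upgraded to a largeness assumption on the absolute quantity $\udist(Z_i, Z_j)$.
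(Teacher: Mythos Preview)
Your proof is correct and essentially identical to the paper's: both rewrite the expression as $\bigl(A - (A+B)r_L(i,j)\bigr)\cdot \udist(Z_i,Z_j)$, take $\tau_1 = A/(2(A+B))$ so the coefficient is at least $A/2$, and then use the fact that $r_L(i,j) < 1$ forces at least one long sub-itinerary, giving $\udist(Z_i,Z_j) \ge L$ and hence the desired bound once $L > 2C/A$.
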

\begin{proof}
  Since $t_L^-(i,j) = \udist(Z_i, Z_j)r_L(i,j)$ and
  $t_L^+(i,j) = \udist(Z_i, Z_j)(1 - r_L(i,j))$, the left-hand side of
  the inequality above is given by
  \[
    \udist(Z_i, Z_j) \cdot (A (1 - r_L(i,j)) - B r_L(i,j)).
  \]
  If $r_L(i,j) < \tau_1$ for some $\tau_1 \in (0,1)$, the above expression is bounded
  below by
  \[
    \udist(Z_i, Z_j)(A(1 - \tau_1) - B\tau_1).
  \]
  As $\tau_1 \to 0$, the quantity $(A(1 - \tau_1) - B\tau_1)$
  approaches $A$. In particular, if $\tau_1 < \frac{A}{2(A + B)}$, then the
  above expression is at least $\udist(Z_i, Z_j) \cdot \frac{A}{2}$. Finally,
  the fact that $r_L(i, j) < \tau_1 < 1$ implies that
  $\udist(Z_i, Z_j) > L$ since there has to be at least one long sub-itinerary, so the lemma follows if we take
  $L \geq 2C / A$.
\end{proof}

\begin{definition}
  \label{defn:long_length_threshold}
  We define
  \[
    B_0 := \max\left\{D_1, D_0, K \right\},
  \]
  where $K$ is the constant from \Cref{lem:wall_additive_error}, and
  then use the previous lemma to fix constants $L, \tau_1$ so that
  \begin{equation}
    \label{eq:uniform_root_ineq}
    A_0 \cdot t_L^+(i,j) - B_0\cdot t_L^-(i,j) > \max\{M_0, M_1\}
  \end{equation}
  whenever $r_L(i,j) < \tau_1$. We also choose $L$ large enough so
  that $L > L_1 \ge L_0$. The length $L$ will be our threshold for
  ``long'' sub-itineraries.
\end{definition}

\subsubsection{Combining all remaining sub-itineraries}

\begin{prop}
  \label{prop:final_gluing}
  For any $1 \le i < j \le N$, if $r_L(i,j) < \tau_1$, then
  \[
    \iroot(Z_i, Z_j) \ge A_0 \cdot t_L^+(i, j) - B_0 \cdot t_L^-(i,
    j).
  \]
\end{prop}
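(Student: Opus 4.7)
The plan is to induct on the number $k$ of long sub-itineraries in $[Z_i, Z_j]$, that is, indices $n$ with $i \le n < j$ and $\udist(Z_n, Z_{n+1}) \ge L$. Set $f(i, j) := A_0 t_L^+(i, j) - B_0 t_L^-(i, j)$, and observe that $f$ is additive over partitions at any wall $Z_m$. The case $k = 0$ is vacuous since then $r_L(i, j) = 1 \not< \tau_1$. For $k = 1$, let $(Z_{n_1}, Z_{n_1+1})$ be the unique long sub-itinerary of length $\ell_1 \ge L$: \Cref{prop:subitinerary_linear_growth} gives $\iroot(Z_{n_1}, Z_{n_1+1}) \ge A_0 \ell_1$, and absorbing the entirely-short prefix and suffix (total length $s_0 + s_1 = t_L^-(i,j)$) via \Cref{lem:wall_additive_error} yields $\iroot(Z_i, Z_j) \ge A_0 \ell_1 - K(s_0 + s_1) \ge f(i, j)$, since $B_0 \ge K$.

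For $k \ge 2$, consider the last long sub-itinerary $(Z_{n_k}, Z_{n_k+1})$. The goal is to glue the prefix up to $Z_{n_k}$ (containing $k-1$ longs) to this final long using \Cref{lem:wellsep_walls_I} with $\lambda = L_1$, for which we need strongly nesting walls of mutual $\udist < L_1$ straddling $Z_{n_k}$. By \Cref{prop:no_adjacent_long_itineraries} the preceding sub-itinerary $(Z_{n_k-1}, Z_{n_k})$ is short. \textbf{Case A:} if $\udist(Z_{n_k-1}, Z_{n_k}) < L_1$, take $W := Z_{n_k-1}$ and $W' := Z_{n_k}$, which strongly nest by construction of $\mathbf{Z}$. \textbf{Case B:} if $\udist(Z_{n_k-1}, Z_{n_k}) \ge L_1 \ge \lambda$, then since also $\udist(Z_{n_k}, Z_{n_k+1}) \ge L > \lambda$, \Cref{prop:separating_walls} provides strongly nesting $W, W'$ straddling $Z_{n_k}$ with $\udist(W, W') < \lambda \le L_1$. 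In either case \Cref{lem:wellsep_walls_I} gives
\[
  \iroot(Z_i, Z_{n_k+1}) \ge \iroot(Z_i, W) + \iroot(W', Z_{n_k+1}) - D_1.
\]
The hypothesis $\iroot(W', Z_{n_k+1}) > M_1$ follows from $\iroot(W', Z_{n_k+1}) \ge A_0 \ell_k - KL_1$ once $L$ is taken large enough (which we may arrange in \Cref{defn:long_length_threshold}). The other hypothesis $\iroot(Z_i, W) > M_1$ is inherited from the inductive bound $\iroot(Z_i, W) \ge f(i, W)$ combined with \eqref{eq:uniform_root_ineq}.

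The final step is to check that the residual errors are compensated by the $-B_0 s_{k-1}$ and $-B_0 s_k$ terms in $f$. Combining the gluing estimate with the inductive bound and \Cref{lem:wall_additive_error} (used both to bound $\iroot(Z_i, W)$ in terms of $\iroot(\text{prefix up to } Z_{n_k})$ and to absorb the suffix $(Z_{n_k+1}, Z_j)$ of length $s_k$), the claim reduces to checking, in Case A, that $(B_0 - K) s_{k-1} + K \cdot \udist(Z_{n_k-1}, Z_{n_k}) \ge D_1$; this holds because $B_0 \ge D_1$, $B_0 \ge K$, and $\udist(Z_{n_k-1}, Z_{n_k}) \ge 1$. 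In Case B, the stronger lower bound $s_{k-1} \ge L_1$ analogously absorbs the additional $KL_1$ term produced by the gap between $W$ and $Z_{n_k}$.

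\textbf{The main obstacle} is the delicate per-step bookkeeping: the gluing error $D_1$ from \Cref{lem:wellsep_walls_I} and the length-proportional error $K \cdot s$ from \Cref{lem:wall_additive_error} must be compensated exactly by the term $-B_0 s_{k-1}$ in $f$, which rests on the precise definition $B_0 = \max\{D_1, D_0, K\}$ together with the Case A/B dichotomy and the elementary bound $s_{k-1} \ge 1$. A secondary subtlety is ensuring that the inductive hypothesis genuinely applies to the relevant prefix: one must either verify that $r_L < \tau_1$ persists on the sub-interval up to $W$, or else supplement the induction with a direct application of \Cref{prop:subitinerary_linear_growth} to an earlier long sub-itinerary in the prefix to confirm $\iroot(Z_i, W) > M_1$ independently.
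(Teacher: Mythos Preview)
There is a genuine gap in your inductive scheme. When you peel off the last long sub-itinerary and try to apply the inductive hypothesis to the prefix $(Z_i, Z_{n_k-1})$, you need $r_L(i, n_k-1) < \tau_1$: both to invoke the proposition inductively (the statement is conditional on $r_L < \tau_1$) and to apply \eqref{eq:uniform_root_ineq} in order to conclude $\iroot(Z_i, Z_{n_k-1}) > M_1$. But this can fail. For instance, if the interval has exactly two longs, one at each end, separated by a large block of shorts, then $r_L(i,j) < \tau_1$ can hold (the two longs are very long), while $r_L(i, n_2 - 1)$ is close to $1$. Your proposed fix of ``supplementing with a direct application of \Cref{prop:subitinerary_linear_growth} to an earlier long'' does not resolve this: the shorts between $Z_{n_{k-1}+1}$ and $Z_{n_k-1}$ can have arbitrarily large total length, so no lower bound on $\iroot(Z_i, Z_{n_k-1})$ follows.

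The paper's proof deals with exactly this issue via the Claim in the middle of the argument: rather than always peeling from one end, it locates a short sub-itinerary $(Z_{n-1}, Z_n)$ at which to split by taking $n$ minimal with $r_L(i,n) \ge \tau_1$, and then uses the weighted-average identity
\[
  r_L(i,j) = \tfrac{\udist(Z_i,Z_n)}{\udist(Z_i,Z_j)}\, r_L(i,n) + \tfrac{\udist(Z_n,Z_j)}{\udist(Z_i,Z_j)}\, r_L(n,j)
\]
to force both $r_L(i,n-1) < \tau_1$ and $r_L(n,j) < \tau_1$. This is the missing idea. Two smaller remarks: your Case~B is actually vacuous, since \Cref{prop:no_adjacent_long_itineraries} forces the sub-itinerary preceding a long one to have length $< L_0 \le L_1$; and the inequality $L_1 \ge \lambda$ you assert there is never established in the paper.
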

\begin{proof}
  We will induct on $j - i$. In the base case $j = i + 1$, then since
  $r_L(i,j) < \tau_1 < 1$ we must have $\udist(Z_i, Z_{i+1}) \ge
  L$. Since $L \ge L_0$, \Cref{prop:subitinerary_linear_growth} tells
  us that $\iroot(Z_i, Z_j) \ge A_0\udist(Z_i, Z_j)$ and since
  $\udist(Z_i, Z_j) = t_L^+(i,j)$ in this case, we are done.

  So now suppose that $j - i > 1$. First, observe that if
  $\udist(Z_i, Z_{i + 1}) < L$, then
  $r_L(i+1, j) < r_L(i, j) < \tau_1$. So, by induction we have
  \[
    \iroot(Z_{i+1}, Z_j) \ge A_0 \cdot t_L^+(i + 1, j) - B_0 \cdot
    t_L^-(i + 1, j).
  \]
  In this case, we also know that $t_L^+(i, j) = t_L^+(i + 1, j)$. So,
  after applying \Cref{lem:wall_additive_error}, we have
  \begin{align*}
    \iroot(Z_i, Z_j) &\ge A_0 \cdot t_L^+(i, j) - B_0 \cdot t_L^-(i +
                       1, j) - K \cdot \udist(Z_i, Z_{i+1})\\
                     &\ge A_0 \cdot t_L^+(i, j) - B_0 \cdot t_L^-(i +
                       1, j) - B_0 \cdot \udist(Z_i, Z_{i+1}).
  \end{align*}
  But we also know that
  $t_L^-(i,j) = \udist(Z_i, Z_{i+1}) + t_L^-(i + 1, j)$ in this case,
  so this proves the desired inequality. A similar argument also shows
  that the inequality holds in the case where
  $\udist(Z_{j-1}, Z_j) < L$.

  So, we may now assume that we have $\udist(Z_i, Z_{i+1}) \ge L$ and
  $\udist(Z_{j-1}, Z_j) \ge L$. Our strategy in this case is to find a
  suitable pair of walls $Z_{n-1} < Z_n$ to break the itinerary
  $\mc{U}(Z_i, Z_j)$ into three pieces: a ``short'' piece in the
  middle, and two ``mostly long'' pieces on either side. Then we will
  apply induction and one of our additivity lemmas for well-separated
  walls (\Cref{lem:wellsep_walls_I}).

  To find a suitable ``short'' sub-itinerary in the middle of
  $\udist(Z_i, Z_j)$, we prove the following claim:
  \begin{claim}
    For some $n$ with $i + 1 < n < j$, we have
    \begin{equation}
      \label{eq:pivot_itinerary}
      r_L(i, n - 1) < \tau_1, \qquad r_L(n, j) < \tau_1,
    \end{equation}
    and $\udist(Z_{n-1}, Z_n) < L$.
  \end{claim}
  To find such an $n$, first note that since
  $\udist(Z_{j-1}, Z_j) \ge L > L_0$,
  \Cref{prop:no_adjacent_long_itineraries} implies that
  $\udist(Z_{j-2}, Z_{j-1}) < L_0 < L$ and thus $j - 2 > i$. Then, if
  $r_L(i, j-2) < \tau_1$, we can satisfy the claim by taking
  $n = j-1$, since $r_L(j-1, j) = 0$. So, we can assume that
  $r_L(i, j - 2) \ge \tau_1$, and let $n$ be the minimal index such
  that $r_L(i, n) \ge \tau_1$.

  Now, we have
  \begin{align*}
    r_L(i,j)
    &= \frac{1}{\udist(Z_i, Z_j)} (t_L^-(i, n) + t_L^-(n, j))\\
    &= \frac{\udist(Z_i, Z_n)}{\udist(Z_i, Z_j)} r_L(i, n) +
      \frac{\udist(Z_n, Z_j)}{\udist(Z_i, Z_j)}r_L(n, j).
  \end{align*}
  From \eqref{eq:udist_additive}, we also know that
  \[
    \frac{\udist(Z_i, Z_n)}{\udist(Z_i, Z_j)} + \frac{\udist(Z_n,
      Z_j)}{\udist(Z_i, Z_j)} = 1.
  \]
  So, since $r_L(i,n) \ge \tau_1$ we must have $r_L(n, j) < \tau_1$,
  and by minimality of $n$ we also know that $r_L(i, n - 1) <
  \tau_1$. But in particular we also know
  $r_L(i, n-1) < r_L(i, n)$, implying $\udist(Z_{n-1}, Z_n) < L$.

  This proves the claim, so to finish the inductive step, we need to
  consider two cases:
  \begin{description}
  \item[Case 1: $\udist(Z_{n-1}, Z_n) < L_1$] In this case, we
    consider the sub-itineraries $\mc{U}(Z_i, Z_{n-1})$ and
    $\mc{U}(Z_n, Z_j)$. Since $r_L(i, n-1) < \tau_1$ and
    $r_L(n, j) < \tau_1$, we may assume inductively that
    \begin{align}
      \label{eq:inductive_root_ineq_left}
      \iroot(i, n-1) &\ge A_0t_L^+(i, n-1) - B_0t_L^-(i,n-1),\\
    \label{eq:inductive_root_ineq_right}
      \iroot(n, j) &\ge A_0t_L^+(n, j) - B_0t_L^-(n, j).
    \end{align}
    It then follows directly from \Cref{defn:long_length_threshold} and
    \eqref{eq:inductive_root_ineq_left},
    \eqref{eq:inductive_root_ineq_right} above that
    \begin{align*}
      \iroot(i, n-1) > M_1, \qquad \iroot(n, j) > M_1.
    \end{align*}
    So, since $\udist(Z_{n-1}, Z_n) < L_1$, we can apply
    \Cref{lem:wellsep_walls_I} to the walls $Z_i, Z_{n-1}, Z_n, Z_j$ to
    obtain
    \begin{align*}
      \iroot(Z_i, Z_j)
      &> \iroot(Z_i, Z_{n-1}) + \iroot(Z_n, Z_j) - D_1\\
      &\ge A_0(t_L^+(i, n-1) + t_L^+(n, j)) - B_0(t_L^-(i, n-1) +
        t_L^-(n, j)) - D_1\\
      &\ge A_0t_L^+(i,j) - B_0(t_L^-(i, n-1) + t_L^-(n, j) + 1),
    \end{align*}
    where the last inequality holds because we have defined $B_0 \geq
    D_1$. But then since $1 \le \udist(Z_{n-1}, Z_n) < L$, we know that
    \[
      t_L^-(i,j) = t_L^-(i, n-1) + t_L^-(n, j) + \udist(Z_{n-1}, Z_n)
      \ge t_L^-(i,n-1) + t_L^-(n, j) + 1,
    \]
    which proves the desired inequality in this case.
  \item[Case 2: $\udist(Z_{n-1}, Z_n) \ge L_1$] In this case, since
    $L_1 \ge L_0$, \Cref{prop:no_adjacent_long_itineraries} implies
    that the two sub-itineraries $\mc{U}(Z_{n-2}, Z_{n-1})$,
    $\mc{U}(Z_n, Z_{n+1})$ to the left and right of
    $\mc{U}(Z_{n-1}, Z_n)$ both have length less than $L_0$.
    Since we know $r_L(i, n-1) < \tau_1 < 1$ and
    $r_L(n, j) < \tau_1 < 1$, this also tells us that $i < n - 2$ and
    $n + 1 < j$, and that
    \[
      r_L(i, n-2) < \tau_1, \qquad r_L(n + 1, j) < \tau_1.
    \]
    We then see directly from \Cref{defn:long_length_threshold}
    together with the induction hypothesis that
    $\iroot(Z_i, Z_{n-2}) > M_0$ and $\iroot(Z_{n+1}, Z_j) > M_0$. In
    addition, since
    $\udist(Z_{n-1}, Z_n) \ge L_1 = \max\{L_0, M_0/A_0, D_0/A_0\}$,
    \Cref{prop:subitinerary_linear_growth} implies that
    $\iroot(Z_{n-1}, Z_n) \ge \max\{M_0, D_0\}$.
    
    We now apply \Cref{lem:wellsep_walls_I} twice. First, we apply the
    lemma to the walls $Z_i < Z_{n-2} < Z_{n-1} < Z_n$, which gives the
    bound
    \begin{equation}
      \label{eq:case2_bound_1}
      \iroot(Z_i, Z_n) \ge \iroot(Z_i, Z_{n-2}) + \iroot(Z_{n-1}, Z_n) -
      D_0.
    \end{equation}
    Since $\iroot(Z_{n-1}, Z_n) \ge D_0$ we see that
    $\iroot(Z_i, Z_n) > M_0$, which means we can then apply
    \Cref{lem:wellsep_walls_I} to the walls $Z_i < Z_n < Z_{n+1} < Z_j$
    to obtain
    \begin{equation}
      \label{eq:case2_bound_2}
      \iroot(Z_i, Z_j) \ge \iroot(Z_i, Z_n) + \iroot(Z_{n+1}, Z_j) -
      D_0.
    \end{equation}
    Putting \eqref{eq:case2_bound_1} and \eqref{eq:case2_bound_2}
    together we see that
    \[
      \iroot(Z_i, Z_j) \ge \iroot(Z_i, Z_{n-2}) + \iroot(Z_{n-1}, Z_n) +
      \iroot(Z_{n+1}, Z_j) - 2D_0.
    \]
    Since each of $\udist(Z_{n-2}, Z_{n-1})$, $\udist(Z_{n-1}, Z_n)$,
    and $\udist(Z_n, Z_{n+1})$ is less than $L$, we know that
    $t_L^+(i, j) = t_L^+(i, n-2) + t_L^+(n+1, j)$. Thus, after applying
    induction to the terms $\iroot(Z_i, Z_{n-2})$ and
    $\iroot(Z_{n+1}, Z_j)$ in the inequality above, and discarding the
    (nonnegative) $\iroot(Z_{n-1}, Z_n)$ term, we obtain
    \begin{align*}
      \iroot(Z_i, Z_j)
      &\ge A_0t_L^+(i, j) - B_0(t_L^-(i, n - 2) +
        t_L^-(n+1, j)) - 2D_0\\
      &\ge A_0t_L^+(i, j) - B_0(t_L^-(i, n - 2) +
        t_L^-(n+1, j) + 2).
    \end{align*}
    For the last line we apply the fact that $B_0 \ge D_0$. Finally,
    since $t_L^-(n - 2, n - 1) \ge 1$ and $t_L^-(n, n + 1) \ge 1$, we
    get
    \begin{align*}
      t_L^-(i,j) &= t_L^-(i,n-2) + t_L^-(n-2, n - 1) + t_L^-(n - 1, n) +
                   t_L^-(n, n + 1) + t_L^-(n+1, j)\\
                 &> t_L^-(i, n - 2) + t_L^-(n + 1, j) + 2,
    \end{align*}
    and we obtain the desired inequality in this case as well. \qedhere
  \end{description}
\end{proof}

Finally we obtain the estimate we originally wanted.
\begin{proof}[Proof of \Cref{prop:long_intervals_bound}]
  We set
  \[
    \tau = \min\left\{\tau_1, \frac{A_0}{2(A_0 + B_0)}\right\},
  \]
  where $A_0, B_0$ are the constants from \Cref{prop:final_gluing}.

  To simplify notation we write $r = r_L(\mathbf{Z})$. Since all of
  our modifications to $\mathbf{Z}$ have only decreased $r$, if we had
  $r < \tau$ before our modifications, this is still true for our
  current $\mathbf{Z}$.

  By definition, we have $r\cdot \udist(Z_1, Z_N) = t_L^-(1, N)$, and
  \eqref{eq:short_long_sum} implies that
  \[
    t_L^+(1, N) = (1 - r)\cdot \udist(Z_1, Z_N).
  \]
  Then, since
  $r < \tau \le \tau_1$, we can use \Cref{prop:final_gluing} to
  obtain:
  \begin{align*}
    \iroot(Z_1, Z_N)
    &\ge \left(A_0(1 - r) - B_0 r \right) \cdot \udist(Z_1, Z_N)\\
    &\ge \left(A_0(1 - \tau) - B_0 \tau \right) \cdot \udist(Z_1,
      Z_N)\\
    &\ge \frac{A_0}{2} \udist(Z_1, Z_N).
  \end{align*}
  Our construction ensures that $Z_1 = W_1$ and $Z_N$ is always the
  maximal wall in $\mathbf{W}$. From \Cref{cor:disjoint_walls} and the
  definition of $\mc{U}$, we know there is a uniform $R > 0$ so that
  $\elt(\mc{U}) = \eta\, \elt(Z_1, Z_N)\, \eta'$, for some
  $\eta, \eta' \in C$ with $|\eta|, |\eta'| < R$.  Thus the desired
  estimate follows from \Cref{lem:root_triangle_inequality}.
\end{proof}

\appendix

\section{Failure of strong nesting for half-cones}
\label{sec:appendix}

The purpose of this appendix is to prove the following two claims:

\begin{prop}
  \label{prop:vindomain_nesting_fail}
  There exists a right-angled Coxeter group $C$, a simplicial
  representation $\rho$ of $C$, and a pair of walls $W, W'$ in the
  Vinberg domain $\vindomain$ for $\rho$ which satisfies the following
  properties:
  \begin{enumerate}
  \item $\elt(W, W')$ does not lie in a proper standard subgroup of
    $C$ (equivalently, by \Cref{lem:disjoint_walls},
    $\overline{W} \cap \overline{W'} = \emptyset$);
  \item $\halfspace_+(W') \subset \halfspace_+(W)$;
  \item $\overline{\halfcone_+(W')}$ is \emph{not} contained in
    $\halfcone_+(W)$.
  \end{enumerate}
\end{prop}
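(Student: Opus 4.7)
Since the statement is purely existential, the plan is to exhibit an explicit counterexample. I would fix an irreducible infinite right-angled Coxeter system $(C,S)$ of small rank---say with $|S|=3$ and a single commuting pair, so that the nerve has connected complement and the group admits reduced words using all generators---and a nonsingular negative-type Cartan matrix $A$, so that $\vindomain$ is properly convex by \Cref{prop:vinberg_properly_convex}. I would then take $W=W(s)$ for some $s\in S$ and $W'=\rho(\gamma)W(s')$ for some $\gamma\in C$ chosen so that the element $\gamma(W,W')$ traversed by an efficient itinerary from $W$ to $W'$ uses every element of $S$. Condition~(1) then follows immediately from \Cref{lem:disjoint_walls}, and condition~(2) is a sign check on the defining functionals $\alpha_W,\alpha_{W'}$ once the group element is fixed.

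The heart of the construction is condition~(3). By \Cref{lem:halfcones_nest}, any point of $\overline{\halfcone_+(W')}\cap\partial\halfcone_+(W)$ must sit in $\partial\halfcone_+(W)\cap\overline{W'}\subset\partial\vindomain$, and by \Cref{lem:halfspaces_in_halfcones} any such point lies on a projective line through the polar $v_W$ and some $x_0\in\overline{W}\cap\partial\vindomain$. The plan is therefore to tune the off-diagonal entries $A_{ij}$ (exploiting the freedom $A_{ij}A_{ji}\ge 4$) so that $\partial\vindomain$ contains a distinguished boundary point $p\in\overline{W'}$ that is collinear in $\P(V)$ with $v_W$ and such an $x_0$, while at the same time ensuring $\overline{W}\cap\overline{W'}=\emptyset$. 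Verifying the resulting configuration is then a finite computation: write explicit matrix representatives for $\rho$, compute the polars and defining functionals, form the half-cones as convex hulls, and exhibit the point $p$ on the relevant face of $\partial\vindomain$.

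The main obstacle is engineering this degenerate alignment. Strong nesting failure is a non-generic phenomenon---it requires $\partial\vindomain$ to have a vertex or lower-dimensional face simultaneously touched by the closures of $W'$ and $\halfcone_+(W)$---so the Cartan parameters cannot be chosen generically and must be solved for. In practice I would first fix a symbolic candidate for $\gamma$ of small word length (but long enough to force the reduced expression to use every generator of $S$), then reduce condition~(3) to a system of polynomial equalities and inequalities in the entries $A_{ij}$, and finally check that this system admits a solution in the region cut out by the Cartan conditions. Such alignments are easiest to find when the Vinberg domain develops cusp-like boundary features, which happens as some product $A_{ij}A_{ji}$ approaches the critical value $4$; tuning one such product near this threshold while keeping the others generic is the concrete mechanism I expect to produce the desired example.
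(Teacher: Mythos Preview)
Your proposal misidentifies both the rank and the mechanism, and the combination makes the plan unworkable as stated.

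First, the choice $|S|=3$ with a single commuting pair forces $C$ to be hyperbolic (there is no empty square in the nerve). In that situation the infinite standard subgroups are the two infinite dihedral groups $C(s_i,s_3)$, whose fixed faces $F_{\{s_i,s_3\}}$ are \emph{vertices} of $\Delta$; the edges $F_{s_i}$ all have finite stabilizer and hence lie in $\vindomain$. So $\partial\vindomain$ meets each tile in a finite set of points, and there is no flat face in $\partial\vindomain$ along which the boundary of $\halfcone_+(W)$ could meet $\overline{W'}$. Your suggested remedy---pushing some $A_{ij}A_{ji}$ toward $4$---produces parabolic (cusp-like) degeneration, which pinches the boundary rather than flattening it; this is the wrong geometric phenomenon and does not create the required segment in $\partial\vindomain$ joining $\partial W$ to $\partial W'$.

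The paper's construction is structural rather than analytic: it takes the rank-5 group $C\cong(\Z/2*\Z/2)^2*(\Z/2)$, generated by $a,b,c,d,e$ with $\{a,c\}$ and $\{b,d\}$ each commuting with the other pair. Here $C(a,c)$ is infinite, so the triangle $F_{\{a,c\}}$ lies in $\partial\vindomain$; its centralizer $C(b,d)$ sweeps it out to an infinite-sided polygon $P\subset H_{\{a,c\}}\cap\partial\vindomain$. Since $b$ commutes with $a$ and $c$, the polar $v_b$ lies in $H_{\{a,c\}}$, so one whole component of $P\setminus\overline{W(b)}$ sits in $\partial\halfcone_+(W(b))$. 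Taking $W=W(b)$ and $W'=bdea\cdot W(c)$ (the last wall of an itinerary for $bdeac$) one checks that $\overline{W'}\cap P$ is a segment inside that component, so $\overline{\halfcone_+(W')}\not\subset\halfcone_+(W)$, while $\gamma(W,W')=bdeac$ uses every generator. No tuning of the Cartan matrix is needed; the example works for any nonsingular choice. The essential ingredient you are missing is a pair of \emph{commuting infinite} standard subgroups, which cannot occur below rank $4$ and is exactly what produces the flat boundary face.
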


\begin{prop}
  \label{prop:refdomain_nesting_fail}
  There exists a right-angled Coxeter group $C$ and reflections
  $R, R'$ in $C$ such that, for any simplicial representation
  $\rho\colon C \to \SLpm(|S|, \R)$ with fully nondegenerate Cartan matrix,
  and \emph{any} reflection domain $\Omega$ for $\rho$, if $W, W'$ are
  the walls in $\Omega$ preserved by $R, R'$, then:
  \begin{enumerate}
  \item $\elt(W, W')$ does not lie in a proper standard subgroup of
    $C$ (so in particular
    $\overline{W} \cap \overline{W'} = \emptyset$);
  \item $\halfspace_+(W') \subset \halfspace_+(W)$;
  \item $\overline{\halfcone_+(W')}$ is \emph{not} contained in
    $\halfcone_+(W)$.
  \end{enumerate}
\end{prop}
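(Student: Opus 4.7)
The plan is to establish both propositions simultaneously by exhibiting an explicit counterexample in a single right-angled Coxeter group. The underlying mechanism is the presence of a 2-dimensional Euclidean flat in the Davis complex, corresponding to a proper standard subgroup with a direct product decomposition. An efficient itinerary that traverses the diagonal of this flat yields a group element lacking bounded product projections, placing it outside the scope of \Cref{lem:halfcone_nesting_failure}; the goal is to show that the half-cones over the first and last walls of such an itinerary can fail to strongly nest even though the element does not lie in any proper standard subgroup.

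Concretely, I propose to take $C$ generated by $s_1, \ldots, s_5$ with defining graph the 4-cycle $s_1 s_2 s_3 s_4 s_1$ together with a pendant edge $s_1 s_5$. The complement of this graph is connected, so $C$ is irreducible; the induced 4-cycle forces $C$ to contain the proper standard subgroup $\langle s_1, s_2, s_3, s_4 \rangle$, which splits as $\langle s_1, s_3 \rangle \times \langle s_2, s_4 \rangle \cong D_\infty \times D_\infty$ and yields a 2-flat in $\daviscx$. Set $\delta := s_1 s_3 s_2 s_4$, a diagonal translation of this flat, and for large $n$ take $W := W(s_5)$ and $W' := \rho(\eta_n)\, W(s_5)$, where $\eta_n := s_5 \delta^n s_5$. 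A direct check using the itinerary framework of \Cref{sec:cube_complexes_RACGs} shows $\gamma(W, W') = \eta_n$. Because $\eta_n$ essentially involves $s_5$, it lies in no proper standard subgroup, so \Cref{lem:disjoint_walls} gives condition (1), and (2) follows from the sign conventions for half-spaces.

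For condition (3), analyze the polar $v_{W'} = \rho(\eta_n)\, v_{s_5}$ via the product structure on $V_T$ for $T = \{s_1, s_2, s_3, s_4\}$. The principal $T$-submatrix of the Cartan matrix is block-diagonal (since $s_1, s_3$ both commute with $s_2, s_4$), so by \Cref{lem:fully_nondegenerate_transverse} we have $V_T = U_{13} \oplus U_{24}$ with $U_{ij} = \spn\{v_{s_i}, v_{s_j}\}$, and $\rho(\delta)$ acts block-diagonally as a product of loxodromic elements on these two summands. Writing $v_{s_5} = v_T + v_\perp$ with $v_T \in V_T$ and $v_\perp \in V_T^\perp$ (a fixed vector of $\rho(\delta)$), the projective trajectory $[\rho(\delta^n) v_{s_5}]$ converges to the top eigenvector direction of $\rho(\delta)|_{V_T}$, determined explicitly by the eigenvalues of $\rho(s_1 s_3)$ and $\rho(s_2 s_4)$. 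After conjugating by $\rho(s_5)$, one verifies that for $n$ large, the limit $[\rho(\eta_n) v_{s_5}]$ lands on the ``cone face'' of $\partial\halfcone_+(W(s_5))$ joining $v_{s_5}$ to a point of $\partial\overline{W(s_5)}$, forcing $\overline{\halfcone_+(W')} \not\subset \halfcone_+(W)$.

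For \Cref{prop:refdomain_nesting_fail}, fix $n$ large and take $R := s_5$ and $R' := \eta_n s_5 \eta_n^{-1}$. Robustness under change of reflection domain is automatic: for any $\Omega \subseteq \vindomain$, the half-cones over $W, W'$ in $\Omega$ are contained in the corresponding half-cones in $\vindomain$ and share the same polar vectors, so failure of strict inclusion persists. Robustness in the Cartan matrix follows because the qualitative position of the limit direction depends only on the commutation structure of $C$ and on the diagonal/off-diagonal pattern of the principal $T$-submatrix of the Cartan matrix, both of which are preserved under any fully nondegenerate choice. The main obstacle will be the explicit verification that the limit of $[\rho(\eta_n) v_{s_5}]$ lies on the boundary of $\halfcone_+(W(s_5))$---neither strictly inside nor strictly outside---uniformly over fully nondegenerate Cartan matrices; this requires careful tracking of how the $V_T^\perp$ component of $v_{s_5}$ and the asymptotic direction in $V_T$ combine under $\rho(s_5)$ to land precisely on the cone face.
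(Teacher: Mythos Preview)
There are two genuine gaps in your proposal.

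First, your argument for condition~(3) rests on showing that the polar $v_{W'}$ lies on $\partial\halfcone_+(W)$. But \Cref{lem:halfcones_nest} (specifically the ``moreover'' clause and its proof) shows that whenever $\overline{W}\cap\overline{W'}=\emptyset$ in $\vindomain$, the polar $v_{W'}$ lies in the \emph{open} half-cone $\halfcone_+(W)$; any failure of strong nesting must occur in $\overline{W'}$, never at the polar. What you actually compute is the \emph{limit} of the polars as $n\to\infty$; even if that limit sits on the cone face, for every finite $n$ the polar is strictly interior, and nothing you have written forces any other point of $\overline{\halfcone_+(W')}$ onto $\partial\halfcone_+(W)$. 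The paper's mechanism is different: one exhibits a positive-dimensional face $P\subset\partial\vindomain$ (the orbit of a simplex face under an infinite standard subgroup) meeting both $\partial W$ and $\partial W'$, and checks directly that the relevant component of $P\setminus\overline{W}$ lies in $\partial\halfcone_+(W)$ and contains $\overline{W'}\cap P$. This works for a fixed word, not only in a limit.

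Second, your robustness argument for arbitrary reflection domains is simply wrong. Shrinking $\Omega$ shrinks \emph{both} half-cones, and there is no reason the non-inclusion $\overline{\halfcone_+(W')}\not\subset\halfcone_+(W)$ should persist; indeed, the paragraph ``Modifying the example'' after the proof of \Cref{prop:vindomain_nesting_fail} explains that for the paper's own five-generator example one \emph{can} repair strong nesting by passing to a small Hilbert neighborhood of $\mindomain$. The paper's proof of \Cref{prop:refdomain_nesting_fail} therefore uses a larger group (six generators, with a standard subgroup $C(T)$ isomorphic to an $(\infty,\infty,\infty)$ triangle group commuting with an infinite dihedral $C(D)$) and locates the bad intersection inside $\partial\mindomain$ via the explicit description of $\mindomain$ in \Cref{thm:minimal_domain}; since $\mindomain\subset\Omega$ for every reflection domain, the obstruction then survives. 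Your five-generator group lacks the two-dimensional minimal-domain face needed for this, and your reduction-to-$\vindomain$ argument cannot substitute for it.
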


Although the second proposition above implies the first, we will prove
these results one at a time, since the construction for
\Cref{prop:refdomain_nesting_fail} is a slightly more complicated
variation of the construction for \Cref{prop:vindomain_nesting_fail}.

Note that if \Cref{prop:vindomain_nesting_fail} were false, then the
proofs in \Cref{sec:main_thm} of this paper would considerably
simplify---in particular, the proof of \Cref{lem:wellsep_walls_II}
could be reduced to a direct application of
\Cref{lem:wellsep_walls_I}. \Cref{prop:refdomain_nesting_fail} tells
us that we cannot resolve the problem simply by replacing $\vindomain$
in \Cref{sec:main_thm} with some other carefully chosen reflection
domain.

\begin{remark}
  If $C$ is a hyperbolic Coxeter group, it follows from
  \cite[Corollary 1.11]{DGKLM} that there is a simplicial
  representation $\rho\colon C \to \SLpm(V)$ and a reflection domain
  $\Omega$ for $\rho$ so that half-cones over any two walls $W, W'$ in
  $\Omega$ with disjoint closures will strongly nest. It seems likely
  that this holds even for some examples of simplicial representations
  of non-hyperbolic Coxeter groups, but we do not pursue this here.
\end{remark}

\subsection{Proof of \Cref{prop:vindomain_nesting_fail}}

Consider the right-angled Coxeter group $C$ with generating set
$S = \langle a, b, c, d, e \rangle$, and nerve given in
\Cref{fig:coxeter_nerve_1} below (recall that there is an edge between
two vertices in the nerve precisely when the corresponding generators
commute):

\begin{figure}[H]
  \begin{center}
    \begin{tikzpicture}
      \node[draw,circle] (a) at (-1.5, 0) {$a$};
      \node[draw,circle] (b) at (0, 1.5) {$b$};
      \node[draw,circle] (c) at (1.5, 0) {$c$};
      \node[draw,circle] (d) at (0, -1.5) {$d$};
      \node[draw,circle] (e) at (3.5, 0) {$e$};
      
      \draw (a) -- (b) {};
      \draw (b) -- (c) {};
      \draw (c) -- (d) {};
      \draw (d) -- (a) {};
    \end{tikzpicture}
  \end{center}
  \caption{The nerve of the right-angled Coxeter group $C$ in
    \Cref{prop:vindomain_nesting_fail}.}
  \label{fig:coxeter_nerve_1}
\end{figure}
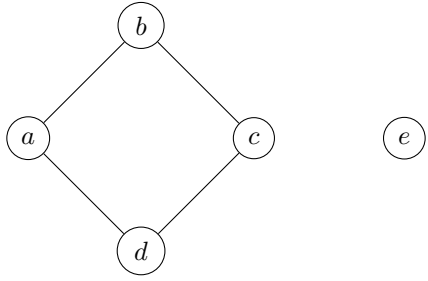

That is, $C$ is isomorphic to the group $(\Z/2 * \Z/2)^2 * (\Z/2)$,
where the $\Z/2 * \Z/2$ factors are generated by the pairs $a, c$ and
$b, d$, and $e$ generates the remaining $\Z/2$ free factor. Observe
that $C$ is a minimal example of an irreducible non-hyperbolic
right-angled Coxeter group: a theorem of Gromov (see \cite[Section
4.2.C]{gromov} or \cite[Chapter 12]{Davis}) implies that a
right-angled Coxeter group fails to be hyperbolic precisely when the
``empty square'' on the vertices $a, b, c, d$ appears as a full
subgraph of the nerve. (The general version of this theorem for
Coxeter groups which are not necessarily right-angled is due to
Moussong \cite{Moussong}.)

Let $V = \R^5$. We choose a nonsingular Cartan matrix for $C$ and a
simplicial representation determined by this Cartan matrix. We
suppress the representation from the notation, and just let $C$ act
directly on $\P(V)$. For each $s \in \{a, b, c, d, e\}$, we let
$H_s \subset \P(V)$ denote the (projective) reflection hyperplane for
the reflection $s$, and we let $F_s$ denote the closed face of the Tits simplex $\Delta$ fixed by $s$. Further, for any
subset $S' \subset S$, we let $H_{S'}$ denote $\bigcap_{s \in S'} H_s$, and $F_{S'}$ denote $\bigcap_{s \in S'}F_s$.

Consider the 2-dimensional projective subspace
$H_{\{a,c\}} = H_a \cap H_c$. This subspace contains the closed face
$F_{\{a,c\}}$ of the fundamental simplex $\Delta$ fixed pointwise by
the standard subgroup $C(a,c)$. Since $C(a,c)$ is infinite,
$F_{\{a,c\}}$ must be contained in the boundary of $\vindomain$ and
hence so are all of its translates under the action of $C$.

Now, since the subgroup $C(b, d)$ centralizes $C(a, c)$, it preserves
the subspace $H_{\{a,c\}}$, and in fact $b$ and $d$ act on this
subspace by projective reflections fixing the lines
$H_b \cap H_{\{a,c\}}$ and $H_d \cap H_{\{a,c\}}$. The relative
interior of the orbit $C(b,d) \cdot F_{\{a,c\}}$ is an infinite-sided
convex polygon $P$ in $H_{\{a,c\}}$, which must be a subset of
$\dee \vindomain$ (see \Cref{fig:5dim_halfcone}).
\begin{figure}[h]
  \begin{center}
    \def\svgwidth{.5\textwidth}
\begingroup%
  \makeatletter%
  \providecommand\color[2][]{%
    \errmessage{(Inkscape) Color is used for the text in Inkscape, but the package 'color.sty' is not loaded}%
    \renewcommand\color[2][]{}%
  }%
  \providecommand\transparent[1]{%
    \errmessage{(Inkscape) Transparency is used (non-zero) for the text in Inkscape, but the package 'transparent.sty' is not loaded}%
    \renewcommand\transparent[1]{}%
  }%
  \providecommand\rotatebox[2]{#2}%
  \newcommand*\fsize{\dimexpr\f@size pt\relax}%
  \newcommand*\lineheight[1]{\fontsize{\fsize}{#1\fsize}\selectfont}%
  \ifx\svgwidth\undefined%
    \setlength{\unitlength}{568.79998779bp}%
    \ifx\svgscale\undefined%
      \relax%
    \else%
      \setlength{\unitlength}{\unitlength * \real{\svgscale}}%
    \fi%
  \else%
    \setlength{\unitlength}{\svgwidth}%
  \fi%
  \global\let\svgwidth\undefined%
  \global\let\svgscale\undefined%
  \makeatother%
  \begin{picture}(1,0.82770471)%
    \lineheight{1}%
    \setlength\tabcolsep{0pt}%
    \put(0,0){\includegraphics[width=\unitlength,page=1]{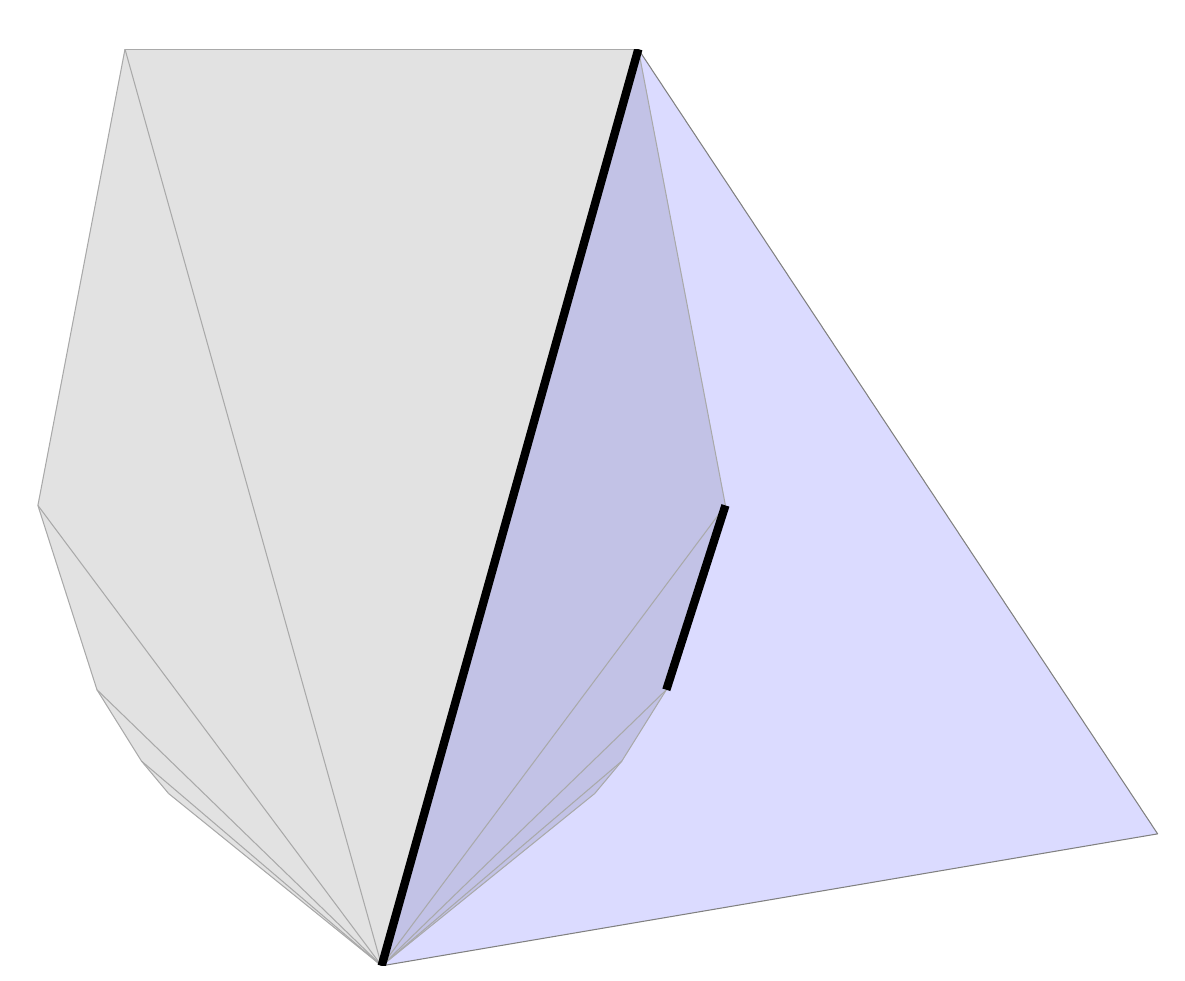}}%
    \put(0.32208158,0.80168777){\makebox(0,0)[lt]{\lineheight{1.25}\smash{\begin{tabular}[t]{l}$e$\end{tabular}}}}%
    \put(0.41490859,0.47679325){\makebox(0,0)[lt]{\lineheight{1.25}\smash{\begin{tabular}[t]{l}$b$\end{tabular}}}}%
    \put(0.22151899,0.47679325){\makebox(0,0)[lt]{\lineheight{1.25}\smash{\begin{tabular}[t]{l}$d$\end{tabular}}}}%
    \put(0.29113926,0.63150493){\makebox(0,0)[lt]{\lineheight{1.25}\smash{\begin{tabular}[t]{l}$F_{\{a,c\}}$\end{tabular}}}}%
    \put(0.63150496,0.16736987){\color[rgb]{0,0,1}\makebox(0,0)[lt]{\lineheight{1.25}\smash{\begin{tabular}[t]{l}$\partial \halfcone_+(W)$\end{tabular}}}}%
    \put(0.33755275,0.39943742){\makebox(0,0)[lt]{\lineheight{1.25}\smash{\begin{tabular}[t]{l}$\partial W$\end{tabular}}}}%
    \put(0.60056261,0.29113926){\makebox(0,0)[lt]{\lineheight{1.25}\smash{\begin{tabular}[t]{l}$\partial W'$\end{tabular}}}}%
  \end{picture}%
\endgroup%

  \end{center}
  \caption{The polygon $P$ giving a face of the Vinberg domain
    $\vindomain$ for a simplicial representation of $C$. The polygon
    is tiled by copies of the triangle $F_{\{a,c\}}$; the walls of
    this tiling are translates of $H_{\{a,c\}} \cap \dee W(b)$ and
    $H_{\{a,c\}} \cap \dee W(d)$.  The shaded triangle has a vertex at
    the polar of $b$, so it lies in the boundary of the half-cone
    $\halfcone_+(W)$.}
  \label{fig:5dim_halfcone}
\end{figure}

On the other hand, since the generator $e$ does \emph{not} commute
with either $a$ or $c$, the intersection
$H_{\{a,c\}} \cap e H_{\{a,c\}}$ is the 1-dimensional subspace
$H_{\{a,c\}} \cap H_e$. This tells us that the closed polygon
$\overline{P}$ is exactly the intersection of $\overline{\vindomain}$
with $H_{\{a,c\}}$, i.e.\ it is a face of $\vindomain$.

Now, consider the geodesic word $w = bdeac$, and let $\mc{W}$ be an
$\vindomain$-itinerary traversing $w$, departing from the
identity. Since no pair of consecutive generators in $w$ commutes,
every pair of distinct walls in $\mc{W}$ is disjoint in
$\vindomain$. This means that $\mc{W}$ must be efficient. So, if
$W, W'$ are respectively the first and last walls in $\mc{W}$, we must
have $\elt(W, W') = bdeac$.

As $W \cap W' = \emptyset$ and $\mc{W}$ departs from the identity, we
know that $\halfspace_+(W)$ contains $\halfspace_+(W')$. By
\Cref{prop:wall_expression} we have $W' = bdea \cdot W(c)$, where
$W(c)$ is the reflection wall in $\vindomain$ for $c$. From this, it
follows that:

\begin{prop}
  The intersection $\overline{W'} \cap \overline{P}$ is given by
  $bd \cdot F_{\{a,c,e\}}$, where $F_{\{a,c,e\}}$ is the edge of
  $F_{\{a,c\}}$ fixed by $e$.
\end{prop}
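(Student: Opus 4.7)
My plan is to verify the two inclusions separately. For $bd \cdot F_{\{a,c,e\}} \subseteq \overline{W'} \cap \overline{P}$, the containment in $\overline{P}$ is clear since $bd \cdot F_{\{a,c,e\}}$ is an edge of the tile $bd \cdot F_{\{a,c\}}$ in the tiling of $P$. For containment in $\overline{W'}$, recall that $\overline{W'}$ is the fixed-point set in $\overline{\vindomain}$ of the reflection $r' = bdea \cdot c \cdot aedb$. A direct calculation shows $r'$ fixes $bd \cdot F_{\{a,c,e\}}$ pointwise: for any $x \in F_{\{a,c,e\}}$, the generators $a, c, e$ fix $x$, so $aedb \cdot bd \cdot x = ae \cdot x = x$ (using $b^2 = d^2 = 1$ and then that $a, e$ fix $x$), and hence $r' \cdot bd \cdot x = bdea \cdot c \cdot x = bd \cdot x$.

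For the reverse inclusion, I would use that $\overline{W'}$ is contained in the projective hyperplane $H_{W'} := bdea \cdot H_c$, while $\overline{P}$ lies in the $2$-plane $H_{\{a,c\}}$. Assuming (for now) that $H_{\{a,c\}} \not\subset H_{W'}$, the intersection $H_{\{a,c\}} \cap H_{W'}$ is a projective line $L$. By the first inclusion, $L$ already contains the full segment $bd \cdot F_{\{a,c,e\}}$, so $L$ must be the projective line $bd \cdot H_{\{a,c,e\}}$ spanned by this edge. Since $\overline{P}$ is a convex polygon in the plane $H_{\{a,c\}}$ and the $e$-edges of the tiles (including $bd \cdot F_{\{a,c,e\}}$) lie on the boundary of $\overline{P}$—because $C(b,d)$-translates only glue tiles along $b$- and $d$-edges—convexity forces $L \cap \overline{P} = bd \cdot F_{\{a,c,e\}}$.

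The only step requiring real work is ruling out $H_{\{a,c\}} \subset H_{W'}$, which would be equivalent to $r'$ fixing all of $H_{\{a,c\}}$ pointwise. I would argue by contradiction: pick a generic point $x$ in the relative interior of the tile $F_{\{a,c\}} \subset H_{\{a,c\}}$. By the standard description of point stabilizers for the $C$-action on $\vindomain$ (tiles of $\vindomain$ are in bijection with $C$, and the stabilizer of an interior point of $F_T$ is the standard subgroup $C(T)$), the stabilizer of $x$ in $C$ equals $C(a,c)$. If $r'$ fixed $H_{\{a,c\}}$ pointwise, then in particular $r' \in C(a,c)$.

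The main obstacle—and the reason the argument works for our specific Coxeter group—is producing a contradiction from $r' = bdeacaedb \in C(a,c)$. The word $bdeacaedb$ is manifestly reduced, since no two adjacent letters commute in the nerve of \Cref{fig:coxeter_nerve_1}, and it involves all five generators $a, b, c, d, e$. By Tits' theorem for (right-angled) Coxeter groups, any two reduced expressions for the same element are related by commutation moves alone, and in particular use the same multiset of generators. Thus no reduced expression for $r'$ can lie in the free monoid on $\{a, c\}$, so $r' \notin C(a,c)$, contradicting the assumption and completing the proof.
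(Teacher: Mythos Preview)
Your proof is correct. It is, however, a genuinely different route from the paper's. The paper first translates by $bd$: since $P$ is $C(b,d)$-invariant, $\overline{P}\cap\overline{W'}=bd\bigl(\overline{P}\cap ea\cdot H_c\bigr)$. It then computes directly that $(ea\cdot H_c)\cap H_{\{a,c\}}=H_{\{a,c,e\}}$; this is a short linear-algebra check using the sign conditions on the Cartan matrix (the pullback of $\alpha_c$ under $ea$ restricts to a nonzero multiple of $\alpha_e$ on $H_{\{a,c\}}$, because $A_{ce}-A_{ca}A_{ae}<0$). From there, $\overline{P}\cap H_{\{a,c,e\}}=F_{\{a,c,e\}}$ is immediate.

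By contrast, you establish the forward inclusion by checking that the reflection $r'$ fixes $bd\cdot F_{\{a,c,e\}}$ pointwise, and then for the reverse inclusion you identify the line $L$ and appeal to convexity of $P$. Your substitute for the paper's hyperplane computation is the stabilizer argument: if $H_{\{a,c\}}\subset H_{W'}$ then $r'$ fixes a relative-interior point of $F_{\{a,c\}}$, hence lies in $C(a,c)$ by Vinberg's description of point stabilizers in $\tilde\Delta$; but $r'=bdeacaedb$ is reduced and involves $b,d,e$, contradiction. This is a more group-theoretic and arguably more conceptual way of getting the nondegeneracy, and it avoids touching the Cartan matrix entries, at the cost of being longer than the paper's three-line computation.
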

\begin{proof}
  Since $\overline{W(c)} = H_c \cap \overline{\vindomain}$, and
  $\vindomain$ is $C$-invariant, we have
  $\overline{P} \cap \overline{W'} = \overline{P} \cap bdea \cdot
  H_c$. Since $P$ is invariant under $b,d$, this intersection is the
  same as $bd(\overline{P} \cap ea \cdot H_c)$. Then since $e$ does
  not commute with $c$ or $a$, we have
  $(ea \cdot H_c) \cap H_{\{a,c\}} = H_{\{a,c,e\}}$ and thus
  $\overline{P} \cap ea \cdot H_c = F_{\{a,c,e\}}$.
\end{proof}

We know that the polar of $b$ lies in the projective subspace
$H_{a,c}$ since $b$ commutes with both $a$ and $c$. So, we can use the
argument from \Cref{lem:halfspaces_in_halfcones} to see that the
boundary of the half-cone $\halfcone_+(W)$ contains the connected
component of $\overline{P} \minus \overline{W}$ which does \emph{not}
contain $F_{\{a,c\}}$. Thus, the boundary of $\halfcone_+(W)$ contains
$\overline{W'}\cap H_{a,c}$ and so the half-cones over these walls
cannot strongly nest.

\begin{remark}
  \label{rem:long_word_fail}
  For any given $k \ge 1$, we can also consider the word
  $w = (bd)^ke(ac)^k$, and an itinerary $\mc{W}$ traversing $w$
  departing from the identity. A nearly identical argument to the
  above shows that the initial and terminal walls $W, W'$ of $\mc{W}$
  also satisfy the conclusions of
  \Cref{prop:vindomain_nesting_fail}. This proves that, in
  \Cref{prop:vindomain_nesting_fail}, the group element $\elt(W, W')$
  cannot even be made to lie ``close'' to a proper standard subgroup:
  we cannot find a uniform constant $R$ so that
  $\elt(W, W') = \eta_1 \gamma \eta_2$ for $\eta_1, \eta_2$ satisfying
  $|\eta_i| < R$ and $\gamma$ lying in a proper standard subgroup of
  $C$.
\end{remark}

\subsubsection{Modifying the example}
\label{sec:modifying}

The argument above also shows that the corresponding half-cones in the
Vinberg domain $\dvindomain \subset \P((\R^5)^*)$ for the dual
representation $\rho^*$ (see \Cref{sec:dual_domains}) do not strongly
nest. By \Cref{lem:halfcone_duality}, the corresponding half-cones in
the dual $\dvindomain^*$ cannot strongly nest either. By
\Cref{prop:vinberg_maximal}, the reflection domain $\dvindomain^*$ is
contained inside of every reflection domain for $\rho$, so we denote
it $\mindomain$.

One could still hope to find some reflection domain $\Omega$ lying
between $\mindomain$ and $\vindomain$ where the half-cones over the
walls $W \cap \Omega$ and $W' \cap \Omega$ strongly nest. In fact, for
the example above, it is possible to find such a domain, by taking
$\Omega$ to be a small neighborhood of $\mindomain$ with respect to
the Hilbert metric on $\vindomain$. This strategy works because the
segments in $\partial \vindomain$ joining $\partial W$ to
$\partial W'$ are not contained in $\partial \mindomain$; see
\Cref{fig:vinface_polars}. But this can fail if the intersection
between $\partial \vindomain$ and $\partial \mindomain$ contains
large-dimensional faces, which is what occurs in the next
counterexample.

\begin{figure}[h]
  \begin{center}
    \def\svgwidth{.6\textwidth}
\begingroup%
  \makeatletter%
  \providecommand\color[2][]{%
    \errmessage{(Inkscape) Color is used for the text in Inkscape, but the package 'color.sty' is not loaded}%
    \renewcommand\color[2][]{}%
  }%
  \providecommand\transparent[1]{%
    \errmessage{(Inkscape) Transparency is used (non-zero) for the text in Inkscape, but the package 'transparent.sty' is not loaded}%
    \renewcommand\transparent[1]{}%
  }%
  \providecommand\rotatebox[2]{#2}%
  \newcommand*\fsize{\dimexpr\f@size pt\relax}%
  \newcommand*\lineheight[1]{\fontsize{\fsize}{#1\fsize}\selectfont}%
  \ifx\svgwidth\undefined%
    \setlength{\unitlength}{568.00604248bp}%
    \ifx\svgscale\undefined%
      \relax%
    \else%
      \setlength{\unitlength}{\unitlength * \real{\svgscale}}%
    \fi%
  \else%
    \setlength{\unitlength}{\svgwidth}%
  \fi%
  \global\let\svgwidth\undefined%
  \global\let\svgscale\undefined%
  \makeatother%
  \begin{picture}(1,0.58068053)%
    \lineheight{1}%
    \setlength\tabcolsep{0pt}%
    \put(0,0){\includegraphics[width=\unitlength,page=1]{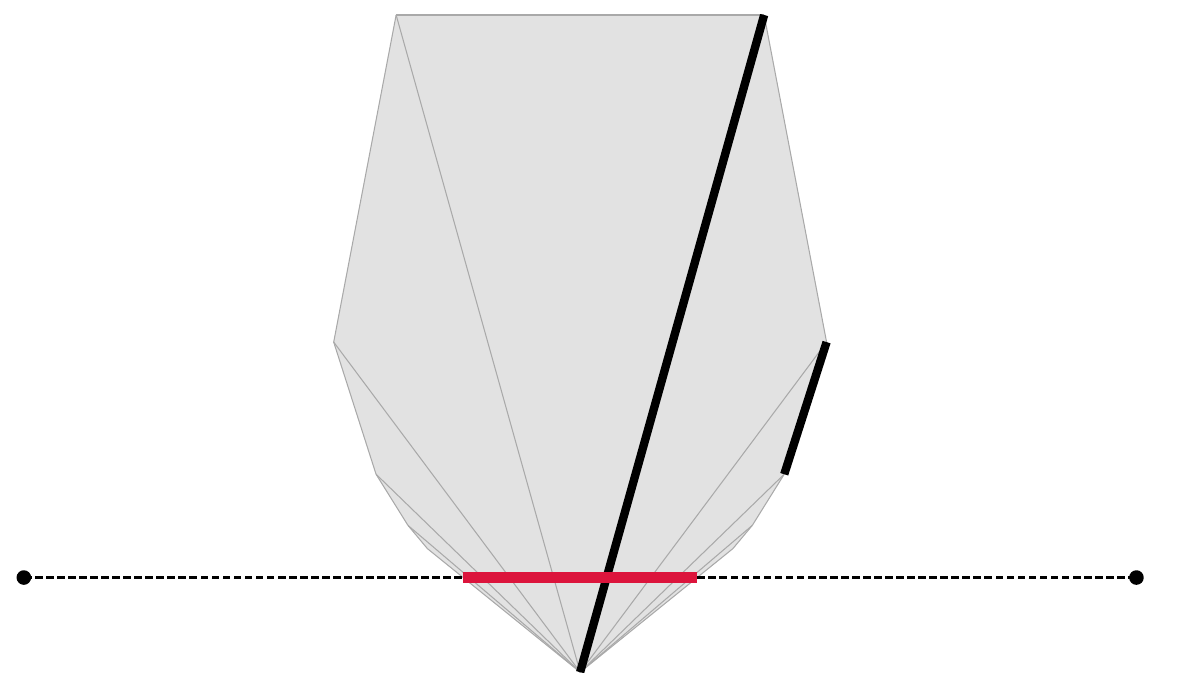}}%
    \put(0.49581188,0.29034026){\makebox(0,0)[lt]{\lineheight{1.25}\smash{\begin{tabular}[t]{l}$\partial W$\end{tabular}}}}%
    \put(0.69017692,0.21259426){\makebox(0,0)[lt]{\lineheight{1.25}\smash{\begin{tabular}[t]{l}$\partial W'$\end{tabular}}}}%
    \put(0.93452158,0.11818838){\makebox(0,0)[lt]{\lineheight{1.25}\smash{\begin{tabular}[t]{l}$v_b$\end{tabular}}}}%
    \put(0.01267592,0.11818838){\makebox(0,0)[lt]{\lineheight{1.25}\smash{\begin{tabular}[t]{l}$v_d$\end{tabular}}}}%
    \put(0.5791112,0.03488906){\color[rgb]{0.8627451,0.07843137,0.23529412}\makebox(0,0)[lt]{\lineheight{1.25}\smash{\begin{tabular}[t]{l}$\partial \mindomain \cap P$\end{tabular}}}}%
  \end{picture}%
\endgroup%

  \end{center}
  \caption{The boundary of the minimal reflection domain $\mindomain$
    does not contain a segment joining $W$ with $W'$. See
    \Cref{thm:minimal_domain} below for a construction of
    $\mindomain$.}
  \label{fig:vinface_polars}
\end{figure}

\subsection{Proof of \Cref{prop:refdomain_nesting_fail}}
\label{sec:6dim_counterexample}

We consider a right-angled Coxeter group $C$ whose generating set $S$
splits into three disjoint subsets $D = \{d_1, d_2\}$,
$T = \{t_1, t_2, t_3\}$, and $E = \{e\}$, with the following
relations:
\begin{itemize}
\item Each $t_i \in T$ commutes with each $d_j \in D$;
\item The generator $e$ commutes with $t_1$ and $t_3$.
\end{itemize}

There are no other relations among the generators, meaning the system
$(C, S)$ has the nerve depicted in \Cref{fig:coxeter_nerve_2} below. Observe that the subgroup $C(T)$ is an $(\infty, \infty, \infty)$
triangle group, which commutes with the infinite dihedral subgroup
$C(D)$.

\begin{figure}[H]
  \centering
  \begin{tikzpicture}
    \node[draw,circle] (d1) at (-1.5, 0) {$d_1$};
    \node[draw,circle] (d2) at (1.5, 0) {$d_2$};

    \node[draw,circle] (t1) at (0, 1.5) {$t_1$};
    \node[draw,circle] (t2) at (0,0) {$t_2$};
    \node[draw,circle] (t3) at (0, -1.5) {$t_3$};

    \node[draw,circle] (e) at (3.5, 0) {$e$};

    \draw (d1) -- (t1) {};
    \draw (d1) -- (t2) {};
    \draw (d1) -- (t3) {};

    \draw (d2) -- (t1) {};
    \draw (d2) -- (t2) {};
    \draw (d2) -- (t3) {};

    \draw (t1) -- (e) {};
    \draw (t3) -- (e) {};
  \end{tikzpicture}
  \caption{The nerve of the right-angled Coxeter group $C$ in
    \Cref{prop:refdomain_nesting_fail}.}
  \label{fig:coxeter_nerve_2}
\end{figure}
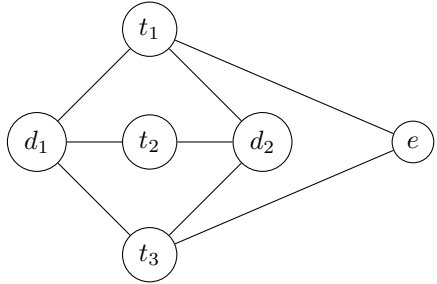

As for the previous example, we let $V = \R^6$, and fix an arbitrary
simplicial representation $C \to \SLpm(6, \R)$ with fully
nondegenerate Cartan matrix. We again omit the representation from the
notation and allow $C$ to act directly on $V$ and $\P(V)$.

Consider the 3-dimensional projective subspace $H_D \subset \P(V)$,
containing the closed face $F_D$ of $\Delta$. This face is a
tetrahedron whose faces span fixed subspaces for the reflections in
$T \cup E$. As every point in $F_D$ has infinite stabilizer, it lies
in the boundary of the Vinberg domain $\vindomain$.

Since the centralizer of the subgroup $C(D)$ is precisely $C(T)$, the
interior of the orbit $C(T) \cdot F_D$ is a subset of $H_D$. In fact,
one may apply the more general form of \Cref{thm:vinberg_action} given
in \cite{Vinberg1971} to see that $C(T) \cdot F_D$ is an
infinite-sided convex polytope $P \subset H_D$, whose closure $\overline{P}$
is a face of the Vinberg domain tiled by copies of $F_D$. See
\Cref{fig:polytope}.

\begin{figure}[H]
  \centering
  \def\svgwidth{.8\textwidth}
  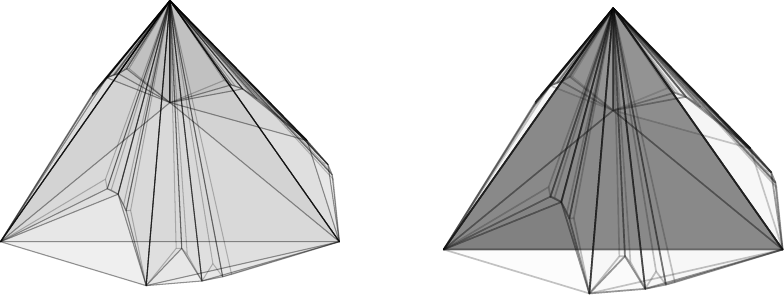
  \caption{The convex polytope $P$ (left and right). On the right, the
    tetrahedral fundamental domain $F_D$ has also been
    highlighted. The reflections $t_1, t_2, t_3$ in the three faces of
    $F_D$ meeting at the topmost vertex preserve the polytope $P$,
    while the reflection $e$ in the bottom face $F_{D \cup E}$ does
    not.}
  \label{fig:polytope}
\end{figure}

It is possible to argue as in the proof of
\Cref{prop:vindomain_nesting_fail} to see that there is a pair of
walls $W, W'$ in $\vindomain$ with disjoint closures, such that the
boundary of $\halfcone_+(W)$ contains a component of
$P \minus \overline{W}$ whose closure intersects $\overline{W'}$. We
want to see that something similar occurs not just for walls in the
Vinberg domain, but in \emph{any} reflection domain for this
simplicial representation.

For this, we consider the unique minimal reflection domain
$\mindomain$ for the simplicial representation $\rho$, whose existence
is guaranteed by the discussion in Section~\ref{sec:modifying}.  We can
get a precise description of $\mindomain$ in our situation using
a theorem of Danciger-Gu\'eritaud-Kassel-Lee-Marquis.

\begin{definition}
  Let $(C, S)$ be an infinite irreducible right-angled Coxeter system,
  and suppose $C$ acts via a simplicial representation on
  $V = \R^{|S|}$ with nonsingular Cartan matrix. For any subset
  $S' \subseteq S$, we let
  $\tilde{\Sigma}_{S'} \subset \overline{\tilde{\Delta}}$ denote the
  set
  \[
    \tilde{\Sigma}_{S'} = \left\{x = \sum_{t \in S'}\lambda_tv_t :
      \alpha_s(x) \le 0 \textrm{ and } \lambda_t \ge 0 \quad \forall s
      \in S, t \in S' \right\}.
  \]
  We let $\Sigma_{S'}$ denote the projectivization of
  $\tilde{\Sigma}_{S'}$ in $\P(V)$, and write
  $\tilde{\Sigma} = \tilde{\Sigma}_S$ and $\Sigma = \Sigma_S$.
\end{definition}

\begin{thm}[{See \cite[Theorem 5.2]{DGKLM}}]
  \label{thm:minimal_domain}
  Let $(C, S)$ be an infinite irreducible right-angled Coxeter group
  with $|S| > 2$, acting on $V = \R^{|S|}$ by a simplicial
  representation with nonsingular Cartan matrix. Then the set
  $\overline{\mindomain} = \overline{\bigcup_{\gamma \in C}
    \gamma\Sigma}$ is the closure of the unique reflection domain
  $\mindomain \subset \vindomain$ which is contained in every
  reflection domain for $\rho$.
\end{thm}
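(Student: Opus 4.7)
The plan is to first identify $\mindomain$ via duality and then verify the claimed equality directly at the level of the invariant cones. Combining the maximality of the Vinberg domain (Proposition~\ref{prop:vinberg_maximal}) with the self-duality of reflection domains (Proposition~\ref{prop:dual_vinberg_domains}), the operation $\Omega \mapsto \Omega^*$ is an inclusion-reversing bijection between the reflection domains of $\rho$ and those of $\rho^*$. Since $\vindomain$ contains every reflection domain for $\rho$, the dual $\dvindomain$ contains every reflection domain for $\rho^*$, so for any reflection domain $\Omega$ of $\rho$ we have $\Omega^* \subseteq \dvindomain$, hence $\Omega \supseteq \dvindomain^*$. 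This identifies $\mindomain = \dvindomain^*$ as the unique minimal reflection domain for $\rho$ (it is open, convex, $\rho$-invariant, and contained in $\vindomain$ since $\dvindomain \supseteq \vindomain^*$). It remains to prove $\overline{\mindomain} = \overline{\bigcup_{\gamma \in C} \rho(\gamma)\Sigma}$.

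For the forward inclusion, let $\widetilde{\Omega}_{\min} \subset V$ denote the invariant cone over $\mindomain$, so $\widetilde{\Omega}_{\min} = (\udvindomain)^*$. Any $v \in \widetilde{\Omega}_{\min} \subseteq \uvindomain$ lies in some translate $\rho(\gamma)\tilde{\Delta}$, and $\rho$-invariance of $\widetilde{\Omega}_{\min}$ reduces the task to proving $\tilde{\Delta} \cap \widetilde{\Omega}_{\min} \subseteq \tilde{\Sigma}$. The fully nondegenerate Cartan matrix ensures $\{v_s\}_{s \in S}$ is a basis of $V$ with a dual basis $\{\beta_s\}$ characterized by $\beta_s(v_t) = \delta_{st}$, so that $v = \sum_s \beta_s(v)\,v_s$. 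Since $v_t(-\beta_s) = -\delta_{st} \le 0$ for every $t$, one has $-\beta_s \in \overline{\tilde{\mathscr{D}}} \subseteq \overline{\udvindomain}$; combined with $v \in (\udvindomain)^*$ this forces $\beta_s(v) \ge 0$. Together with the condition $\alpha_s(v) \le 0$ coming from $v \in \tilde{\Delta}$, this places $v$ in $\tilde{\Sigma}$.

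For the reverse inclusion, $\rho$-invariance of both sides reduces the problem to $\tilde{\Sigma} \subseteq \overline{\widetilde{\Omega}_{\min}}$. Given $v_0 \in \tilde{\Sigma}$, I need $w(v_0) \le 0$ for every $w \in \overline{\udvindomain}$; writing $w = \rho^*(\eta) w_0$ with $w_0 \in \overline{\tilde{\mathscr{D}}}$ and unraveling the dual action, the inequality becomes $w_0(\rho(\eta^{-1})v_0) \le 0$, which holds as soon as
\[
  \rho(\eta)\,\tilde{\Sigma} \;\subseteq\; \overline{\mathrm{cone}\{v_s : s \in S\}} \qquad \text{for every } \eta \in C.
\]
I would prove this key lemma by induction on the length of a reduced expression for $\eta$. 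The inductive step uses $\rho(s_i\eta)v_0 = \rho(\eta)v_0 - \alpha_i(\rho(\eta)v_0)\,v_i$, whose coefficient of $v_i$ is non-negative exactly when $\alpha_i(\rho(\eta)v_0) \le 0$. The main obstacle is the classical \emph{positivity-of-roots} theorem for Vinberg's reflection representations (an extension of the Tits-representation case, established in \cite{Vinberg1971}): it guarantees that when $\ell(s_i\eta) > \ell(\eta)$, the functional $\rho^*(\eta^{-1})\alpha_i$ is a non-negative combination of the $\alpha_t$, and pairing such a positive root against $v_0 \in \tilde{\Sigma} \subseteq \tilde{\Delta}$ gives $\alpha_i(\rho(\eta)v_0) \le 0$, closing the induction.
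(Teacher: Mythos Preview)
The paper does not prove this theorem; it is quoted from \cite[Theorem~5.2]{DGKLM} and used as a black box in the appendix, so there is no in-paper argument to compare your approach against.

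Your proof is correct. The identification $\mindomain = \dvindomain^*$ is precisely how the paper introduces $\mindomain$ in \Cref{sec:modifying}, and both inclusion arguments go through cleanly. Two small remarks. First, you invoke a \emph{fully nondegenerate} Cartan matrix to obtain the basis $\{v_s\}$, but the theorem only assumes a \emph{nonsingular} one; nonsingularity already makes $\{v_s\}$ a basis (the change-of-basis matrix from $\{e_s\}$ to $\{v_s\}$ is $A$ itself), so simply weaken the hypothesis you invoke. Second, rather than citing positivity of roots as an external result from \cite{Vinberg1971}, you can extract it directly from \Cref{thm:vinberg_action}: the condition $\ell(s_i\eta) > \ell(\eta)$ says that the wall $W(s_i)$ does not separate $\tilde{\Delta}$ from $\rho(\eta^{-1})\tilde{\Delta}$ in the Tits cone, hence $\rho(\eta^{-1})\tilde{\Delta} \subseteq \{\alpha_i \le 0\}$, i.e.\ $\rho^*(\eta^{-1})\alpha_i$ is non-positive on all of $\tilde{\Delta}$; since $\tilde{\Delta}$ is the simplicial cone cut out by $\{\alpha_s \le 0\}_{s \in S}$, this forces $\rho^*(\eta^{-1})\alpha_i$ to lie in the non-negative span of the $\alpha_s$, closing your induction without appeal to a separate theorem.
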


\begin{remark}
  The version of \Cref{thm:minimal_domain} proved in \cite{DGKLM}
  holds under considerably weaker hypotheses than what we have stated
  here. In particular, the result in \cite{DGKLM} can be applied to
  non-right-angled Coxeter groups and non-simplicial representations.
\end{remark}

We now consider the intersection of the sets $\Sigma$ and
$\overline{\mindomain}$ with the projective subspace $H_D$, for the
specific right-angled Coxeter group $C$ we described above. Observe
that the subgroup $C(T)$ acts on the subspace
$V_T = \spn\{v_{t_1}, v_{t_2}, v_{t_3}\}$, via a simplicial
representation whose Cartan matrix is a (fully nondegenerate)
principal submatrix of our original Cartan matrix. Thus
$\Sigma_T \subset \Sigma \cap \P(V_T)$ is a hexagon, with alternating
sides contained in the projective lines $H_{t_i} \cap \P(V_T)$ (see
\Cref{fig:iiitri}).

\begin{figure}[h]
  \centering
  \def\svgwidth{.6\textwidth}
\begingroup%
  \makeatletter%
  \providecommand\color[2][]{%
    \errmessage{(Inkscape) Color is used for the text in Inkscape, but the package 'color.sty' is not loaded}%
    \renewcommand\color[2][]{}%
  }%
  \providecommand\transparent[1]{%
    \errmessage{(Inkscape) Transparency is used (non-zero) for the text in Inkscape, but the package 'transparent.sty' is not loaded}%
    \renewcommand\transparent[1]{}%
  }%
  \providecommand\rotatebox[2]{#2}%
  \newcommand*\fsize{\dimexpr\f@size pt\relax}%
  \newcommand*\lineheight[1]{\fontsize{\fsize}{#1\fsize}\selectfont}%
  \ifx\svgwidth\undefined%
    \setlength{\unitlength}{576bp}%
    \ifx\svgscale\undefined%
      \relax%
    \else%
      \setlength{\unitlength}{\unitlength * \real{\svgscale}}%
    \fi%
  \else%
    \setlength{\unitlength}{\svgwidth}%
  \fi%
  \global\let\svgwidth\undefined%
  \global\let\svgscale\undefined%
  \makeatother%
  \begin{picture}(1,1)%
    \lineheight{1}%
    \setlength\tabcolsep{0pt}%
    \put(0,0){\includegraphics[width=\unitlength,page=1]{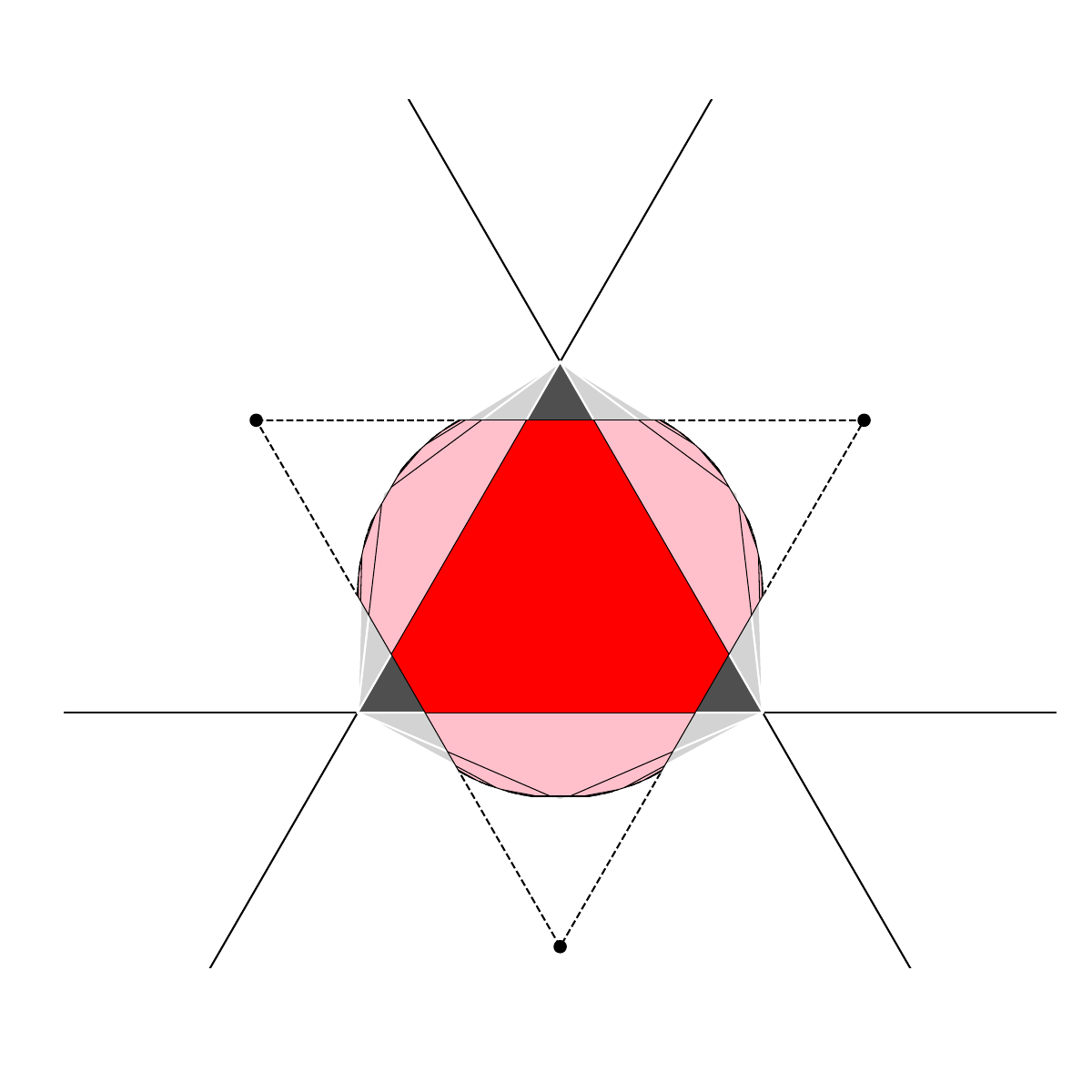}}%
    \put(0.51296658,0.45447185){\makebox(0,0)[lt]{\lineheight{1.25}\smash{\begin{tabular}[t]{l}$\Sigma_T$\end{tabular}}}}%
    \put(0.23464145,0.63788809){\makebox(0,0)[lt]{\lineheight{1.25}\smash{\begin{tabular}[t]{l}$v_{t_2}$\end{tabular}}}}%
    \put(0.79129171,0.63788809){\makebox(0,0)[lt]{\lineheight{1.25}\smash{\begin{tabular}[t]{l}$v_{t_1}$\end{tabular}}}}%
    \put(0.51296658,0.09900188){\makebox(0,0)[lt]{\lineheight{1.25}\smash{\begin{tabular}[t]{l}$v_{t_3}$\end{tabular}}}}%
    \put(0.60386721,0.79534912){\makebox(0,0)[lt]{\lineheight{1.25}\smash{\begin{tabular}[t]{l}$H_{t_2}$\end{tabular}}}}%
    \put(0.71749295,0.18177001){\makebox(0,0)[lt]{\lineheight{1.25}\smash{\begin{tabular}[t]{l}$H_{t_1}$\end{tabular}}}}%
    \put(0.2175396,0.36357122){\makebox(0,0)[lt]{\lineheight{1.25}\smash{\begin{tabular}[t]{l}$H_{t_3}$\end{tabular}}}}%
  \end{picture}%
\endgroup%

  \caption{The action of the standard subgroup $C(T)$ via a simplicial
    representation on $\P(V_T)$. Copies of the red hexagon $\Sigma_T$
    tile the minimal domain $P_{\min}$ (in pink) for this simplicial
    representation.}
  \label{fig:iiitri}
\end{figure}

In particular, since $C(D)$ and $C(T)$ commute, we know that
$\P(V_T) \subset H_D$, and therefore $\Sigma_T \subset F_D$. In
addition, since $t_1$ and $t_3$ commute with all of the generators in
$D \cup E$, the subspace $\P(V_{t_1, t_3})$ is contained in the
2-dimensional projective subspace $H_{D \cup E} = H_D \cap H_E$, which
means that the edge $\Sigma_{t_1, t_3}$ of the hexagon $\Sigma_T$ is
contained in $H_{D \cup E}$ (see \Cref{fig:hexagon}, left).

Since $\Sigma_T \subset F_D$, the relative interior of the orbit
$C(T) \cdot \Sigma_T$ is a $C(T)$-invariant convex subset $P_{\min}$
of the polytope $P$, contained in the $C(T)$-invariant subspace
$\P(V_T)$; it is a copy of the minimal $C(T)$-invariant domain for the
simplicial representation of $C(T)$ on $V_T$ (see \Cref{fig:hexagon},
right).

\begin{figure}[h]
  \centering
\begingroup%
  \makeatletter%
  \providecommand\color[2][]{%
    \errmessage{(Inkscape) Color is used for the text in Inkscape, but the package 'color.sty' is not loaded}%
    \renewcommand\color[2][]{}%
  }%
  \providecommand\transparent[1]{%
    \errmessage{(Inkscape) Transparency is used (non-zero) for the text in Inkscape, but the package 'transparent.sty' is not loaded}%
    \renewcommand\transparent[1]{}%
  }%
  \providecommand\rotatebox[2]{#2}%
  \newcommand*\fsize{\dimexpr\f@size pt\relax}%
  \newcommand*\lineheight[1]{\fontsize{\fsize}{#1\fsize}\selectfont}%
  \ifx\svgwidth\undefined%
    \setlength{\unitlength}{323.54269193bp}%
    \ifx\svgscale\undefined%
      \relax%
    \else%
      \setlength{\unitlength}{\unitlength * \real{\svgscale}}%
    \fi%
  \else%
    \setlength{\unitlength}{\svgwidth}%
  \fi%
  \global\let\svgwidth\undefined%
  \global\let\svgscale\undefined%
  \makeatother%
  \begin{picture}(1,0.41030504)%
    \lineheight{1}%
    \setlength\tabcolsep{0pt}%
    \put(0,0){\includegraphics[width=\unitlength,page=1]{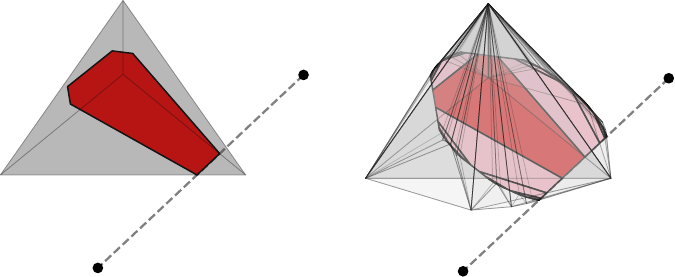}}%
    \put(0.12033233,0.03727127){\color[rgb]{0,0,0}\makebox(0,0)[lt]{\lineheight{1.25}\smash{\begin{tabular}[t]{l}$v_{t_3}$\end{tabular}}}}%
    \put(0.43124327,0.32355684){\color[rgb]{0,0,0}\makebox(0,0)[lt]{\lineheight{1.25}\smash{\begin{tabular}[t]{l}$v_{t_1}$\end{tabular}}}}%
  \end{picture}%
\endgroup%

  \caption{Left: the hexagon $\Sigma_T$, embedded in the tetrahedron
    $F_D$. The subset $\Sigma_{t_1, t_3}$ is an edge of the hexagon
    contained in the bottom face $F_{D \cup E}$ of $F_D$. Right: the
    convex subset $P_{\min}$ embedded in the polytope $P$.}
  \label{fig:hexagon}
\end{figure}

We now consider the geodesic word $w = t_1t_3t_2ed_1d_2$, and let
$\mc{W}$ be an $\vindomain$-itinerary traversing $w$, departing from
the identity. As in the previous example, since no pair of consecutive
generators in $w$ commutes, the itinerary $\mc{W}$ is efficient. Thus, writing $W_{\mathrm{Vin}}$ and $W'_{\mathrm{Vin}}$ for the first and last walls of $\mc{W}$,
$\elt(W_{\mathrm{Vin}}, W_{\mathrm{Vin}}') = t_1t_3t_2ed_1d_2$.

We let $R, R'$ denote the reflections in $C$ fixing
$W_{\mathrm{Vin}}, W_{\mathrm{Vin}}'$, and let $\Omega$ be an
arbitrary reflection domain for $C$. We let $W, W'$ denote the walls
in $\Omega$ fixed by $R, R'$, and let $W_{\min}, W_{\min}'$ denote the
walls in $\mindomain$ fixed by the same pair of reflections. By
\Cref{prop:vinberg_maximal} and \Cref{thm:minimal_domain}, we have
\[
  \mindomain \subseteq \Omega \subseteq \vindomain,
\]
and
\[
  W_{\min} \subseteq W \subseteq W_{\mathrm{Vin}}, \qquad W_{\min}'
  \subseteq W' \subseteq W_{\mathrm{Vin}}'.
\]

By \Cref{prop:wall_expression}, we have
$W_{\min}' = t_1t_3t_2ed_1 \cdot (H_{d_2} \cap \mindomain)$. Using
this, we show:
\begin{prop}
  \label{prop:wprime_intersect}
  The intersection $\overline{W_{\min}'} \cap \overline{P_{\min}}$
  contains $t_1t_3t_2 \cdot \Sigma_{t_1, t_3}$.
\end{prop}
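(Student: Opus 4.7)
The plan is to establish the claimed inclusion by showing separately that $t_1 t_3 t_2 \cdot \Sigma_{t_1,t_3}$ lies in each of $\overline{W_{\min}'}$ and $\overline{P_{\min}}$.

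First I would dispatch the easier inclusion into $\overline{P_{\min}}$. By definition $P_{\min}$ is the relative interior of the orbit $C(T) \cdot \Sigma_T$, so $\Sigma_T \subseteq \overline{P_{\min}}$, and since $\Sigma_{t_1,t_3}$ is an edge of the hexagon $\Sigma_T$, it lies in $\overline{\Sigma_T}$. Applying $t_1 t_3 t_2 \in C(T)$ gives $t_1 t_3 t_2 \cdot \Sigma_{t_1,t_3} \subseteq t_1 t_3 t_2 \cdot \overline{\Sigma_T} \subseteq \overline{P_{\min}}$.

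Next I would tackle the inclusion into $\overline{W_{\min}'}$. The itinerary $\mc{W}$ departs from the identity and traverses $w = t_1 t_3 t_2 e d_1 d_2$, so Proposition \ref{prop:wall_expression} gives $W_{\mathrm{Vin}}' = t_1 t_3 t_2 e d_1 \cdot W(d_2)$, and therefore
\[
\overline{W_{\min}'} = t_1 t_3 t_2 e d_1 \cdot (H_{d_2} \cap \overline{\mindomain}).
\]
The key observation is that $\Sigma_{t_1,t_3} \subseteq \P(V_{\{t_1,t_3\}}) \subseteq H_{D \cup E} = H_{d_1} \cap H_{d_2} \cap H_e$: this is exactly the fact that was used earlier to conclude that this edge of $\Sigma_T$ lies in $H_{D\cup E}$, coming from the commutation relations between $t_1, t_3$ and each of $d_1, d_2, e$. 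Since each of $d_1, d_2, e$ acts as a linear reflection fixing its own reflection hyperplane pointwise, every point of $H_{D \cup E}$ is fixed by all three reflections. In particular $d_1$ and $e$ act trivially on $\Sigma_{t_1,t_3}$, and $\Sigma_{t_1,t_3} \subseteq H_{d_2}$. Moreover $\Sigma_{t_1,t_3} \subseteq \Sigma_T \subseteq \overline{\mindomain}$.

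Putting these facts together,
\[
t_1 t_3 t_2 \cdot \Sigma_{t_1,t_3} = t_1 t_3 t_2 e d_1 \cdot \Sigma_{t_1,t_3} \subseteq t_1 t_3 t_2 e d_1 \cdot (H_{d_2} \cap \overline{\mindomain}) = \overline{W_{\min}'},
\]
which combined with the first inclusion gives the proposition. There is no substantive obstacle here; the entire argument is a direct consequence of identifying the correct fixed subspace $H_{D \cup E}$ of the subgroup $C(D \cup E)$, using the commutation pattern of the nerve (namely that each of $t_1, t_3$ commutes with every element of $D \cup E$), and then applying Proposition \ref{prop:wall_expression} to express $W_{\min}'$ explicitly.
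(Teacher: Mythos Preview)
Your proof is correct and follows essentially the same approach as the paper's: both use the expression $\overline{W_{\min}'} = t_1t_3t_2ed_1 \cdot (H_{d_2} \cap \overline{\mindomain})$, the $C(T)$-invariance of $P_{\min}$, and the key fact that $\Sigma_{t_1,t_3} \subset H_{D \cup E}$. Your version is slightly more direct in that you use the pointwise fixing of $H_{D\cup E}$ by $e$ and $d_1$ immediately, whereas the paper instead computes the intersection $ed_1 \cdot H_{d_2} \cap H_D = H_{D\cup E}$ (using that $e$ does not commute with $d_1,d_2$); but these are two phrasings of the same observation.
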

\begin{proof}
  Since $P_{\min}$ is $C(T)$-invariant and $\mindomain$ is
  $C$-invariant, we just need to check that
  $\overline{P_{\min}} \cap ed_1 \cdot H_{d_2}$ contains
  $\Sigma_{t_1, t_3}$. Since $e$ does not commute with $d_1$ or $d_2$,
  we know that $ed_1 \cdot H_{d_2} \cap H_D = H_{D \cup E}$, and as
  $P_{\min} \subset H_D$ we therefore have
  \[
    \overline{P_{\min}} \cap ed_1 \cdot H_{d_2} = \overline{P_{\min}}
    \cap H_{D \cup E}.
  \]
  We have already seen that the edge $\Sigma_{t_1, t_3}$ of the
  hexagon $\Sigma_T$ is contained in $H_{D \cup E}$, which gives us
  the desired intersection since $C(T)$-translates of $\Sigma_T$ tile
  $P_{\min}$.
\end{proof}

Now, since the $\vindomain$-itinerary $\mc{W}$ departs from the
identity, we know that $\halfspace_+(W_{\mathrm{Vin}})$ contains
$\halfspace_+(W'_{\mathrm{Vin}})$ and thus
$\halfspace_+(W') \subset \halfspace_+(W)$. We consider the boundary
of the half-cone $\halfcone_+(W)$. Note that, since the entire
tetrahedron $F_D$ has infinite stabilizer in $C$, any point in
$\overline{W} \cap F_D$ must lie in $\dee W$. In particular, since
$W_{\min} \subset W$, the boundary $\dee W$ must contain
$\overline{W_{\min}} \cap F_D$, which is a reflection wall in the
2-dimensional domain $P_{\min}$.

As $P_{\min}$ is a reflection domain for $C(T)$ in the
$C(T)$-invariant subspace $\P(V_T)$,
\Cref{lem:halfspaces_in_halfcones} tells us that the closure of the
half-cone $\halfcone_+(\overline{W_{\min}} \cap P_{\min})$ on the
domain $P_\text{min}$ contains the component of
$P_{\min} \minus \overline{W_{\min}}$ which does not contain
$\Sigma_T$ (see \Cref{fig:pmin_halfcone}).

\begin{figure}[h]
  \centering
  \def\svgwidth{.7\textwidth}
\begingroup%
  \makeatletter%
  \providecommand\color[2][]{%
    \errmessage{(Inkscape) Color is used for the text in Inkscape, but the package 'color.sty' is not loaded}%
    \renewcommand\color[2][]{}%
  }%
  \providecommand\transparent[1]{%
    \errmessage{(Inkscape) Transparency is used (non-zero) for the text in Inkscape, but the package 'transparent.sty' is not loaded}%
    \renewcommand\transparent[1]{}%
  }%
  \providecommand\rotatebox[2]{#2}%
  \newcommand*\fsize{\dimexpr\f@size pt\relax}%
  \newcommand*\lineheight[1]{\fontsize{\fsize}{#1\fsize}\selectfont}%
  \ifx\svgwidth\undefined%
    \setlength{\unitlength}{568.79998779bp}%
    \ifx\svgscale\undefined%
      \relax%
    \else%
      \setlength{\unitlength}{\unitlength * \real{\svgscale}}%
    \fi%
  \else%
    \setlength{\unitlength}{\svgwidth}%
  \fi%
  \global\let\svgwidth\undefined%
  \global\let\svgscale\undefined%
  \makeatother%
  \begin{picture}(1,0.51265825)%
    \lineheight{1}%
    \setlength\tabcolsep{0pt}%
    \put(0,0){\includegraphics[width=\unitlength,page=1]{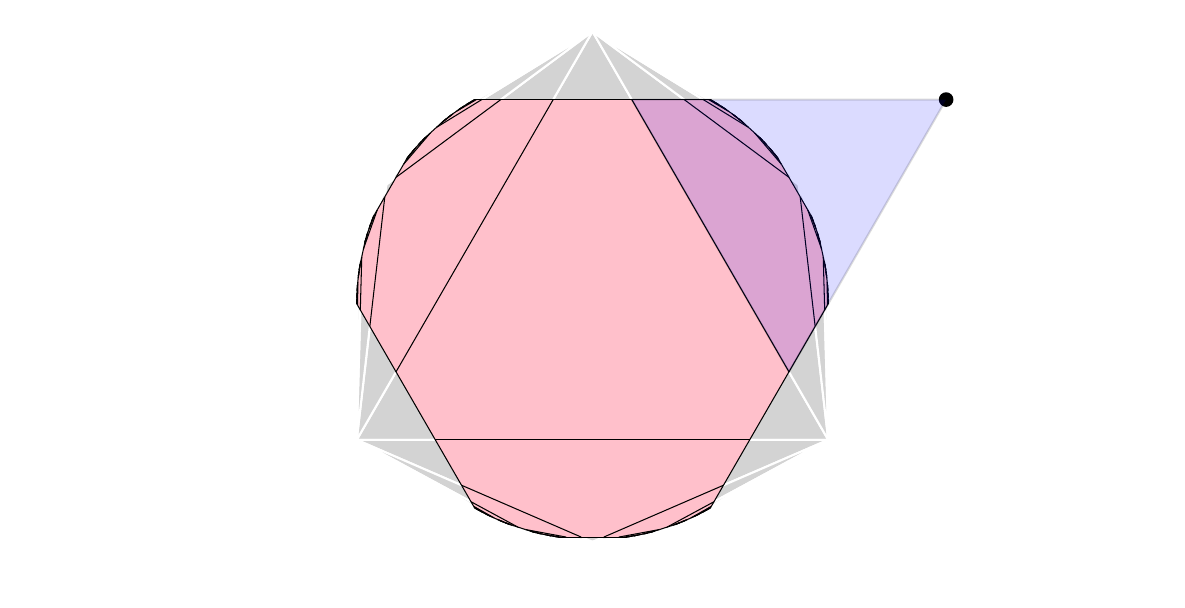}}%
    \put(0.79843466,0.45299756){\makebox(0,0)[lt]{\lineheight{1.25}\smash{\begin{tabular}[t]{l}$v_{t_1}$\end{tabular}}}}%
    \put(0.48781647,0.29287976){\color[rgb]{1,0,0}\makebox(0,0)[lt]{\lineheight{1.25}\smash{\begin{tabular}[t]{l}$\partial W_{\min}$\end{tabular}}}}%
    \put(0.68275319,0.34161394){\color[rgb]{1,0,0}\makebox(0,0)[lt]{\lineheight{1.25}\smash{\begin{tabular}[t]{l}$\partial W_{\min}'$\end{tabular}}}}%
    \put(0.42689876,0.23196203){\makebox(0,0)[lt]{\lineheight{1.25}\smash{\begin{tabular}[t]{l}$P_{\min}$\end{tabular}}}}%
    \put(0,0){\includegraphics[width=\unitlength,page=2]{iiitri_halfcone.pdf}}%
  \end{picture}%
\endgroup%

  \caption{The boundary of the half-cone over $W_{\min}$ intersects
    the boundary of the wall $W_{\min}'$.}
  \label{fig:pmin_halfcone}
\end{figure}

In particular, as $W_{\min}$ is a reflection wall for $t_1$, this
half-cone contains $t_1t_3t_2 \cdot \Sigma_{t_1, t_3}$. As
$\overline{W_{\min}} \cap P_{\min} \subset \dee W$, we have
\[
  \halfcone_+(\overline{W_{\min}} \cap P_{\min}) \subset \partial
  \halfcone_+(W),
\]
meaning that $\partial \halfcone_+(W)$ contains
$t_1t_3t_2 \cdot \Sigma_{t_1, t_3}$. Then, since
$W'_{\min} \subset W'$ we see from \Cref{prop:wprime_intersect} that
$\overline{W'} \cap \partial \halfcone_+(W)$ is nonempty, meaning that
the halfcones $\halfcone_+(W)$ and $\halfcone_+(W')$ cannot strongly
nest.

\begin{remark}
  As in \Cref{rem:long_word_fail}, we can apply a nearly identical
  argument to the word $(t_1t_3t_2)^ke(d_1d_2)^k$ for any given
  $k > 0$ to see that the group element $\elt(W, W')$ can also be made
  to lie arbitrarily far from any standard subgroup of $C$.
\end{remark}

\bibliography{specialbib}{}

\begin{thebibliography}{10}

\bibitem{agolVHC}
{\sc I.~Agol}, {\em The virtual {H}aken conjecture}, Doc. Math., 18 (2013),
  pp.~1045--1087.
\newblock With an appendix by Agol, Daniel Groves, and Jason Manning.

\bibitem{bergeron2012boundary}
{\sc N.~Bergeron and D.~T. Wise}, {\em A boundary criterion for cubulation},
  Amer. J. Math., 134 (2012), pp.~843--859.

\bibitem{bestvina96}
{\sc M.~Bestvina}, {\em Local homology properties of boundaries of groups},
  Michigan Math. J., 43 (1996), pp.~123--139.

\bibitem{BPS}
{\sc J.~Bochi, R.~Potrie, and A.~Sambarino}, {\em Anosov representations and
  dominated splittings}, J. Eur. Math. Soc. (JEMS), 21 (2019), pp.~3343--3414.

\bibitem{bourbaki68}
{\sc N.~Bourbaki}, {\em \'{E}l\'{e}ments de math\'{e}matique. {F}asc. {XXXIV}.
  {G}roupes et alg\`ebres de {L}ie. {C}hapitre {IV}: {G}roupes de {C}oxeter et
  syst\`emes de {T}its. {C}hapitre {V}: {G}roupes engendr\'{e}s par des
  r\'{e}flexions. {C}hapitre {VI}: syst\`emes de racines}, Actualit\'{e}s
  Scientifiques et Industrielles [Current Scientific and Industrial Topics],
  No. 1337, Hermann, Paris, 1968.

\bibitem{bridsonhaefliger}
{\sc M.~R. Bridson and A.~Haefliger}, {\em Metric spaces of non-positive
  curvature}, vol.~319 of Grundlehren der mathematischen Wissenschaften
  [Fundamental Principles of Mathematical Sciences], Springer-Verlag, Berlin,
  1999.

\bibitem{canary2021anosov}
{\sc R.~Canary}, {\em Anosov representations: Informal lecture notes}, 2021.

\bibitem{ct2020}
{\sc R.~Canary and K.~Tsouvalas}, {\em Topological restrictions on {A}nosov
  representations}, J. Topol., 13 (2020), pp.~1497--1520.

\bibitem{canary2019new}
{\sc R.~D. Canary, M.~Stover, and K.~Tsouvalas}, {\em New nonlinear hyperbolic
  groups}, Bulletin of the London Mathematical Society, 51 (2019),
  pp.~547--553.

\bibitem{charney1995strict}
{\sc R.~M. Charney and M.~W. Davis}, {\em Strict hyperbolization}, Topology, 34
  (1995), pp.~329--350.

\bibitem{DGKexamples}
{\sc J.~{Danciger}, F.~{Gu{\'e}ritaud}, and F.~{Kassel}}, {\em Examples and
  non-examples of convex cocompact groups in projective space}.
\newblock {In preparation}.

\bibitem{dgk2017convex}
\leavevmode\vrule height 2pt depth -1.6pt width 23pt, {\em {Convex cocompact
  actions in real projective geometry}}, arXiv e-prints,  (2017),
  p.~arXiv:1704.08711.

\bibitem{DGKgeomded}
\leavevmode\vrule height 2pt depth -1.6pt width 23pt, {\em Convex cocompactness
  in pseudo-{R}iemannian hyperbolic spaces}, Geom. Dedicata, 192 (2018),
  pp.~87--126.

\bibitem{DGKLM}
{\sc J.~{Danciger}, F.~{Gu{\'e}ritaud}, F.~{Kassel}, G.-S. {Lee}, and
  L.~{Marquis}}, {\em {Convex cocompactness for Coxeter groups}}, arXiv
  e-prints,  (2021), p.~arXiv:2102.02757.

\bibitem{davis1983}
{\sc M.~W. Davis}, {\em Groups generated by reflections and aspherical
  manifolds not covered by {E}uclidean space}, Ann. of Math. (2), 117 (1983),
  pp.~293--324.

\bibitem{Davis}
\leavevmode\vrule height 2pt depth -1.6pt width 23pt, {\em The geometry and
  topology of {C}oxeter groups}, in Introduction to modern mathematics, vol.~33
  of Adv. Lect. Math. (ALM), Int. Press, Somerville, MA, 2015, pp.~129--142.

\bibitem{dj91}
{\sc M.~W. Davis and T.~Januszkiewicz}, {\em Hyperbolization of polyhedra}, J.
  Differential Geom., 34 (1991), pp.~347--388.

\bibitem{dj2000}
\leavevmode\vrule height 2pt depth -1.6pt width 23pt, {\em Right-angled {A}rtin
  groups are commensurable with right-angled {C}oxeter groups}, J. Pure Appl.
  Algebra, 153 (2000), pp.~229--235.

\bibitem{delzantgromov}
{\sc T.~Delzant and M.~Gromov}, {\em Cuts in {K}\"{a}hler groups}, in Infinite
  groups: geometric, combinatorial and dynamical aspects, vol.~248 of Progr.
  Math., Birkh\"{a}user, Basel, 2005, pp.~31--55.

\bibitem{dey2022}
{\sc S.~{Dey}}, {\em {On Borel Anosov subgroups of ${\rm SL}(d,\mathbb{R})$}},
  arXiv e-prints,  (2022), p.~arXiv:2208.02109.

\bibitem{dk2022}
{\sc S.~{Dey} and M.~{Kapovich}}, {\em {Klein-Maskit combination theorem for
  Anosov subgroups: Free products}}, arXiv e-prints,  (2022),
  p.~arXiv:2205.03919.

\bibitem{dk2023}
\leavevmode\vrule height 2pt depth -1.6pt width 23pt, {\em {Klein-Maskit
  combination theorem for Anosov subgroups: Amalgams}}, arXiv e-prints,
  (2023), p.~arXiv:2301.02354.

\bibitem{dkl2019}
{\sc S.~{Dey}, M.~{Kapovich}, and B.~{Leeb}}, {\em A combination theorem for
  {A}nosov subgroups}, Math. Z., 293 (2019), pp.~551--578.

\bibitem{dt2022anosov}
{\sc S.~Douba and K.~Tsouvalas}, {\em Anosov groups that are indiscrete in rank
  one}, arXiv preprint arXiv:2210.17549,  (2022).

\bibitem{giralt}
{\sc A.~Giralt}, {\em Cubulation of {G}romov-{T}hurston manifolds}, Groups
  Geom. Dyn., 11 (2017), pp.~393--414.

\bibitem{gromov}
{\sc M.~Gromov}, {\em Hyperbolic groups}, in Essays in group theory, vol.~8 of
  Math. Sci. Res. Inst. Publ., Springer, New York, 1987, pp.~75--263.

\bibitem{gromov1993asymptotic}
\leavevmode\vrule height 2pt depth -1.6pt width 23pt, {\em Asymptotic
  invariants of infinite groups}, in Geometric group theory, {V}ol. 2
  ({S}ussex, 1991), vol.~182 of London Math. Soc. Lecture Note Ser., Cambridge
  Univ. Press, Cambridge, 1993, pp.~1--295.

\bibitem{ggkw2017anosov}
{\sc F.~Gu\'{e}ritaud, O.~Guichard, F.~Kassel, and A.~Wienhard}, {\em Anosov
  representations and proper actions}, Geom. Topol., 21 (2017), pp.~485--584.

\bibitem{gw2012anosov}
{\sc O.~Guichard and A.~Wienhard}, {\em Anosov representations: domains of
  discontinuity and applications}, Invent. Math., 190 (2012), pp.~357--438.

\bibitem{hagen2012geometry}
{\sc M.~F. Hagen}, {\em Geometry and combinatorics of cube complexes}, McGill
  University (Canada), 2012.

\bibitem{haglund2008finite}
{\sc F.~Haglund}, {\em Finite index subgroups of graph products}, Geom.
  Dedicata, 135 (2008), pp.~167--209.

\bibitem{HW08}
{\sc F.~Haglund and D.~T. Wise}, {\em Special cube complexes}, Geom. Funct.
  Anal., 17 (2008), pp.~1551--1620.

\bibitem{haissinsky2015hyperbolic}
{\sc P.~Ha\"{\i}ssinsky}, {\em Hyperbolic groups with planar boundaries},
  Invent. Math., 201 (2015), pp.~239--307.

\bibitem{IZ21}
{\sc M.~Islam and A.~Zimmer}, {\em A flat torus theorem for convex co-compact
  actions of projective linear groups}, J. Lond. Math. Soc. (2), 103 (2021),
  pp.~470--489.

\bibitem{js2003}
{\sc T.~Januszkiewicz and J.~\'{S}wi\c{a}tkowski}, {\em Hyperbolic {C}oxeter
  groups of large dimension}, Comment. Math. Helv., 78 (2003), pp.~555--583.

\bibitem{kapovichpolygons}
{\sc M.~Kapovich}, {\em Representations of polygons of finite groups}, Geom.
  Topol., 9 (2005), pp.~1915--1951.

\bibitem{kapovich2007convex}
\leavevmode\vrule height 2pt depth -1.6pt width 23pt, {\em Convex projective
  structures on {G}romov-{T}hurston manifolds}, Geom. Topol., 11 (2007),
  pp.~1777--1830.

\bibitem{kapovich2008kleinian}
\leavevmode\vrule height 2pt depth -1.6pt width 23pt, {\em Kleinian groups in
  higher dimensions}, in Geometry and dynamics of groups and spaces, vol.~265
  of Progr. Math., Birkh\"{a}user, Basel, 2008, pp.~487--564.

\bibitem{KLP2014}
{\sc M.~{Kapovich}, B.~{Leeb}, and J.~{Porti}}, {\em {Morse actions of discrete
  groups on symmetric space}}, arXiv e-prints,  (2014), p.~arXiv:1403.7671.

\bibitem{klp2017characterizations}
\leavevmode\vrule height 2pt depth -1.6pt width 23pt, {\em Anosov subgroups:
  dynamical and geometric characterizations}, Eur. J. Math., 3 (2017),
  pp.~808--898.

\bibitem{klp2018domains}
\leavevmode\vrule height 2pt depth -1.6pt width 23pt, {\em Dynamics on flag
  manifolds: domains of proper discontinuity and cocompactness}, Geom. Topol.,
  22 (2018), pp.~157--234.

\bibitem{KLP2018}
\leavevmode\vrule height 2pt depth -1.6pt width 23pt, {\em A {M}orse lemma for
  quasigeodesics in symmetric spaces and {E}uclidean buildings}, Geom. Topol.,
  22 (2018), pp.~3827--3923.

\bibitem{KLP2023}
\leavevmode\vrule height 2pt depth -1.6pt width 23pt, {\em {Morse actions of
  discrete groups on symmetric spaces: Local-to-global principle}}, arXiv
  e-prints,  (2023), p.~arXiv:2301.04562.

\bibitem{kozmalubotzky}
{\sc G.~Kozma and A.~Lubotzky}, {\em Linear representations of random groups},
  Bull. Math. Sci., 9 (2019), pp.~1950016, 12.

\bibitem{labourie2006anosov}
{\sc F.~Labourie}, {\em Anosov flows, surface groups and curves in projective
  space}, Invent. Math., 165 (2006), pp.~51--114.

\bibitem{lafont2022special}
{\sc J.-F. Lafont and L.~Ruffoni}, {\em Special cubulation of strict
  hyperbolization}, arXiv preprint arXiv:2206.03620,  (2022).

\bibitem{leemarquis2019}
{\sc G.-S. Lee and L.~Marquis}, {\em Anti--de {S}itter strictly {GHC}-regular
  groups which are not lattices}, Trans. Amer. Math. Soc., 372 (2019),
  pp.~153--186.

\bibitem{markovic2013criterion}
{\sc V.~Markovic}, {\em Criterion for {C}annon's conjecture}, Geom. Funct.
  Anal., 23 (2013), pp.~1035--1061.

\bibitem{Riestenberg}
{\sc J.~{Maxwell Riestenberg}}, {\em {A quantified local-to-global principle
  for Morse quasigeodesics}}, arXiv e-prints,  (2021), p.~arXiv:2101.07162.

\bibitem{Moussong}
{\sc G.~Moussong}, {\em Hyperbolic {C}oxeter groups}, ProQuest LLC, Ann Arbor,
  MI, 1988.
\newblock Thesis (Ph.D.)--The Ohio State University.

\bibitem{nibloreeves97}
{\sc G.~Niblo and L.~Reeves}, {\em Groups acting on {${\rm CAT}(0)$} cube
  complexes}, Geom. Topol., 1 (1997), p.~approx. 7 pp.

\bibitem{nibloreevescoxeter}
\leavevmode\vrule height 2pt depth -1.6pt width 23pt, {\em Coxeter groups act
  on {${\rm CAT}(0)$} cube complexes}, J. Group Theory, 6 (2003), pp.~399--413.

\bibitem{ollivier2011cubulating}
{\sc Y.~Ollivier and D.~T. Wise}, {\em Cubulating random groups at density less
  than {$1/6$}}, Trans. Amer. Math. Soc., 363 (2011), pp.~4701--4733.

\bibitem{ontaneda}
{\sc P.~Ontaneda}, {\em Riemannian hyperbolization}, Publ. Math. Inst. Hautes
  \'{E}tudes Sci., 131 (2020), pp.~1--72.

\bibitem{ps2009flag}
{\sc P.~Przytycki and J.~\'{S}wi\k atkowski}, {\em Flag-no-square
  triangulations and {G}romov boundaries in dimension 3}, Groups Geom. Dyn., 3
  (2009), pp.~453--468.

\bibitem{py2013coxeter}
{\sc P.~Py}, {\em Coxeter groups and {K}\"{a}hler groups}, Math. Proc.
  Cambridge Philos. Soc., 155 (2013), pp.~557--566.

\bibitem{tholozan2021linearity}
{\sc N.~Tholozan and K.~Tsouvalas}, {\em Linearity and indiscreteness of
  amalgamated products of hyperbolic groups}, arXiv preprint arXiv:2112.05574,
  (2021).

\bibitem{tholozan2022residually}
\leavevmode\vrule height 2pt depth -1.6pt width 23pt, {\em Residually finite
  non linear hyperbolic groups}, arXiv preprint arXiv:2207.14356,  (2022).

\bibitem{tsouvalas2020}
{\sc K.~Tsouvalas}, {\em On {B}orel {A}nosov representations in even
  dimensions}, Comment. Math. Helv., 95 (2020), pp.~749--763.

\bibitem{Vinberg1971}
{\sc E.~B. Vinberg}, {\em Discrete linear groups that are generated by
  reflections}, Izvestiya Akademii Nauk SSSR. Seriya Matematicheskaya, 35
  (1971), pp.~1072--1112.

\bibitem{wise2004cubulating}
{\sc D.~T. Wise}, {\em Cubulating small cancellation groups}, Geom. Funct.
  Anal., 14 (2004), pp.~150--214.

\bibitem{wise2014cubical}
\leavevmode\vrule height 2pt depth -1.6pt width 23pt, {\em The cubical route to
  understanding groups}, in Proceedings of the {I}nternational {C}ongress of
  {M}athematicians---{S}eoul 2014. {V}ol. {II}, Kyung Moon Sa, Seoul, 2014,
  pp.~1075--1099.

\bibitem{wise2021structure}
\leavevmode\vrule height 2pt depth -1.6pt width 23pt, {\em The structure of
  groups with a quasiconvex hierarchy}, vol.~209 of Annals of Mathematics
  Studies, Princeton University Press, Princeton, NJ, 2021.

\end{thebibliography}
\bibliographystyle{siam}

\end{document}